\documentclass[11pt]{article}
\usepackage[T1]{fontenc}
\usepackage[margin=1in]{geometry}
\usepackage{amsmath,amssymb,amsthm,mathtools}
\usepackage{enumitem}
\usepackage{microtype}
\usepackage[colorlinks=true,linkcolor=blue,citecolor=blue,urlcolor=blue]{hyperref}

\allowdisplaybreaks
\numberwithin{equation}{section}
\theoremstyle{plain}
\newtheorem{theorem}{Theorem}[section]
\newtheorem{lemma}[theorem]{Lemma}
\newtheorem{proposition}[theorem]{Proposition}
\newtheorem{corollary}[theorem]{Corollary}
\newtheorem*{lemma*}{Lemma}
\theoremstyle{definition}
\newtheorem{definition}[theorem]{Definition}
\theoremstyle{remark}
\newtheorem{remark}[theorem]{Remark}

\newcommand{\Rn}{\mathbb{R}^n}
\newcommand{\loc}{\operatorname{loc}}
\newcommand{\supp}{\operatorname{supp}}
\newcommand{\W}{\mathcal{W}}
\newcommand{\dist}{\operatorname{dist}}
\newcommand{\osc}{\operatorname*{osc}}
\newcommand{\minisection}[1]{\par\smallskip\noindent\textbf{#1}\par\smallskip}
\setlist[itemize]{topsep=2pt,itemsep=1pt,parsep=0pt,leftmargin=1.5em}
\setlist[enumerate]{topsep=2pt,itemsep=1pt,parsep=0pt,leftmargin=1.8em}
\AtBeginDocument{%
  \setlength{\abovedisplayskip}{7pt plus 2pt minus 3pt}%
  \setlength{\belowdisplayskip}{7pt plus 2pt minus 3pt}%
  \setlength{\abovedisplayshortskip}{5pt plus 2pt minus 3pt}%
  \setlength{\belowdisplayshortskip}{5pt plus 2pt minus 3pt}%
}
\makeatletter
\renewcommand\section{\@startsection{section}{1}{\z@}%
  {-2.2ex \@plus -.7ex \@minus -.2ex}%
  {1.0ex \@plus .2ex}%
  {\normalfont\Large\bfseries}}
\renewcommand\subsection{\@startsection{subsection}{2}{\z@}%
  {-1.8ex \@plus -.5ex \@minus -.2ex}%
  {.7ex \@plus .2ex}%
  {\normalfont\large\bfseries}}
\renewcommand\subsubsection{\@startsection{subsubsection}{3}{\z@}%
  {-1.4ex \@plus -.4ex \@minus -.2ex}%
  {.5ex \@plus .1ex}%
  {\normalfont\normalsize\bfseries}}
\makeatother

\title{Isolated Singularities of $k$-Hessian Equations}
\author{Shiguang Ma and Shengyang Zang}

\begin{document}
\maketitle

\begin{abstract}
We classify isolated singularities of nonpositive, $k$-admissible
classical solutions to $k$-Hessian equations without radial symmetry
assumptions. For $1\le k<n/2$, we consider $F_k[u]=(-u)^p$ with
$1<p\le nk/(n-2k)$. Below the Serrin exponent, we obtain
fundamental-solution asymptotics and characterize removability by
the vanishing of the leading coefficient. At the Serrin exponent,
we determine the exact logarithmic correction and its universal
coefficient for every nonremovable singularity. In the critical
dimension $k=n/2$, we establish logarithmic asymptotics and a
removability criterion for $F_k[u]=f(x,u,\nabla u)$ with continuous
sources satisfying $0\le f(x,s,\xi)\le C|\xi|^\ell e^{-\alpha s}$,
where $\alpha>0$ and $0\le\ell<n$. The proofs combine Hessian-measure
compactness, Wolff-potential estimates, a global B\^ocher theorem,
and a bounded-variation analysis of spherical extrema.
\end{abstract}

\section{Introduction}\label{sec:introduction}

In this paper, we study the local behavior of nonpositive
solutions to two classes of  $k$-Hessian equations. We assume the dimension $n\ge2$   throughout the paper. For
$1\le k<\frac{n}{2}$, we consider the Lane--Emden type equation
\begin{equation}\label{eq:subcritical-hessian-equation}
	F_k[u]=(-u)^p,
\end{equation}
whereas for $ k=\frac{n}{2}$, we consider
\begin{equation}\label{eq:critical-dimension-equation}
	F_\frac{n}{2}[u]=f(x,u,\nabla u).
\end{equation}
Given a nonpositive solution in a punctured neighborhood of the origin, say $B_1\setminus\{0\}$, the problem is
to determine whether the singularity is removable and, if it is not, to
identify the exact asymptotic behavior of $u$.  For $1\le k<\frac{n}{2}$, we give a
complete classification in the range $1<p\le \frac{nk}{n-2k}$.
For $k=\frac{n}{2}$, we assume the right-hand side $f(x,u,\nabla u)$ satisfies the growth condition \eqref{eq:critical-source-growth}.
     
Let $\Omega\subset\mathbb R^n$ be a bounded domain.  For
$1\le k\le n$ and $u\in C^2(\Omega)$, the $k$-Hessian operator is
defined by
\[
	F_k[u]=\sigma_k\bigl(\lambda(D^2u)\bigr),
	\qquad
	\sigma_k(\lambda)
	=\sum_{1\le i_1<\cdots<i_k\le n}
	\lambda_{i_1}\cdots\lambda_{i_k},
\]
where $\lambda(D^2u)=(\lambda_1,\ldots,\lambda_n)$ denotes the vector of
eigenvalues of $D^2u$.  Thus $F_1[u]=\Delta u$, whereas
$F_n[u]=\det D^2u$.  The associated G\aa rding cone is
\[
\Gamma_k
:=\bigl\{\lambda\in\mathbb R^n:
\sigma_j(\lambda)>0\ \text{for }j=1,\ldots,k\bigr\},
\]
and its closure is denoted by $\overline{\Gamma}_k$. An upper-semicontinuous
function $u:\Omega\to[-\infty,\infty)$ is said to be $k$-subharmonic in $\Omega$, $1\le k\le n$, if 
 $F_k[q]\ge0$  for any quadratic polynomial $q$ such that $u-q$ has a local finite maximum in $\Omega$. We denote by $\Phi^k(\Omega)$ the set of all $k$-subharmonic functions in $\Omega$ which are not equal to $-\infty$ identically.  Trudinger and Wang  \cite{TrudingerWang1997,TrudingerWang1999,TrudingerWang2002}   associate with every
$u\in\Phi^k(\Omega)$ a nonnegative Radon measure, again denoted by
$F_k[u]$.  This measure agrees with
$\sigma_k(\lambda(D^2u))\,dx$ for $C^2$ functions and is weakly
continuous under local $L^1$ convergence.  See
\cite{CNS1985,Wang1994,Trudinger1997,Wang2009KHessian} for the
classical theory and
\cite{TrudingerWang1997,TrudingerWang1999,TrudingerWang2002} for Hessian
measure theory.

For comparison, consider the model case $k=1$. For a nontrivial
solution, setting $v=-u$ in \eqref{eq:subcritical-hessian-equation}
gives the classical Lane--Emden equation
\[
-\Delta v=v^p,
\qquad v>0.
\]
Brezis and Lions \cite{BrezisLions1981,Lions1980} showed that a positive
solution in a punctured domain extends across the puncture in the
distributional sense as
\begin{equation}\label{eq:intro-classical-dirac}
-\Delta v=v^p+a\delta_0,
\qquad a\ge0.
\end{equation}
For $1<p<n/(n-2)$, the coefficient $a$ distinguishes removable
singularities from those with fundamental-solution asymptotics. At and
above the Serrin exponent $n/(n-2)$, this coefficient vanishes, and the
nonlinear equation determines the remaining singular behavior.
The corresponding classifications were established by Aviles
\cite{Aviles1987} at $p=n/(n-2)$, by Gidas and Spruck
\cite{GidasSpruck1981} for $n/(n-2)<p<(n+2)/(n-2)$, and by Caffarelli,
Gidas and Spruck \cite{CaffarelliGidasSpruck1989} for
$n/(n-2)\le p\le(n+2)/(n-2)$.

Serrin's work \cite{Serrin1964,Serrin1965} laid the foundations for the
local theory of isolated singularities of quasilinear equations.
For the $m$-Laplace equation $-\Delta_m v=v^p$, with $1<m<n$,
Guedda and V\'eron \cite{GueddaVeron1988} classified radial singularities
in the range $m-1<p<nm/(n-m)-1$; the range $1<p\le m-1$ falls within
Serrin's theory. Nonradial classifications at and below the Serrin
exponent $n(m-1)/(n-m)$ are developed in our work \cite{MaZang2025}
and its sequel, currently in preparation \cite{MaZang2026}.
These arguments use one-sided differential inequalities for spherical
extrema after the Emden--Fowler transformation.
Analogous questions for the fractional equation $(-\Delta)^s v=v^p$,
$0<s<1$, have been studied below the Serrin exponent
\cite{ChenQuaas2018}, at the Serrin exponent
\cite{WeiWu2022,ChenHajaiej2022,ChanDelaTorre2024}, between the
Serrin and Sobolev exponents \cite{YangZou2021}, and at the Sobolev
exponent \cite{CaffarelliJinSireXiong2014}.
Labutin \cite{Labutin2001} studied isolated singularities for homogeneous
fully nonlinear elliptic equations.

For $k$-Hessian equations, Labutin's potential estimates
\cite{Labutin2002} and the work of Phuc and Verbitsky
\cite{PhucVerbitsky2008} provide a framework for studying equations
with measure data, including solvability, pointwise estimates, and
removable singularities. More recently, Dai, Fu, Gui and Qin
\cite{DaiFuGuiQin2026} established Liouville theorems and classification
results for entire solutions of $k$-Hessian equations of Lane--Emden
type. The present paper concerns precise local asymptotics near an
isolated singularity. At the Serrin exponent, we determine the
logarithmic correction and its universal coefficient, which describe
the nonremovable singularities remaining after the Dirac mass vanishes.
In the critical dimension, we obtain logarithmic asymptotics and a
removability criterion for sources that may also depend on the gradient.

The passage from the Laplace and $m$-Laplace equations to the
$k$-Hessian equation introduces several difficulties. First, although
every smooth $k$-admissible function satisfies
\[
kF_k[u]=\operatorname{div}\bigl(T_{k-1}(D^2u)\nabla u\bigr),
\]
the Newton tensor $T_{k-1}(D^2u)$ depends on $D^2u$. This identity
therefore does not directly yield the first-order energy estimates
used in the Laplace and $m$-Laplace settings. We obtain the required
local bounds through a Wolff-potential bootstrap in
Sections~\ref{sec:subcritical} and~\ref{sec:serrin-exponent}.
Second, the available interior $C^2$ estimates depend on additional
structural hypotheses and do not directly provide the uniform control
needed for our singular rescalings; see, for example,
\cite{ChouWang2001,ShankarYuan2025,WarrenYuan2009,Zhang2025}.
We therefore pass to rescaled limits using local $L^1$ compactness and
the weak continuity of Hessian measures. Finally, an annular Harnack
inequality and a global B\^ocher theorem yield asymptotic radiality.
At the Serrin exponent, we then determine the exact asymptotic
coefficient through one-sided differential inequalities and a
bounded-variation analysis of the spherical minimum and maximum.

We now state our main results.  The first is a B\^ocher type theorem
for punctured $k$-harmonic functions, which identifies the limits arising
in the rescaling arguments.

\begin{theorem}\label{thm:k-hessian-punctured-bocher}
	Let $1\le k\le \frac{n}{2}$. Assume that
	$U\in\Phi^k(\mathbb R^n\setminus\{0\})$ is nonpositive and satisfies
	$F_k[U]=0$. If $1\le k<\frac{n}{2}$, then there exist
	$A\ge0$ and $B\le0$ such that
	\begin{equation}\label{eq:bocher-final-form}
		U(x)=B-A|x|^{2-\frac{n}{k}},
		\qquad x\ne0.
	\end{equation}
	If $k=\frac{n}{2}$, then $U\equiv B$ for some $B\le0$.
\end{theorem}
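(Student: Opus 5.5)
Our plan is to reduce to the radial case by a comparison argument and use the explicit radial $k$-harmonic functions. The radial solutions of $F_k[w]=0$ on $\mathbb R^n\setminus\{0\}$ satisfy $r^{1-n}\bigl(r^{n-k}(w')^k\bigr)'=0$ up to a constant. In the admissible class, where $w'\ge0$, they are exactly $w=B-A r^{2-n/k}$ with $A\ge0$ when $k<n/2$. When $k=n/2$ they are $w=B+A\log r$. Nonpositivity near infinity forces $A=0$ in the second case.

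\textbf{Step 1: monotone spherical extrema.} Set $m(r)=\min_{|x|=r}U$ and $M(r)=\max_{|x|=r}U$. Since $U$ is $k$-subharmonic, hence subharmonic, the maximum principle on annuli shows that $M$ is a convex function of $r^{2-n}$, or of $\log r$ when $n=2$. Because $U\le0$ on all of $\mathbb R^n\setminus\{0\}$, we also get that $M$ is nondecreasing. Indeed, if $M(r_1)>M(r_2)$ with $r_1<r_2$, convexity together with the upper bound $0$ would contradict the behaviour at infinity. We then define $B=\lim_{r\to\infty}M(r)\le0$.

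\textbf{Step 2: Harnack on annuli and asymptotic radiality.} The function $U$ is a viscosity solution of $F_k=0$ away from the origin, so $U-B$ is a nonpositive $k$-harmonic function. The annular Harnack inequality for nonnegative $k$-superharmonic solutions of the homogeneous equation (Trudinger--Wang/Labutin), applied to $B-U$ or to $M(2r)-U$ on $B_{2r}\setminus B_{r/2}$, gives $M(r)-m(r)\le C\,(M(2r)-m(r/2))$-type oscillation control. We use it through a scaling argument. Rescaled functions $U_\lambda(x)=(U(\lambda x)-B)/|m(\lambda)-B|$ form a family that is locally bounded in $L^1(\mathbb R^n\setminus\{0\})$. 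By weak continuity of Hessian measures, any limit as $\lambda\to0$ or $\lambda\to\infty$ is again a nonpositive $k$-harmonic function.

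\textbf{Step 3: comparison with radial barriers.} This is the main step. We compare $U$ with the radial solutions $w_\pm$ built from $m$ and $M$ on annuli $B_R\setminus\overline B_\rho$. On such an annulus, $B-A_\pm(\rho,R)|x|^{2-n/k}$ is chosen to match $m$ or $M$ on the two boundary spheres. The comparison principle for $k$-subharmonic versus $k$-harmonic functions, where $U$ is $k$-harmonic in the viscosity sense, then gives $w_-\le U\le w_+$. Letting $R\to\infty$ and using $M(R),m(R)\to B$ (which follows from Harnack at large scales) pins the limits down. Letting $\rho\to0$, we obtain $m(r)\ge B-A_-r^{2-n/k}$ and $M(r)\le B-A_+r^{2-n/k}$. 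The coefficients are
\[
A_-=\limsup_{\rho\to0}\frac{B-m(\rho)}{\rho^{2-n/k}},\qquad A_+=\liminf_{\rho\to0}\frac{B-M(\rho)}{\rho^{2-n/k}}.
\]

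The crux is to show $A_+=A_-$. For this we use Harnack together with a rescaling limit. The quantity $(B-U)/|x|^{2-n/k}$ is bounded above and below. Any blow-down or blow-up limit is a $k$-harmonic function squeezed between two multiples of the fundamental solution. Harnack iteration shows that its oscillation ratio decays geometrically across dyadic annuli, so the two-sided bound self-improves to $A_+=A_-=:A$.

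\textbf{Case $k=n/2$.} Here the barriers are $B+A\log r$. Comparison on $B_R\setminus B_\rho$ with the bound $U\le0$ sends $A\to0$ as $R\to\infty$. Hence $U\ge m(\rho)$-type lower bounds propagate, giving $U\equiv B$.

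The main obstacle is the oscillation-decay step, i.e., proving rigidity without any $C^2$ regularity. We would first attempt it via the comparison principle on annuli iterated dyadically, using only Harnack's constant.
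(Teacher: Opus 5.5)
The gap is the step you yourself call the crux, $A_+=A_-$, and the mechanism you propose for it cannot work. Harnack-based oscillation decay is a linear device. For $k=1$ it succeeds because $U-(B-A_+|x|^{2-n})$ is again a nonpositive harmonic function, to which Harnack applies. For $k\ge2$ the difference between $U$ and a radial $k$-harmonic function satisfies no equation with a Harnack inequality. Harnack for $B-U$ with a fixed constant only reproduces $A_-\le CA_+$.

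In fact, every ingredient you list is equally satisfied by non-radial profiles $B-|x|^{2-n/k}\phi(x/|x|)$ with $A_+\le\phi\le A_-$. These ingredients are: comparison with radial solutions on annuli, the pinching $B-A_-|x|^{2-n/k}\le U\le B-A_+|x|^{2-n/k}$, and a scale-invariant Harnack constant. So no dyadic iteration of them can force $A_+=A_-$; the equation must enter through a rigidity statement. Likewise, saying that blow-up limits are ``again nonpositive $k$-harmonic functions squeezed between two multiples of the fundamental solution'' is circular unless you extract more from the limit.

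A smaller omission: your lower barrier is $k$-admissible only if $m$ is nondecreasing. This does follow from comparison with constants on $\{\rho<|x|<R\}$ together with $m(R)\to B$, but you should say so.

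The missing idea can be supplied within your framework. Your annular comparison shows that $f(r)=(B-M(r))r^{n/k-2}$ is nondecreasing, with $f(r)\downarrow A_+$ as $r\to0$. Hence any locally uniform limit $W$ of $\lambda^{n/k-2}(U(\lambda x)-B)$ as $\lambda\to0$ satisfies $W\le h:=-A_+|x|^{2-n/k}$, with equality somewhere on every sphere; equicontinuity comes from Trudinger's H\"older estimate.

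Now use rigidity at the touching point. Since $\lambda(D^2h)\in\Gamma_{k-1}\cap\partial\Gamma_k$, and there $\partial\sigma_k/\partial\lambda_i=\sigma_{k-1}(\lambda|i)>0$, the tangent hyperplane supports the convex cone $\overline\Gamma_k$ at $D^2h$. This makes $W-h$ a viscosity subsolution of the uniformly elliptic linear equation $a_{ij}\partial_{ij}v=0$ with $a_{ij}=\partial\sigma_k(D^2h)/\partial X_{ij}$. The strong maximum principle then gives $W\equiv h$, hence $A_-=A_+$. If $A_+=0$, use instead the strong maximum principle for the subharmonic $W$.

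The paper avoids barriers altogether. The canonical extension gives $U\in\Phi^k(\mathbb R^n)$ with $F_k[U]=M\delta_0$. The global Wolff estimate of Phuc--Verbitsky gives $0\le B-U\le CM^{1/k}|x|^{2-n/k}$. The Trudinger--Wang uniqueness theorem for the point-mass problem with this behaviour at infinity then identifies $U=B-a_{n,k}M^{1/k}|x|^{2-n/k}$. So the rigidity you are missing is outsourced to that uniqueness theorem.

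For $k=n/2$ your barriers do give $M(r)\equiv B$. The conclusion should then come from the strong maximum principle for the subharmonic $U$, which attains $\sup U=B$ at an interior point, rather than from ``propagating'' lower bounds. The paper instead excludes the point mass via Labutin's lower Wolff estimate and concludes by Harnack.
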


An isolated singularity is called \emph{removable} if,
for some $\gamma\in(0,1)$, its canonical $k$-convex extension is locally
$C^{0,\gamma}$ across the puncture and satisfies the stated equation there
in the distributional sense.  We write $B_r(x)$ or $B(x,r)$ for the open ball in $\mathbb{R}^n$ with the center $x$ and the radius $r$, and set $B_r=B_r(0)$. The next theorem treats the sub-Serrin range. 
\begin{theorem}
	\label{thm:subcritical-hessian-classification}
	Assume $1\le k<\frac{n}{2}$ and $1<p<\frac{nk}{n-2k}$.  Let
	$u\in C^2(B_2\setminus\{0\})\cap\Phi^k(B_2\setminus\{0\})$ be a
	nonpositive solution of \eqref{eq:subcritical-hessian-equation} in
	$B_1\setminus\{0\}$.  Then there exists $m\ge0$ such that
	\begin{equation}\label{eq:subcritical-fundamental-limit}
		\lim_{x\to0}\frac{u(x)}{-|x|^{2-\frac{n}{k}}}=m.
	\end{equation}
	Moreover, the singularity at the origin is
	removable if and only if $m=0$.
\end{theorem}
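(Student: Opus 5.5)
We prove Theorem~\ref{thm:subcritical-hessian-classification} by combining an a priori upper bound of fundamental-solution type, a rescaling to punctured entire $k$-harmonic functions, and Theorem~\ref{thm:k-hessian-punctured-bocher}; the argument is organised around the quantity $M(r)=\max_{|x|=r}(-u)$.

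\textbf{Step 1: extension and upper bound.}
Since $u\le0$ is $k$-subharmonic and bounded above, its canonical extension across the origin (the upper-semicontinuous regularisation) belongs to $\Phi^k(B_1)$. The Hessian measure of the extension is
\[
F_k[u]=(-u)^p\,dx+a\,\delta_0,\qquad a\ge0 .
\]
This follows from the fact that a polar set carries only a Dirac mass, together with local finiteness of the Hessian measure of the extension.

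In particular $(-u)^p\in L^1(B_{1/2})$. We then invoke the Labutin/Trudinger--Wang Wolff-potential estimate
\[
-u(x)\le C\bigl(1+W^{\mu}_{\frac{2k}{k+1},k+1}(x,1)\bigr),\qquad \mu=F_k[u].
\]
The Dirac part of $\mu$ contributes $C a^{1/k}|x|^{2-n/k}$. For the absolutely continuous part we bootstrap, in dyadic annuli, using $p<\frac{nk}{n-2k}$: this condition is exactly what makes $|x|^{(2-n/k)p}$ integrable, so it does not worsen the singularity, and $W$ of such a density is $O(|x|^{2-n/k})$ up to a small loss that the iteration removes. The conclusion is
\[
-u(x)\le C|x|^{2-n/k}\quad\text{near }0 .
\]
A harmonic-analysis lemma on Wolff potentials of $|y|^{-\beta}$ with $\beta<n$ supplies each bootstrap step.

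\textbf{Step 2: rescaling and identification of the limit.}
Set
\[
u_r(y)=r^{\frac nk-2}u(ry).
\]
Then
\[
F_k[u_r]=r^{n-2k+ \ldots}\ldots=r^{(\frac nk-2)(k-p)+2k}(-u_r)^p\cdots
\]
Checking exponents, the right-hand side becomes $r^{\theta}(-u_r)^p$ with $\theta=2k-(\frac nk-2)(p-k)>0$, precisely because $p<\frac{nk}{n-2k}$. By Step 1, $u_r$ is locally bounded on $\mathbb R^n\setminus\{0\}$, uniformly in $r$.

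$L^1_{\loc}$ compactness for $k$-subharmonic functions gives a subsequence $u_{r_j}\to U$. By weak continuity of Hessian measures, $F_k[U]=0$ on $\mathbb R^n\setminus\{0\}$, and $U\le0$. Theorem~\ref{thm:k-hessian-punctured-bocher} then gives
\[
U=B-A|y|^{2-n/k}.
\]
Because of the upper bound in Step 1 and the scaling, $B=0$. Hence
\[
U=-A|y|^{2-n/k}.
\]

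\textbf{Step 3: uniqueness of the limit.}
We must show that $A$ is independent of the subsequence, and we want uniform rather than merely $L^1$ convergence. The natural invariant is the mass:
\[
A^k\cdot c_{n,k}=\lim_{r\to0}\mu(B_{r})=a .
\]
This holds since the absolutely continuous part has vanishing mass on small balls and the masses pass to the limit under weak convergence on annuli. So $A=(a/c_{n,k})^{1/k}=:m$ for every subsequence.

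Uniform convergence on $\{|y|=1\}$ comes from an annular Harnack inequality for the rescaled equation combined with monotonicity/continuity in the Hessian measure framework. Concretely, we use equicontinuity of $k$-subharmonic functions whose Hessian measures are small in the appropriate Wolff sense, which follows from the Wolff-potential two-sided estimates on annuli. This yields \eqref{eq:subcritical-fundamental-limit}.

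\textbf{Step 4: removability.}
If $m>0$, then $u$ is unbounded near $0$, so the singularity is not removable. If $m=0$, then $a=0$, so $\mu=(-u)^p\,dx$. Repeating the Wolff-potential bootstrap now yields $(-u)\in L^\infty$, and then $\mu$ has density in $L^\infty$. The Hölder estimates of Trudinger--Wang/Labutin for Hessian measures with $\mu(B_\rho)\le C\rho^{n-2k+\epsilon}$ give $C^{0,\gamma}$, and the equation holds across the origin because there is no Dirac term.

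\textbf{Main obstacle.}
We expect the main obstacle to be Step 1 together with the uniform-convergence part of Step 3. The difficulty is the absence of energy estimates: everything must go through Wolff potentials. The key point is to ensure that the bootstrap closes exactly under $p<\frac{nk}{n-2k}$.
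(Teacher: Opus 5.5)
Your Steps 2--4 essentially follow the paper: the blow-down $r^{\frac nk-2}u(r\xi)$, weak continuity, the B\^ocher theorem, and identification of the coefficient through the Dirac mass $a$ of the canonical measure. The gap is in Step 1, on which everything else rests: it has no starting point.

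The mechanism you describe is a pointwise power bootstrap. Suppose $-u\le C|x|^{-\beta}$. Split $\W_k^{(-u)^p}(x,1)$ at $|x|/2$, using the pointwise bound inside $B(x,|x|/2)$ and $(-u)^p\in L^1$ outside, and add the Dirac contribution. This gives $-u\le C\bigl(1+|x|^{-\eta}+|x|^{2-p\beta/k}\bigr)$ with $\eta=\frac{n-2k}{k}$. For $p>k$ this improves $\beta$ only if $\beta<\frac{2k}{p-k}$, and you never supply an initial pointwise bound. The only one that scaling provides, via a doubling/Liouville argument, is $-u\lesssim|x|^{-2k/(p-k)}$. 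That is exactly the fixed point of $\beta\mapsto p\beta/k-2$, so the iteration stalls there. Knowing only $(-u)^p\in L^1$ gives no control of the local part $\W_k^{(-u)^p}(x,|x|/2)$. Your remark that $|x|^{(2-\frac nk)p}$ is integrable presupposes the bound being proved.

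The missing idea is to start from scale-invariant \emph{integral} information at the critical exponent. Because the total Hessian mass is finite, the weak-type estimate for Wolff potentials of finite measures (Lemmas~\ref{lem:finite-measure-weak-wolff} and~\ref{lem:hessian-weak-lorentz-subcritical}) gives $u\in L^{\frac{nk}{n-2k},\infty}$ near the origin. Rescaling $B(x,|x|/2)$ to unit size then gives $L^{q_0}$ bounds, uniform in $x$, for every $q_0<\frac{nk}{n-2k}$, and in particular for some $q_0>p$. This is where $p<\frac{nk}{n-2k}$ genuinely enters. A finite $L^q$ Wolff bootstrap for the rescaled equation $F_k[U]=a(-U)^p$ with $a\le1$ (Lemma~\ref{lem:subcritical-wolff-crossing}) then crosses $\frac{pn}{2k}$. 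That bounds the local Wolff potential and the infimum term uniformly, yielding $-u\le C|x|^{2-\frac nk}$; the paper runs this by contradiction in Proposition~\ref{prop:subcritical-fundamental-bound}. Once that bound is in hand, your Step~4 power iteration does close when $a=0$, since $\eta<\frac{2k}{p-k}$ in the sub-Serrin range. The paper instead runs the same $L^q$ bootstrap directly on a ball around the origin.

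Two smaller points in Step~3. First, weak convergence of Hessian measures on annuli cannot detect the Dirac mass. You need $u_{r_j}\to U$ in $L^1_{\loc}(\mathbb R^n)$, which follows from the uniform bound $-u_r\le C|\xi|^{-\eta}$ because $\eta<n$; this gives $F_k[U]=a\delta_0$ on all of $\mathbb R^n$. Second, equicontinuity away from the origin follows directly from Trudinger's H\"older estimate, since $F_k[u_r]\to0$ uniformly there; no annular Harnack inequality is needed.
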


At the Serrin exponent, the vanishing of the Dirac mass does not imply removability. The remaining singularity is described by a precise logarithmic correction with a universal coefficient.
\begin{theorem}
	\label{thm:hessian-log-limit}
	Assume $1\le k<\frac{n}{2}$ and $p=\frac{nk}{n-2k}$.  Let
	$u\in C^2(B_2\setminus\{0\})\cap\Phi^k(B_2\setminus\{0\})$ be a
	nonpositive solution of \eqref{eq:subcritical-hessian-equation} in
	$B_1\setminus\{0\}$.  Then exactly one of the following holds:
	\begin{enumerate}[label=\textup{(\roman*)}]
		\item the singularity at the origin is removable;
		\item the singularity is nonremovable; in this case
		$u(x)\to-\infty$ as $x\to0$ and
		\begin{equation}\label{eq:hessian-subfundamental-limit}
			\lim_{x\to0}\frac{u(x)}{-|x|^{2-\frac{n}{k}}}=0.
		\end{equation}
More precisely,
		\begin{equation}\label{eq:hessian-log-limit}
			\lim_{x\to0}
			\left(\log\frac1{|x|}\right)^{\frac{n-2k}{2k^2}}
			|x|^{\frac{n-2k}{k}}(-u(x))
			=\mathcal L_{n,k},
		\end{equation}
where
		\begin{equation}\label{eq:Lnk-def}
			\mathcal L_{n,k}
			=\left[
			\binom{n-1}{k-1}\frac{n-2k}{2k^2}
			\left(\frac{n-2k}{k}\right)^k
			\right]^{\frac{n-2k}{2k^2}}.
		\end{equation}
	\end{enumerate}
\end{theorem}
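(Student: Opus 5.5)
The plan is to pass to Emden--Fowler variables and reduce the problem to an asymptotically autonomous ODE for the spherical extrema. Set $\beta=\frac{n-2k}{k}$, $t=\log\frac1{|x|}$, $\theta\in S^{n-1}$, and
\[
w(t,\theta)=|x|^{\beta}(-u(x)).
\]
The radial part of $F_k$ acting on $-|x|^{-\beta}w$ gives
\[
c_{n,k}\,e^{(\beta+2)kt}\,(\beta w+w_t)^{k-1}\bigl(-w_{tt}+\cdots\bigr)\cdot(\text{const}).
\]
Since $p=\frac{nk}{n-2k}$ gives $\beta p=n=(\beta+2)k$, the powers of $e^t$ cancel exactly. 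Near a positive radial-type profile the equation becomes
\[
\binom{n-1}{k-1}\beta^{k-1}\,\partial_t\bigl[(\text{flux})\bigr]\approx -\,w^{p}.
\]
More precisely, the radial flux $\Psi(t)=\binom{n-1}{k-1}\frac{\beta^{k-1}}{k}\cdot r^{n-k}|u'|^k$ satisfies $\frac{d}{dt}\Psi\approx -w^p$, with $\Psi\approx \binom{n-1}{k-1}\frac{\beta^k}{k}w^k$ up to lower-order terms. Hence
\[
\frac{d}{dt}w^k\approx -\frac{k}{\binom{n-1}{k-1}\beta^k}\,w^p .
\]
Solving this gives
\[
w\sim\Bigl[\tfrac{\binom{n-1}{k-1}\beta^{k}}{k(p-k)}\Bigr]^{1/(p-k)}t^{-1/(p-k)}.
\]
With $p-k=\frac{2k^2}{n-2k}$, this yields exactly the exponent $\frac{n-2k}{2k^2}$ and the constant $\mathcal L_{n,k}$ in \eqref{eq:Lnk-def}. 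I will check this bookkeeping first, since it dictates the whole argument.

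\textbf{Dichotomy.} I would first show, as in Theorem~\ref{thm:subcritical-hessian-classification}, that $u$ extends as a $k$-subharmonic function with $F_k[u]=(-u)^p+a\delta_0$ in the Hessian-measure sense. The Wolff-potential bootstrap gives $-u\le C|x|^{-\beta}$. Rescaling $u_r(x)=r^{\beta}u(rx)$ and using $L^1_{\loc}$ compactness, weak continuity of Hessian measures, and the fact that $r^{\beta p}(-u)^p(rx)\,r^{2k}\cdot r^{-\beta k}$ scales so that the source mass in annuli tends to $0$, every limit is $k$-harmonic on $\mathbb R^n\setminus\{0\}$. By Theorem~\ref{thm:k-hessian-punctured-bocher} each limit equals $-A|x|^{-\beta}$.

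Integrability of $(-u)^p$ near $0$ is forced at the Serrin exponent: if $A>0$ then $(-u)^p\sim A^p|x|^{-n}$ is not integrable. This contradicts the finiteness of the Hessian measure near the origin, which holds because $F_k[u]$ is a Radon measure on the extension. So $A=0$ along all sequences, which proves \eqref{eq:hessian-subfundamental-limit}. If in addition $u$ stays bounded, then the Wolff estimate $-u\lesssim W^\mu_{1,k}$ with $\mu=(-u)^p\,dx$ and a bootstrap give Hölder continuity, hence removability. So case (ii) is exactly the case $u\to-\infty$.

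\textbf{Asymptotic radiality.} Next I would show that $\max_\theta w(t,\cdot)/\min_\theta w(t,\cdot)\to1$. The plan is to rescale by $u/\!\min_{\partial B_r}(-u)$ instead of $r^{-\beta}$. The source term then still vanishes in the limit because $w\to0$. The limit is a nonpositive $k$-harmonic function on the punctured space, normalized and with a singularity, so by Theorem~\ref{thm:k-hessian-punctured-bocher} it is of the form $B-A|x|^{-\beta}$. The annular Harnack inequality gives the two-sided control needed, and this identifies the normalized limit as $-|x|^{-\beta}$ up to a constant. Asymptotic radiality of $w$ and of its $t$-derivative in an averaged sense follows.

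\textbf{Exact coefficient (main obstacle).} Let $m(t)=\min_\theta w$ and $M(t)=\max_\theta w$. At a spherical minimum point the angular Hessian is nonnegative, so the angular contributions to $\sigma_k$ have a sign. This yields a one-sided differential inequality of the form
\[
\frac{d}{dt}\bigl(m^k\bigr)\ \ge\ -\frac{k}{\binom{n-1}{k-1}\beta^k}\,M^p(1+o(1))
\]
in the viscosity / Dini-derivative sense, and the reverse inequality for $M$. The difficulty is that $m$ and $M$ are only Lipschitz, and the inequalities couple $m$ with $M$. The perturbation terms involving $w_t/w$ are only $o(1)$ relative to the main terms after radiality, not integrable a priori.

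To handle this, I would first prove that $m^{k-p}$ and $M^{k-p}$ have bounded variation growth, i.e. that $\frac{d}{dt}(w^{k-p})$ is bounded in an averaged sense. This comes from the inequalities together with $M/m\to1$. I would then integrate the inequalities for $m^{k-p}$ and $M^{k-p}$ over $[t_0,t]$, divide by $t$, and squeeze, using $M/m\to1$ and $w_t/w\to0$ in Cesàro mean. The result is
\[
\lim_{t\to\infty} t^{-1}w^{k-p}=\frac{k(p-k)}{\binom{n-1}{k-1}\beta^k},
\]
which is equivalent to \eqref{eq:hessian-log-limit}. The delicate point, which I expect to be the real obstacle, is controlling the lower-order terms in the flux $(\beta w+w_t)^{k-1}$ expansion well enough that their error integrates to $o(t)$.
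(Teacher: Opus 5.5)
Your overall architecture matches the paper's: exclusion of the Dirac mass via nonintegrability of $|x|^{-n}$, normalized rescaling by the spherical minimum plus Theorem~\ref{thm:k-hessian-punctured-bocher} for radiality, and spherical extrema in Emden--Fowler variables. However, two steps fail as written.

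First, the bound $-u\le C|x|^{-\beta}$ does not follow from the Wolff bootstrap when $p=\frac{nk}{n-2k}$. Lemma~\ref{lem:subcritical-wolff-crossing} needs a starting exponent $p<q_0<\frac{nk}{n-2k}$, and this range is empty; indeed $T(p)=p$, so the iteration sits at its fixed point. After rescaling, $F_k[U_j]=(-U_j)^p$ has no small coefficient because $\beta p=n$. The only control on the source is $\|(-U_j)^p\|_{L^1(B_2)}\to0$, and from this Lemma~\ref{lem:finite-measure-weak-wolff} returns only weak $L^p$ again. The paper instead obtains the bound, together with a gradient bound, in Theorem~\ref{thm:serrin-fundamental-bound}. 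It applies the doubling lemma to $(-u)^{1/\beta}+|\nabla u|^{1/(\beta+1)}$ and passes to a locally uniform limit that is an entire solution. The Phuc--Verbitsky Liouville theorem forces that limit to vanish, and Trudinger's gradient estimate excludes a gradient-dominated limit.

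Your later steps depend on this bound in three places:
\begin{itemize}
\item Integrability of $(-u)^p$ only gives $L^p$-smallness of $r^\beta u(r\,\cdot)$ on annuli. Reaching the pointwise limit \eqref{eq:hessian-subfundamental-limit} uses the gradient bound.
\item The annular Harnack inequality needs $\Theta=aR^{2k}S^{2k^2/(n-2k)}$ to stay bounded, which requires a pointwise bound on $S$.
\item That Harnack inequality, combined with the minimum principle (Lemma~\ref{lem:critical-bounded-or-blowup}), is what upgrades ``bounded $\Rightarrow$ removable'' to ``nonremovable $\Rightarrow u\to-\infty$''. A priori, $-u$ could be unbounded yet stay bounded along a sequence.
\end{itemize}

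Second, your extremal inequalities point the wrong way, and the obstacle you anticipate is not the real one. Do not split $\sigma_k$ into radial and angular parts, since the mixed derivatives $\partial_r\nabla_\theta u$ need not vanish at a spherical extremum. Instead, touch $u$ by a radial function.
\begin{itemize}
\item At a maximizer of $u$ on $\partial B_r$, the function $\Phi(x)=\max_{|y|=|x|}u(y)$ lies above $u$. Hence $D^2u\le D^2\Phi$ and $F_k[\Phi]\ge(-u)^p$. In your variables, wherever $\beta m+m_t>0$, this reads $\frac{d}{dt}(\beta m+m_t)^k\le-\frac{k}{\binom{n-1}{k-1}}m^p$ as measures; semiconcavity of $m$ makes the singular part nonpositive. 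This is an \emph{upper} bound for $m$, and it involves only $m$.
\item At a minimizer of $u$ the radial function lies below $u$. After checking that it lies in $\Gamma_k$ whenever $\sigma_k>0$, you get the reverse inequality with $M^p$, which is a \emph{lower} bound for $M$.
\end{itemize}
So there is no coupling between $m$ and $M$; the ratio $M/m\to1$ is used only at the end, to combine the $\limsup$ for $m$ with the $\liminf$ for $M$.

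The genuine difficulty in the lower bound is not integrating error terms. The inequality for $M$ is satisfied, with equality, by smooth radial removable solutions, for which $w\sim ce^{-\beta t}$. So smallness of $w_t/w$ does not follow from radiality, and nonremovability must be used. The paper shows that $M=O(e^{-\kappa t})$ would give $-u\in L^{\frac{nk}{n-2k}+\delta}$, hence removability by Corollary~\ref{cor:continuous-extension}. This yields a wedge $M_t^+>-\theta M$ with $\theta$ small, after which invariant-region arguments in the $(w,w_t^+)$ phase plane give the matching bounds.

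Finally, recheck your constant. The equation $\frac{d}{dt}w^k=-\frac{k}{\binom{n-1}{k-1}\beta^k}w^p$ integrates to $t^{-1}w^{k-p}\to\frac{p-k}{\binom{n-1}{k-1}\beta^k}$, not $\frac{k(p-k)}{\binom{n-1}{k-1}\beta^k}$. Only the former reproduces $\mathcal L_{n,k}$ when $k\ge2$.
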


Finally, consider the critical case $k=\frac{n}{2}$, in which the fundamental
solution is logarithmic.  The right-hand side may depend on both $u$ and
$\nabla u$; in particular, the equation $F_k[u]=e^{-u}$ is included.
\begin{theorem}
	\label{thm:critical-dimension-classification}
	Assume $k=\frac{n}{2}$.  Let
	\[
		f\in C\bigl(B_1\times(-\infty,0]\times\mathbb R^n\bigr),
		\qquad f\ge0,
	\]
	and suppose that, for some $C_0>0$, $\alpha>0$, and
	$0\le\ell<n$,
	\begin{equation}\label{eq:critical-source-growth}
		f(x,s,\xi)\le C_0|\xi|^\ell e^{-\alpha s}
		\qquad
		(x,s,\xi)\in B_1\times(-\infty,0]\times\mathbb R^n,
	\end{equation}
	with the convention $|\xi|^0=1$.  Let
	$u\in C^2(B_2\setminus\{0\})\cap\Phi^k(B_2\setminus\{0\})$ be a
	nonpositive solution of \eqref{eq:critical-dimension-equation} in $B_1\setminus\{0\}$. Then there exists $m\ge0$ such that
	\begin{equation}\label{eq:critical-dimension-limit}
		\lim_{x\to0}\frac{-u(x)}{\log\frac{1}{|x|}}=m.
	\end{equation}
	Moreover, the singularity at the origin is
	removable if and only if $m=0$.
\end{theorem}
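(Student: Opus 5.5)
Throughout, write $v=-u\ge0$ and $t=\log\frac1{|x|}$. The plan has three stages: a Wolff-potential upper bound, a rescaling to pin down the limit $m$, and the removability criterion.

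\textbf{Step 1: extension and a priori bound.} First extend $u$ across the origin as a $k$-subharmonic function on $B_1$ (its canonical extension, $u(0)=\liminf_{x\to0}u(x)$). Since $n=2k$, points have positive $k$-capacity only in the logarithmic sense. The Hessian measure of the extension is then
\[
F_k[u]=f(x,u,\nabla u)\,dx+a\delta_0,\qquad a\ge0.
\]
To see this, I will use the weak continuity of Hessian measures applied to the truncations $\max(u,-j)$. Next I bound $v$ above. The Labutin/Trudinger--Wang Wolff-potential estimate gives
\[
v(x)\le C+C\,W^{\mu}_{\frac{2k}{k+1},k+1}(x,1),
\]
where $\mu=F_k[u]$. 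For $k=n/2$ the Dirac part contributes exactly $C a^{1/k}\log\frac1{|x|}$. So the task is to show that the absolutely continuous part contributes $o(\log\frac1{|x|})$. I expect this to be the main obstacle, because $f$ grows like $e^{\alpha v}|\nabla u|^\ell$. I will first aim for the crude bound $v\le C\log\frac1{|x|}$ and then bootstrap.

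\textbf{Step 2: controlling the source.} I will obtain the gradient control from $k$-convexity on annuli. Write $M(r)=\max_{\partial B_r}v$ and $m(r)=\min_{\partial B_r}v$. An interior gradient bound for $k$-subharmonic functions (via the $(k-1)$-convexity/Lipschitz estimate) gives
\[
|\nabla u|\le C\,r^{-1}\osc_{B_{2r}\setminus B_{r/2}}u .
\]
The first key claim is that $M(r)\le \beta\log\frac1r+C$ for some $\beta$. For this I argue by contradiction: rescale $u_r(y)=u(ry)-\min_{|y|=1}u(ry)$ and take an $L^1_{\loc}$ limit, which by Theorem~\ref{thm:k-hessian-punctured-bocher} is constant. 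An annular Harnack inequality then makes $M(r)-m(r)$ bounded on dyadic annuli. With $M(r)\le\beta t+C$ and $|\nabla u|\lesssim r^{-1}$, the source is bounded by $C r^{-\ell-\alpha\beta}$. This is integrable against the Wolff kernel only when $\ell+\alpha\beta<n$.

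To close the argument, I will use the mass constraint directly. The total mass of $F_k[u]$ on $B_1$ is finite, being the extended measure. Moreover, $\int_{B_r}f\,dx\to0$ as $r\to0$. Together these give
\[
W^{\mu-a\delta_0}(x,|x|/2)=o\bigl(\log\tfrac1{|x|}\bigr),
\]
provided I can show that the local masses on dyadic balls centred on $\partial B_r$ are $o(1)$. I will attempt this via a local Wolff-potential bootstrap on balls $B(x,|x|/2)$, using the equation and a Harnack estimate for $v$ on each such ball.

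\textbf{Step 3: the limit and removability.} With the upper bound and Harnack in hand, I consider $w_r(y)=v(ry)/\log\frac1r$. These functions are bounded and converge along subsequences, with vanishing Hessian measure away from $0$ in the limit. By Theorem~\ref{thm:k-hessian-punctured-bocher} the limits are constants $c$. Comparison with $a^{1/k}$ times the fundamental solution $-C_{n}\log|x|$ from above and below then forces $c=m:=c_n a^{1/k}$, independently of the subsequence; this gives \eqref{eq:critical-dimension-limit}.

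For the removability criterion:
\begin{itemize}
\item If $m>0$, then $u$ is unbounded, so the singularity is not removable.
\item If $m=0$, then $a=0$, and $v=o(\log\frac1{|x|})$. Feeding this back, the source satisfies $f\le C|x|^{-\ell-\epsilon}$ for every $\epsilon>0$, and $\ell<n$. The Wolff-potential estimate with $\mu\in L^{q}$, $q>1$ close to $n/\ell$, then gives local H\"older continuity of $u$ across $0$. The equation holds distributionally because $a=0$.
\end{itemize}
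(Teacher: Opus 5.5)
There is a genuine gap, and it sits where you expect it: in the a priori bound $v\le C\log\frac1{|x|}$, and then in the sharp upper bound $\limsup v/\log\frac1{|x|}\le m$.

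\textbf{The logarithmic upper bound.} Step 2 relies on tools that are not available.

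The interior bound $|\nabla u|\le Cr^{-1}\osc u$ fails for $k$-subharmonic functions. For instance, $\max(\varepsilon\log|x-x_0|,-1)$ with $|x_0|=1$ is $\frac n2$-subharmonic and has oscillation at most $2$ on $B_2\setminus B_{1/2}$. Yet its gradient near $x_0$ is of size $\varepsilon e^{1/\varepsilon}$. For solutions, Trudinger's gradient estimate needs $\nabla(f^{1/k})$ bounded. Since $f$ is merely continuous and depends on $\nabla u$, this involves $D^2u$.

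There is also no annular Harnack inequality you can invoke. Labutin's estimate is the Harnack-type substitute, and its Wolff term is exactly what must be controlled. An $L^1_{\loc}$ blow-down converging to a constant does not rule out upward spikes of $v$, which is precisely where $e^{\alpha v}$ is dangerous.

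Your fallback, that $\mu(B(x,|x|/2))=o(1)$ yields $\W_k^\mu(x,|x|/2)=o(\log\frac1{|x|})$, also does not close. Those masses are trivially $o(1)$ because $f(\cdot,u,\nabla u)\in L^1$. But for $k=\frac n2$ a measure of small total mass can still have infinite Wolff potential. You need scale-by-scale control: a density bound at very small scales, which in turn requires a prior bound $v\le C\log\frac1{|x|}$ on $B(x,|x|/2)$.

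The paper breaks this circularity by contradiction at a single point $x_j$:
\begin{enumerate}
\item After rescaling by $|x_j|/4$, Labutin's estimate and an infimum bound from $n$-superharmonic theory give $-u_j\le C\lambda_j+C\W_k^{\mu_j}(\cdot,2)$, with $\mu_j(B_2)\to0$.
\item The Adams--Moser--Trudinger inequality for $\W_k$ (Proposition~\ref{prop:critdim-hessian-amt}) then makes $e^{A\W_k^{\mu_j}}$ uniformly integrable for every fixed $A$.
\item Splitting $B_1$ into $\{\W_k^{\mu_j}\ge\lambda_j\}$ and its complement, and using only $\nabla u\in L^q$ with $q<n$, gives $\W_k^{\mu_j}(0,2)\le C+C\lambda_j$, a contradiction.
\end{enumerate}
This exponential-integrability mechanism is the missing idea. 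Only once the bound $v\le C\log\frac1{|x|}$ is established does the split at $\rho=r^N$ give Lemma~\ref{lem:critdim-local-wolff-small}.

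\textbf{Identification of the limit.} Step 3 is one-sided. From $F_k[u]\ge a\delta_0$, comparison with $c\log|x|$ yields only $v\ge c_na^{1/k}\log\frac1{|x|}$. There is no barrier for the reverse inequality, because $F_k[u]$ is not dominated by a point mass. Labutin's constants $C_1,C_2$ are not sharp either, so Wolff bounds alone cannot produce the matching upper constant. Passing from subsequential limits of $v(r\cdot)/\log\frac1r$ to a pointwise limit again presupposes the missing Harnack control.

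The paper never identifies $m$ with the Dirac mass. Instead:
\begin{enumerate}
\item It defines $m$ as the limit of the monotone quotient $\min_{|x|=r}(-u)/\log\frac1r$, using comparison only in the available direction.
\item It identifies every blow-down as $-m$ via the mean-value inequality for the superharmonic function $-U_j-b_j$ at the minimizing point. This gives $\inf_{B(x,\theta|x|)}(-u)\sim m\log\frac1{|x|}$.
\item It applies Labutin's upper estimate to the shifted function $u+\inf_{B(x,3r/4)}(-u)$, so the non-sharp constants multiply only $o(\log\frac1{|x|})$ quantities.
\end{enumerate}

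\textbf{Smaller points.} Your removability step again uses $|\nabla u|\lesssim|x|^{-1}$. Instead, use $\nabla u\in L^{q_0}$ with $q_0<n$ and H\"older's inequality to get $f(\cdot,u,\nabla u)\in L^s$ for some $s>1=\frac n{2k}$, then apply Proposition~\ref{thm:general-removable-lm}. Also, the canonical extension must be $\limsup_{x\to0}u$, not $\liminf$, to be upper semicontinuous.
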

The proofs of the three classification theorems begin with a common
measure-theoretic reduction.
By Lemma~\ref{lem:hessian-measure-decomposition-origin}, each solution
admits a canonical $k$-convex extension across the origin, whose
Hessian measure satisfies
\[
F_k[u]=M\delta_0+g(x)\,dx
\quad\text{in }B_1,
\qquad M\ge0,\qquad g\in L^1_{\mathrm{loc}}(B_1).
\]
Here, away from the origin, $g(x)=(-u(x))^p$ for
\eqref{eq:subcritical-hessian-equation}, whereas
$g(x)=f(x,u(x),\nabla u(x))$ in the critical-dimensional case
$k=n/2$. This decomposition establishes local integrability of
the source and isolates any mass concentrated at the puncture.

Theorem~\ref{thm:k-hessian-punctured-bocher} then identifies the
nonpositive $k$-harmonic limits arising in the rescaling arguments.
The subsequent analysis determines the leading coefficients in the
sub-Serrin and critical-dimensional cases, and the precise logarithmic
correction at the Serrin exponent.

This paper is organized as follows.  Section~\ref{sec:prelim-removable}
develops the Hessian measure and Wolff potential tools and proves
Theorem~\ref{thm:k-hessian-punctured-bocher}.
Section~\ref{sec:subcritical} proves the classifications for the sub-Serrin range $1<p<\frac{nk}{n-2k}$,
Section~\ref{sec:serrin-exponent} treats the Serrin exponent $\frac{nk}{n-2k}$, and
Section~\ref{sec:critical-dimension} handles the critical dimension
$k=\frac{n}{2}$.  The technical phase-plane details for the complementary lower estimate at
the Serrin exponent are collected in Appendix~\ref{app:critical-lower-bound}.

\section{Hessian measures and nonlinear potentials}
\label{sec:prelim-removable}

This section develops the measure-theoretic and potential-theoretic
framework used throughout the paper.  We first record the local
integrability and potential estimates needed below, together with the
H\"older regularity criterion above the density threshold.  We then
construct the canonical $k$-convex extension across an isolated
puncture and identify the part of its Hessian measure concentrated
there.  These ingredients are finally combined to prove
Theorem~\ref{thm:k-hessian-punctured-bocher}.

The underlying theory of Hessian measures is due to Trudinger and Wang
\cite{TrudingerWang1997,TrudingerWang1999,TrudingerWang2002}.
Labutin established the corresponding two-sided pointwise estimates in
terms of Wolff potentials \cite{Labutin2002}, while Phuc and Verbitsky
developed the global measure-data theory for Hessian equations of
Lane--Emden type \cite{PhucVerbitsky2008}.

\subsection{Local estimates and the H\"older regularity threshold}

We first fix the notation for Hessian measures and Wolff potentials. Recall that we write $\Phi^k(\Omega)$ for the class of  $k$-convex functions
in $\Omega$, and $F_k[u]$ for the
associated $k$-Hessian measure.  If $u\in C^2(\Omega)$, then this measure
is absolutely continuous and its density is $F_k(D^2u)$:
$
dF_k[u]=F_k(D^2u)\,dx.
$
For $h\in L^1_{\loc}(\Omega)$, the notation
\[
F_k[u]=h\qquad\text{in }\mathcal D'(\Omega)
\]
means
\[
\int_\Omega \varphi\,dF_k[u]
=\int_\Omega \varphi h\,dx,
\qquad \varphi\in C_c^\infty(\Omega),
\]
equivalently, the Hessian measure $F_k[u]$ is $h(x)\,dx$.  For classical
$C^2$ solutions this is the same as $F_k(D^2u)=h$ pointwise, hence also in the sense of distributions.

We shall repeatedly use the following local integrability properties of
$k$-convex functions, established in \cite{TrudingerWang1999}.  If
$u\in \Phi^k(\Omega)$, $u\le0$, $1\le k\le \frac{n}{2}$, and
$\Omega'\Subset\Omega$, then
$u\in L^q$ for all $0\le q<\frac{nk}{n-2k}$ when $1\le k<\frac{n}{2}$, and for all $0\le q<\infty$
 when
$k=\frac{n}{2}$.
Moreover, $\nabla u\in L^q_{\mathrm{loc}}(\Omega)$ for every $0\le q<\frac{nk}{n-k}$.

The pointwise estimates used below are formulated in terms of the
following nonlinear potential. For a nonnegative Borel measure $\mu$ on a domain $\Omega\subset\mathbb R^n$, define the truncated Wolff potential
\[
\W_k^\mu(x,R)
=\int_0^R
\left(\frac{\mu(B(x,t))}{t^{n-2k}}\right)^{\frac{1}{k}}\frac{dt}{t},
\qquad 0<R\le\infty.
\]
Here $\mu(B(x,t))$ means $\mu(B(x,t)\cap\Omega)$.  When
$d\mu=f(y)\,dy$ on $\Omega$, we also write $\W_k^f(x,R)$, and
$\W_k^f(x)=\W_k^f(x,\infty)$.  The potential of the full Hessian measure
is written as $\W_k^{F_k[v]}$.  We use
$\chi_E$ for the characteristic function of $E$.

The first consequence needed below is a regularity criterion when the
density lies above $\frac{n}{2k}$.

\begin{proposition}
\label{thm:general-removable-lm}
Let $1\le k\le \frac{n}{2}$, $u\in\Phi^k(B_1)$, and suppose that
$F_k[u]=g$ in $B_1$, where $g\ge0$ belongs to
$L^m(B_1)$ for some $m>\frac{n}{2k}$.
Then $u\in C^{0,\gamma}_{\loc}(B_1)$ for every $0<\gamma<\min\{1,2-\frac{n}{km}\}$.
\end{proposition}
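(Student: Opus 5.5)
The plan is to use Labutin's two-sided Wolff potential estimate \cite{Labutin2002}, applied to the measure $\mu=g\,dx$, and to show that the oscillation of $u$ on small balls is controlled by a power of the radius.

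The estimate I will invoke reads as follows. If $u\in\Phi^k(B(x_0,2R))$ with $F_k[u]=\mu$, then for $x$ near $x_0$ and $r\le R$,
\[
\sup_{B(x,r)}u-u(x)\le C\,\W_k^{\mu}(x,2r)+C\Bigl(\sup_{B(x,2r)}u-\inf_{B(x,2r)}u\Bigr)\text{-type terms},
\]
or, in its standard form,
\[
-u(x)\le C\,\W_k^{\mu}(x,2r)-C\inf_{B(x,r)}(-u)\dots
\]
To avoid the technical form of the lower bound, I will use the following cleaner route.

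\textbf{Step 1 (Wolff potential bound).} By H\"older,
\[
\mu(B(x,t))\le \|g\|_{L^m}|B_t|^{1-1/m}\le C\|g\|_{L^m}\,t^{n-n/m}.
\]
Therefore
\[
\W_k^{g}(x,r)\le C\|g\|_{L^m}^{1/k}\int_0^r t^{(2k-n/m)/k}\,\frac{dt}{t}=C\|g\|_{L^m}^{1/k}\,r^{2-\frac{n}{km}},
\]
which is finite precisely because $m>\frac{n}{2k}$. This bound is uniform in $x\in B_1$.

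\textbf{Step 2 (local boundedness).} Labutin's upper bound $-u(x)\le C\W_k^\mu(x,2r)+C\sup_{\partial B(x,r)}(-u)$ (in the form $u(x)\ge \inf_{B(x,r)}u - C\W$) gives $u\in L^\infty_{\loc}$. Indeed, $u$ is upper semicontinuous and hence bounded above, and the $L^1$ averages are finite. Bounded $k$-convex functions with $F_k[u]$ absolutely continuous are then continuous.

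\textbf{Step 3 (H\"older modulus).} This is the main obstacle. Fix $x\in B_{1/2}$ and set $\omega(r)=\osc_{B(x,r)}u$. I will compare $u$ on $B(x,r)$ with the solution $h$ of $F_k[h]=0$ in $B(x,r)$ with $h=u$ on the boundary (Trudinger--Wang Dirichlet theory). By comparison $u\le h$. The difference $h-u$ is controlled through Labutin's estimate applied to $u-h$, or directly by $\sup(h-u)\le C\W_k^{g\chi_{B(x,r)}}(\cdot,2r)\le C r^{\beta}$ with $\beta=2-\frac{n}{km}$. Meanwhile $h$, being $k$-harmonic, satisfies an interior oscillation decay $\osc_{B(x,\theta r)}h\le \lambda\osc_{B(x,r)}h$ for some $\lambda<1$ and any $\theta<1$; this follows from the weak Harnack inequality for $k$-Hessian equations (Trudinger--Wang). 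Combining these,
\[
\omega(\theta r)\le \lambda\,\omega(r)+Cr^{\beta}.
\]
A standard iteration lemma then gives $\omega(r)\le Cr^{\gamma}$ for every $\gamma<\beta$ with $\gamma\le\log\lambda/\log\theta$. Since $\theta$ can be taken small, with $\lambda$ fixed by Harnack, the exponent $\log\lambda/\log\theta$ can be made arbitrarily close to $0$ from above only. To reach every $\gamma<\min\{1,\beta\}$, I expect instead to need the sharper fact that $k$-harmonic functions are locally Lipschitz; interior gradient estimates for $F_k[h]=0$ via approximation give this. That fact lets $\lambda\approx\theta$, and the iteration yields any $\gamma<\min\{1,\beta\}$.

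The delicate part is justifying the comparison and gradient estimates for the merely continuous boundary data. I would handle this by approximating $u$ uniformly by smooth $k$-convex functions and passing to the limit using the weak continuity of Hessian measures.
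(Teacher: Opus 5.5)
\textbf{Genuine gap: continuity in Step~2.} Your Step~3 depends on it. The claim ``bounded $k$-convex functions with $F_k[u]$ absolutely continuous are then continuous'' is false. Already for $k=1$, $n\ge3$, take $x_j\to0$ and sum the Newtonian potentials of small uniformly charged balls $B(x_j,\varepsilon_j)$, each normalized to equal $-1$ at its centre, with $\varepsilon_j\to0$ fast enough. The result is a bounded subharmonic function with $\Delta u\in L^1$ that is discontinuous at the origin.

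So after Step~2 you only have local boundedness. Your infimum form of Labutin's estimate does give that, provided you apply it to $u-\sup_{B(x,3r)}u$, since $u$ is not assumed nonpositive. Your displayed ``standard form'' also has the wrong sign in front of the infimum term.

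Continuity is exactly where $m>\frac{n}{2k}$ has to enter. Without it Step~3 cannot start. Solving $F_k[h]=0$ in $B(x,r)$ with boundary values $u|_{\partial B(x,r)}$, the comparison $u\le h$, the bound $\osc h\le\osc_{\partial B(x,r)}u$, and the uniform smooth approximation you propose all presuppose continuous boundary data.

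\textbf{The fix from your Step~1.} Apply Labutin's two-sided estimate to $u-\sup_{B(x,3R)}u\le0$, and set $d(\rho)=\sup_{B(x,\rho)}u-u(x)$. This gives
\[
d(R)\le\Bigl(1-\frac1{C_3}\Bigr)d(3R)+C\,\W_k^{g}(x,2R)\le\Bigl(1-\frac1{C_3}\Bigr)d(3R)+CR^{2-\frac{n}{km}}.
\]
Hence $d(R)\le CR^{\gamma_0}$ for some small $\gamma_0>0$, locally uniformly in $x$. Since $|x-y|<\rho$ gives $u(x)\le\sup_{B(y,\rho)}u\le u(y)+C\rho^{\gamma_0}$, $u$ is locally H\"older. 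This is Labutin's H\"older criterion \cite[Corollary~3.4]{Labutin2002}, which is the paper's first step. The paper then applies Trudinger's estimate \eqref{eq:trudinger-holder-original} to the now continuous $u$ to reach every $\gamma<\min\{1,2-\frac{n}{km}\}$.

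\textbf{Your Step~3 after continuity.} With continuity in hand, Step~3 is a legitimate, longer alternative to citing Trudinger; it is essentially a proof of his Theorem~4.1. One point needs a different justification. Labutin's estimate cannot be applied to $u-h$, which is not $k$-convex. Instead let $v$ solve $F_k[v]=g\chi_{B(x,r)}$ with $v=0$ on $\partial B(x,r)$. Superadditivity of the $k$-Hessian measure, which comes from concavity of $\sigma_k^{1/k}$, gives $F_k[h+v]\ge F_k[v]=F_k[u]$. Comparison then yields $h+v\le u\le h$, so $0\le h-u\le\|v\|_{L^\infty}\le Cr^{2-\frac{n}{km}}$.

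Combine this with the Lipschitz bound for $h$, obtained from \eqref{eq:trudinger-gradient-original} by approximation. This gives $\omega(\theta r)\le C\theta\,\omega(r)+Cr^{2-\frac{n}{km}}$, and the usual iteration yields the full range of exponents.
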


The proof uses Trudinger's local H\"older estimate.  We record together
with it the companion interior gradient estimate needed later in the
critical rescaling argument.
\begin{lemma*}[Trudinger's interior gradient estimate]
Let $2\le k\le n$, let $U\in C^\infty(B_R(y))$ be $k$-admissible, and
suppose
\[
        F_k[U]=f^k\qquad\text{in }B_R(y),
\]
where $f\ge0$ and $f\in C^{0,1}(B_R(y))$.  Then
\begin{equation}\label{eq:trudinger-gradient-original}
        |\nabla U(y)|
        \le C(n,k)\left(
        R^{-1}\osc_{B_R(y)}U
        +R^2\|\nabla f\|_{L^\infty(B_R(y))}
        \right).
\end{equation}
This is estimate (3.10) of Trudinger \cite[p.~1258]{Trudinger1997}.
\end{lemma*}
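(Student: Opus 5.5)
The plan is a Bernstein-type maximum principle argument applied to an auxiliary function built from $|\nabla U|^2$, a cutoff and a function of $U$. We work with the concave, $1$-homogeneous operator $G(D^2U)=\sigma_k(\lambda(D^2U))^{1/k}$, so that the equation reads $G(D^2U)=f$.

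\textbf{Reduction by scaling.} Setting $U_R(x)=U(y+Rx)$ on $B_1$ gives $F_k[U_R]=(R^2f(y+Rx))^k$. The new datum $f_R=R^2f(y+R\cdot)$ satisfies $\|\nabla f_R\|_\infty=R^3\|\nabla f\|_\infty$, and $\osc U_R=\osc U$. Since $|\nabla U_R(0)|=R|\nabla U(y)|$, \eqref{eq:trudinger-gradient-original} follows from the case $R=1$, $y=0$. We also replace $U$ by $U-\inf_{B_1}U$, so that $0\le U\le M:=\osc_{B_1}U$.

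\textbf{The auxiliary function.} Let $F^{ij}=\partial G/\partial r_{ij}(D^2U)$. Since $\lambda(D^2U)\in\Gamma_k$, the matrix $(F^{ij})$ is positive definite. Concavity and homogeneity give
\[
F^{ij}U_{ij}=f,\qquad \sum_i F^{ii}\ge \binom nk^{1/k}.
\]
Differentiating the equation yields $F^{ij}U_{ijl}=f_l$. Consider
\[
w=\eta\,\varphi(U)\,|\nabla U|^2,\qquad \eta=(1-|x|^2)^2,\qquad \varphi(t)=\Bigl(1-\frac{t}{2M}\Bigr)^{-1/2},
\]
or another $\varphi$ with $\varphi'>0$, $\varphi''\ge c\,\varphi'^2/\varphi>0$ and all quantities comparable to $1$ and $1/M$ on $[0,M]$. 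Suppose $w$ attains its maximum at $x_0$, and rotate coordinates so that $\nabla U(x_0)=U_1e_1$ with $U_1>0$. Then at $x_0$ the first-order condition $(\log w)_i=0$ gives
\[
U_{11}=-\frac{\varphi'}{2\varphi}U_1^2-\frac{\eta_1}{2\eta}U_1,\qquad U_{1i}=-\frac{\eta_i}{2\eta}U_1\quad(i\ge2).
\]
In the nontrivial regime $\eta U_1\gg M^{-1}$ (with cutoff error terms absorbed), this forces $U_{11}<0$.

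\textbf{Key structural fact and main obstacle.} The main obstacle is getting a good lower bound for the term $\varphi''F^{ij}U_iU_j=\varphi''F^{11}U_1^2$, which must dominate everything else. For $\lambda\in\Gamma_k$ with a negative component, one has $\partial\sigma_k/\partial\lambda_1\ge c(n,k)\sum_i\partial\sigma_k/\partial\lambda_i$ whenever $\lambda_1<0$. We apply this after diagonalizing $D^2U$, observing that $U_{11}<0$ forces the $e_1$-component of $F^{ij}$ to be comparable to the trace. The result is
\[
F^{11}\ge c\sum_iF^{ii}\ge c'>0.
\]
Using the first-order condition to make $e_1$ an approximate eigendirection is the delicate point. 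If needed, we would instead control $F^{ij}\xi_i\xi_j$ for $\xi=\nabla U/|\nabla U|$ directly, via $\xi^TD^2U\xi=U_{11}<0$ and the cone inequality for $T_{k-1}$ applied to vectors along which the Hessian is negative.

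\textbf{Closing the estimate.} At $x_0$ we have $0\ge F^{ij}(\log w)_{ij}$. Expanding, this becomes
\[
0\ge \frac{2F^{ij}U_{li}U_{lj}}{|\nabla U|^2}+\frac{2U_lf_l}{|\nabla U|^2}+\Bigl(\frac{\varphi''}{\varphi}-\frac{\varphi'^2}{\varphi^2}\Bigr)F^{11}U_1^2+\frac{\varphi'}{\varphi}f+F^{ij}(\log\eta)_{ij}-(\text{first-order cross terms}).
\]
The cross terms are absorbed using the first-order condition and Cauchy--Schwarz. The cutoff terms are of size $\sum F^{ii}/\eta$, and the source term is bounded by $|\nabla f|/|\nabla U|$. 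Combining these with the lower bound for $F^{11}$ gives
\[
\frac{c}{M^2}U_1^2\sum F^{ii}\le \frac{C}{\eta}\sum F^{ii}+\frac{C\|\nabla f\|_\infty}{U_1}.
\]
Since $\sum F^{ii}$ is bounded below, this yields $\eta U_1^2\le C(M^2+\|\nabla f\|_\infty^{2/3}M^{4/3})$. Evaluating $w$ at $0$ and applying Young's inequality gives
\[
|\nabla U(0)|\le C(M+\|\nabla f\|_\infty),
\]
which is \eqref{eq:trudinger-gradient-original} after undoing the scaling.
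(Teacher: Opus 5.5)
The paper does not prove this lemma; it quotes it as estimate (3.10) of Trudinger's paper and uses it as a black box, only in Step~4 of the proof of Theorem~\ref{thm:serrin-fundamental-bound}. You instead reconstruct it by a Bernstein maximum-principle argument, and the outline is sound. The scaling reduction is right. The identities $F^{ij}U_{ij}=f$ and $\sum_iF^{ii}\ge\binom nk^{1/k}$ follow from homogeneity and concavity of $\sigma_k^{1/k}$. The term $\frac{\varphi'}{\varphi}f$ has a favorable sign, which is why no $\sup f$ term appears. Finally, the case analysis $\eta U_1^2\le C(M^2+M^{4/3}\|\nabla f\|_\infty^{2/3})$ followed by Young's inequality gives exactly \eqref{eq:trudinger-gradient-original}. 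Compared with the citation, your route shows why the constant needs no ellipticity bound. The only structural inputs are $\operatorname{tr}F\ge c$ and $F^{ij}\xi_i\xi_j\ge c\operatorname{tr}F$ along a direction $\xi$ of nonpositive curvature, both uniform on $\Gamma_k$. This is exactly what Step~4 requires, since there $\psi_j\to0$. The citation, in exchange, is short and inherits Trudinger's handling of the general hypotheses.

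Several steps need tightening.

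First, the structural inequality must be proved, and $e_1$ need not be an eigendirection. Order $\lambda_1\ge\cdots\ge\lambda_n$. Deleting an entry maps $\Gamma_j$ into $\Gamma_{j-1}$, so $(\lambda_k,\ldots,\lambda_n)\in\Gamma_1$ and hence $\lambda_n>-(n-k)\lambda_k$. Let $\xi$ be a unit vector, written in an eigenbasis, with $\sum_i\lambda_i\xi_i^2\le0$, and set $\tau=\sum_{i\ge k}\xi_i^2$. Then $0\ge\lambda_k(1-\tau)-(n-k)\lambda_k\tau$, so $\tau\ge\frac1{n-k+1}$. Combined with $\sigma_{k-1}(\lambda|i)\ge\sigma_{k-1}(\lambda|k)\ge c(n,k)\sigma_{k-1}(\lambda)$ for $i\ge k$, this gives $F^{ij}\xi_i\xi_j\ge c\operatorname{tr}F$. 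Your approximate-eigenvector idea also works: in the nontrivial regime the first-order conditions give $|D^2Ue_1-U_{11}e_1|\le 2U_1/\sqrt\eta\ll|U_{11}|$. So $e_1$ carries most of its weight on negative eigenvalues, where $\sigma_{k-1}(\lambda|i)\ge\sigma_{k-1}(\lambda)$.

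Second, the nontrivial regime is $\sqrt\eta\,U_1\gg M$, i.e.\ $w(x_0)\gg M^2$, not $\eta U_1\gg M^{-1}$.

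Third, inserting the first-order condition into $-4F^{ij}U_lU_{li}U_mU_{mj}/|\nabla U|^4$ produces $-\frac{\varphi'^2}{\varphi^2}F^{11}U_1^2$, which is of the same size as the good term. The net coefficient is therefore $\frac{\varphi''}{\varphi}-2\frac{\varphi'^2}{\varphi^2}$ before the Cauchy--Schwarz loss on the cross term with $\nabla\eta$. So you need $\varphi\varphi''$ to exceed $2\varphi'^2$ by a fixed multiple of $\varphi'^2$ (slightly less if you exploit $2F^{ij}U_{li}U_{lj}/|\nabla U|^2$). Your explicit $\varphi$ has $\varphi\varphi''/\varphi'^2=3$, so it works, but an arbitrary $c>0$ in $\varphi''\ge c\varphi'^2/\varphi$ does not.

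Finally, you tacitly use $\lambda(D^2U)\in\Gamma_k$, so that $f=\sigma_k(D^2U)^{1/k}>0$ and $\sigma_k^{1/k}$ is smooth along $U$. This covers the paper's application, but if $f$ may vanish an approximation is needed. Also, since $U$ is only given on the open ball, maximize $w$ over $\overline{B}_{1-\delta}$ with the cutoff adapted to that ball, and let $\delta\to0$.
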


\begin{lemma*}[Trudinger's local H\"older estimate]
Let $1\le k\le n$, let $U\in C_{loc}^0(\Omega)$ be a $k$-admissible weak solution of
\[
        F_k[U]=\psi
        \qquad\text{in }\Omega,
\]
where $\psi\ge0$ and $\psi\in L^m_{\loc}(\Omega)$ for some
$m>\frac{n}{2k}$. Then $U\in C_{loc}^\alpha(\Omega)$ for any $\alpha<1$, satisfying $\alpha\le 2-\frac{n}{km}$. Furthermore,  for any ball $B_R(y)\Subset\Omega$ and  concentric sub-ball $B_{\sigma R}(y)$ with $0<\sigma<1$, we have the estimate
\begin{equation}\label{eq:trudinger-holder-original}
        \osc_{B_{\sigma R}(y)} U
        \le C(n,k,m,\alpha)\sigma^\alpha
        \left(
        \osc_{B_R(y)}U
        +R^{2-\frac{n}{km}}\|\psi\|_{L^m(B_R(y))}^{\frac{1}{k}}
        \right).
\end{equation}
This is Theorem~4.1 of Trudinger \cite[p.~1258]{Trudinger1997}.
\end{lemma*}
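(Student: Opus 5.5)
The plan is to prove the estimate by comparing $U$, on each ball, with the $k$-harmonic function having the same boundary values. The $k$-harmonic function supplies decay with exponent $1$, and the error from $\psi$ is controlled by an $L^\infty$ bound of order $R^{2-\frac{n}{km}}$. A standard Campanato-type iteration then gives the oscillation decay \eqref{eq:trudinger-holder-original}.

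\textbf{Step 1: reduction to smooth data.} Mollify $\psi$ to $\psi_\varepsilon+\varepsilon$ and $U$ to continuous boundary data on $\partial B_R(y)$. Solve the Dirichlet problems
\[
F_k[U_\varepsilon]=\psi_\varepsilon+\varepsilon \ \text{in } B_R(y),\qquad U_\varepsilon=U*\rho_\varepsilon \ \text{on } \partial B_R(y),
\]
by Caffarelli--Nirenberg--Spruck, so that each $U_\varepsilon$ is smooth and admissible. By comparison, together with the $L^\infty$ bound of Step~2, we get $U_\varepsilon\to U$ locally uniformly. Hence it suffices to prove \eqref{eq:trudinger-holder-original} for smooth admissible solutions, with constants independent of $\varepsilon$.

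\textbf{Step 2: one-ball comparison.} Fix $B_r(z)\subset B_R(y)$. Let $h$ be the $k$-harmonic function with $h=U$ on $\partial B_r(z)$; approximate it by smooth solutions of $F_k=\delta^k$ if needed. Let $w\le 0$ solve $F_k[w]=\psi$ in $B_r(z)$ with $w=0$ on the boundary.
\begin{itemize}
\item Since $F_k[U]\ge 0=F_k[h]$, comparison gives $U\le h$.
\item Concavity and homogeneity of $\sigma_k^{1/k}$ on $\Gamma_k$ give $F_k[h+w]^{1/k}\ge F_k[h]^{1/k}+F_k[w]^{1/k}=\psi^{1/k}$. Comparison then yields $h+w\le U$.
\item Hence $|U-h|\le \sup|w|$ on $B_r(z)$.
\end{itemize}
To bound $w$, use the Wolff potential estimate (Labutin's bound $\sup|w|\le C\,\W_k^{\psi}(\cdot,2r)$), or a direct comparison with radial barriers. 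Together with Hölder's inequality,
\[
\mu(B_t)\le |B_t|^{1-\frac1m}\|\psi\|_{L^m},
\]
this gives
\[
\sup_{B_r(z)}|w|\le C\,r^{2-\frac{n}{km}}\|\psi\|_{L^m(B_r(z))}^{1/k}.
\]
The exponent $2-\frac{n}{km}$ is positive precisely because $m>\frac{n}{2k}$.

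\textbf{Step 3: Lipschitz decay for $k$-harmonic functions.} This is the step I expect to be the main obstacle, since $h$ has only continuous boundary data. For $k\ge2$, apply Trudinger's interior gradient estimate \eqref{eq:trudinger-gradient-original} to the smooth approximants with constant right-hand side $f=\delta$, so that $\nabla f=0$. This gives $|\nabla h|\le C\,r^{-1}\osc_{B_r(z)}h$ on $B_{r/2}(z)$, and therefore
\[
\osc_{B_{\sigma r}(z)}h\le C\sigma\,\osc_{B_r(z)}h\le C\sigma\,\osc_{B_r(z)}U,
\]
using the maximum principle for the last inequality. For $k=1$ this is the classical gradient estimate for harmonic functions. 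The bound passes to the limit as $\delta\to0$ by uniform convergence.

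\textbf{Step 4: iteration.} Set $\phi(\rho)=\osc_{B_\rho(y)}U$. Steps 2--3 give, for $\rho\le r\le R$,
\[
\phi(\rho)\le C\frac{\rho}{r}\,\phi(r)+C\,r^{2-\frac{n}{km}}\|\psi\|_{L^m(B_R(y))}^{1/k}.
\]
The standard iteration lemma turns this into
\[
\phi(\sigma R)\le C\sigma^{\alpha}\Bigl(\phi(R)+R^{2-\frac{n}{km}}\|\psi\|^{1/k}_{L^m(B_R(y))}\Bigr)
\]
for every $\alpha<1$ with $\alpha\le 2-\frac{n}{km}$. The borderline case $\alpha=2-\frac{n}{km}<1$ is allowed because the decay exponent of the homogeneous part, namely $1$, is strictly larger. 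Applying the same estimate at every center $y$ with $B_R(y)\Subset\Omega$ gives $U\in C^\alpha_{\loc}(\Omega)$.
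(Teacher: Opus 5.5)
The paper does not prove this lemma. It quotes it as Theorem~4.1 of Trudinger's 1997 paper and uses it as a black box: in Proposition~\ref{thm:general-removable-lm}, and for equicontinuity in the rescaling arguments. Your proposal is a self-contained reconstruction by $k$-harmonic replacement plus a Campanato-type iteration, and it is sound in outline:
\begin{itemize}
\item $h+w\le U\le h$ gives the perturbation bound;
\item the gradient estimate \eqref{eq:trudinger-gradient-original} with constant $f=\delta$ gives decay of order $\sigma$ for $h$;
\item the iteration lemma, with homogeneous exponent $1$ and forcing exponent $2-\frac{n}{km}$, yields every $\alpha<1$ with $\alpha\le 2-\frac{n}{km}$.
\end{itemize}
Your route buys transparency. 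It shows the estimate reduces to tools the paper already records: the gradient estimate, and the H\"older computation \eqref{eq:general-measure-growth} converted into a Wolff bound. It also shows exactly where $m>\frac{n}{2k}$ and the cap $\alpha<1$ come from. The citation buys brevity, but hides the imported machinery: solvability of the Dirichlet problem for $F_k=0$ with continuous data on balls, the Trudinger--Wang comparison principle, and a global $L^\infty$ bound for the zero-boundary problem.

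Three points to tighten.
\begin{enumerate}
\item[(i)] The bound $\sup|w|\le Cr^{2-\frac{n}{km}}\|\psi\|_{L^m}^{1/k}$ must come from the \emph{global} zero-Dirichlet Wolff estimate, or from a Hessian Alexandrov-type estimate. The global Wolff estimate is in Labutin and Phuc--Verbitsky; apply it first for compactly supported data, then pass to general data by monotone approximation. The local inequality \eqref{eq:pointwise-labutin}, as the paper uses it, does not work here: $w$ lives only on $B_r(z)$, its zero extension is not $k$-convex because the normal derivative jumps the wrong way, and the local estimate carries an $\inf(-w)$ term you do not control. Radial barriers alone handle only $\psi\in L^\infty$; for $\psi\in L^m$ you would need symmetrization.
\item[(ii)] $h$ is subharmonic but not superharmonic. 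So the lower half of $\operatorname{osc}_{B_r}h\le\operatorname{osc}_{\partial B_r}U$ does not follow from a maximum principle. It comes from comparing $h$ with the constant $\min_{\partial B_r}U$, since both have zero Hessian measure.
\item[(iii)] Step~1 is unnecessary, and its claim $U_\varepsilon\to U$ silently uses the same comparison machinery it is meant to avoid. You can run Step~2 directly on the continuous $U$:
\begin{itemize}
\item approximate only $h$ and $w$ by smooth Dirichlet solutions;
\item use superadditivity of $\sigma_k^{1/k}$ and weak continuity to get $F_k[h+w]\ge F_k[w]=F_k[U]$ as measures;
\item note that $h+w\le U$ at $\partial B_r(z)$ needs only $w\le 0$ and $h=U$ there.
\end{itemize}
\end{enumerate}
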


\begin{proof}[Proof of Proposition~\ref{thm:general-removable-lm}]
	Let $\mu=g\,dx$, and set $ \varepsilon=2k-\frac nm>0$.
	For every ball $B(x,r)\Subset B_1$, H\"older's inequality gives
	\begin{equation}\label{eq:general-measure-growth}
		\mu(B(x,r))
		\le \omega_n^{1-\frac{1}{m}}\|g\|_{L^m(B_1)}
		r^{n(1-\frac{1}{m})}
		=C\,r^{n-2k+\varepsilon}.
	\end{equation}
	Labutin's H\"older criterion \cite[Corollary~3.4]{Labutin2002} therefore
	shows that $u$ is locally H\"older continuous, and in particular finite in $B_1$.  We may now apply Trudinger's local H\"older estimate
	\eqref{eq:trudinger-holder-original} on compactly contained balls.  It gives
	$u\in C^{0,\gamma}_{\loc}(B_1)$ for every
	$0<\gamma<\min\{1,2-\frac{n}{km}\}$, as claimed.
\end{proof}

We next turn to the potential estimates used in the bootstrap and
rescaling arguments.  The first is a strong mapping estimate below the
density threshold.

\begin{lemma}\label{lem:strong-wolff}
	Let $1\le k<\frac{n}{2}$, $1<m<\frac{n}{2k}$, and
	$
	s=\frac{nkm}{n-2km}.
	$
	Then, for every nonnegative $f\in L^m(\Rn)$,
	\begin{equation}\label{eq:strong-wolff}
		\|\W_k^f\|_{L^s(\Rn)}
		\le C(n,k,m)\|f\|_{L^m(\Rn)}^{\frac{1}{k}}.
	\end{equation}
\end{lemma}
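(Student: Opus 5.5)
The plan is to reduce the Wolff potential to a Riesz potential by a Hedberg-type pointwise argument, or, more directly, to use the known strong-type bound for Wolff potentials $\W_{\alpha,p}$ (Hedberg--Wolff). With $\alpha=2k/(k+1)$ and $p=k+1$ one has $\alpha p=2k$ and $p-1=k$, so
\[
\W_k^f(x)=\int_0^\infty\Bigl(\frac{\mu(B(x,t))}{t^{n-\alpha p}}\Bigr)^{\frac1{p-1}}\frac{dt}{t}=\W^\mu_{\alpha,p}(x).
\]
A self-contained route avoids the general theory. Fix $x$ and split the $t$-integral at a radius $\delta>0$ to be chosen. For small $t$, estimate $\mu(B(x,t))\le \omega_n t^n Mf(x)$, where $M$ is the Hardy--Littlewood maximal function, so that
\[
\int_0^\delta \bigl(t^{2k}Mf(x)\bigr)^{1/k}\frac{dt}{t}\le C\,\delta^{2/k}\,(Mf(x))^{1/k}.
\]
For large $t$, use H\"older's inequality: $\mu(B(x,t))\le C\|f\|_{L^m}t^{n(1-1/m)}$. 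This gives the integrand $\bigl(t^{2k-n/m}\|f\|_m\bigr)^{1/k}t^{-1}$, and since $m<n/(2k)$ the exponent $2k-n/m$ is negative. The tail is therefore convergent and equals
\[
C\,\delta^{(2k-n/m)/k}\|f\|_m^{1/k}.
\]

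Next, optimize in $\delta$ by choosing $\delta^{n/m}=\|f\|_m/Mf(x)$, which balances the two terms. This yields the Hedberg inequality
\[
\W_k^f(x)\le C\,(Mf(x))^{\frac1k\left(1-\frac{2km}{n}\right)}\|f\|_m^{\frac{2m}{n}}.
\]
Raising to the power $s=nkm/(n-2km)$, the exponent on $Mf$ becomes $\frac{s}{k}\cdot\frac{n-2km}{n}=m$. Integrating and applying the maximal theorem in $L^m$ (valid since $m>1$) then gives
\[
\|\W_k^f\|_s^s\le C\|f\|_m^{\frac{2ms}{n}}\|Mf\|_m^m\le C\|f\|_m^{\frac{2ms}{n}+m}.
\]
The final step is to check the exponents: $\frac{2ms}{n}+m=m\,\frac{2km+n-2km}{n-2km}=\frac{mn}{n-2km}=\frac{s}{k}$. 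This gives exactly \eqref{eq:strong-wolff}.

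I expect no real obstacle here. The only points needing care are the degenerate cases $Mf(x)=0$ or $Mf(x)=\infty$, which occur only on null sets or where $f\equiv0$, and making sure the tail integral converges, which uses $m<n/(2k)$ strictly. One could alternatively cite Hedberg--Wolff or Phuc--Verbitsky for the same bound, but the direct argument above is short enough to include.
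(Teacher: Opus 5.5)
Your argument is correct and is essentially the paper's proof. Like the paper, you split the Wolff integral at a radius, bound the small-$t$ part by the maximal function and the large-$t$ part by H\"older, balance with $\delta^{n/m}=\|f\|_{L^m}/\mathcal M f(x)$, and finish with the strong $(m,m)$ maximal inequality. One small slip: the near part is $\int_0^\delta \bigl(t^{2k}\mathcal M f(x)\bigr)^{1/k}\,\frac{dt}{t}=C\delta^{2}(\mathcal M f(x))^{1/k}$, not $C\delta^{2/k}(\mathcal M f(x))^{1/k}$; your choice of $\delta$ and the resulting Hedberg inequality already correspond to the correct $\delta^{2}$, so nothing downstream changes.
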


\begin{proof}
Set $A=\|f\|_{L^m(\Rn)}$ and let $\mathcal M f$ be the
Hardy--Littlewood maximal
function.  The case $A=0$ is trivial.  For a.e. $x$ with
$0<\mathcal M f(x)<\infty$, split the potential at a radius $\rho>0$.
For $0<t<\rho$,
\[
        \int_{B(x,t)}f\le \omega_n t^n \mathcal M f(x),
\]
and hence
\begin{equation}\label{eq:near-part}
        \int_0^\rho
        \left(\frac{\int_{B(x,t)}f}{t^{n-2k}}\right)^{\frac{1}{k}}\frac{dt}{t}
        \le C(\mathcal M f(x))^{\frac{1}{k}}\rho^2.
\end{equation}
For $t>\rho$, H\"older's inequality gives
\[
        \int_{B(x,t)}f
        \le A\omega_n^{1-\frac{1}{m}}t^{n(1-\frac{1}{m})}.
\]
Since $m<\frac{n}{2k}$, we have
\begin{equation}\label{eq:far-part}
        \int_\rho^\infty
        \left(\frac{\int_{B(x,t)}f}{t^{n-2k}}\right)^{\frac{1}{k}}\frac{dt}{t}
        \le C A^{\frac{1}{k}}\rho^{\frac{2k-\frac{n}{m}}{k}}.
\end{equation}
Choose $\rho=(\frac{A}{\mathcal M f(x)})^{\frac{m}{n}}$.  Combining
\eqref{eq:near-part} and \eqref{eq:far-part} yields
\[
        \W_k^f(x)
        \le C A^{\frac{2m}{n}}(\mathcal M f(x))^\beta,
        \qquad
        \beta=\frac1k-\frac{2m}{n}>0.
\]
Because $s\beta=m$ and $\frac{2m}{n}+\beta=\frac{1}{k}$, the strong $(m,m)$ estimate
for $\mathcal M$ gives
\[
        \|\W_k^f\|_{L^s(\Rn)}
        \le C A^{\frac{2m}{n}}\|\mathcal M f\|_{L^m(\Rn)}^\beta
        \le C A^{\frac{1}{k}}.
\]
 This proves \eqref{eq:strong-wolff}.
\end{proof}

The preceding lemma gives integral control of the Wolff potential.  We
shall combine it with Labutin's pointwise comparison between a
$k$-convex function and the Wolff potential of its Hessian measure.

\begin{lemma}\label{lem:labutin-local-pointwise}
	Let $u\in\Phi^k(B_{3R})$, $1\le k\le \frac n2$, $u\le 0$, $F_k[u]=\mu$. Then
	\begin{equation}\label{eq:pointwise-labutin}
		C_1\W_k^\mu(0,\frac R8)\le |u(0)|\le C_2\W_k^\mu(0,2R)+C_3\inf_{B_R}(-u)
	\end{equation}
	If in addition,
	$u\in L_{loc}^q(B_{3R})$ for some $q>1$, then
	\begin{equation}\label{eq:pointwise-start}
		|u(0)|
		\le C_2\W_k^\mu(0,2R)
		+C_3(\omega_nR^n)^{-\frac{1}{q}}\|u\|_{L^q(B_R)}.
	\end{equation}
\end{lemma}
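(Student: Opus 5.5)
This lemma is essentially Labutin's two-sided Wolff potential estimate \cite{Labutin2002}, localized to the scale $R$, and I would deduce it from that theorem rather than reprove it. Labutin's result states that for a nonpositive $k$-subharmonic $u$ in $B_{3R}$ with Hessian measure $\mu$ there are constants $C_1,C_2,C_3$, depending only on $n,k$, such that
\[
C_1\W_k^\mu(x,\tfrac R8)\le -u(x)\le C_2\W_k^\mu(x,2R)+C_3\inf_{B_R(x)}(-u).
\]
Taking $x=0$ gives \eqref{eq:pointwise-labutin} directly. The plan is therefore first to check that our normalizations agree with Labutin's: the definition of $\W_k^\mu$, the range $1\le k\le n/2$ (which includes the logarithmic case $k=n/2$), and the use of the Trudinger--Wang measure. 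Where the radii in Labutin's statement differ, I would adjust them by scaling $u_R(y)=u(Ry)$. Under this scaling $F_k[u_R]=R^{2k}(\mu\circ R\cdot)$, and $\W_k$ is invariant once $t$ is rescaled, so any fixed pair of radii can be converted into $R/8$ and $2R$. The constants may change, but only in a way that depends on $n$ and $k$.

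If one wanted a self-contained argument, the lower bound would come from comparing $u$ on the balls $B_{2^{-j}R}$ with $k$-harmonic-type barriers. At each dyadic scale, the oscillation gained is at least $c\bigl(\mu(B_{t})/t^{n-2k}\bigr)^{1/k}$, and summing over scales gives the lower bound. The upper bound would use Labutin's iteration: one compares with solutions of Dirichlet problems on dyadic annuli and controls the boundary term by $\inf_{B_R}(-u)$. This iteration is the genuinely hard step, which is why I would cite it rather than redo it.

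For \eqref{eq:pointwise-start}, it only remains to bound $\inf_{B_R}(-u)$. Since $-u\ge \inf_{B_R}(-u)\ge0$ on $B_R$, we have
\[
\inf_{B_R}(-u)\le \Bigl(\frac{1}{|B_R|}\int_{B_R}|u|^q\Bigr)^{1/q}=(\omega_nR^n)^{-1/q}\|u\|_{L^q(B_R)}.
\]
Substituting this into the upper half of \eqref{eq:pointwise-labutin} gives the claim. The $L^q$ norm is finite by the integrability hypothesis, or by the Trudinger--Wang integrability recalled above when $q<nk/(n-2k)$. The only real obstacle is the constant bookkeeping when matching Labutin's radii to the stated ones, and the scaling argument handles it.
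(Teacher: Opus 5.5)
Your proposal is correct and matches the paper's proof: you cite Labutin's Theorem~2.1 at $x=0$ for \eqref{eq:pointwise-labutin}, then bound $\inf_{B_R}(-u)$ by an average of $|u|$ over $B_R$, and your direct $L^q$-average bound is equivalent to the paper's $L^1$ average followed by H\"older. One small remark: Labutin's theorem is already stated with the radii $R/8$, $2R$, $B_R$ and $B_{3R}$, so no adjustment is needed — and rescaling $u(Ry)$ could not have converted a different ratio of radii anyway, since dilations preserve ratios.
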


\begin{proof}
The two inequalities in \eqref{eq:pointwise-labutin} are Labutin's  pointwise Wolff potential estimate \cite[Theorem~2.1]{Labutin2002}. 
If $u\in L^q(B_R)$, $q>1$, then we have
\[
\begin{aligned}
        \inf_{B_R}(-u)
        &\le \frac1{|B_R|}\int_{B_R}|u|\,dy  \\
        &\le |B_R|^{-\frac{1}{q}}\|u\|_{L^q(B_R)}
        = (\omega_nR^n)^{-\frac{1}{q}}\|u\|_{L^q(B_R)}.
\end{aligned}
\]
Substituting this into \eqref{eq:pointwise-labutin} gives
\eqref{eq:pointwise-start}.
\end{proof}

\subsection{Canonical extension and measure decomposition}
\label{app:measure-decomposition}
The preceding estimates concern $k$-convex functions defined on a full
ball.  For a solution given only on a punctured ball, we must first
construct a canonical extension and identify the part of its Hessian
measure supported at the puncture.  The next lemma provides both.
\begin{lemma}\label{lem:hessian-measure-decomposition-origin}
Let $1\le k\le \frac{n}{2}$, and suppose
$u\in\Phi^k(B_R\setminus\{0\})$ is nonpositive.  Assume that
$F_k[u]=g\,dx$ as measures on $B_R\setminus\{0\}$, for some
nonnegative $g\in L^1_{\loc}(B_R\setminus\{0\})$.
Then the canonical extension
\[
        u(0):=\limsup_{x\to0}u(x)
\]
belongs to $\Phi^k(B_R)$.  Moreover, after setting $g(0)=0$, we have
$g\in L^1_{\loc}(B_R)$, and the extended Hessian measure satisfies
\begin{equation}\label{eq:hessian-measure-decomposition-origin}
        F_k[u]=M\delta_0+g\,dx
        \quad\text{ in }B_R,
\end{equation}
where $M=F_k[u](\{0\})\ge0$.
\end{lemma}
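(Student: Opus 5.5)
The plan has three steps: extend $u$ across the origin as a $k$-convex function, show the density $g$ is integrable near $0$, and read off the Hessian measure at the origin.

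\emph{Step 1: the extension is $k$-convex.} We approximate $u$ from above by $k$-convex functions on the full ball. For $j\ge1$ set
\[
u_j:=\max\{u,\,j\,w\},
\]
where $w\le0$ is the fundamental solution. Concretely, $w=-|x|^{2-n/k}$ if $k<n/2$, and $w=\log|x|-\log R$ if $k=n/2$; in both cases $w=-\infty$ at $0$ and $w$ is $k$-convex on $\mathbb R^n$. Near the origin $jw$ is far below $u$ only if $u$ is locally bounded below, which need not hold. So instead we use the standard removability fact for sets of zero $k$-capacity: a point has zero $k$-capacity exactly when $k\le n/2$. We argue as follows.
\begin{itemize}
\item The function $u$ is nonpositive, hence bounded above near $0$.
\item A $k$-subharmonic function that is bounded above near a polar set extends across it by its upper semicontinuous regularization $\limsup$. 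This is the $k$-Hessian analogue of the Grauert--Remmert/Brelot theorem.
\item To prove that analogue, test the viscosity definition at $0$ with a quadratic $q$ touching $u^*$ from above at $0$. Perturb to $q_\varepsilon=q+\varepsilon(\text{suitable }k\text{-convex term})-\varepsilon w$. Since $-\varepsilon w=+\infty$ at $0$, the maximum of $u-q_\varepsilon$ is attained away from $0$, where $u$ is $k$-subharmonic. Letting $\varepsilon\to0$ then yields $F_k[q]\ge0$.
\end{itemize}
The delicate point is that $-\varepsilon w$ is not a quadratic polynomial. I would therefore phrase the argument via the equivalent characterization of $k$-subharmonicity by comparison with $k$-harmonic functions, or by mollification (Trudinger--Wang). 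In that form the perturbation $\varepsilon w$ is admissible, and the fact $\lim_{x\to0}w(x)=-\infty$ forces the touching point off $0$.

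\emph{Step 2: local integrability of $g$.} Once $u\in\Phi^k(B_R)$, Trudinger--Wang associates to it a nonnegative Radon measure $\mu=F_k[u]$ on $B_R$, locally finite. On $B_R\setminus\{0\}$, $\mu=g\,dx$ by hypothesis. Hence for $r<R$,
\[
\int_{B_r\setminus\{0\}}g\,dx=\mu(B_r\setminus\{0\})\le\mu(B_r)<\infty .
\]
So $g\in L^1_{\loc}(B_R)$ with $g(0)=0$ (a null set).

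\emph{Step 3: the decomposition.} The measure $\mu-g\,dx$ is nonnegative, because $g\,dx\le\mu$ on every Borel set: on sets avoiding $0$ they agree, and $\{0\}$ has Lebesgue measure zero. It is also supported in $\{0\}$, hence it equals $M\delta_0$ with $M=\mu(\{0\})\ge0$. This gives \eqref{eq:hessian-measure-decomposition-origin}.

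The main obstacle is Step 1, the removability of a point for bounded-above $k$-subharmonic functions when $k\le n/2$. My first attempt would be the comparison/mollifier definition combined with the unbounded $k$-harmonic function $w$ as a barrier.
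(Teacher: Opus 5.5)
Your Steps 2 and 3 are the paper's measure-theoretic argument. Step 1 takes a different route. It can be made to work, but the final step as you state it fails. The contact point $x_\varepsilon$ tends to $0$, where $D^2w$ blows up like $|x|^{-n/k}$. So $\varepsilon D^2w(x_\varepsilon)$ need not vanish as $\varepsilon\to0$: if $u=q$ near $0$, it stays of the size of the strictness penalty. In general you have no quantitative control of it, and ``letting $\varepsilon\to0$'' gives nothing.

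The fix is to use the $k$-convexity of $w$ at $x_\varepsilon$ instead of a limit.
\begin{enumerate}
\item Fix $\delta>0$ so that $u^*-q-\delta|x|^2$ has a strict maximum at $0$ on $\overline{B_\rho}$. Since $|w|\to\infty$ at $0$, you must take $\varepsilon$ small relative to $\delta$, not equal to it. Then $u-q-\delta|x|^2+\varepsilon w$, which is upper semicontinuous and equal to $-\infty$ at $0$, attains its maximum at some $x_\varepsilon\in B_\rho\setminus\{0\}$.
\item At $x_\varepsilon$, replace the test function by its second-order Taylor polynomial plus $\tau|x-x_\varepsilon|^2$ and let $\tau\to0$. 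This settles your worry about non-quadratic test functions. Adding nonnegative quadratic forms upgrades $F_k\ge0$ to cone membership, so $\lambda\bigl(D^2q+2\delta I-\varepsilon D^2w(x_\varepsilon)\bigr)\in\overline{\Gamma}_k$.
\item Since $\lambda(D^2w(x_\varepsilon))\in\overline{\Gamma}_k$, and symmetric matrices with eigenvalues in $\overline{\Gamma}_k$ form a convex cone, adding $\varepsilon D^2w(x_\varepsilon)$ gives $\lambda(D^2q+2\delta I)\in\overline{\Gamma}_k$. Letting $\delta\to0$ gives $F_k[q]\ge0$.
\end{enumerate}
With this, you do not need to retreat to comparison or mollifier characterizations.

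The paper avoids any touching argument at the origin. It sets $u_\varepsilon=u+\varepsilon G_{n,k}$ with $u_\varepsilon(0)=-\infty$. This lies in $\Phi^k(B_R)$ trivially, because the viscosity test at a point where the function equals $-\infty$ is vacuous. It then passes $\varepsilon\to0$ in $L^1_{\mathrm{loc}}$, using Trudinger--Wang's additivity and distributional characterization of $\Phi^k$. Finally it identifies the resulting representative with $\limsup_{x\to0}u$ by the sub-mean-value property.

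Both proofs rest on the same input: a $k$-convex function on $\mathbb R^n$, smooth off $0$ and equal to $-\infty$ there, which exists because $k\le n/2$. The paper uses it to make the origin invisible, and pays with closure properties of $\Phi^k$ plus a representative-identification step. Your repaired version checks the definition directly for the upper semicontinuous extension $u^*$, using only the explicit Hessian of $w$ and the convexity of $\overline{\Gamma}_k$. It is more elementary and lands on the canonical extension with no identification step.
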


(The density assumption holds in particular when
$u\in C^2(B_R\setminus\{0\})$, with $g=F_k(D^2u)$, and when
$F_k[u]=0$ on the punctured ball, with $g=0$.)

\begin{proof}
Set
\[
        G_{n,k}(x)=
        \begin{cases}
        -|x|^{2-\frac{n}{k}},&1\le k<\frac{n}{2},\\[1mm]
        \log|x|,&k=\frac{n}{2},
        \end{cases}
        \qquad x\ne0.
\]
The function $G_{n,k}\in\Phi^k(\mathbb R^n)$ is locally
integrable  and tends to $-\infty$ at the origin.  For $\varepsilon>0$,
define
\[
        u_\varepsilon(x)=u(x)+\varepsilon G_{n,k}(x),\quad x\ne0,
        \qquad u_\varepsilon(0)=-\infty.
\]
The class $\Phi^k$ is closed under addition and multiplication by
positive constants \cite[Lemma~2.5]{TrudingerWang1999}, so
$u_\varepsilon\in\Phi^k(B_R\setminus\{0\})$.
Since $u\le 0$, $u_\varepsilon\le\varepsilon G_{n,k}$, and therefore
$u_\varepsilon(x)\to-\infty$ as $x\to0$.  Thus, the extension is
upper semicontinuous at the origin, where the viscosity test is vacuous;
hence $u_\varepsilon\in\Phi^k(B_R)$.

Fix $\varepsilon_0>0$.  Since $u_{\varepsilon_0}$ and $G_{n,k}$ are
locally integrable and
\[
        u=u_{\varepsilon_0}-\varepsilon_0G_{n,k}
        \qquad\text{a.e. in }B_R,
\]
we have $u\in L^1_{\loc}(B_R)$.  Furthermore, for every compact
$K\Subset B_R$,
\[
        \|u_\varepsilon-u\|_{L^1(K)}
        =\varepsilon\|G_{n,k}\|_{L^1(K)}\longrightarrow0.
\]
Passing to the limit in the distributional characterization of
$k$-convexity \cite[Lemma~2.2]{TrudingerWang1999}, we obtain a proper
$k$-convex representative $\widetilde u$, equal to $u$ almost
everywhere.  On $B_R\setminus\{0\}$, both functions are subharmonic;
their values are recovered as limits of averages over shrinking balls.
Consequently, they agree pointwise there.  Upper semicontinuity gives
\[
        \widetilde u(0)\ge\limsup_{x\to0}u(x).
\]
Conversely, since every $k$-convex function is subharmonic, for $0<r<R$,
\[
        \widetilde u(0)
        \le \frac1{|B_r|}\int_{B_r}\widetilde u\,dx
        =\frac1{|B_r|}\int_{B_r}u\,dx
        \le \sup_{0<|x|<r}u(x).
\]
Letting $r\to0$ proves the reverse inequality.  Thus
$\widetilde u$ is the canonical extension, and we again denote it by $u$.

Let $\mu=F_k[u]$ and set $M=\mu(\{0\})$.  Since $\mu$ is a locally
finite Radon measure, $0\le M<\infty$.  The measure
\[
        \nu:=\mu-M\delta_0
\]
is nonnegative and has no Dirac mass at the origin.  By locality of
Hessian measures and the assumed equation on the punctured ball,
\[
        \nu=g\,dx
        \qquad\text{in }B_R\setminus\{0\}.
\]
Set $g(0)=0$.  Since both $\nu$ and $g\,dx$ assign zero mass to
$\{0\}$, the preceding identity extends to every Borel subset of
$B_R$; hence $\nu=g\,dx$ on $B_R$.  Local finiteness of $\mu$ gives
$g\in L^1_{\loc}(B_R)$, and
\eqref{eq:hessian-measure-decomposition-origin} follows.
\end{proof}

\subsection{The global B\^ocher theorem}
\label{sec:bocher}

The preceding decomposition reduces the punctured $k$-harmonic problem
to a global equation whose Hessian measure is supported at the origin.
We now identify the corresponding solutions.  The cases
$1\le k<\frac{n}{2}$ and $k=\frac{n}{2}$ are treated separately because
the Wolff potential of a point mass has power and logarithmic behavior,
respectively.

\begin{proof}[Proof of Theorem~\ref{thm:k-hessian-punctured-bocher}]
Apply Lemma~\ref{lem:hessian-measure-decomposition-origin} on each ball
$B_R$, using $F_k[U]=0$ on $B_R\setminus\{0\}$, so that
$g=0$.  The canonical extensions agree on overlapping balls; hence
$U\in\Phi^k(\mathbb R^n)$ and
\begin{equation}\label{eq:bocher-measure-pointmass-corrected}
        F_k[U]=M\delta_0
        \qquad\text{in }\mathbb R^n
\end{equation}
for some $M\ge0$.
Assume first that $k=\frac{n}{2}$.  If $M>0$, fix $x_0\ne0$ with
$U(x_0)>-\infty$. By Lemma~\ref{lem:labutin-local-pointwise}, we have
\[
\begin{aligned}
        -U(x_0)
        &\ge C(n,k)^{-1}\W_k^{M\delta_0}(x_0,\frac{R}{32})\\
        &\ge C(n,k)^{-1}M^{\frac{1}{k}}
              \int_{2|x_0|}^{\frac{R}{32}}\frac{dt}{t}
        =C(n,k)^{-1}M^{\frac{1}{k}}\log\frac{R}{64|x_0|}.
\end{aligned}
\]
Letting $R\to\infty$ contradicts the finiteness of $U(x_0)$.
Hence $M=0$.

Set $B=\sup_{\mathbb R^n\setminus\{0\}}U$ and $V=B-U\ge0$.
Then $F_k[-V]=0$ and $\inf V=0$.  Fix $x\ne0$, choose $y_j$ with
$V(y_j)\to0$, and take $\rho_j>8|x-y_j|$.  Applying
Lemma~\ref{lem:labutin-local-pointwise} to  $-V$ gives
\[
        V(x)\le C\inf_{B(x,\frac{\rho_j}{8})}V\le C V(y_j).
\]
Thus $V(x)=0$, and $U\equiv B$.

Now let $1\le k<\frac{n}{2}$.  With the same definitions of $B$ and $V$, the global Wolff estimate \cite[Corollary~7.3]{PhucVerbitsky2008}
gives
\[
        0\le V(x)
        \le C\W_k^{M\delta_0}(x,\infty)
        =C M^{\frac{1}{k}}|x|^{2-\frac{n}{k}}.
\]
Thus $U(x)\to B$ as $|x|\to\infty$; if $M=0$, then $U\equiv B$.
For $M>0$, by a uniqueness result
\cite[Theorem~4.5 and (4.15)]{TrudingerWang2002}, we conclude that
\[
        U(x)-B=-a_{n,k}M^{\frac{1}{k}}|x|^{2-\frac{n}{k}},
\]
where
\begin{equation}\label{eq:a-n-def}
        a_{n,k}=\frac1{(\frac{n}{k}-2)(\binom nk\omega_n)^{\frac{1}{k}}}.
\end{equation}
This is the asserted form with $A=a_{n,k}M^{\frac{1}{k}}$.
\end{proof}

\section{The sub-Serrin range}
\label{sec:subcritical}
The proof proceeds from weak critical integrability to a finite
Wolff-potential bootstrap and then to an upper bound by the fundamental
solution.  The classification is completed by applying these estimates to
the mass at the origin in the canonical Hessian measure decomposition.

\subsection{Weak critical integrability}
We begin with a weak estimate at the critical exponent
$\frac{nk}{n-2k}$.  The first lemma concerns the Wolff potential of a
finite measure, and the second transfers this estimate to $k$-convex
functions.
\begin{lemma}\label{lem:finite-measure-weak-wolff}
Let $\mu$ be a finite nonnegative Borel measure on $\mathbb R^n$, with
$1\le k<\frac{n}{2}$.  Then
\begin{equation}\label{eq:finite-measure-weak-wolff}
        \|\W_k^\mu(\cdot,\infty)\|_{L^{\frac{nk}{n-2k},\infty}(\mathbb R^n)}
        \le C(n,k)\,\mu(\mathbb R^n)^{\frac{1}{k}}.
\end{equation}
\end{lemma}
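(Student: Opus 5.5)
Write $\mu(B(x,t))\le \mu(\mathbb R^n)=:\|\mu\|$ and split the Wolff potential at a radius $\rho>0$ depending only on the level $\lambda$ we test, exactly as in the proof of Lemma~\ref{lem:strong-wolff}. This is the natural adaptation of that argument. The maximal-function bound for the small radii is replaced by a weak-type bound: the Hardy--Littlewood maximal function of a measure, $\mathcal M\mu(x)=\sup_{t>0}\mu(B(x,t))/(\omega_nt^n)$, satisfies $|\{\mathcal M\mu>s\}|\le C\|\mu\|/s$.

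For the far part we use only the total mass. Since $n-2k>0$,
\[
\int_\rho^\infty\Bigl(\frac{\mu(B(x,t))}{t^{n-2k}}\Bigr)^{\frac1k}\frac{dt}{t}
\le \|\mu\|^{\frac1k}\int_\rho^\infty t^{-\frac{n-2k}{k}-1}\,dt
= \tfrac{k}{n-2k}\,\|\mu\|^{\frac1k}\rho^{-\frac{n-2k}{k}},
\]
and this holds for every $x$. For the near part we have $\mu(B(x,t))\le\omega_nt^n\mathcal M\mu(x)$, hence
\[
\int_0^\rho\Bigl(\frac{\mu(B(x,t))}{t^{n-2k}}\Bigr)^{\frac1k}\frac{dt}{t}\le C(\mathcal M\mu(x))^{\frac1k}\rho^2 .
\]

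Fix $\lambda>0$. We choose $\rho$ so that the far part is at most $\lambda/2$, namely
\[
\rho=\Bigl(\tfrac{2k}{n-2k}\Bigr)^{\frac{k}{n-2k}}\|\mu\|^{\frac{1}{n-2k}}\lambda^{-\frac{k}{n-2k}} .
\]
Then $\{\W_k^\mu(\cdot,\infty)>\lambda\}\subset\{C(\mathcal M\mu)^{1/k}\rho^2>\lambda/2\}=\{\mathcal M\mu>c\lambda^k\rho^{-2k}\}$. By the weak $(1,1)$ bound, this set has measure at most
\[
C\|\mu\|\lambda^{-k}\rho^{2k}=C\|\mu\|^{1+\frac{2k}{n-2k}}\lambda^{-k-\frac{2k^2}{n-2k}}=C\|\mu\|^{\frac{n}{n-2k}}\lambda^{-\frac{nk}{n-2k}} .
\]
Multiplying by $\lambda^{nk/(n-2k)}$ and taking the $(n-2k)/(nk)$ power gives $\lambda|\{\W_k^\mu>\lambda\}|^{\frac{n-2k}{nk}}\le C\|\mu\|^{\frac1k}$, which is \eqref{eq:finite-measure-weak-wolff}. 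The exponent bookkeeping checks out, as dimensional scaling predicts: $\W_k^\mu$ scales like $\|\mu\|^{1/k}$ times length$^{-(n-2k)/k}$.

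The only point requiring care is the weak-type maximal inequality for measures rather than $L^1$ functions. It follows from the Vitali covering lemma in the standard way: cover $\{\mathcal M\mu>s\}\cap B_N$ by balls with $\mu(B)>s\omega_n r^n$, extract a disjoint subfamily whose fivefold dilates cover, and sum. If $\|\mu\|=0$ there is nothing to prove. The case $\mathcal M\mu(x)=\infty$ is harmless, because such points lie in every superlevel set and so have measure zero.
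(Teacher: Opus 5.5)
Your proposal is correct and follows the paper's argument: the Wolff integral is split at a radius $\rho$, the near part is bounded by $(\mathcal M\mu(x))^{1/k}\rho^2$, the far part by $\mu(\mathbb R^n)^{1/k}\rho^{-(n-2k)/k}$, and the weak $(1,1)$ bound for the maximal function of a finite measure is then applied. The only difference is cosmetic. You fix $\rho$ in terms of the level $\lambda$, whereas the paper optimizes $\rho$ pointwise in $x$ to obtain $\W_k^\mu(x,\infty)\le C\mu(\mathbb R^n)^{2/n}(\mathcal M\mu(x))^{\frac{n-2k}{nk}}$; your choice incidentally makes the points where $\mathcal M\mu(x)\in\{0,\infty\}$ need no separate attention.
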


\begin{lemma}
\label{lem:hessian-weak-lorentz-subcritical}
Let $1\le k<\frac{n}{2}$, $w\le0$ and  $w\in \Phi^k(B_2)$. Then
\begin{equation}\label{eq:hessian-weak-critical-quantitative}
        \|w\|_{L^{\frac{nk}{n-2k},\infty}(B_{\frac{3}{2}})}
        \le C(n,k)\left(
        F_k[w](B_{\frac{7}{4}})^{\frac{1}{k}}
        +\|w\|_{L^1(B_{\frac{7}{4}})}
        \right).
\end{equation}
\end{lemma}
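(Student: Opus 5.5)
We combine Lemma~\ref{lem:labutin-local-pointwise} with Lemma~\ref{lem:finite-measure-weak-wolff}, applied pointwise on $B_{3/2}$ at scale $R=\frac1{12}$. We may assume $F_k[w](B_{7/4})<\infty$, which holds since $F_k[w]$ is Radon on $B_2$. Set $\mu=F_k[w]\llcorner B_{7/4}$, extended by zero to a finite measure on $\mathbb R^n$, so that $\mu(\mathbb R^n)=F_k[w](B_{7/4})$.

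\textbf{Pointwise bound.} Fix $x\in B_{3/2}$ and take $R=\frac1{12}$. Then $B(x,3R)=B(x,\frac14)\subset B_{7/4}\subset B_2$, so the translate of \eqref{eq:pointwise-labutin} applies to $w$ on $B(x,3R)$. This gives
\[
|w(x)|\le C_2\W_k^{F_k[w]}(x,2R)+C_3\inf_{B(x,R)}(-w).
\]
Since $2R<\frac14$, the balls $B(x,t)$ with $t\le 2R$ lie in $B_{7/4}$. Hence $\W_k^{F_k[w]}(x,2R)=\W_k^{\mu}(x,2R)\le \W_k^\mu(x,\infty)$. For the second term, as in the proof of \eqref{eq:pointwise-start},
\[
\inf_{B(x,R)}(-w)\le |B_R|^{-1}\int_{B(x,R)}|w|\le C(n)\|w\|_{L^1(B_{7/4})}.
\]
Combining,
\[
|w(x)|\le C\,\W_k^\mu(x,\infty)+C\|w\|_{L^1(B_{7/4})}\qquad(x\in B_{3/2}).
\]

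\textbf{Weak-norm estimate.} Take the $L^{\frac{nk}{n-2k},\infty}(B_{3/2})$ quasinorm of both sides. The quasinorm of a sum is controlled by a constant times the sum of the quasinorms. The constant term has weak norm at most $C|B_{3/2}|^{\frac{n-2k}{nk}}\|w\|_{L^1(B_{7/4})}$. Lemma~\ref{lem:finite-measure-weak-wolff} bounds the potential term by $C\mu(\mathbb R^n)^{1/k}=C\,F_k[w](B_{7/4})^{1/k}$. Together these give \eqref{eq:hessian-weak-critical-quantitative}.

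\textbf{Main obstacle.} The delicate point is the applicability of Labutin's estimate at every point. It requires $w\in\Phi^k$ on the ball, which holds, but $w(x)$ may equal $-\infty$ on a polar set. In that case the inequality is trivially consistent, since the Wolff potential is infinite there; alternatively, the set is null and irrelevant for the weak norm. One should also check that the truncated potential of $F_k[w]$ coincides with that of the restricted measure $\mu$, which is exactly why the radius $R$ must be small relative to $\frac74-\frac32=\frac14$. The rest is bookkeeping.
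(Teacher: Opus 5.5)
Your proposal is correct and follows essentially the same route as the paper: Labutin's upper pointwise bound at a fixed small scale (the paper takes $R=\frac1{16}$, you take $R=\frac1{12}$), the averaging bound for the infimum term, Lemma~\ref{lem:finite-measure-weak-wolff} for the zero-extended measure $\mu$, and the quasi-triangle inequality in $L^{\frac{nk}{n-2k},\infty}(B_{\frac32})$. Your radius bookkeeping ($B(x,3R)\subset B_{\frac74}$, so that the truncated potential only sees $\mu$) checks out.
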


\begin{proof}[Proof of Lemma~\ref{lem:finite-measure-weak-wolff}]
Let $m_\mu=\mu(\mathbb R^n)$ and $\mathcal M\mu$ be the centered
maximal function of the measure.  The standard weak $(1,1)$ estimate is
\begin{equation}\label{eq:maximal-measure-weak11}
        |\{\mathcal M\mu>s\}|\le C(n)m_\mu s^{-1}.
\end{equation}
For $m_\mu>0$, splitting the Wolff integral at $\rho>0$ gives
\[
\W_k^\mu(x,\infty)
\le C\left((\mathcal M\mu(x))^{\frac{1}{k}}\rho^2
       +m_\mu^{\frac{1}{k}}\rho^{-\frac{n-2k}{k}}\right).
\]
Choosing $\rho=(\frac{m_\mu}{\mathcal M\mu(x)})^{\frac{1}{n}}$ yields
\begin{equation}\label{eq:hedberg-finite-wolff}
        \W_k^\mu(x,\infty)
        \le C(n,k)m_\mu^{\frac{2}{n}}
        (\mathcal M\mu(x))^{\frac{n-2k}{nk}}.
\end{equation}
Combining this with \eqref{eq:maximal-measure-weak11} and
\[
        1+\frac{2k}{n-2k}=\frac{n}{n-2k}
\]
gives \eqref{eq:finite-measure-weak-wolff}.  The case $m_\mu=0$ is
immediate.
\end{proof}

\begin{proof}[Proof of Lemma~\ref{lem:hessian-weak-lorentz-subcritical}]
Let $\mu$ be the Hessian measure $F_k[w]$ on $B_{\frac{7}{4}}$,
regarded as a measure on $\mathbb R^n$ by zero extension.
For every $x\in B_{\frac{3}{2}}$, by Lemma~\ref{lem:labutin-local-pointwise}
\begin{equation}\label{eq:weak-critical-labutin-input}
    -w(x)
    \le C(n,k)\W_k^\mu(x,\frac{1}{8})
    +C(n,k)\inf_{B(x,\frac{1}{16})}(-w).
\end{equation}
Since $B(x,\frac{1}{16})\subset B_{\frac{7}{4}}$ and $w\in L^1_{\loc}(B_2)$,
\[
\inf_{B(x,\frac{1}{16})}(-w)
\le \frac1{|B_{\frac{1}{16}}|}\int_{B_{\frac{7}{4}}}(-w)
\le C(n)\|w\|_{L^1(B_{\frac{7}{4}})}.
\]
Moreover,
\[
\W_k^\mu(x,\frac{1}{8})\le \W_k^\mu(x,\infty).
\]
 On the other hand, Lemma~\ref{lem:finite-measure-weak-wolff} yields
\[
\|\W_k^\mu(\cdot,\frac{1}{8})\|_{L^{\frac{nk}{n-2k},\infty}(\mathbb R^n)}
\le C(n,k)F_k[w](B_{\frac{7}{4}})^{\frac{1}{k}},
\]
Therefore, for $A\ge 0$, 
\[
\|f+A\|_{L^{r,\infty}(E)}
\le 2\|f\|_{L^{r,\infty}(E)}+2A|E|^{\frac{1}{r}},
\]
and we obtain \eqref{eq:hessian-weak-critical-quantitative} from
\eqref{eq:weak-critical-labutin-input}. 
\end{proof}

\subsection{A finite Wolff bootstrap}

The preceding weak estimate gives $L^{q_0}$-integrability on smaller
balls for every $q_0<\frac{nk}{n-2k}$.  In the sub-Serrin range, a
finite iteration of the local Wolff estimate produces an exponent
$q>Q_p=\frac{pn}{2k}$.

\begin{lemma}
\label{lem:subcritical-wolff-crossing}
Let
\[
        1<p<\frac{nk}{n-2k},\qquad
        Q_p=\frac{pn}{2k},\qquad
        p<q_0<\frac{nk}{n-2k}.
\]
Let $B_{R'}\Subset B_R$ and  $0\le a\le1$. Suppose that
$U<0$ belongs to $\Phi^k(B_R)\cap L^{q_0}(B_R)$ and satisfies
\begin{equation}\label{eq:subcritical-iteration-equation}
        F_k[U]=a(-U)^p\qquad\text{in }B_R.
\end{equation}
Then $U\in L^q(B_{R'})$ for some $q>Q_p$ depending only on
$n,k,p,q_0$.
\end{lemma}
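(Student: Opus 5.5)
We argue by a finite bootstrap in the integrability exponent, driven by Lemma~\ref{lem:labutin-local-pointwise} and Lemma~\ref{lem:strong-wolff}. Fix a finite chain of radii $R'=r_N<r_{N-1}<\cdots<r_0<R$, with $N$ to be chosen depending only on $n,k,p,q_0$. Suppose inductively that $U\in L^{q_j}(B_{r_j})$. Then the density $g=a(-U)^p\chi_{B_{r_j}}$ belongs to $L^{m_j}(\Rn)$ with $m_j=q_j/p>1$, since $q_0>p$. As long as $m_j<\frac{n}{2k}$, Lemma~\ref{lem:strong-wolff} gives $\W_k^{g}\in L^{s_j}(\Rn)$ with
\[
s_j=\frac{nkm_j}{n-2km_j}.
\]
For $x\in B_{r_{j+1}}$ we choose $\rho=\frac{1}{3}(r_j-r_{j+1})$, so that $B(x,3\rho)\subset B_{r_j}$. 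Applying \eqref{eq:pointwise-start} at $x$ with $q=q_j$ gives
\[
-U(x)\le C\,\W_k^{g}(x,2\rho)+C\rho^{-n/q_j}\|U\|_{L^{q_j}(B_{r_j})}.
\]
The first term is pointwise at most $\W_k^{g}(x)$, because $F_k[U]=g$ on $B(x,2\rho)$. Hence $U\in L^{q_{j+1}}(B_{r_{j+1}})$ with $q_{j+1}=s_j$.

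The key computation is the exponent map. Writing $m=q/p$, the recursion is
\[
q\mapsto T(q)=\frac{nkq/p}{n-2kq/p}=\frac{nkq}{np-2kq},
\]
which is defined while $q<Q_p=\frac{pn}{2k}$. We compare $T(q)$ with $q$: $T(q)>q$ iff $nk>np-2kq$ iff $q>\frac{n(p-k)}{2k}$. We also have $q_0>p$. If $p\le k$ the threshold is nonpositive. In general, $\frac{n(p-k)}{2k}<p$ iff $p(n-2k)<nk$, which is exactly the sub-Serrin condition $p<\frac{nk}{n-2k}$, so $q_0$ lies above the repelling fixed point $q^*=\frac{n(p-k)}{2k}$. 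Moreover $T(q)/q=\frac{nk}{np-2kq}$ is increasing in $q$. The ratio is therefore at least $\lambda=\frac{nk}{np-2kq_0}>1$ throughout the iteration, so $q_j\ge\lambda^jq_0$ as long as $q_j<Q_p$. Since $T(q)\to\infty$ as $q\uparrow Q_p$, after finitely many steps $N=N(n,k,p,q_0)$ we get $q_N\ge Q_p$ with all previous $q_j<Q_p$.

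Two boundary cases need care. First, if some $q_j$ lands exactly at $Q_p$, or if $m_j\ge\frac{n}{2k}$, Lemma~\ref{lem:strong-wolff} no longer applies. There we shrink $q_j$ slightly, by H\"older on a bounded ball, to a value $\tilde q<Q_p$ with $T(\tilde q)>Q_p$; this is possible by continuity and monotonicity of $T$ near $Q_p$, and $\tilde q$ can be fixed in terms of $n,k,p,q_0$ only. One more application of the step then yields some $q=T(\tilde q)>Q_p$. Second, the constant factor $a\le1$ only improves the bounds, and the truncation of $g$ to $B_{r_j}$ is harmless because only $B(x,2\rho)\subset B_{r_j}$ enters the local potential.

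The main point to verify is the exponent arithmetic: that sub-Serrin is precisely the condition placing $q_0$ above the fixed point of $T$, so that the iteration increases geometrically rather than stalling. The rest is routine bookkeeping of the nested radii and of the fact that the number of steps depends only on $(n,k,p,q_0)$.
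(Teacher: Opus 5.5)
Your proposal is correct and follows the paper's proof: the same one-step estimate (Labutin's pointwise bound plus the strong Wolff estimate with $m=q/p$), the same exponent map $T(q)=\frac{nkq}{pn-2kq}$ iterated finitely many times, and the same trick of lowering the exponent when an iterate lands exactly on $Q_p$. The only difference is cosmetic. You track the ratio $T(q)/q$ and check explicitly that $p<\frac{nk}{n-2k}$ places $q_0$ above the fixed point $\frac{n(p-k)}{2k}$, whereas the paper uses monotonicity of $T(q)-q$ and leaves $T(q_0)>q_0$ implicit.
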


\begin{proof}
The case $q_0>Q_p$ is immediate.  If $q_0\le Q_p$, the
embedding $L^{q_0}(B_R)\hookrightarrow L^{\widetilde q}(B_R)$,
$\widetilde q<q_0$, allows us, after decreasing $q_0$ only in the equality
case, to assume without loss of generality
that $p<q_0<Q_p$. For $p<q<Q_p$, set
\begin{equation}\label{eq:subcritical-T-map}
        T(q)=\frac{nkq}{pn-2kq}.
\end{equation}
Let $B_s\Subset B_r\Subset B_R$ be concentric and suppose
$U\in L^q(B_r)$.  Since $1<\frac{q}{p}<\frac{n}{2k}$,
Lemma~\ref{lem:labutin-local-pointwise}, H\"older's inequality, and Lemma~\ref{lem:strong-wolff} applied to
the source $a(-U)^p$ on $B_r$, with $m=\frac{q}{p}$, give
\begin{equation}\label{eq:subcritical-one-step}
        \|U\|_{L^{T(q)}(B_s)}
        \le C(n,k,p,q,r,s)
        \left(\|U\|_{L^q(B_r)}^{\frac{p}{k}}+\|U\|_{L^q(B_r)}\right).
\end{equation}
Since $T(q)-q$ is increasing,  if
$q_{j+1}=T(q_j)$ while $q_j<Q_p$, then
\[
        q_{j+1}-q_j\ge T(q_0)-q_0>0,
\]
and some finite iterate satisfies $q_J\ge Q_p$.  If $q_J=Q_p$, we again decrease this exponent to a fixed
$p<\widehat q<Q_p$ with $T(\widehat q)>Q_p$, and one further application
of \eqref{eq:subcritical-one-step} crosses $Q_p$.

Iterating \eqref{eq:subcritical-one-step}  finite many times yields $U\in L^q(B_{R'})$ for some $q>Q_p$. 
\end{proof}

\subsection{A fundamental-solution upper bound}

\begin{proposition}
\label{prop:subcritical-fundamental-bound}
Under the assumptions of Theorem~\ref{thm:subcritical-hessian-classification},
there exist $C>0$ and $r_0>0$ such that
\begin{equation}\label{eq:subcritical-uniform-fundamental-bound}
        -u(x)\le C|x|^{2-\frac{n}{k}},
        \qquad 0<|x|<r_0.
\end{equation}
\end{proposition}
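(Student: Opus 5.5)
We plan to derive the bound from Labutin's pointwise estimate (Lemma~\ref{lem:labutin-local-pointwise}) applied to the canonical extension. The source is controlled through the bootstrap of Lemma~\ref{lem:subcritical-wolff-crossing}, and the Dirac mass at the origin produces exactly the fundamental-solution term.

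\textbf{Step 1: Integrability.} By Lemma~\ref{lem:hessian-measure-decomposition-origin}, the canonical extension satisfies $u\in\Phi^k(B_1)$ and
\[
F_k[u]=M\delta_0+(-u)^p\,dx,\qquad (-u)^p\in L^1_{\loc}(B_1).
\]
Lemma~\ref{lem:hessian-weak-lorentz-subcritical}, applied after rescaling $B_2$ to a ball inside $B_1$, gives $u\in L^{nk/(n-2k),\infty}$ near the origin. Hence $u\in L^{q_0}(B_{r_1})$ for some fixed $q_0\in(p,\frac{nk}{n-2k})$.

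\textbf{Step 2: Bootstrap.} We cannot apply Lemma~\ref{lem:subcritical-wolff-crossing} on a ball containing the origin, because $F_k[u]$ carries the extra mass $M\delta_0$. Instead, we use only its one-step estimate \eqref{eq:subcritical-one-step}, which rests on Lemma~\ref{lem:strong-wolff} applied to the absolutely continuous part $g=(-u)^p\chi_{B_r}$.

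\begin{itemize}
\item Write $\mu=M\delta_0+g\,dx$. Then $\W_k^\mu\le C(\W_k^{M\delta_0}+\W_k^{g})$, since $(a+b)^{1/k}\le C(a^{1/k}+b^{1/k})$.
\item The term $\W_k^{M\delta_0}(x,R)\le CM^{1/k}|x|^{2-n/k}$ lies in $L^{s}$ only for $s<\frac{nk}{n-2k}$.
\item Therefore the iteration improves integrability only up to $\frac{nk}{n-2k}$.
\end{itemize}

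This limitation is harmless. The goal of the bootstrap is to place $g=(-u)^p$ in $L^{m}$ for some $m>\frac{n}{2k}$, which requires $u\in L^q$ with $q>Q_p=\frac{pn}{2k}$. Since $p<\frac{nk}{n-2k}$ is equivalent to $Q_p<\frac{nk}{n-2k}$, the target exponent is attainable. At each step we truncate the new exponent to some fixed $\bar q\in(Q_p,\frac{nk}{n-2k})$. This yields $u\in L^{\bar q}(B_{r_2})$, and hence $(-u)^p\in L^{m}(B_{r_2})$ with $m=\bar q/p>\frac{n}{2k}$.

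\textbf{Step 3: Pointwise bound.} For $0<|x|<r_0$, apply \eqref{eq:pointwise-start} at $x$ on balls of fixed radius $R$ comparable to $r_2$, with $q=\bar q$:
\[
-u(x)\le C\,\W_k^{M\delta_0}(x,2R)+C\,\W_k^{g}(x,2R)+C\|u\|_{L^{\bar q}(B_{r_2})}.
\]
The three terms are handled as follows.
\begin{itemize}
\item By the Hölder estimate \eqref{eq:general-measure-growth}, $\int_{B(x,t)}g\le Ct^{n-2k+\varepsilon}$ with $\varepsilon=2k-\frac nm>0$. Hence $\W_k^{g}(x,2R)\le C\int_0^{2R}t^{\varepsilon/k-1}\,dt\le C$.
\item The point mass contributes $\W_k^{M\delta_0}(x,2R)\le \frac{k}{n-2k}M^{1/k}|x|^{2-n/k}$.
\item The last term is a constant.
\end{itemize}
Combining these gives $-u(x)\le C(1+M^{1/k}|x|^{2-n/k})\le C|x|^{2-n/k}$ for $|x|<r_0$.

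The main obstacle is Step~2. One must check that the one-step estimate \eqref{eq:subcritical-one-step}, with the source localized to $B_r$, survives the addition of the Dirac contribution. Concretely, $\|\W_k^{M\delta_0}(\cdot,2R)\|_{L^{T(q)\wedge \bar q}(B_s)}<\infty$ holds precisely because $\bar q<\frac{nk}{n-2k}$, and this is where the sub-Serrin hypothesis enters. The other technical point is ensuring that Labutin's estimate is applied on balls $B_{3R}(x)$ contained in $B_1$. This is arranged by taking $R$ small but fixed, independent of $x$.
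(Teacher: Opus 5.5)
There is a genuine gap in Step~2, at the claim that $p<\frac{nk}{n-2k}$ is equivalent to $Q_p<\frac{nk}{n-2k}$. Since $Q_p=\frac{pn}{2k}$, one has $Q_p<\frac{nk}{n-2k}$ if and only if $p<\frac{2k^2}{n-2k}$, which is strictly below the Serrin exponent. For $k=1$ and $n\ge4$ it fails for every $p>1$.

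Whenever $p\ge\frac{2k^2}{n-2k}$, the interval $(Q_p,\frac{nk}{n-2k})$ from which you choose $\bar q$ is empty. No bootstrap on a ball containing the origin can reach the target. If $M>0$, the lower Wolff bound gives $-u(x)\ge cM^{1/k}|x|^{2-\frac nk}$ near $0$, so $(-u)^p\notin L^m$ near $0$ for every $m>\frac{n}{2k}$.

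Consequently the bound $\W_k^g(x,2R)\le C$ in Step~3 is not available. For $u$ comparable to the fundamental solution, the balls $B(x,t)$ with $t<|x|/2$ alone contribute an amount of order $|x|^{2-\frac{p(n-2k)}{k^2}}$, which is unbounded when $p(n-2k)>2k^2$. What you actually need is $\W_k^g(x,2R)\le C|x|^{2-\frac nk}$. That requires controlling $(-u)^p$ on balls of radius comparable to $|x|$ around $x$, i.e.\ controlling $u$ at scale $|x|$, which is the statement being proved. For $p<\frac{2k^2}{n-2k}$ your argument does work, and Step~2 is then superfluous, since Step~1 already gives $u\in L^q$ near $0$ for every $q<\frac{nk}{n-2k}$.

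The missing idea is scale-invariant localization away from the puncture, which is how the paper argues. For $x$ near $0$, put $r=|x|/4$ and $U(\xi)=r^{\frac{n-2k}{k}}u(x+r\xi)$ on $B_2$. This corresponds to $B(x,|x|/2)$, which does not contain the origin, so $M\delta_0$ never enters.

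The weak bound $u\in L^{\frac{nk}{n-2k},\infty}$ has exactly the scaling of the fundamental solution. It therefore yields $\|U\|_{L^q(B_2)}\le C$ for each fixed $q<\frac{nk}{n-2k}$, with $C$ independent of $x$, as in \eqref{eq:subcritical-scaled-starting-Lq}. The rescaled equation $F_k[U]=r^{n-\frac{n-2k}{k}p}(-U)^p$ has coefficient at most $1$ and no Dirac part. So the iteration of Lemma~\ref{lem:subcritical-wolff-crossing} runs past $Q_p$, even above $\frac{nk}{n-2k}$ if necessary, with constants independent of $x$.

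The rescaled source then satisfies $\mu(B_t)\le Ct^{n-2k+\varepsilon}$ uniformly. Labutin's estimate at $\xi=0$, with the $L^1$ bound controlling the infimum term, gives $-U(0)\le C$, that is, $-u(x)\le C|x|^{2-\frac nk}$. The paper phrases this as a contradiction along a sequence $x_j\to0$, but the uniform estimate is the same.
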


\begin{proof}
Suppose for contradiction that \eqref{eq:subcritical-uniform-fundamental-bound}
fails.  Then there is  $x_j\to0$ such that
\[
 |x_j|^{\frac{n-2k}{k}} u(x_j)\longrightarrow-\infty.
 \]
Let $ r_j=\frac{|x_j|}{4}$ and $U_j(\xi)=r_j^{\frac{n-2k}{k}} u(x_j+r_j\xi)$.
Therefore, we have
\[
F_k[U_j]=r_j^{n-\frac{n-2k}{k} p}(-U_j)^p
\qquad\text{in }B_2,
\]
and let $\mu_j$ be the measure on $B_2$ defined by
\[
        d\mu_j=r_j^{n-\frac{n-2k}{k}p}(-U_j)^p\,d\xi.
\]
Then
\begin{equation}\label{eq:subcritical-Uj-zero-blowup}
        U_j(0)=r_j^{\frac{n-2k}{k}} u(x_j)
        =4^{-\frac{n-2k}{k}}|x_j|^{\frac{n-2k}{k}} u(x_j)\to-\infty.
\end{equation}
Set $s=\frac{nk}{n-2k}$.  By
Lemma~\ref{lem:hessian-weak-lorentz-subcritical},
$u\in L^{s,\infty}(B_1)$; hence, for fixed $0<q<s$ and large $j$,
\begin{equation}\label{eq:subcritical-scaled-starting-Lq}
\begin{aligned}
        \|U_j\|_{L^q(B_2)}^q
        &=r_j^{\frac{n-2k}{k} q-n}
          \int_{B(x_j,2r_j)}(-u(y))^q\,dy\\
        &\le C r_j^{\frac{n-2k}{k}q-n}
          \|u\|_{L^{s,\infty}(B_1)}^q
          |B(x_j,2r_j)|^{1-\frac qs}
        \le C(n,k,q,u).
\end{aligned}
\end{equation}
Since $1<\frac{nk}{n-2k}$, taking $q=1$ in
\eqref{eq:subcritical-scaled-starting-Lq} gives
\begin{equation}\label{eq:subcritical-inf-bound-scaled}
        \inf_{B_{\frac{1}{4}}}(-U_j)
        \le \frac1{|B_{\frac{1}{4}}|}\int_{B_{\frac{1}{4}}}(-U_j)
        \le C,
\end{equation}
where $C$ is independent of $j$.
By Lemma~\ref{lem:labutin-local-pointwise},
\[
-U_j(0)
\le C\W_k^{\mu_j}(0,\frac{1}{2})+C\inf_{B_{\frac{1}{4}}}(-U_j).
\]
Combining \eqref{eq:subcritical-Uj-zero-blowup} and
\eqref{eq:subcritical-inf-bound-scaled}, we obtain
\begin{equation}\label{eq:subcritical-Wolff-must-blow}
        \W_k^{\mu_j}(0,\frac{1}{2})\longrightarrow\infty.
\end{equation}
The following uniform  bound gives the required contradiction.

Fix $p<q_0<s$ and nested balls from $B_2$ to $B_1$.
Since $a_j=r_j^{n-\frac{n-2k}{k}p}\le1$ for large $j$, the finite
iteration of \eqref{eq:subcritical-one-step} in
Lemma~\ref{lem:subcritical-wolff-crossing}, starting from
\eqref{eq:subcritical-scaled-starting-Lq}, yields some
$q>Q_p=\frac{pn}{2k}$ and $C$ independent of $j$ such that
\begin{equation}\label{eq:subcritical-scaled-high-integrability}
        \|U_j\|_{L^q(B_1)}\le C.
\end{equation}
Set $ \varepsilon=2k-\frac{np}{q}>0$.
For $0<t<\frac{1}{2}$, H\"older's inequality and
\eqref{eq:subcritical-scaled-high-integrability} imply
\begin{align*}
	\mu_j(B_t)
	&=r_j^{n-\frac{n-2k}{k}p}\int_{B_t}(-U_j)^p\,d\xi                    \\
	&\le C |B_t|^{1-\frac{p}{q}}
	\le C t^{n(1-\frac{p}{q})}
	=C t^{n-2k+\varepsilon}.
\end{align*}
Therefore
\begin{align*}
	\W_k^{\mu_j}(0,\frac{1}{2})
	&=\int_0^{\frac{1}{2}}
	\left(\frac{\mu_j(B_t)}{t^{n-2k}}\right)^{\frac{1}{k}}\frac{dt}{t} \\
	&\le C\int_0^{\frac{1}{2}}t^{\frac{\varepsilon}{k}}\frac{dt}{t}
	<\infty,
\end{align*}
which contradicts
\eqref{eq:subcritical-Wolff-must-blow} and proves the proposition.
\end{proof}

\subsection{The mass at the puncture and the classification}

We now apply the preceding estimates to the canonical Hessian measure
decomposition and distinguish the cases $M=0$ and $M>0$.

\begin{proof}[Proof of Theorem~\ref{thm:subcritical-hessian-classification}]
By Lemma~\ref{lem:hessian-measure-decomposition-origin}, we have
\begin{equation}\label{eq:subcritical-measure-decomposition}
        F_k[u]=M\delta_0+(-u)^p
        \qquad\text{in }\mathcal{D}'(B_1),
\end{equation}
where $M\ge0$ and $(-u)^p\in L^1(B_1)$. Consider first the case $M=0$.
Choose $q_0$ as in Lemma~\ref{lem:subcritical-wolff-crossing}.
Lemma~\ref{lem:hessian-weak-lorentz-subcritical} then gives
$u\in L^{q_0}$ on a small ball.  Applying
Lemma~\ref{lem:subcritical-wolff-crossing} with $a=1$ yields an exponent
$q>Q_p=\frac{pn}{2k}$ and a smaller ball $B_r$ such that $u\in L^q(B_r)$.
Proposition~\ref{thm:general-removable-lm} therefore
shows that the extension is locally $C^{0,\gamma}$ at the origin for
some $\gamma\in(0,1)$.
Consequently, $u$ is bounded near the origin, and hence
\[
        \lim_{x\to0}\frac{-u(x)}{|x|^{2-\frac{n}{k}}}=0.
\]
It remains to assume $M>0$.
By Proposition~\ref{prop:subcritical-fundamental-bound}, after decreasing the
radius if necessary,
\begin{equation}\label{eq:subcritical-pointwise-upper-for-theorem}
	0\le -u(x)\le C|x|^{-\frac{n-2k}{k}},
	\qquad 0<|x|<\frac{1}{2}.
\end{equation}
Since $\frac{n-2k}{k}p<n$, we have
\begin{equation}\label{eq:subcritical-source-mass-vanishes}
	\int_{B_r}(-u)^p\,dx
	\le C\int_0^r s^{n-1-\frac{n-2k}{k}p}\,ds
	=C r^{n-\frac{n-2k}{k}p}\longrightarrow0.
\end{equation}

Let $r_j\to0$ be arbitrary, and define
\[
U_j(\xi):=r_j^{\frac{n-2k}{k}}u(r_j\xi),
\qquad \xi\in B_{1/r_j}.
\]
Then $U_j\in\Phi^k(B_{1/r_j})$, $U_j\le0$, and
\begin{equation}\label{eq:subcritical-scaled-measure-equation}
	F_k[U_j]
	=M\delta_0+a_j(-U_j)^p
	\qquad\text{in }B_{1/r_j},
	\qquad
	a_j:=r_j^{n-\frac{n-2k}{k}p}\longrightarrow0.
\end{equation}

Fix $K\Subset\mathbb R^n\setminus\{0\}$.  For all sufficiently large
$j$, we have $K\Subset B_{1/r_j}$.  By
\eqref{eq:subcritical-pointwise-upper-for-theorem}, the functions $U_j$
are uniformly bounded on a fixed neighborhood of $K$.  On this
neighborhood,
\[
F_k[U_j]=a_j(-U_j)^p,
\]
and hence $F_k[U_j]\to0$ uniformly, since $a_j\to0$.  The local
H\"older estimate \eqref{eq:trudinger-holder-original} therefore yields
equicontinuity of $\{U_j\}$ on $K$.  Since $K$ is arbitrary, a
diagonal argument gives, after passing to a subsequence,
\[
U_j\longrightarrow U
\qquad\text{locally uniformly in }
\mathbb R^n\setminus\{0\}.
\]
Moreover,
\begin{equation}\label{eq:subcritical-limit-fundamental-bound}
	0\le -U(\xi)
	\le C|\xi|^{-\frac{n-2k}{k}}.
\end{equation}

The preceding bound improves the convergence to
$L^1_{\loc}(\mathbb R^n)$.  Indeed, the convergence is uniform on
$B_R\setminus B_\rho$, while on $B_\rho$ both $|U_j|$ and $|U|$
are bounded by $C|\xi|^{-\frac{n-2k}{k}}$, and
\[
\int_{B_\rho}|\xi|^{-\frac{n-2k}{k}}\,d\xi
=C(n,k)\rho^{n-\frac{n-2k}{k}}
\longrightarrow0.
\]
Thus $U_j\to U$ in $L^1_{loc}(\mathbb{R}^n)$. Let $\varphi\in C_c^\infty(\mathbb R^n)$, and choose $R>0$ such that
$\supp\varphi\subset B_R$.  By a change of variables,
\begin{align*}
	\left|
	\int_{B_{1/r_j}}
	\varphi(\xi)a_j(-U_j(\xi))^p\,d\xi
	\right|
	&=
	\left|
	\int_{B_1}
	\varphi\!\left(\frac{x}{r_j}\right)(-u(x))^p\,dx
	\right| \\
	&\le
	\|\varphi\|_{L^\infty}
	\int_{B_{Rr_j}}(-u)^p\,dx
	\longrightarrow0.
\end{align*}
Since $U_j\to U$ in $L^1_{\loc}(\mathbb R^n)$, the local closure of
$\Phi^k$ and the weak continuity of Hessian measures give
\[
U\in\Phi^k(\mathbb R^n),
\qquad
F_k[U_j]\rightharpoonup F_k[U]
\quad\text{locally in }\mathbb R^n.
\]
Combining this with
\eqref{eq:subcritical-scaled-measure-equation}, we obtain
\[
F_k[U]=M\delta_0
\qquad\text{in }\mathbb R^n.
\]

By Theorem~\ref{thm:k-hessian-punctured-bocher} and the identification of
the coefficient in its proof,
\[
U(\xi)
=B-a_{n,k}M^{\frac1k}
|\xi|^{-\frac{n-2k}{k}}.
\]
It follows from \eqref{eq:subcritical-limit-fundamental-bound} that
$B=0$.  Setting
\[
m:=a_{n,k}M^{\frac1k}>0,
\]
we conclude that every sequence $r_j\to0$ has a subsequence such that
\[
U_j(\xi)
\longrightarrow
-m|\xi|^{-\frac{n-2k}{k}}
\qquad\text{locally uniformly in }
\mathbb R^n\setminus\{0\}.
\]
Since $m$ is independent of the chosen sequence, the subsequential
limit is unique.  In particular,
\[
r^{\frac{n-2k}{k}}u(r\theta)
\longrightarrow -m
\qquad\text{uniformly for }
\theta\in\mathbb S^{n-1}.
\]
Equivalently,
\[
\lim_{x\to0}
\frac{u(x)}{-|x|^{2-\frac{n}{k}}}
=m,
\]
which is \eqref{eq:subcritical-fundamental-limit}.  Since $m>0$,
combining this conclusion with the case $M=0$ proves that $m=0$ if
and only if the singularity is removable.
\end{proof}

\section{The Serrin critical exponent}
\label{sec:serrin-exponent}

Throughout this section, unless otherwise stated, $u$ denotes the
solution in Theorem~\ref{thm:hessian-log-limit}.  The proof proceeds
through universal a priori estimates, the exclusion of a Dirac
mass at the origin, asymptotic radiality on shrinking annuli, and the
determination of the exact logarithmic behavior from the spherical
extrema. Recall the asymptotic coefficient $\mathcal L_{n,k}$ from
\eqref{eq:Lnk-def}.  For $0<r<1$, we use the notation
\[
\underline{u}(r)=\inf_{|x|=r}(-u(x)),
\qquad
\overline{u}(r)=\sup_{|x|=r}(-u(x)).
\]

\subsection{A universal  estimate}
\label{sec:critical-fundamental-bound}

We begin with a pointwise estimate for the solution and its gradient in
terms of the distance to the boundary.  Its proof combines the doubling
lemma, compactness in the Hessian-measure topology, the Liouville theorem
of Phuc--Verbitsky, and the two interior estimates of Trudinger recorded
in Section~\ref{sec:prelim-removable}.

\begin{theorem}\label{thm:serrin-fundamental-bound}
Let $1\le k<\frac{n}{2}$.  There exists $C=C(n,k)>0$ such that, for every proper
domain $\Omega\subsetneq\Rn$, each nonpositive solution
$u\in C^2(\Omega)\cap\Phi^k(\Omega)$ of \eqref{eq:subcritical-hessian-equation} with $p=\frac{nk}{n-2k}$,
satisfies
\begin{equation}\label{eq:huang-xu-type-gradient-bound}
        |u(x)|^{\frac{k}{n-2k}}
        +|\nabla u(x)|^{\frac{k}{n-k}}
        \le C\,\dist(x,\partial\Omega)^{-1},
        \qquad x\in\Omega.
\end{equation}
Consequently, if $\Omega=B_1\setminus\{0\}$, then
\begin{equation}\label{eq:serrin-fundamental-bound}
        |u(x)|+|\nabla u(x)|^{\frac{n-2k}{n-k}}
        \le C|x|^{-\frac{n-2k}{k}}
        \qquad\text{for }0<|x|<\frac12.
\end{equation}
\end{theorem}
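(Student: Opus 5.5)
We argue by contradiction via the Polá\v{c}ik--Quittner--Souplet doubling lemma. Set
\[
M(x)=|u(x)|^{\frac{k}{n-2k}}+|\nabla u(x)|^{\frac{k}{n-k}},
\]
which is the scaling-invariant quantity: if $U(\xi)=\lambda^{\frac{n-2k}{k}}u(x_0+\lambda\xi)$, then $F_k[U]=(-U)^p$ exactly when $p=\frac{nk}{n-2k}$, because $\frac{n-2k}{k}p=n=2k+\frac{n-2k}{k}k$. Under this rescaling $M$ scales like $\lambda^{-1}$. Suppose there are domains $\Omega_j$, solutions $u_j$ and points $y_j$ with $M_j(y_j)>2j\,\dist(y_j,\partial\Omega_j)^{-1}$. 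The doubling lemma then gives points $x_j$ with
\[
M_j(x_j)\ge 2j\,\dist(x_j,\partial\Omega_j)^{-1}
\quad\text{and}\quad
M_j\le 2M_j(x_j)\ \text{on } B\bigl(x_j,jM_j(x_j)^{-1}\bigr).
\]
Put $\lambda_j=M_j(x_j)^{-1}$ and rescale as above. The functions $U_j$ are defined and $C^2$ on $B_j$, are nonpositive, solve $F_k[U_j]=(-U_j)^p$, and satisfy $\widetilde M_j\le2$ on $B_j$ with $\widetilde M_j(0)=1$. Hence $|U_j|+|\nabla U_j|\le C$ on $B_j$.

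Next we pass to a limit. The sources $(-U_j)^p$ are uniformly bounded and, by the gradient bound, uniformly Lipschitz. Trudinger's local H\"older estimate \eqref{eq:trudinger-holder-original} together with the gradient bound gives equicontinuity, so after a subsequence $U_j\to U$ locally uniformly on $\mathbb R^n$. The local closure of $\Phi^k$ and weak continuity of Hessian measures then give $U\in\Phi^k(\mathbb R^n)$, $U\le0$, $U$ Lipschitz, with $F_k[U]=(-U)^p$ as measures. Since $p=\frac{nk}{n-2k}$ lies at or below the Liouville threshold in Phuc--Verbitsky \cite{PhucVerbitsky2008}, the inequality $F_k[U]\ge(-U)^p$ has no nontrivial nonpositive entire $k$-convex solution. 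Therefore $U\equiv0$.

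The main obstacle is reaching a contradiction with the normalization $\widetilde M_j(0)=1$. We know $U_j(0)\to U(0)=0$, so the normalization forces $|\nabla U_j(0)|\to1$. Local uniform convergence alone does not transfer this to the limit, so we need gradient convergence. The plan is to apply Trudinger's gradient estimate \eqref{eq:trudinger-gradient-original} with $f=(-U_j)^{p/k}$. This $f$ is Lipschitz with uniform bounds because $p/k>1$ and $U_j$, $\nabla U_j$ are bounded. We apply the estimate on a ball $B_R$ to $U_j$ minus a constant, which is legitimate since $F_k$ is unchanged. This gives
\[
|\nabla U_j(0)|\le C\bigl(R^{-1}\osc_{B_R}U_j+R^2\|\nabla f_j\|_{L^\infty(B_R)}\bigr).
\]
On $B_R$ we have $\|\nabla f_j\|_\infty\le C\|U_j\|_{L^\infty(B_R)}^{p/k-1}\|\nabla U_j\|_\infty$, and this tends to $0$ because $U_j\to0$ uniformly on $B_R$. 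Also $\osc_{B_R}U_j\to0$. Hence $|\nabla U_j(0)|\to0$, a contradiction. Two technical points remain: the estimate is stated for smooth $U$, so we approximate $U_j$ by smooth $k$-admissible solutions (or check that the proof needs only $C^2$), and the case $k=1$ is handled by standard elliptic gradient estimates.

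The consequence \eqref{eq:serrin-fundamental-bound} follows by taking $\Omega=B_1\setminus\{0\}$. For $|x|<\frac12$ we have $\dist(x,\partial\Omega)=|x|$, so $|u|^{\frac{k}{n-2k}}\le C|x|^{-1}$ and $|\nabla u|^{\frac{k}{n-k}}\le C|x|^{-1}$. Raising these to the powers $\frac{n-2k}{k}$ and $\frac{n-k}{k}\cdot\frac{n-2k}{n-k}$ respectively gives the stated bound.
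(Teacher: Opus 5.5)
Your proposal is correct and follows essentially the same route as the paper. Both arguments use the same scale-invariant quantity in the Pol\'a\v{c}ik--Quittner--Souplet doubling lemma, compactness of the rescalings, and the Phuc--Verbitsky Liouville theorem to force $U\equiv0$. Both then apply Trudinger's gradient estimate with $f_j=(-U_j)^{p/k}$, $p/k=\frac{n}{n-2k}>1$, to contradict $|\nabla U_j(0)|\to1$.

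For the regularity needed in that estimate, the paper avoids approximation: the normalization gives $U_j\not\equiv0$, the strong maximum principle then yields $U_j<0$, so the equation is locally uniformly elliptic, and Evans--Krylov plus Schauder bootstrapping give $U_j\in C^\infty_{\loc}(B_j)$. Your separate treatment of $k=1$, where the cited gradient estimate is stated only for $k\ge2$, addresses a point the paper passes over.
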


\begin{proof}
\medskip
\noindent\textbf{Step 1: doubling and rescaling.}
Set $\alpha:=\frac{n-2k}{k}$. Assume that \eqref{eq:huang-xu-type-gradient-bound} fails.  Then there
exist proper domains $\Omega_j\subsetneq\Rn$, nonpositive solutions $u_j\in C^2(\Omega_j)\cap\Phi^k(\Omega_j)$
of \eqref{eq:subcritical-hessian-equation} with
$p=\frac{nk}{n-2k}$, and points $y_j\in\Omega_j$, such that the
functions
\[
M_j(x):=(-u_j(x))^{\frac1\alpha}
+|\nabla u_j(x)|^{\frac1{\alpha+1}},
\qquad x\in\Omega_j,
\]
satisfy
\[
M_j(y_j)>2j\,\dist(y_j,\partial\Omega_j)^{-1}.
\]
By the doubling lemma
\cite[Lemma~5.1 and Remark~5.2(b)]{PQS2007}, there exists
$x_j\in\Omega_j$ such that
\[
M_j(x_j)>2j\,\dist(x_j,\partial\Omega_j)^{-1}
\]
and
\begin{equation}\label{eq:polacik-doubling-bound-hessian}
	M_j(z)\le 2M_j(x_j),
    \qquad
	|z-x_j|\le jM_j(x_j)^{-1}.
\end{equation}
We now rescale $u_j$ by
setting
\[
U_j(\xi):=\lambda_j^\alpha
u_j(x_j+\lambda_j\xi),
\qquad |\xi|\le j,
\]
where $\lambda_j:=M_j(x_j)^{-1}$.
Then
\[
U_j\in C^2(B_j)\cap\Phi^k(B_j),
\qquad U_j\le0.
\]
Since $k(\alpha+2)=n$ and $\alpha\frac{nk}{n-2k}=n$,
the rescaled function satisfies
\begin{equation}\label{eq:scaled-gradient-equation-new}
	F_k[U_j]=(-U_j)^{\frac{nk}{n-2k}}
	\qquad\text{in }B_j.
\end{equation}
Moreover,
\begin{equation}\label{eq:gradient-normalization-bound-new}
	(-U_j(0))^{\frac1\alpha}
	+|\nabla U_j(0)|^{\frac1{\alpha+1}}
	=1,
\end{equation}
whereas \eqref{eq:polacik-doubling-bound-hessian} yields
\[
(-U_j(\xi))^{\frac1\alpha}
+|\nabla U_j(\xi)|^{\frac1{\alpha+1}}
\le2,
\qquad |\xi|\le j.
\]
Consequently,
\begin{equation}\label{eq:uniform-value-gradient-rescaled-new}
	0\le -U_j(\xi)\le2^\alpha,
	\qquad
	|\nabla U_j(\xi)|\le2^{\alpha+1},
	\qquad |\xi|\le j.
\end{equation}

\medskip
\noindent\textbf{Step 2: compactness and the global limit.}
By
\eqref{eq:uniform-value-gradient-rescaled-new} and Arzel\`a--Ascoli,
we obtain a locally Lipschitz function $U\le0$ such that
\[
        U_j\to U
        \qquad\text{locally uniformly in }\Rn.
\]
In particular, $U_j\to U$ in $L^1_{\loc}$.  Since
$U_j\in\Phi^k(B_j)$, after taking $j\to\infty$, we obtain $U\in\Phi^k(\Rn)$.
Weak continuity of Hessian measures gives $F_k[U_j]\rightharpoonup F_k[U]$ on every compact subset of $\mathbb{R}^n$.
On the other hand, the local uniform convergence implies, for every
$\varphi\in C^\infty_c(\Rn)$,
\[
        \int \varphi\,(-U_j)^{\frac{nk}{n-2k}}\,d\xi
        \longrightarrow
        \int \varphi\,(-U)^{\frac{nk}{n-2k}}\,d\xi.
\]
Using \eqref{eq:scaled-gradient-equation-new} and comparing the two limits,
we obtain
\begin{equation}\label{eq:limit-equation-new}
        F_k[U]=(-U)^{\frac{nk}{n-2k}}
        \quad\text{in }\mathcal D'(\Rn).
\end{equation}

\medskip
\noindent\textbf{Step 3: identification of the global limit.}
The Liouville type theorem
\cite[Corollary~2.8]{PhucVerbitsky2008} rules out nontrivial entire solutions of \eqref{eq:limit-equation-new}.  Hence $U\equiv0$, and therefore $\|U_j\|_{L^\infty(B_1)}\to0$.

\medskip
\noindent\textbf{Step 4: exclusion of the gradient-dominated limit.}
The normalization and the strong maximum principle give $U_j<0$ in
$B_j$.  Hence the equation is locally uniformly elliptic along $U_j$,
and Evans--Krylov theory and Schauder bootstrapping yield
$U_j\in C^\infty_{\loc}(B_j)$.
Denote $F_k[U_j]=\psi_j^k$ and $\psi_j=(-U_j)^{\frac{n}{n-2k}}$.
Since $\frac{n}{n-2k}>1$, 
\begin{equation}\label{eq:psi-gradient-goes-zero-new}
        \|\nabla\psi_j\|_{L^\infty(B_1)}
        \le C\|U_j\|_{L^\infty(B_1)}^{\frac{2k}{n-2k}}
             \|\nabla U_j\|_{L^\infty(B_1)}
        \to0.
\end{equation}
Also $\osc_{B_1}U_j\le \|U_j\|_{L^\infty(B_1)}\to0$.  Applying
\eqref{eq:trudinger-gradient-original} with $R=1$ at the center $0$ we have 
\begin{equation}\label{eq:gradient-estimate-applied-at-origin}
        |\nabla U_j(0)|
        \le C(n,k)\left(
        \osc_{B_1}U_j+
        \|\nabla\psi_j\|_{L^\infty(B_1)}
        \right)
        \to0.
\end{equation}
On the other hand, \eqref{eq:gradient-normalization-bound-new} and the fact that
$U_j(0)\to0$ imply $|\nabla U_j(0)|\to1$, a contradiction.
This proves \eqref{eq:huang-xu-type-gradient-bound}.  The remaining estimate
\eqref{eq:serrin-fundamental-bound} follows by applying the estimate to
$\Omega=B_1\setminus\{0\}$, where
$\dist(x,\partial\Omega)=|x|$ for $0<|x|<\frac{1}{2}$.
\end{proof}

\begin{remark}\label{rem:super-serrin-apriori}
	Set $p_{\mathrm{Ser}}=\frac{nk}{n-2k}$ and $p_*=\frac{k(n+2)}{n-2k}$.
	Although only the case $p=p_{\mathrm{Ser}}$ is used below, the same
	doubling argument applies to every fixed
	$p_{\mathrm{Ser}}<p<p_*$.  More precisely, every nonpositive classical
	$k$-admissible solution of $F_k[u]=(-u)^p
	$ in $\Omega$
	satisfies
	\begin{equation}\label{eq:super-serrin-first-order-bound}
		|u(x)|^{\frac{p-k}{2k}}
		+|\nabla u(x)|^{\frac{p-k}{p+k}}
		\le C(n,k,p)\,\dist(x,\partial\Omega)^{-1}.
	\end{equation}
	Indeed, with
    $a_p=\frac{2k}{p-k}$
	one applies the doubling lemma to
	$|u|^{\frac1{a_p}}+|\nabla u|^{\frac1{a_p+1}}$
	and rescales by
    $U_j(\xi)=\lambda_j^{a_p}
    u_j(x_j+\lambda_j\xi)$.
	Since $k(a_p+2)=pa_p$, the equation is invariant under this
	rescaling.  If \eqref{eq:super-serrin-first-order-bound} failed, the
	compactness argument in the proof of
	Theorem~\ref{thm:serrin-fundamental-bound} would produce a nonpositive
	locally bounded entire solution $U$ of $F_k[U]=(-U)^p$. 
	The Liouville theorem of Dai, Fu, Gui and Qin
	\cite[Theorem~1.2]{DaiFuGuiQin2026} gives $U\equiv0$, and the final
	gradient argument in Step~4 then contradicts the normalization.
\end{remark}

\begin{remark}
\label{rem:conformal-vs-real-hessian}
Huang--Xu established an estimate of the form
\eqref{eq:huang-xu-type-gradient-bound} for positive conformally
$k$-admissible solutions of
\[
        F_k[A_w]=w^q,
        \qquad
        A_w=D^2w-\frac{|Dw|^2}{2w}I;
\]
see \cite[Theorem~1.1]{HuangXu2009}.  In that conformal setting, the
Liouville-type doubling argument admits a higher-order compactness approach, since
the interior estimate of Sheng, Trudinger and Wang
\cite[Theorem~3.1]{ShengTrudingerWang2007} provides the uniform $C^{3,\alpha}$ bounds.
\end{remark}

\begin{remark}
	\label{rem:wang-tian-second-order-compactness}
	Wang and Tian \cite[Theorem~2.1]{WangTian2020} obtained a related
	conditional estimate containing an additional Hessian term.  For $p>k$,
	their rescaling gives locally uniform $W^{2,\infty}$ bounds, but not the
	$C^{2,\alpha}_{\loc}$ compactness invoked to pass this term to the limit.
	The estimates cited from \cite[Section~3]{Wang2009KHessian} require uniform
	ellipticity, which is not ensured when the rescaled right-hand sides may
	tend to zero and the Hessians may approach $\partial\Gamma_k$.  Thus the
	Hessian-dominated case requires an additional argument.  This concerns only
	the proof as written and does not preclude another proof of the estimate.
	By contrast, our normalization involves only zeroth- and first-order terms,
	handled by local uniform convergence and Trudinger's gradient estimate,
	respectively.
\end{remark}

\subsection{Exclusion of a Dirac mass and a removability criterion}
\label{subsec:critical-removable-bootstrap}

We first record the finite Wolff-potential iteration used in the
removability argument.  For this purpose, set
\begin{equation}\label{eq:critical-bootstrap-exponents}
        Q:=\frac{n^2}{2(n-2k)}
        =\frac{nk}{n-2k}+\frac n2,
        \qquad
        T(q):=\frac{nq(n-2k)}{n^2-2q(n-2k)},
        \qquad \frac{nk}{n-2k}<q<Q.
\end{equation}
The iteration yields the implication
\[
        -u\in L^{\frac{nk}{n-2k}+\delta}
        \quad\Longrightarrow\quad
        -u\in L^a\text{ for some }a>Q
        \quad\Longrightarrow\quad
        (-u)^{\frac{nk}{n-2k}}\in L^m\text{ for some }m>\frac n{2k},
\]
and hence places the density in the range required by the H\"older
criterion in Proposition~\ref{thm:general-removable-lm}.

\begin{proposition}\label{prop:finite-crossing}
Let $u\le0$, $u\in\Phi^k(\Omega)$, and
$F_k[u]=(-u)^{\frac{nk}{n-2k}}$ in $\Omega$.  If
$u\in L^{\frac{nk}{n-2k}+\delta}_{\loc}(\Omega)$ for some $\delta>0$,
then $u\in L^a_{\loc}(\Omega)$ for some
$a>Q=\frac{n^2}{2(n-2k)}$.
\end{proposition}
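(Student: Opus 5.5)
The plan is to mimic the finite Wolff bootstrap of Lemma~\ref{lem:subcritical-wolff-crossing}, now with $p=\frac{nk}{n-2k}$. One step maps $L^q$ to $L^{T(q)}$ with $T$ as in \eqref{eq:critical-bootstrap-exponents}, and the key point is that $T(q)-q$ is bounded below along the iteration.

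Since the statement is local, fix concentric balls $B_s\Subset B_r\Subset\Omega$ and suppose $u\in L^q(B_r)$ with $p<q<Q$. The first step is the pointwise bound. For $x\in B_s$, Lemma~\ref{lem:labutin-local-pointwise}, in the form \eqref{eq:pointwise-start} with a small radius $\rho$ so that $B(x,3\rho)\subset B_r$, gives
\[
-u(x)\le C\,\W_k^{\chi_{B_r}(-u)^p}(x,\infty)+C\rho^{-n/q}\|u\|_{L^q(B_r)}.
\]
The second step estimates the potential term. The source $f=\chi_{B_r}(-u)^p$ lies in $L^m(\mathbb R^n)$ with $m=q/p$. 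Because $q>p$ we have $m>1$, and because $q<Q=\frac{pn}{2k}$ (recall $\frac{pn}{2k}=\frac{n^2}{2(n-2k)}$ when $p=\frac{nk}{n-2k}$) we have $m<\frac{n}{2k}$. Lemma~\ref{lem:strong-wolff} then places the potential in $L^s$ with
\[
s=\frac{nkm}{n-2km}=\frac{nkq}{pn-2kq}=\frac{nq(n-2k)}{n^2-2q(n-2k)}=T(q).
\]
This yields the one-step estimate \eqref{eq:subcritical-one-step} exactly as before:
\[
\|u\|_{L^{T(q)}(B_s)}\le C\bigl(\|u\|_{L^q(B_r)}^{p/k}+\|u\|_{L^q(B_r)}\bigr).
\]

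The third step is the iteration. Start from $q_0=\frac{nk}{n-2k}+\delta$, shrinking $\delta$ if necessary so that $q_0<Q$; if already $q_0>Q$ there is nothing to prove. Because $T$ is convex and increasing on $(p,Q)$, the difference $T(q)-q$ is increasing there. Note that $T(p)=p$ precisely at the Serrin exponent, which is the reason an initial gain $\delta>0$ is needed. Hence
\[
T(q)-q\ge T(q_0)-q_0>0 \qquad\text{for } q\ge q_0 .
\]
The iterates $q_{j+1}=T(q_j)$ therefore increase by a fixed positive amount and reach $Q$ after finitely many steps. To cover a compact $K\Subset\Omega$, use finitely many nested balls (one chain per ball of a finite cover of $K$).

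The final step handles the crossing of $Q$. If an iterate lands exactly at $Q$, replace it by some $\hat q<Q$ close enough to $Q$ that $T(\hat q)>Q$; this is possible since $T(q)\to\infty$ as $q\uparrow Q$. One more application of the one-step estimate then gives $u\in L^a_{\loc}$ with $a>Q$.

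The main obstacle is bookkeeping rather than a genuine difficulty. Each step must keep $m=q/p$ strictly inside $(1,\frac n{2k})$ so that Lemma~\ref{lem:strong-wolff} applies, and the boundary case at $Q$ must be handled as above. The only place the hypothesis $\delta>0$ is essential is that it makes the initial gain $T(q_0)-q_0$ strictly positive.
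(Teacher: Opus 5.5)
Your proposal is correct and matches the argument the paper intends: the paper omits this proof and refers to the finite Wolff bootstrap of Lemma~\ref{lem:subcritical-wolff-crossing} with $p=\frac{nk}{n-2k}$, which is what you carry out, including identifying $T$ with the exponent from Lemma~\ref{lem:strong-wolff} and adjusting the iterate when it lands exactly at $Q$. One small correction: convexity and monotonicity of $T$ do not by themselves make $T(q)-q$ increasing, but here $T'(q)=\frac{pn^2k}{(pn-2kq)^2}\ge T'(p)=\frac{n}{n-2k}>1$, and this together with $T(p)=p$ gives the uniform gain you use.
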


\begin{proof}
	We omit the details here, since it is almost the same as in the
	proof of Lemma~\ref{lem:subcritical-wolff-crossing}.
\end{proof}

We next combine the measure decomposition with the lower Wolff estimate
to exclude a Dirac mass at the origin.

\begin{proposition}\label{prop:no-dirac-pv}
Under the assumptions of Theorem~\ref{thm:hessian-log-limit}, there holds
\[
        F_k[u]=(-u)^{\frac{nk}{n-2k}}
        \qquad\text{in }\mathcal D'(B_1).
\]
Moreover,
\begin{equation}\label{eq:k-zero-mass-asymptotic}
        \lim_{x\to0}|x|^{\frac{n-2k}{k}} u(x)=0.
\end{equation}
\end{proposition}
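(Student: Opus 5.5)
By Lemma~\ref{lem:hessian-measure-decomposition-origin} we already know that
\[
F_k[u]=M\delta_0+(-u)^{p}\,dx \quad\text{in } B_1,\qquad p=\tfrac{nk}{n-2k},
\]
with $M\ge0$ and $(-u)^p\in L^1(B_1)$. The plan is first to show $M=0$ by comparing the lower Wolff bound with the universal estimate of Theorem~\ref{thm:serrin-fundamental-bound}. We then deduce \eqref{eq:k-zero-mass-asymptotic} by a rescaling argument, as in the proof of Theorem~\ref{thm:subcritical-hessian-classification}.

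\emph{Step 1 (no Dirac mass).} Suppose $M>0$. Fix $x$ with $0<|x|$ small and apply the lower bound in \eqref{eq:pointwise-labutin} at $x$ on a ball $B(x,3R)\subset B_1$ with $R$ fixed, say $R=1/8$. For $2|x|<t<R/8$ the ball $B(x,t)$ contains the origin, so $\mu(B(x,t))\ge M$. The absolutely continuous part also contributes, since $B(x,t)\supset B_{t/2}$. Using the radial lower bound $\underline u$ on annuli only gives a self-referential inequality. The cleaner route is different: the Dirac part alone gives $-u(x)\ge c\,M^{1/k}|x|^{-\frac{n-2k}{k}}$, since
\[
\int_{2|x|}^{R/8}(M t^{2k-n})^{1/k}\,\frac{dt}{t}\sim M^{1/k}|x|^{-\frac{n-2k}{k}}.
\]
Then
\[
(-u)^p\ge c\,M^{p/k}|x|^{-n},
\]
which is not integrable near $0$. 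This contradicts $(-u)^p\in L^1(B_1)$. Hence $M=0$ and the distributional equation holds in $B_1$. This is the key use of $p$ being exactly the Serrin exponent: $p\cdot\frac{n-2k}{k}=n$.

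\emph{Step 2 (vanishing of the fundamental coefficient).} Let $r_j\to0$ and set $U_j(\xi)=r_j^{\frac{n-2k}{k}}u(r_j\xi)$. By scale invariance at $p=\frac{nk}{n-2k}$ we get $F_k[U_j]=(-U_j)^p$ with no small factor. The bound \eqref{eq:serrin-fundamental-bound} gives
\[
|U_j(\xi)|\le C|\xi|^{-\frac{n-2k}{k}},\qquad |\nabla U_j(\xi)|\le C|\xi|^{-\frac{n-k}{k}},
\]
so by Arzel\`a--Ascoli a subsequence converges locally uniformly on $\mathbb R^n\setminus\{0\}$ and in $L^1_{\loc}(\mathbb R^n)$, using the integrable dominating bound as before. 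The key quantity is the total mass of the rescaled source on $B_R$:
\[
\int_{B_R}(-U_j)^p\,d\xi=\int_{B_{Rr_j}}(-u)^p\,dx\to0 .
\]
This tends to zero by absolute continuity of the integral, since $(-u)^p\in L^1$ after Step 1. The same computation shows that the source of the limit has zero mass. Weak continuity of Hessian measures then gives $U\in\Phi^k(\mathbb R^n)$ with $F_k[U]=0$ in $\mathbb R^n$, including at the origin, because the measures $F_k[U_j]$ have no atom and their mass on $B_\rho$ tends to zero. Theorem~\ref{thm:k-hessian-punctured-bocher} and its proof (the case $M=0$) then give $U\equiv B$. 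The bound $|U|\le C|\xi|^{-\frac{n-2k}{k}}$ alone does not force $B=0$, so we argue directly. Fatou's lemma gives
\[
\int_{B_R}(-U)^p\le\liminf_j\int_{B_R}(-U_j)^p=0,
\]
hence $U\equiv0$. Since the limit is independent of the subsequence, locally uniform convergence on $|\xi|=1$ yields $r^{\frac{n-2k}{k}}u(r\theta)\to0$ uniformly in $\theta$, which is \eqref{eq:k-zero-mass-asymptotic}.

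The main obstacle I anticipate is Step 1: making sure the lower Wolff estimate is legitimately applied on balls containing the singular point, which requires the canonical extension $u\in\Phi^k(B_1)$ from Lemma~\ref{lem:hessian-measure-decomposition-origin}, and keeping the constants uniform as $x\to0$. Step 2 is then routine given the universal bound.
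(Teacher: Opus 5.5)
Your Step 1 is the paper's argument. The Dirac part alone in the lower Wolff bound gives $-u(x)\ge cM^{1/k}|x|^{-\frac{n-2k}{k}}$. With $q=\frac{nk}{n-2k}$, the identity $\frac{n-2k}{k}\,q=n$ then makes $(-u)^q$ non-integrable at the origin. You can delete the false start about the absolutely continuous part.

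For \eqref{eq:k-zero-mass-asymptotic} you take a different but correct route. The paper argues by contradiction without any compactness. Suppose $|x_j|^{\frac{n-2k}{k}}(-u(x_j))\ge\varepsilon$ along a sequence $x_j\to0$. The gradient bound in \eqref{eq:serrin-fundamental-bound} keeps $-u\ge\frac{\varepsilon}{2}|x_j|^{-\frac{n-2k}{k}}$ on pairwise disjoint balls $B(x_j,c|x_j|)$. Each ball contributes a fixed positive amount to $\int(-u)^q$, so the integral diverges.

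You instead blow up, using the same value and gradient bounds for Arzel\`a--Ascoli. You then use the scale invariance $\int_{B_R}(-U_j)^q\,d\xi=\int_{B_{Rr_j}}(-u)^q\,dx\to0$ together with Fatou to force the limit to vanish. Both arguments rest on the same two facts: the universal first-order bound, which gives control at the natural scale $|x|$, and the scale-invariant integrability of $(-u)^q$.

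The paper's version is shorter and entirely elementary. Yours gives the slightly stronger conclusion that $r^{\frac{n-2k}{k}}u(r\xi)\to0$ locally uniformly on $\mathbb R^n\setminus\{0\}$, in the spirit of the later blow-down arguments.

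In your Step 2, the appeal to weak continuity of Hessian measures and to Theorem~\ref{thm:k-hessian-punctured-bocher} is superfluous. The Fatou step alone identifies $U\equiv0$, and for it you need only pointwise convergence away from the origin, not $L^1_{\loc}(\mathbb R^n)$ convergence.
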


\begin{proof}
Set $q=\frac{nk}{n-2k}$ and $\eta=\frac{n-2k}{k}$, so that
$\eta q=n$.  By Lemma~\ref{lem:hessian-measure-decomposition-origin},
the canonical extension belongs to $\Phi^k(B_1)$ and satisfies
\[
        F_k[u]=M\delta_0+(-u)^q\,dx
        \quad\text{in }B_1,
        \qquad M\ge0,
\]
with $(-u)^q\in L^1_{\loc}(B_1)$.
Suppose that $M>0$.  For $0<r=|x|<\frac1{128}$, apply the lower
estimate in Lemma~\ref{lem:labutin-local-pointwise} with center $x$
and $R=\frac14$.  Since $B(x,\frac34)\subset B_1$,
$4r<\frac1{32}$, and $F_k[u](B(x,t))\ge M$ whenever $t>r$,
we obtain
\[
\begin{aligned}
        -u(x)
        &\ge C_1\W_k^{F_k[u]}(x,\frac1{32}) \\
        &\ge C_1M^{\frac1k}\int_{2r}^{4r}t^{-\eta}\frac{dt}{t}
        =cM^{\frac1k}r^{-\eta},
\end{aligned}
\]
where $c=c(n,k)>0$.  It follows that
\[
        (-u(x))^q\ge c^qM^{\frac qk}|x|^{-n},
        \qquad 0<|x|<\frac1{128},
\]
contradicting $(-u)^q\in L^1_{\loc}(B_1)$.  Therefore $M=0$,
which proves the asserted distributional identity.

It remains to prove \eqref{eq:k-zero-mass-asymptotic}.
By
Theorem~\ref{thm:serrin-fundamental-bound},
\[
|u(x)|\le C|x|^{-\frac{n-2k}{k}},
\qquad
|\nabla u(x)|\le C|x|^{-\frac{n-2k}{k}-1}
\qquad (0<|x|<\frac{1}{2}).
\]
If \eqref{eq:k-zero-mass-asymptotic} failed, there would be  $\varepsilon>0$ and $x_j\to0$ with
$|x_{j+1}|<\frac{|x_j|}{8}$ and
$|x_j|^{\frac{n-2k}{k}}(-u(x_j))\ge\varepsilon$.  On the other hand, for 
$y\in B(x_j,\frac{|x_j|}{4})$ the gradient bound gives
\[
|\nabla u(y)|\le C_0|x_j|^{-\frac{n-2k}{k}-1}.
\]
Choose $0<c<\frac{1}{4}$ so that $C_0c\le\frac{\varepsilon}{2}$.  Integrating along the
line segment from $x_j$ to $y$ then gives
\[
-u(y)\ge \frac{\varepsilon}{2}|x_j|^{-\frac{n-2k}{k}}
\qquad\text{for }y\in B(x_j,c|x_j|).
\]
After decreasing $c$ once more if necessary, these balls are pairwise
disjoint.  Therefore
\[
\int_{B_{\frac{1}{2}}}(-u)^{\frac{nk}{n-2k}}\,dx
\ge c_0\sum_j |x_j|^n|x_j|^{-\frac{n-2k}{k}\frac{nk}{n-2k}}=\infty,
\]
because $\frac{n-2k}{k}\frac{nk}{n-2k}=n$.  This contradicts
$(-u)^{\frac{nk}{n-2k}}\in L^1(B_{\frac{1}{2}})$.
\end{proof}

\begin{corollary}\label{cor:continuous-extension}
Assume the hypotheses of Proposition~\ref{prop:no-dirac-pv}.  If, for some
$\delta>0$,
\begin{equation}\label{eq:above-serrin}
        u\in L^{\frac{nk}{n-2k}+\delta}(B_1),
\end{equation}
then the singularity at the origin is removable.
\end{corollary}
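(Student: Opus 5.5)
The plan is to combine Proposition~\ref{prop:no-dirac-pv} and Proposition~\ref{prop:finite-crossing} with the H\"older criterion of Proposition~\ref{thm:general-removable-lm}. This is exactly the chain announced before Proposition~\ref{prop:finite-crossing}.

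First, by Proposition~\ref{prop:no-dirac-pv}, the canonical extension (still denoted $u$) lies in $\Phi^k(B_1)$. It satisfies $F_k[u]=(-u)^{\frac{nk}{n-2k}}$ in $\mathcal D'(B_1)$, with no Dirac mass at the origin. Thus $u$ solves the equation in the measure sense on the full ball. Together with the hypothesis \eqref{eq:above-serrin}, this puts us in the setting of Proposition~\ref{prop:finite-crossing} with $\Omega=B_1$. That proposition yields $u\in L^a_{\loc}(B_1)$ for some $a>Q=\frac{n^2}{2(n-2k)}$.

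Next, set $g=(-u)^{\frac{nk}{n-2k}}$ and $m=a\frac{n-2k}{nk}$. Then $g\in L^m(B_r)$ for every $r<1$, and
\[
m>\frac{n^2}{2(n-2k)}\cdot\frac{n-2k}{nk}=\frac{n}{2k}.
\]
We apply Proposition~\ref{thm:general-removable-lm} on a rescaled ball $B_r$; this only changes constants, by translation and dilation invariance of the equation up to a harmless positive factor. It shows that $u\in C^{0,\gamma}_{\loc}(B_r)$ for some $\gamma\in(0,1)$, namely any $0<\gamma<\min\{1,2-\frac{n}{km}\}$. The extension therefore satisfies the equation distributionally across the origin, which is exactly the definition of a removable singularity.

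The only delicate point is checking that Proposition~\ref{prop:finite-crossing} applies across the puncture. Its Wolff bootstrap (as in Lemma~\ref{lem:subcritical-wolff-crossing}) uses the pointwise estimate of Lemma~\ref{lem:labutin-local-pointwise}, which requires $F_k[u]$ to be exactly the density $(-u)^q\,dx$ on full balls. Any residual mass $M\delta_0$ would add $M^{1/k}|x|^{2-n/k}$ to the Wolff potential, and this term is not in $L^{nk/(n-2k)}$. The exclusion $M=0$ from Proposition~\ref{prop:no-dirac-pv} is therefore essential. Once it is in place, the iteration $q\mapsto T(q)$ from \eqref{eq:critical-bootstrap-exponents} goes through on nested balls around the origin. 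The increment $T(q)-q$ is bounded below, so finitely many steps cross $Q$, adjusting the exponent downward if one lands exactly on $Q$.
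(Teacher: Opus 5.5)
Your proposal is correct and follows the paper's proof. The paper likewise uses Proposition~\ref{prop:no-dirac-pv} to extend the equation across the origin, applies Proposition~\ref{prop:finite-crossing} to get $u\in L^a$ near the origin with $a>Q$, notes that $(-u)^{\frac{nk}{n-2k}}\in L^{m}$ with $m=\frac{a(n-2k)}{nk}>\frac{n}{2k}$, and concludes with Proposition~\ref{thm:general-removable-lm}; your remarks on why $M=0$ is needed for the Wolff bootstrap and on rescaling the ball just make explicit what the paper leaves implicit.
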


\begin{proof}
Applying
Proposition~\ref{prop:finite-crossing} to the extended solution in a smaller ball,
we obtain an exponent $a>Q$ such that $u\in L^a(B_r)$
for some $r>0$.  Therefore
\[
        (-u)^{\frac{nk}{n-2k}}
        \in L^{\frac{a(n-2k)}{nk}}(B_r),
        \qquad
        \frac{a(n-2k)}{nk}
        >\frac{Q(n-2k)}{nk}=\frac n{2k}.
\]
By Proposition~\ref{thm:general-removable-lm}, $u$
is locally $C^{0,\gamma}$ at the origin for some $\gamma\in(0,1)$.
\end{proof}

\subsection{Annular Harnack estimates and asymptotic radiality}
\label{sec:annular-harnack}

The zero-mass decay obtained above allows the local Harnack inequality to
be iterated on fixed annuli with constants independent of the radius.  We
use the resulting annular estimate first to distinguish the removable case
from the case $u(x)\to-\infty$, and then to prove asymptotic radiality by
a rescaling argument.
For $0<\varepsilon<1$ and $r>0$, we set
\[
        \mathcal A_r^\varepsilon
        =\{x\in\mathbb R^n:\ \varepsilon r\le |x|\le
        \varepsilon^{-1}r\}.
\]

\begin{lemma}\label{lem:mohammed-porru-harnack}
Let $q>k$, $A>0$, and $0<\theta\le A$.  If $W\in C^3(B_1)$ is
negative and $k$-admissible, $0<-W\le1$, and
$F_k[W]=\theta(-W)^q$ in $B_1$, then
\[
        \sup_{B_{\frac{1}{2}}}(-W)
        \le C(n,k,q,A)\inf_{B_{\frac{1}{2}}}(-W).
\]
\end{lemma}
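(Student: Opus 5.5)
The plan is to reduce the lemma to a Harnack inequality for $k$-Hessian equations with a bounded, nonnegative right-hand side, treated as a linearized problem. Set $v=-W$, so $0<v\le1$. Then $F_k[W]=\theta v^q$ with $\theta v^q\le A$, and $W$ is a $k$-admissible solution with a bounded source. The natural tool is the Harnack inequality for $k$-convex (negative) solutions with bounded source, in the form
\[
\sup_{B_{1/2}}v\le C\Bigl(\inf_{B_{1/2}}v+\|F_k[W]^{1/k}\|_{L^\infty}\Bigr)
\]
(Trudinger--Wang weak Harnack / Labutin). The additive error $\theta^{1/k}\sup v^{q/k}$ is not directly absorbable, so I would instead exploit $q>k$ to absorb it multiplicatively.

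\textbf{Step 1: upper bound.} Apply Lemma~\ref{lem:labutin-local-pointwise} at each $x\in B_{1/2}$ with $R$ a fixed small multiple (say $R=\frac18$). This gives
\[
v(x)\le C_2\W_k^{\mu}(x,\tfrac14)+C_3\inf_{B(x,1/8)}v,
\qquad d\mu=\theta v^q\,dx .
\]
Bounding $\mu(B(x,t))\le A(\sup_{B_{3/4}}v)^{q}\omega_nt^n$ yields
\[
\W_k^\mu(x,\tfrac14)\le CA^{1/k}\bigl(\sup_{B_{3/4}} v\bigr)^{q/k}.
\]
Since $v\le1$ and $q/k>1$, this is at most $CA^{1/k}(\sup_{B_{3/4}}v)^{q/k-1}\sup_{B_{3/4}}v$. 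This is small compared with $\sup v$ only when $\sup v$ is small, which is the main obstacle. When $\sup v$ is not small the Wolff term is merely bounded, and one cannot absorb it by smallness.

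\textbf{Step 2: handling the obstacle.} I would split into two cases according to whether $\inf_{B_{3/4}}v\ge\delta$ or not, for a constant $\delta$ to be fixed.

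In the first case, the Wolff bound gives $\sup v\le C+C\inf v\le C\delta^{-1}\inf v$, using $v\le1$, and we are done.

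The second case is where the comparison argument enters: assume $\inf_{B_{3/4}} v<\delta$. Since $v$ is $k$-convex, $-v$ is $k$-subharmonic, and hence $v$ is superharmonic. The key claim is then
\[
\text{if }\inf_{B_{3/4}}v<\delta,\ \text{then } \sup_{B_{5/8}}v\le C'\inf_{B_{3/4}}v\ \text{when } \delta \text{ is small}.
\]
To prove it, I would use the Step 1 bound on a chain of overlapping balls covering $B_{1/2}$, each of radius comparable to a fixed constant, together with a Harnack chain. On each ball the inequality reads
\[
S\le C\,I+CA^{1/k}S^{q/k},
\]
where $S$ is the supremum over the ball and $I$ the infimum. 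Rather than attempting a direct absorption, I would run a continuity/scaling argument. Replace $v$ by the family $v_\lambda=\lambda v$ for $\lambda\in(0,1]$. Since $F_k[-\lambda v]=\lambda^k\theta v^q=\lambda^{k-q}\theta v_\lambda^q$, the coefficient stays bounded by $A$ for $\lambda\le1$, whereas the Wolff term scales like $\lambda^{k-q}$. This tends to be the wrong direction, so instead I would use the monotonicity in $\theta$ via comparison with a radial supersolution.

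Concretely, the fallback is to cite the Harnack inequality for $F_k[W]=f$ with $f\ge0$ bounded, applied to the linearized operator $L=T_{k-1}(D^2W)_{ij}\partial_{ij}$. The $C^3$ assumption on $W$ makes the coefficients differentiable, which is presumably why it appears in the hypotheses. The equation
\[
L(-v)=k\theta v^q=\bigl(k\theta v^{q-1}\bigr)\,v
\]
is linear in $v$ with a nonnegative zeroth-order coefficient $c=k\theta v^{q-1}\le kA$, bounded because $q>1$ and $v\le1$. Since $\operatorname{div}T_{k-1}=0$, $L$ has divergence form, and a Moser-type or Mohammed--Porru-type Harnack argument for $\operatorname{div}(T\nabla v)+cv=0$ then yields $\sup v\le C\inf v$, with $C$ depending on $A$. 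The constant depends only on $(n,k,q,A)$, as required.

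The remaining difficulty is the degeneracy of $T_{k-1}$. Mohammed--Porru handle this through test functions of the form $v^{\beta}$ combined with the cofactor structure, and I would follow that route, expecting it to be the genuinely delicate step.
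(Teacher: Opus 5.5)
\textbf{Gap: the deferred step is the whole lemma.} The paper proves this lemma simply by appealing to Mohammed--Porru (their Theorems~3.2 and~4.4), and your fallback ends by deferring to the same source. But the mechanism you sketch for that step cannot give a constant depending only on $(n,k,q,A)$. The divergence-form equation $\operatorname{div}\bigl(T_{k-1}(D^2W)\nabla v\bigr)+cv=0$ with $0\le c=k\theta v^{q-1}\le kA$ is correct. However, Moser-type Harnack constants depend on the ellipticity bounds of the coefficient matrix. The eigenvalues of $T_{k-1}(D^2W)$ are not controlled above or below by the data ($\theta$ may even be arbitrarily small), so bounding $c$ gains nothing. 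Nor is there a multiplicative Harnack inequality for $F_k[W]=f$ with $f\ge0$ merely bounded. Already for $k=1$, the solution of $-\Delta w=\chi_{B_\rho}$ in $B_1$ with $w=0$ on $\partial B_1$ has $\sup_{B_{1/2}}w/\inf_{B_{1/2}}w\to\infty$ as $\rho\to0$. What makes the lemma true is the degree-$k$ bound $\theta v^q\le Av^k$, which uses $v\le1$ and $q\ge k$. Your linear reduction uses only $q\ge1$ and never sees this structure.

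\textbf{Your Step~1 actually closes.} Its ``main obstacle'' is illusory: the Wolff term carries a factor $R^2$, so smallness should come from the radius, not from $\sup v$. Put $S(s)=\sup_{B_s}v$. For $\frac12\le s<s'\le\frac34$, $R=\frac{s'-s}{2}$ and $x\in B_s$, apply Lemma~\ref{lem:labutin-local-pointwise} on $B_{3R}(x)\subset B_1$. Three observations feed into it: $\mu(B(x,t))\le A\omega_nS(s')^qt^n$ for $t\le2R$; $S^{q/k}\le S$ because $S\le1$ and $q\ge k$; and $\inf_{B(x,R)}v\le|B_R|^{-1}\int_{B(x,R)}v$. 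Together they give
\[
S(s)\le C(n,k)A^{\frac1k}(s'-s)^2S(s')+C(n,k)(s'-s)^{-n}\int_{B_{3/4}}v .
\]
For $s'-s\le\delta(n,k,A)$ the first coefficient is at most $\frac12$. The standard iteration (absorption) lemma on $[\frac12,\frac12+\delta]$ then gives $\sup_{B_{1/2}}v\le C(n,k,A)\int_{B_{3/4}}v$. Finally, $v\ge0$ is superharmonic, since $W$ is $k$-convex and hence subharmonic. The super-mean-value property gives $\inf_{B_\rho(y)}v\ge(\frac23)^n|B_{2\rho}|^{-1}\int_{B_{2\rho}(y)}v$ whenever $B_{4\rho}(y)\subset B_1$, and a finite chain of such balls yields $\int_{B_{3/4}}v\le C(n)\inf_{B_{1/2}}v$.

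This proves the lemma with $C=C(n,k,A)$ in the range $1\le k\le\frac n2$ of Lemma~\ref{lem:labutin-local-pointwise}, which covers its use in Proposition~\ref{prop:local-small-coefficient-k-hessian-harnack}. It needs only $q\ge k$, not the $C^3$ hypothesis, and so gives a self-contained alternative to the citation.
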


\begin{proof}
This follows directly from \cite[Theorems~3.2 and~4.4]{MohammedPorru2026}.  
\end{proof}

\begin{proposition}\label{prop:local-small-coefficient-k-hessian-harnack}
Let $B_{2R}(x_0)\Subset\Omega$, let $a>0$, and let
$U\in C^2(\Omega)$ be negative and $k$-admissible, with
$F_k[U]=a(-U)^{\frac{nk}{n-2k}}$ in $B_{2R}(x_0)$. Denote $S=\sup\limits_{B_{2R}(x_0)}(-U)$ and $ \Theta=aR^{2k}S^{\frac{2k^2}{n-2k}}$.
Then
\begin{equation}\label{eq:local-small-coeff-harnack}
        \sup_{B_R(x_0)}(-U)
        \le C(n,k,\Theta)\inf_{B_R(x_0)}(-U).
\end{equation}
\end{proposition}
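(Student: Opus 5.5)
The plan is to reduce to Lemma~\ref{lem:mohammed-porru-harnack} by a rescaling that normalizes both the radius and the sup. Set
\[
W(\xi):=S^{-1}U(x_0+2R\xi),\qquad \xi\in B_1 .
\]
Then $W$ is negative, $k$-admissible and satisfies $0<-W\le1$ on $B_1$. Since $F_k$ is homogeneous of degree $k$ in $D^2$, we have $D^2W=S^{-1}(2R)^2D^2U$, and hence
\[
F_k[W]=S^{-k}(2R)^{2k}a(-U)^q=(2R)^{2k}aS^{q-k}(-W)^q,\qquad q=\tfrac{nk}{n-2k}.
\]
Because $q-k=\frac{2k^2}{n-2k}$, the coefficient is $\theta=2^{2k}aR^{2k}S^{\frac{2k^2}{n-2k}}=4^k\Theta$. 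Also $q>k$, since $n>n-2k$. Choosing $A=4^k\Theta$, so that $\theta\le A$, Lemma~\ref{lem:mohammed-porru-harnack} gives
\[
\sup_{B_{1/2}}(-W)\le C(n,k,q,A)\inf_{B_{1/2}}(-W).
\]
Scaling back, $B_{1/2}$ corresponds to $B_R(x_0)$, and the factor $S^{-1}$ cancels on both sides. This yields \eqref{eq:local-small-coeff-harnack} with a constant depending on $n,k,\Theta$; the dependence on $q$ is through $n,k$ only.

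The main obstacle is regularity. The lemma requires $W\in C^3(B_1)$, while $U$ is only $C^2$, and there is a smaller issue that $B_{2R}(x_0)\Subset\Omega$ means the closed ball lies in $\Omega$. Since $U<0$ and $U$ is continuous on the compact closure, $-U$ is bounded below by a positive constant there. So the right-hand side $a(-U)^q$ is positive and $C^{1,1}$-smooth in $U$, and the equation is uniformly elliptic along $U$ on compact subsets, because $\sigma_k$ is bounded below and $D^2U$ is continuous. As in Step~4 of the proof of Theorem~\ref{thm:serrin-fundamental-bound}, Evans--Krylov and Schauder bootstrapping then give $U\in C^\infty$ in a neighborhood of $\overline{B_{2R}(x_0)}$, so $W\in C^3(B_1)$.

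If one wants the finer point that the sup is attained, use $\sup$ over the open ball; in any case $S<\infty$ by continuity on the closure. This completes the argument. Alternatively, to avoid any boundary subtlety, one can apply the argument on $B_{2R-\epsilon}$ and let $\epsilon\to0$, noting that the constant is monotone in $\Theta$ (taking $A$ as an upper bound).
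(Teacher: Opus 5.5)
Your proof is correct and follows the paper's argument essentially verbatim: interior regularity via Evans--Krylov and Schauder bootstrapping (using $U<0$, $a>0$), the rescaling $S^{-1}U(x_0+2Ry)$ with coefficient $\theta=2^{2k}\Theta$, and Lemma~\ref{lem:mohammed-porru-harnack} applied with $A=2^{2k}\Theta$. One small correction: the equation is only assumed in the open ball, so bootstrapping gives $U\in C^\infty_{\loc}(B_{2R}(x_0))$ rather than smoothness on a neighborhood of the closed ball, but this interior regularity is all the lemma needs (or use your $B_{2R-\epsilon}$ variant).
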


\begin{proof}
Since $U<0$ and $a>0$, the equation is locally uniformly elliptic along
$U$.  Evans--Krylov theory and Schauder bootstrapping therefore give
$U\in C^\infty_{\loc}(B_{2R}(x_0))$.
For $|y|<1$, let  $\widetilde U(y)=S^{-1}U(x_0+2Ry)$.
Then $0<-\widetilde U\le1$ and
\[
        F_k[\widetilde U]
        =\theta(-\widetilde U)^{\frac{nk}{n-2k}},
        \qquad
        \theta=a(2R)^{2k}S^{\frac{2k^2}{n-2k}}=2^{2k}\Theta.
\]
Applying Lemma~\ref{lem:mohammed-porru-harnack} with exponent $\frac{nk}{n-2k}$ and
$A=2^{2k}\Theta$ gives
\[
        \sup_{B_{\frac{1}{2}}}(-\widetilde U)
        \le C(n,k,\Theta)
        \inf_{B_{\frac{1}{2}}}(-\widetilde U).
\]
Scaling back, we get \eqref{eq:local-small-coeff-harnack}.
\end{proof}

\begin{theorem}\label{thm:k-zero-mass-annular-harnack}
Under the assumptions of Theorem~\ref{thm:hessian-log-limit}, for every $0<\varepsilon<1$ there
exist constants $\delta=\delta(u,\varepsilon)>0$ and
$C=C(n,k,\varepsilon)>1$ such that
\begin{equation}\label{eq:k-annular-harnack}
        \sup_{\mathcal A_r^\varepsilon}(-u)
        \le C\inf_{\mathcal A_r^\varepsilon}(-u),
        \qquad 0<r<\delta.
\end{equation}
\end{theorem}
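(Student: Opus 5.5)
We prove the annular Harnack inequality \eqref{eq:k-annular-harnack} by covering the annulus $\mathcal A_r^\varepsilon$ with a fixed number of balls and applying Proposition~\ref{prop:local-small-coefficient-k-hessian-harnack} on each, chaining the resulting local estimates along overlapping balls. The only point requiring care is that the constant in Proposition~\ref{prop:local-small-coefficient-k-hessian-harnack} depends on the scale-invariant quantity $\Theta=aR^{2k}S^{\frac{2k^2}{n-2k}}$. We must therefore show that $\Theta$ stays bounded uniformly in $r$; this is where the zero-mass decay \eqref{eq:k-zero-mass-asymptotic} enters.

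\textbf{Setup.} Fix $\varepsilon$ and set $R=\frac{\varepsilon r}{4}$. For every $x_0\in\mathcal A_r^\varepsilon$ we have $B_{2R}(x_0)\subset\mathcal A_r^{\varepsilon/2}$, and this ball stays away from the origin. Since $u<0$ in the nonremovable regime, we may apply the proposition with $a=1$. If $u$ vanished somewhere, the strong maximum principle would force $u\equiv0$, which we exclude as trivial. Moreover $u$ is $C^2$ on the ball, and for $r<\delta$ small the ball lies in $B_1\setminus\{0\}$.

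\textbf{Bounding $\Theta$.} Here
\[
\Theta = R^{2k}\,\sup_{B_{2R}(x_0)}(-u)^{\frac{2k^2}{n-2k}}.
\]
On $B_{2R}(x_0)$ we have $|x|\ge \varepsilon r/2$. By \eqref{eq:k-zero-mass-asymptotic}, given any $\eta>0$ we obtain $-u(x)\le \eta|x|^{-\frac{n-2k}{k}}$ for $|x|<\delta(\eta)$. Even the cruder bound \eqref{eq:serrin-fundamental-bound} gives
\[
-u\le C(\varepsilon r)^{-\frac{n-2k}{k}} \quad\text{on } B_{2R}(x_0).
\]
Since $\bigl(r^{-\frac{n-2k}{k}}\bigr)^{\frac{2k^2}{n-2k}}=r^{-2k}$, it follows that
\[
\Theta\le C(n,k)\,\varepsilon^{2k}\,(\varepsilon r)^{-2k}\,r^{2k}\cdots,
\]
which is bounded by a constant depending only on $n$ and $k$ (and $\varepsilon$). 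So boundedness is automatic from Theorem~\ref{thm:serrin-fundamental-bound}. The zero-mass decay in fact makes $\Theta$ small, but this is not needed, since the constant $C(n,k,\Theta)$ in Lemma~\ref{lem:mohammed-porru-harnack} is uniform for $\Theta\le A$. Consequently
\[
\sup_{B_R(x_0)}(-u)\le C_1\inf_{B_R(x_0)}(-u),
\]
with $C_1=C_1(n,k,\varepsilon)$ independent of $r$ and $x_0$.

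\textbf{Chaining.} The annulus $\mathcal A_r^\varepsilon$ is connected (as $n\ge2$) and is a dilate by $r$ of the fixed annulus $\mathcal A_1^\varepsilon$. Hence it can be covered by $N=N(n,\varepsilon)$ balls $B_R(x_i)$ with centers in $\mathcal A_r^\varepsilon$, arranged so that any two of them are connected by a chain of successively intersecting balls of length at most $N$. Given points $y,z$ in the annulus, pick such a chain from a ball containing $y$ to a ball containing $z$. Each overlap transfers the comparison from one ball to the next with loss of a factor $C_1$, so
\[
-u(y)\le C_1^{N}\,(-u(z)).
\]
This yields \eqref{eq:k-annular-harnack} with $C=C_1^N$ depending only on $n,k,\varepsilon$, while $\delta$ depends on $u$ only through the requirement that the balls lie in the domain where the equation holds and the bound \eqref{eq:serrin-fundamental-bound} is valid.

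\textbf{Main obstacle.} The delicate point is verifying the hypotheses of Lemma~\ref{lem:mohammed-porru-harnack}: it requires $C^3$ regularity and strict negativity. Both are handled inside Proposition~\ref{prop:local-small-coefficient-k-hessian-harnack} via Evans--Krylov theory, provided $u<0$ on $B_1\setminus\{0\}$. We will justify this negativity by the strong maximum principle argument used in Step 4 of Theorem~\ref{thm:serrin-fundamental-bound}.
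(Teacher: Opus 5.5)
Your proof is correct. It has the same architecture as the paper's: cover the annulus by a fixed number of balls, apply Proposition~\ref{prop:local-small-coefficient-k-hessian-harnack} on each, and chain. The difference lies in how you control the Harnack parameter $\Theta$.

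The paper rescales to $U_r(\xi)=u(r\xi)$ and invokes the zero-mass decay \eqref{eq:k-zero-mass-asymptotic} to force $\Theta_i(r)\le1$ for $r<\delta_0(u,\varepsilon)$. You observe that smallness is unnecessary: Lemma~\ref{lem:mohammed-porru-harnack} is uniform for $\theta\le A$, so boundedness suffices. You then get boundedness directly from the universal estimate \eqref{eq:serrin-fundamental-bound}. With $R=\varepsilon r/4$ and $|x|\ge\varepsilon r/2$ on $B_{2R}(x_0)$,
\[
\Theta\le\Bigl(\frac{\varepsilon r}{4}\Bigr)^{2k}\Bigl(C\bigl(\tfrac{\varepsilon r}{2}\bigr)^{-\frac{n-2k}{k}}\Bigr)^{\frac{2k^2}{n-2k}}=2^{-2k}C^{\frac{2k^2}{n-2k}},
\]
which is what your displayed line ending in ``$\cdots$'' should say.

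Your route is shorter and proves slightly more. Here $\delta$ can be taken as $\varepsilon/4$, so that $\mathcal A_r^{\varepsilon/2}\subset B_{1/2}\setminus\{0\}$, and this is independent of $u$. The local Harnack constant then depends only on $n,k$, so $C=C_1^N$ depends only on $n,k,\varepsilon$. The paper's $\delta$, by contrast, depends on the rate in \eqref{eq:k-zero-mass-asymptotic}.

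What the paper's route buys is that it uses only the qualitative decay $|x|^{\frac{n-2k}{k}}u(x)\to0$ rather than a uniform constant. That same decay is what later gives $a_j\to0$ in Theorem~\ref{thm:k-mazang-theorem51}. Within this paper, however, the decay is itself derived from Theorem~\ref{thm:serrin-fundamental-bound}, so your shortcut loses nothing.

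One small wording fix: negativity of $u$ holds whenever $u\not\equiv0$, by the strong maximum principle on the connected set $B_1\setminus\{0\}$, not only in the nonremovable case. The case $u\equiv0$ is trivial.
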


\begin{proof}
If $u\equiv0$, the conclusion is immediate.  Otherwise the strong maximum
principle gives $-u>0$ in the punctured ball.
Fix $0<\varepsilon<1$.  Choose finitely many balls $B(\xi_i,\rho_i)$, $i=1,\ldots,N$,
such that
\[
        \mathcal A_1^\varepsilon\subset\bigcup_{i=1}^N B(\xi_i,\rho_i),
        \qquad
        B(\xi_i,2\rho_i)\Subset\mathcal A_1^{\frac{\varepsilon}{2}}.
\]
For $0<r<\frac{\varepsilon}{2}$ and $U_r(\xi):=u(r\xi)$, we have 
\begin{equation}\label{eq:rescaled-k-harnack-equation}
        F_k[U_r]
        =r^{2k}F_k[u](r\xi)
        =r^{2k}(-U_r)^{\frac{nk}{n-2k}}.
\end{equation}
For each $i$,
Proposition~\ref{prop:local-small-coefficient-k-hessian-harnack}, applied to
$U_r$ on $B(\xi_i,2\rho_i)$, gives a Harnack constant depending only on
\[
        \Theta_i(r)
        =r^{2k}\rho_i^{2k}M_i(r)^{\frac{2k^2}{n-2k}},
\]
where $M_i(r):=\sup_{B(\xi_i,2\rho_i)}(-U_r)$. It suffices to bound $\Theta_i(r)$ uniformly as $r\to0$.

Since $\frac{n-2k}{k}\frac{2k^2}{n-2k}=2k$,
we have, for $\xi\in\mathcal A_1^{\frac{\varepsilon}{2}}$,
\begin{equation}\label{eq:zero-mass-controls-local-coefficient}
\begin{aligned}
        r^{2k}(-U_r(\xi))^{\frac{2k^2}{n-2k}}
        &=r^{2k}(-u(r\xi))^{\frac{2k^2}{n-2k}}                                      \\
        &=|\xi|^{-2k}
          \bigl(|r\xi|^{\frac{n-2k}{k}}(-u(r\xi))\bigr)^{\frac{2k^2}{n-2k}}\\
        &\le C(\varepsilon)
        \left(\sup_{y\in\mathcal A_r^{\frac{\varepsilon}{2}}}
        |y|^{\frac{n-2k}{k}}(-u(y))\right)^{\frac{2k^2}{n-2k}}.
\end{aligned}
\end{equation}
By \eqref{eq:k-zero-mass-asymptotic}, the last supremum tends to zero as
$r\to0$.  Hence one may choose
$\delta_0=\delta_0(u,\varepsilon)\le\frac{\varepsilon}{2}$ such that, for
$0<r<\delta_0$ and every $i$,
\begin{equation}\label{eq:local-coeff-small-on-chain}
        r^{2k}\rho_i^{2k}M_i(r)^{\frac{2k^2}{n-2k}}\le1.
\end{equation}
Applying
Proposition~\ref{prop:local-small-coefficient-k-hessian-harnack} on each
ball, using $\Theta_i(r)\le1$, gives
\begin{equation}\label{eq:local-chain-harnack-corrected}
        \sup_{B(\xi_i,\rho_i)}(-U_r)
        \le C_0(n,k,\varepsilon)\inf_{B(\xi_i,\rho_i)}(-U_r),
        \qquad i=1,\ldots,N,
\end{equation}
where $C_0$ is independent of $r$.  It is now easy to prove
\[
        \sup_{\mathcal A_1^\varepsilon}(-U_r)
        \le C(n,k,\varepsilon)\inf_{\mathcal A_1^\varepsilon}(-U_r),
        \qquad 0<r<\delta_0.
\]
Scaling back, we obtain \eqref{eq:k-annular-harnack}.
\end{proof}

The annular estimate separates the removable case from the case in which
the solution diverges to $-\infty$ at the origin.

\begin{lemma}
\label{lem:critical-bounded-or-blowup}
Under the assumptions of Theorem~\ref{thm:hessian-log-limit}, suppose that
$u\not\equiv0$.  Then either $u(x)\to-\infty$ as $x\to0$, or the
singularity is removable and $u(0)<0$.
\end{lemma}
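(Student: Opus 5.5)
The plan is to split according to whether the canonical extension has $u(0)=-\infty$ or $u(0)>-\infty$, and to use the annular Harnack inequality (Theorem~\ref{thm:k-zero-mass-annular-harnack}) to turn pointwise information along a sequence into information on whole spheres. Since $u\not\equiv0$, the strong maximum principle gives $-u>0$ in $B_1\setminus\{0\}$. By Lemma~\ref{lem:hessian-measure-decomposition-origin} the canonical extension $u(0)=\limsup_{x\to0}u(x)$ lies in $\Phi^k(B_1)$. By Proposition~\ref{prop:no-dirac-pv} its Hessian measure is $(-u)^{\frac{nk}{n-2k}}\,dx$ with no Dirac mass.

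\emph{Case $u(0)=-\infty$.} Here $\limsup_{x\to0}u=-\infty$, which already means $u(x)\to-\infty$ as $x\to0$, and we are done. It is still worth recording how Harnack enters, because it is used in the other case. Suppose only that $\liminf_{x\to0}u=-\infty$, and pick $x_j\to0$ with $-u(x_j)\to\infty$. Theorem~\ref{thm:k-zero-mass-annular-harnack} with $\varepsilon=\tfrac12$ and $r=|x_j|$ gives $\inf_{|x|=r_j}(-u)\to\infty$. Now apply the maximum principle to the $k$-subharmonic function $u$ on the annuli $\{r_{j+1}\le|x|\le r_j\}$. Since $u$ is subharmonic, $\max_{\overline{\text{annulus}}}u$ is attained on the boundary spheres, where $u\to-\infty$. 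Hence $u\to-\infty$ uniformly between consecutive spheres.

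\emph{Case $u(0)>-\infty$.} Here $\limsup_{x\to0}u>-\infty$, so $\underline u(r_j)=\inf_{|x|=r_j}(-u)$ stays bounded along some $r_j\to0$. The annular Harnack inequality upgrades this to $\sup_{|x|=r_j}(-u)\le C$. Using subharmonicity again, $-u$ is superharmonic, so its infimum on each annulus is attained on the boundary. That only bounds $-u$ from below, so we need a different route to an upper bound.

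To bound $-u$ from above, apply the maximum principle to the comparison $u\ge$ something. The cleanest way is the Wolff-potential argument: boundedness of $-u$ on a sequence of spheres plus Harnack on the annuli $\mathcal A_{r_j}^{\varepsilon}$ only controls $-u$ near those spheres. Instead I would use the sub-mean/ superharmonic structure as follows. $-u$ is superharmonic and positive, so its spherical minimum $\underline u(r)$ satisfies a minimum principle on annuli: $\underline u$ is controlled by its values at the endpoints $r_{j+1},r_j$. Combined with annular Harnack, this gives $\sup_{r_{j+1}\le|x|\le r_j}(-u)\le C\max(\underline u(r_j),\underline u(r_{j+1}))\le C'$, so $u$ is bounded near $0$.

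Boundedness gives $u\in L^\infty\subset L^{\frac{nk}{n-2k}+\delta}$ near the origin. Corollary~\ref{cor:continuous-extension} then yields removability, with $u$ H\"older continuous across $0$ and $F_k[u]=(-u)^{\frac{nk}{n-2k}}$ in $\mathcal D'$. Finally, $u(0)<0$: if $u(0)=0$, the continuous extension $u\le0$ attains an interior maximum $0$. Since $u$ is a continuous $k$-convex, hence subharmonic, function on $B_1$ attaining its maximum inside, the strong maximum principle forces $u\equiv0$ near $0$, contradicting $-u>0$ in the punctured ball.

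The main obstacle is the step in the bounded case where the spherical minimum principle is combined with annular Harnack to get a uniform bound between the chosen spheres. The point needing care is that Harnack holds on each annulus with the same constant only for $r<\delta$. Once uniform boundedness is in place, the rest follows from the earlier results.
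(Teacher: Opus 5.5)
Your bounded case breaks down at exactly the step you flag as the main obstacle. After Harnack gives $\sup_{|x|=r_j}(-u)\le C$, you claim that superharmonicity of $-u$ controls $\underline u(r)$, for $r_{j+1}\le r\le r_j$, from above by $\max(\underline u(r_j),\underline u(r_{j+1}))$. The minimum principle gives only the reverse inequality $\underline u(r)\ge\min(\underline u(r_j),\underline u(r_{j+1}))$, as you yourself note one paragraph earlier. A positive superharmonic function can be huge in the middle of an annulus and small on both boundary spheres; for example, take a radial function that is a tall concave tent in the variable $|x|^{2-n}$. Annular Harnack cannot replace this step either. It compares values only within one annulus $\mathcal A_r^\varepsilon$, and chaining it across $[r_{j+1},r_j]$ costs a factor $C$ per annulus, while $r_j/r_{j+1}$ is uncontrolled because the $r_j$ are just radii where $-u$ happens to be bounded. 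So the uniform bound on $-u$ between consecutive spheres is not established.

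The missing argument is already in your first case. There you show, correctly, that if $\liminf_{x\to0}u=-\infty$, then Harnack yields $\underline u(r_j)\to\infty$ along radii $r_j\downarrow0$. The maximum principle for $u$ on $\{r_{j+1}\le|x|\le r_j\}$ then gives $u\to-\infty$. Used contrapositively, this settles the second case. Since $u(0)=\limsup_{x\to0}u>-\infty$, we have $u\not\to-\infty$, hence $\liminf_{x\to0}u>-\infty$, and because $u\le0$ the function is bounded near the origin.

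This is the paper's mechanism. The paper sandwiches a point $z_j$ with $-u(z_j)\le M$ between two spheres on which Harnack gives $-u\ge j$, and applies the minimum principle to $-u$. Your version, run directly between consecutive large spheres, does not even need the bounded sequence. With boundedness in hand, the rest of your proof is fine: removability via Corollary~\ref{cor:continuous-extension}, and $u(0)<0$ by the strong maximum principle.
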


\begin{proof}
Suppose that $u$ does not tend to $-\infty$ as $x\to0$.  Then there
are $M<\infty$ and a sequence $z_j\to0$ such that $-u(z_j)\le M$.
We prove that $u$ is bounded near the origin.

Suppose for contradiction that $-u$ is unbounded in every punctured
neighborhood of the origin.  Fix $\varepsilon_0=\frac{1}{4}$, and let $C_H$ and
$\delta_H$ be the constants in
Theorem~\ref{thm:k-zero-mass-annular-harnack} for this value of
$\varepsilon_0$.  Choose $y_j^+$ so that
\[
        0<|y_j^+|<\delta_H,
        \qquad
        -u(y_j^+)\ge jC_H.
\]
After passing to a subsequence of $z_j$, we can arrange that $|z_j|<\varepsilon_0^2|y_j^+|$ with $-u(z_j)\le M$.
Since $-u$ is unbounded in every punctured ball, choose $y_j^-$ with 
\[
        0<|y_j^-|<\varepsilon_0^2|z_j|,
        \qquad
        -u(y_j^-)\ge jC_H.
\]
Applying  Theorem~\ref{thm:k-zero-mass-annular-harnack} on
$\mathcal A_{|y_j^+|}^{\varepsilon_0}$ and
$\mathcal A_{|y_j^-|}^{\varepsilon_0}$, we obtain $-u\ge j$ on $\mathcal A_{|y_j^+|}^{\varepsilon_0}
\cup\mathcal A_{|y_j^-|}^{\varepsilon_0}$.

Since $-u\ge j$ on both boundary components of the annulus $A_j=\{\varepsilon_0^{-1}|y_j^-|<|x|<\varepsilon_0|y_j^+|\}$, the minimum principle yields
\[
        -u(z_j)\ge\min_{\partial A_j}(-u)\ge j,
\]
contradicting $-u(z_j)\le M$ for $j>M$.  Thus $u$ is bounded near
the origin.

In the bounded case, $(-u)^{\frac{nk}{n-2k}}\in L^m$ in a smaller ball for every
finite $m>1$.  Proposition~\ref{prop:no-dirac-pv} extends the equation
at the origin, and Proposition~\ref{thm:general-removable-lm} makes
the singularity removable.  Finally,
$u(0)<0$: if $u(0)=0$, the subharmonic function $u\le0$ would attain
an interior maximum at the origin, and the strong maximum principle would
give $u\equiv0$, contrary to the hypothesis.
\end{proof}

We now pass to normalized rescalings.  In the nonremovable case, the
annular Harnack estimate gives compactness on fixed annuli, and the global
B\^ocher theorem identifies every limiting function.

\begin{theorem}\label{thm:k-mazang-theorem51}
Under the assumptions of Theorem~\ref{thm:hessian-log-limit}, suppose that
$u\not\equiv0$.  With $\underline u$ and $\overline u$ defined as above,
\begin{equation}\label{eq:k-mazang-angular-asymptotic}
        \lim_{x\to0}\frac{-u(x)}{\underline{u}(|x|)}=1.
\end{equation}
In particular,
\begin{equation}\label{eq:k-mazang-spherical-ratio}
        \lim_{r\to0}\frac{\overline{u}(r)}{\underline{u}(r)}=1.
\end{equation}
\end{theorem}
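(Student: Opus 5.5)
We argue by contradiction and rescaling, normalizing by the spherical minimum. By Lemma~\ref{lem:critical-bounded-or-blowup}, either the singularity is removable with $u(0)<0$, or $u(x)\to-\infty$. In the removable case $u$ extends continuously to the origin with $-u(0)>0$, so both $-u(x)$ and $\underline u(|x|)$ tend to $-u(0)$ and \eqref{eq:k-mazang-angular-asymptotic} is immediate. It therefore suffices to treat the case $u(x)\to-\infty$.

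Suppose \eqref{eq:k-mazang-angular-asymptotic} fails. Then there are $x_j\to0$ with $-u(x_j)/\underline u(|x_j|)\to\lambda\ne1$; since $-u\ge\underline u$ on each sphere, necessarily $\lambda>1$, possibly $\lambda=\infty$. Set $r_j=|x_j|$ and
\[
V_j(\xi)=\frac{u(r_j\xi)}{\underline u(r_j)},\qquad \xi\in B_{1/r_j}\setminus\{0\}.
\]
Then
\[
F_k[V_j]=a_j(-V_j)^{\frac{nk}{n-2k}},\qquad a_j=r_j^{2k}\,\underline u(r_j)^{\frac{2k^2}{n-2k}}.
\]
By \eqref{eq:k-zero-mass-asymptotic}, $a_j=\bigl(r_j^{\frac{n-2k}{k}}\underline u(r_j)\bigr)^{\frac{2k^2}{n-2k}}\to0$.

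Theorem~\ref{thm:k-zero-mass-annular-harnack}, applied with every $\varepsilon$, bounds $-V_j$ above and below on each fixed annulus $\mathcal A_1^\varepsilon$, since $\inf_{|\xi|=1}(-V_j)=1$. This already forces $\lambda<\infty$. The right-hand sides $a_j(-V_j)^{p}$ then tend to $0$ uniformly on compacts of $\mathbb R^n\setminus\{0\}$. Trudinger's H\"older estimate \eqref{eq:trudinger-holder-original} gives equicontinuity, so a subsequence converges locally uniformly on $\mathbb R^n\setminus\{0\}$ to a limit $V\le0$. Because the convergence is local uniform away from the origin, $L^1_{\loc}$ convergence holds there, and weak continuity of Hessian measures shows that $V\in\Phi^k(\mathbb R^n\setminus\{0\})$ with $F_k[V]=0$.

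Theorem~\ref{thm:k-hessian-punctured-bocher} now gives $V=B-A|\xi|^{2-\frac nk}$, which is radial. Passing to a further subsequence with $x_j/r_j\to\theta_0$, we get $-V(\theta_0)=\lambda>1$ while $\inf_{|\xi|=1}(-V)=1$. This contradicts radiality, since $V$ is constant on the unit sphere. Hence \eqref{eq:k-mazang-angular-asymptotic} holds. Taking $x$ on the sphere of radius $r$ where $-u$ attains $\overline u(r)$ gives \eqref{eq:k-mazang-spherical-ratio}.

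The main obstacle is the compactness step: one needs the rescalings to be uniformly controlled on whole annuli, not just on the unit sphere. The annular Harnack inequality, valid for every $\varepsilon$ with $\delta$ depending on $\varepsilon$, supplies exactly this. One must also check that the limit genuinely satisfies the hypotheses of the global B\^ocher theorem. Nonpositivity and $k$-subharmonicity pass to the limit, and no information at the origin is needed, because Theorem~\ref{thm:k-hessian-punctured-bocher} is stated for punctured space.
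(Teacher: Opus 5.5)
Your proposal is correct and follows essentially the same route as the paper. You normalize by the spherical minimum, use the annular Harnack inequality of Theorem~\ref{thm:k-zero-mass-annular-harnack} and Trudinger's H\"older estimate to get locally uniform compactness away from the origin, identify the limit as $k$-harmonic in the punctured space, and invoke the global B\^ocher theorem for radiality. The only difference is cosmetic: you argue by contradiction with a limiting ratio $\lambda\ne1$, whereas the paper shows directly that every sequence $x_j\to0$ has a subsequence along which the ratio tends to $1$.
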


\begin{proof}
If the singularity is removable, then $u(0)<0$ and
$\underline{u}(r)\to-u(0)$ and $u(x)\to u(0)$, so
\eqref{eq:k-mazang-angular-asymptotic} is immediate.  Hence it remains to
prove
\eqref{eq:k-mazang-angular-asymptotic} in the case in which $u(x)\to-\infty$ as $x\to0$.

Let $x_j\to0$ be arbitrary.  Let $r_j=|x_j|$ and $e_j=x_j/|x_j|\in \mathbb{S}^{n-1}$.
After passing to a subsequence, we may assume
$e_j\to e_\infty\in\mathbb S^{n-1}$.  Define $U_j(\xi)=u(r_j\xi)/\underline{u}(r_j)$.
Then $U_j\in\Phi^k$, $U_j<0$, and
\begin{equation}\label{eq:normalized-zero-mass-scaled-equation}
        F_k[U_j]
        =a_j(-U_j)^{\frac{nk}{n-2k}},
\end{equation}
where $a_j:=r_j^{2k}\underline{u}(r_j)^{\frac{2k^2}{n-2k}}\to 0$, since  $\frac{n-2k}{k}\frac{2k^2}{n-2k}=2k$ and by \eqref{eq:k-zero-mass-asymptotic}
\[
        r_j^{\frac{n-2k}{k}}\underline{u}(r_j)
        \le \sup_{|x|=r_j}|x|^{\frac{n-2k}{k}}(-u(x))
        \longrightarrow0.
\]
Fix $K\Subset\mathbb R^n\setminus\{0\}$, and choose
$0<\varepsilon<1$ such that
$
K\Subset\operatorname{int}\mathcal A_1^\varepsilon.
$
By Theorem~\ref{thm:k-zero-mass-annular-harnack}, for all sufficiently
large $j$,
\[
C(\varepsilon)^{-1}\le -U_j\le C(\varepsilon)
\qquad\text{on }\mathcal A_1^\varepsilon.
\]
Since $a_j\to0$, it follows that
\begin{equation}\label{eq:scaled-density-goes-zero}
	a_j(-U_j)^{\frac{nk}{n-2k}}\longrightarrow0
	\qquad\text{in }L^\infty(\mathcal A_1^\varepsilon).
\end{equation}

Fix $m_0>\frac{n}{2k}$.  The preceding bounds give uniform
$L^\infty$ bounds for $U_j$ and uniform $L^{m_0}$ bounds for
$F_k[U_j]$ locally in
$\operatorname{int}\mathcal A_1^\varepsilon$.  Applying
\eqref{eq:trudinger-holder-original} yields equicontinuity of $U_j$
on $K$.  Since $K\Subset\mathbb R^n\setminus\{0\}$ is arbitrary,
a diagonal argument gives, after passing to a subsequence,
\begin{equation}\label{eq:normalized-Uj-uniform-limit}
	U_j\longrightarrow U
	\qquad\text{locally uniformly in }\mathbb R^n\setminus\{0\}.
\end{equation}
Therefore, $U\in\Phi^k(\mathbb R^n\setminus\{0\})$ and $U\le0$.
Let $\varphi\in C_c^\infty(\mathbb R^n\setminus\{0\})$, and choose
$0<\varepsilon<1$ such that
$
\supp\varphi\Subset\operatorname{int}\mathcal A_1^\varepsilon.
$
By the weak continuity of Hessian measures, we have $F_k[U_j]\rightharpoonup F_k[U]$ on $\operatorname{int}\mathcal A_1^\varepsilon$,
whereas \eqref{eq:scaled-density-goes-zero} gives
\[
\int
\varphi\,a_j(-U_j)^{\frac{nk}{n-2k}}\,d\xi
\longrightarrow0.
\]
Combining this with
\eqref{eq:normalized-zero-mass-scaled-equation}, we obtain $F_k[U]=0$ in $\mathbb{R}^n\setminus\{0\}$.
Then it follows from Theorem~\ref{thm:k-hessian-punctured-bocher} that
\[
        U(\xi)=B-A|\xi|^{2-\frac{n}{k}},
        \qquad A\ge0,
        \quad B\le0.
\]
Notice that $\inf_{|\xi|=1}(-U_j(\xi))=1$
for every $j$. Thus the locally uniform convergence on $\mathbb S^{n-1}$
gives
\[
        \inf_{|\xi|=1}(-U(\xi))=1.
\]
The function $U$ is radial, so $-U\equiv1$ on $\mathbb S^{n-1}$.
Hence
\[
        \frac{-u(x_j)}{\underline{u}(|x_j|)}
        =-U_j(e_j)
        \longrightarrow -U(e_\infty)=1.
\]
Since $x_j\to0$ was arbitrary,
this proves \eqref{eq:k-mazang-angular-asymptotic}.
\end{proof}

\subsection{Sharp logarithmic asymptotics}
\label{sec:critical-log}

Having established asymptotic radiality, it remains to determine the
exact logarithmic behavior.  We use the Emden--Fowler transformation and
one-sided measure differential inequalities for the spherical extrema:
the spherical minimum gives the upper bound, while the spherical maximum
gives the matching lower bound.

We first record the elementary facts on semiconvexity and semiconcavity
used below.

\begin{definition}[Semiconvexity and semiconcavity]
Let $f:(\alpha,\beta)\to\mathbb R$.
\begin{enumerate}[label=\textup{(\roman*)}]
\item The function $f$ is \emph{semiconvex} if there exists $C>0$ such that
      $f(r)+Cr^2$ is convex on $(\alpha,\beta)$.
\item The function $f$ is \emph{semiconcave} if there exists $C>0$ such that
$f(r)-Cr^2$ is concave on $(\alpha,\beta)$.
\end{enumerate}
\end{definition}

\begin{lemma}\label{lem:semiconcavity-properties}
Let $f:(\alpha,\beta)\to\mathbb R$ be locally semiconcave.  Then $f$ is
locally Lipschitz, the one-sided derivatives $f_r^\pm$ exist everywhere,
and $f_r^-\ge f_r^+$.  Both derivatives are locally of bounded variation;
$f_r^-$ is left-continuous, whereas $f_r^+$ is right-continuous and lower
semicontinuous.  Moreover,
\[
        f_{rr}=\nu_{ac}+\nu_{pp}+\nu_{sc},
        \qquad \nu_{pp},\nu_{sc}\le0,
\]
the set $\{r:f_r^-(r)>f_r^+(r)\}$ is at most countable, and
\[
        f_r^-(\beta_1)-f_r^+(\alpha_1)
        =f_{rr}((\alpha_1,\beta_1))
        \quad\text{for }[\alpha_1,\beta_1]\Subset(\alpha,\beta).
\]
For locally semiconvex $f$, the corresponding assertions hold with the
inequalities reversed and $f_r^+$ upper semicontinuous.
\end{lemma}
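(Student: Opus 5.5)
The natural route is to reduce everything to the concave case by setting $g(r)=f(r)-Cr^2$ locally, with $g$ concave. Since $Cr^2$ is smooth, the claims about regularity, one-sided derivatives, bounded variation and countability of the jump set transfer back to $f$ directly. The one exception is the decomposition of $f_{rr}$: there the smooth part $2C\,dr$ is absorbed into the absolutely continuous component, and the signs of $\nu_{pp}$ and $\nu_{sc}$ are unaffected. The semiconvex statements follow afterwards by applying the result to $-f$.

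For concave $g$ on a compact subinterval $[\alpha_1,\beta_1]\Subset(\alpha,\beta)$, I would proceed in order:
\begin{enumerate}
\item Monotonicity of difference quotients, i.e.\ $\frac{g(s)-g(r)}{s-r}$ is nonincreasing in each variable. This gives existence of $g_r^\pm$ everywhere, the inequality $g_r^-(r)\ge g_r^+(r)$, and $g_r^+(r)\ge g_r^-(s)$ for $r<s$.
\item Both one-sided derivatives are nonincreasing, so they are locally bounded. They are therefore locally BV, and $g$ is locally Lipschitz with constant $\max\{|g_r^+(\alpha_1)|,|g_r^-(\beta_1)|\}$.
\item Continuity: $g_r^+(r)=\lim_{s\downarrow r}g_r^\pm(s)$ and $g_r^-(r)=\lim_{s\uparrow r}g_r^\pm(s)$, using the squeeze from the monotone quotients. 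Thus $g_r^+$ is right-continuous and $g_r^-$ is left-continuous. Since $g_r^+$ is also nonincreasing and right-continuous, $\liminf_{s\to r}g_r^+(s)\ge g_r^+(r)$ (the left limit is $g_r^-(r)\ge g_r^+(r)$), so $g_r^+$ is lower semicontinuous.
\item The set $\{g_r^->g_r^+\}$ is exactly the jump set of a monotone function. It is at most countable because the open intervals $(g_r^+(r),g_r^-(r))$ are pairwise disjoint, by step 1.
\end{enumerate}

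For the measure, take the distributional derivative $g_{rr}$. It is the Lebesgue--Stieltjes measure of the nonincreasing function $g_r^+$, hence a nonpositive Radon measure. Its Lebesgue decomposition into $\nu_{ac}+\nu_{pp}+\nu_{sc}$ then has all three parts nonpositive for $g$. For $f$, only the absolutely continuous part gains the term $+2C$, which is why only $\nu_{pp},\nu_{sc}\le0$ is asserted.

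The identity $f_r^-(\beta_1)-f_r^+(\alpha_1)=f_{rr}((\alpha_1,\beta_1))$ comes from the Stieltjes formula: the measure of an open interval is the left limit at $\beta_1$ minus the right limit at $\alpha_1$. By step 3 these limits equal $g_r^-(\beta_1)$ and $g_r^+(\alpha_1)$, and adding back $2C(\beta_1-\alpha_1)$ gives the identity for $f$.

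The only delicate point is this bookkeeping of the open-interval endpoints, which must match the left- and right-limit conventions. That is where the left-continuity of $f_r^-$ and right-continuity of $f_r^+$ enter. The rest is standard one-variable convex analysis.
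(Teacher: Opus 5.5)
Your proposal is correct and follows essentially the same route as the paper. Both arguments subtract a quadratic on compact subintervals to reduce to concave functions, then appeal to standard one-variable convex analysis: the singular parts inherit their sign from the monotonicity of $g_r^+$, the interval identity is the Stieltjes (Newton--Leibniz) formula with the left/right-limit conventions, and lower semicontinuity of $f_r^+$ comes from its being a right-continuous nonincreasing function plus a continuous affine term. You simply supply the details that the paper summarizes as standard.
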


\begin{proof}
On a compact subinterval, subtracting or adding a quadratic polynomial
reduces the assertion to the standard one-dimensional theory of convex and
concave functions.  The sign of the singular part of the second derivative
is inherited from convexity or concavity, while the last identity is the
Newton--Leibniz formula for the bounded-variation representative of $f'$.
For a semiconcave function, $f_r^+$ differs by a continuous affine
function from a right-continuous nonincreasing function and is therefore
lower semicontinuous; the semiconvex case is analogous with the inequalities
reversed.
\end{proof}

We first apply these facts to the spherical minimum and derive the upper
bound.

\begin{proposition}
    \label{prop:hessian-log-upper-bound}
    Under the hypotheses of Theorem~\ref{thm:hessian-log-limit},
    \begin{equation}\label{eq:hessian-log-upper-bound}
        \limsup_{x\to0}
        \left(\log\frac1{|x|}\right)^{\frac{n-2k}{2k^2}}|x|^{\frac{n-2k}{k}}(-u(x))
        \le \mathcal L_{n,k}.
    \end{equation}
\end{proposition}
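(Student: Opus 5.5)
The plan is to work with the spherical minimum $\underline u(r)=\inf_{|x|=r}(-u(x))$. In the equivalent form $m(r):=-\underline u(r)=\max_{|x|=r}u(x)$, it is a maximum of $C^2$ functions and hence locally semiconvex on $(0,1)$. Lemma~\ref{lem:semiconcavity-properties} then makes $m_r^\pm$ locally BV and gives a measure $m_{rr}$ whose singular part is nonnegative.

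By Lemma~\ref{lem:critical-bounded-or-blowup} we may assume $u(x)\to-\infty$, since otherwise the left side of \eqref{eq:hessian-log-upper-bound} is $0$. Then $m$ eventually increases in $r$, so $m_r\ge0$ wherever $m$ is differentiable.

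\textbf{Step 1: a one-sided differential inequality.} Fix a point $x_0$ with $|x_0|=r$ and $u(x_0)=m(r)$. Since $u(x)\le m(|x|)$ with equality at $x_0$, the function $m(|x|)$ touches $u$ from above at $x_0$. Hence, in the viscosity/support sense, $D^2u(x_0)\le D^2[m(|\cdot|)](x_0)$, whose eigenvalues are $m_{rr}$ (radial) and $m_r/r$ with multiplicity $n-1$. Because $\lambda(D^2u(x_0))\in\Gamma_k$ and $\sigma_k$ is monotone on $\Gamma_k$, we get
\[
(-m)^p=\sigma_k(D^2u(x_0))\le \sigma_k\bigl(m_{rr},\tfrac{m_r}{r},\dots,\tfrac{m_r}{r}\bigr)
=\tfrac1k\tbinom{n-1}{k-1}r^{1-n}\bigl(r^{n-k}(m_r)^k\bigr)_r .
\]
I would first prove this at points of twice differentiability of $m$, where a comparison with a quadratic touching $m$ from above is available by semiconvexity. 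I would then upgrade it to an inequality of measures on $(0,1)$, using that the singular part of $m_{rr}$ is $\ge0$ and that $m_r\ge0$, so the singular parts only help.

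\textbf{Step 2: Emden--Fowler variables.} Set $t=\log\frac1r$, $\alpha=\frac{n-2k}{k}$, and $\underline u(r)=r^{-\alpha}v(t)$. Since $k(\alpha+1)=n-k$ and $\alpha p=n$, all powers of $e^t$ cancel, and the inequality becomes
\[
\Phi_t\le -c_{n,k}\,v^p,\qquad \Phi:=(\alpha v+v_t)^k\ge0,\qquad c_{n,k}=k\tbinom{n-1}{k-1}^{-1},
\]
as measures in $t$. Here I use the one-sided derivatives via Lemma~\ref{lem:semiconcavity-properties}, so that $\Phi$ is a nonincreasing BV function. By \eqref{eq:k-zero-mass-asymptotic} we have $v(t)\to0$, and since $\Phi$ is monotone and nonnegative it has a limit, which must be $0$.

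\textbf{Step 3: comparing $\Phi$ with $v^k$.} The key claim is $v_t=o(v)$ as $t\to\infty$, at least in the averaged sense $\frac{v(t+s)}{v(t)}\to1$ uniformly for $|s|\le1$. I would derive it from the rescaling argument of Theorem~\ref{thm:k-mazang-theorem51}, normalizing by $\underline u(r)$. The limits are $U=B-A|\xi|^{-\alpha}$ with $-U\equiv1$ on $\mathbb S^{n-1}$, and I need to show that the nonremovable case forces $A=0$. For this I would use the decay $r^{\alpha}\underline u(r)\to0$ together with the monotonicity of $\Phi$: if $A>0$, then $\alpha v+v_t$ would equal $0$ on an asymptotically long window, which contradicts $\Phi$ being nonincreasing with $v>0$ unless $\Phi\equiv0$ there. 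Then $\underline u(\rho r)/\underline u(r)\to1$, which translates to $v(t+s)/v(t)\to e^{-\alpha s}\cdot e^{\alpha s}=1$.

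Combined with monotonicity of $\Phi$, and by averaging $\Phi^{1/k}=\alpha v+v_t$ over unit windows, this gives $\Phi=\alpha^k v^k(1+o(1))$. Substituting yields
\[
\Phi_t\le -c_{n,k}\alpha^{-p}\Phi^{p/k}(1+o(1)).
\]
Integrating $\bigl(\Phi^{1-p/k}\bigr)_t\ge (\tfrac pk-1)c_{n,k}\alpha^{-p}(1+o(1))$, which is legitimate for monotone BV functions since the jumps have the favourable sign, gives
\[
\Phi(t)\le\bigl[(\tfrac pk-1)c_{n,k}\alpha^{-p}t\bigr]^{-\frac{k}{p-k}}(1+o(1)).
\]
Using $p-k=\frac{2k^2}{n-2k}$, we get $v(t)\le \mathcal L_{n,k}\,t^{-\frac{n-2k}{2k^2}}(1+o(1))$, after carefully matching constants with \eqref{eq:Lnk-def}.

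\textbf{Step 4: conclusion.} Finally, \eqref{eq:k-mazang-angular-asymptotic} transfers the bound from $\underline u$ to $-u(x)$, which yields \eqref{eq:hessian-log-upper-bound}.

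I expect the main obstacle to be Step 3, namely the rigorous comparison $\Phi\sim\alpha^kv^k$ when $v$ is only semiconcave-type and $\Phi$ is only BV. A second delicate point is the measure-valued version of Step 1 at kinks of $m$.
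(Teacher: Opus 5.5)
The gap is in Step~3, and Step~3 as written cannot work. Steps 1, 2 and 4 are sound. The flux $\Phi=r^{n-k}(m_r)^k=(\alpha v+v_t)^k$ with $d\Phi\le -c_{n,k}v^p\,dt$ is the flux form of the paper's inequality \eqref{eq:critical-phase-inequality}.

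Your blow-up bookkeeping is inverted. With $\rho=e^{-s}$ you have $\underline u(\rho r)/\underline u(r)=e^{\alpha s}\,v(t+s)/v(t)$. So the profile $A=0$, $U\equiv-1$ gives $v(t+s)/v(t)\to e^{-\alpha s}$, not $1$. Slow variation of $v$ corresponds to $A=1$, $B=0$, $U=-|\xi|^{-\alpha}$. Likewise, $\alpha v+v_t\equiv0$ on a window means $\underline u$ is constant there, which is $A=0$, not $A>0$. A limit with parameter $A$ has $(\alpha v+v_t)(t_j+s)\approx\alpha A\,v(t_j)$, constant in $s$, so it is fully compatible with $\Phi$ being nonincreasing.

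More seriously, even the correct target $A=1$ is out of reach with these tools. The nonlinearity disappears under the rescaling ($a_j\to0$), so every $A\in[0,1]$ is an admissible $k$-harmonic limit. The one-sided inequality also lets $\Phi$ drop abruptly, since its singular part only has a sign. After such a drop $v$ relaxes like $e^{-\alpha t}$ on unit windows, which gives $A<1$ along a sequence. So $\Phi\sim\alpha^k v^k$ is a consequence of the two-sided asymptotics, not an input for the upper bound.

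You do not need that equivalence; the following repair works with what you already have.

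(a) $v_t^+<0$ for all large $t$. Such times occur arbitrarily late because $v>0$ and $v\to0$. Also $v_t^+=\Phi^{1/k}-\alpha v$ cannot climb back to $0$: jumps of $v_t$ are downward, and near a zero of $v_t$ the term $\alpha v$ is almost constant while $\Phi^{1/k}$ decreases at a definite rate. This is Lemma~\ref{lem:critical-z-negative}.

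(b) Hence $\Phi^{1/k}\le\alpha v$, so $d\Phi\le -c_{n,k}\alpha^{-p}\Phi^{p/k}\,dt$ with no error term. Since $\Phi>0$ and its jumps are downward, $\Phi^{-(p-k)/k}(t)\ge\frac{p-k}{k}c_{n,k}\alpha^{-p}(t-T)$. That is, $\Phi^{1/k}\le G(t):=[\frac{p-k}{k}c_{n,k}\alpha^{-p}(t-T)]^{-1/(p-k)}$.

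(c) $e^{\alpha t}v$ is locally absolutely continuous with derivative $e^{\alpha t}\Phi^{1/k}$ a.e. So for a fixed $T_1>T$,
$v(t)\le e^{-\alpha(t-T_1)}v(T_1)+\int_{T_1}^t e^{-\alpha(t-s)}G(s)\,ds=\alpha^{-1}G(t)(1+o(1))$.
Here the slowly varying function is the explicit barrier $G$, not $v$. Finally, $\alpha^{-1}[\frac{p-k}{k}c_{n,k}\alpha^{-p}]^{-1/(p-k)}=\mathcal L_{n,k}$.

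Repaired this way, your argument is a genuinely different and somewhat shorter route than the paper's. The paper proves that $(\underline w,\underline z)$ eventually enters $\{z<-Aw^{q-k+1}\}$ and stays there by a first-contact argument, then integrates $\underline w_t\le-A\underline w^{q-k+1}$. Your monotone flux replaces that invariant-region analysis with two explicit integrations.
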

If $u\equiv0$, the proposition is immediate.  In the phase-plane analysis
below we therefore assume $u\not\equiv0$; the strong maximum principle gives
$u<0$ in the punctured ball.
Set
\[
        \eta:=\frac{n-2k}{k},
        \qquad
        t=\eta\log\frac{\eta}{r},
        \qquad 0<r<1,
\]
and define $\underline w$ by
$ e^t\underline w(t)=\underline{u}(r)$.
We also denote
\[
        \phi(r):=\max_{|x|=r}u(x)=-\underline{u}(r).
\]
The next two lemmas establish the one-sided measure inequality satisfied
by $\underline w$.

\begin{lemma}\label{lem:critical-w-semiconcavity}
The function $\underline w$ is locally semiconcave on
$(\eta\log\eta,\infty)$ and satisfies
\[
        \lim_{t\to\infty}\underline w(t)=0.
\]
If
\[
        \underline w_{tt}=\nu_{ac}+\nu_{pp}+\nu_{sc},
\]
then $\nu_{pp}$ and $\nu_{sc}$ are nonpositive, and $\nu_{pp}$ is supported
on the at most countable set
\[
        \mathcal J
        :=\{t:\underline w_t^-(t)>\underline w_t^+(t)\}.
\]
\end{lemma}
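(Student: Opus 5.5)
The plan is to obtain semiconcavity of $\underline w$ from a semiconvexity property of the spherical maximum $\phi(r)=\max_{|x|=r}u(x)$. Since $u\in C^2(B_2\setminus\{0\})$, on every compact annulus $\{a\le|x|\le b\}\subset B_1\setminus\{0\}$ the radial second derivative $\partial_r^2 u(r\theta)$ is bounded below by some $-C$, uniformly in $\theta$. Hence each function $r\mapsto u(r\theta)+\frac C2 r^2$ is convex on $[a,b]$. Then $\phi(r)+\frac C2r^2$ is a supremum of convex functions, and it is finite and continuous, so it is convex. Thus $\phi$ is locally semiconvex on $(0,1)$, and $\underline u=-\phi$ is locally semiconcave.

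Next we transfer this property through the change of variables $r=\eta e^{-t/\eta}$, $\underline w(t)=e^{-t}\underline u(\eta e^{-t/\eta})$. Semiconcavity is the local statement that the distributional second derivative is bounded above by a constant, i.e.\ $f''\le C$ as measures. This is preserved under composition with the smooth diffeomorphism $t\mapsto r(t)$ and under multiplication by the smooth positive factor $e^{-t}$. To check this, write $\underline u=g-Cr^2$ locally with $g$ concave, and compute
\[
(g\circ r)''=g''(r)\,(r')^2+g'(r)\,r''.
\]
The first term is a nonpositive measure. The second term is locally bounded, because $g'$ is locally bounded for concave $g$. The product rule with $e^{-t}$ only adds terms involving $\underline w$ and $\underline w_t$, which are locally bounded. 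Hence $\underline w_{tt}\le C(K)$ on each compact $K\subset(\eta\log\eta,\infty)$, and $\underline w$ is locally semiconcave there. I expect this bookkeeping to be the main (mild) obstacle. The cleanest route is to argue directly with difference quotients, or to note that $\underline w+C't^2$ is concave on $K$ for a suitable $C'$.

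The decomposition claims then follow from Lemma~\ref{lem:semiconcavity-properties}, applied to $\underline w$ on compact subintervals. The measure $\underline w_{tt}$ equals a concave part, whose second derivative is a nonpositive measure, plus a bounded absolutely continuous part. Therefore the singular parts $\nu_{pp}$ and $\nu_{sc}$ are nonpositive. The atoms of $\underline w_{tt}$ at a point $t$ have mass $\underline w_t^+(t)-\underline w_t^-(t)$, so $\nu_{pp}$ is supported on $\mathcal J$. This set is countable because the jumps of a locally BV function are countable.

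Finally, for the limit we observe that $e^t=(\eta/r)^{\eta}$, so
\[
\underline w(t)=\eta^{-\eta}\,r^{\eta}\,\underline u(r).
\]
By \eqref{eq:k-zero-mass-asymptotic} in Proposition~\ref{prop:no-dirac-pv}, we have $r^{\eta}\underline u(r)\le\sup_{|x|=r}|x|^{\eta}(-u(x))\to0$ as $r\to0$. Since $r\to0$ corresponds to $t\to\infty$, this gives $\underline w(t)\to0$ as $t\to\infty$.
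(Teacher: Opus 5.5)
Your proof is correct and is essentially the paper's argument: semiconcavity comes from taking an envelope of a family of $C^2$ functions with uniformly bounded second derivatives, the measure decomposition comes from Lemma~\ref{lem:semiconcavity-properties}, and the limit comes from \eqref{eq:k-zero-mass-asymptotic} via $\underline w(t)=\eta^{-\eta}r^\eta\underline u(r)$. The paper simply changes variables before taking the envelope, writing $\underline w(t)=\inf_{\theta}g_\theta(t)$ with $g_\theta(t)=-e^{-t}u(r(t)\theta)$ already $C^2$ in $t$, which avoids the chain-rule bookkeeping you flagged; you also have two harmless sign slips, since semiconcavity gives $\underline u=g+\frac C2r^2$ with $g$ concave and $\underline w-C't^2$ concave, not $g-Cr^2$ and $\underline w+C't^2$.
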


\begin{proof}
Fix a compact interval $I\Subset(\eta\log\eta,\infty)$ and write
$r(t)=\eta e^{-\frac{t}{\eta}}$.  The radii $r(t)$, $t\in I$, remain in a compact
subinterval of $(0,1)$.  For $\theta\in\mathbb S^{n-1}$ define
\[
        g_\theta(t)=-e^{-t}u(r(t)\theta).
\]
Since $u\in C^2$ away from the origin, the second derivatives
$g_\theta''$ are bounded above uniformly for $(t,\theta)\in
I\times\mathbb S^{n-1}$.  Thus, for some $C_I>0$, every function
$t\mapsto g_\theta(t)-C_It^2$ is concave on $I$.  Moreover, $\underline w(t)=\inf_{\theta\in\mathbb S^{n-1}}g_\theta(t)$  and
$-\underline w(t)+C_It^2=\sup_{\theta}(-g_\theta(t)+C_It^2)$ is convex.  Hence
$\underline w(t)-C_It^2$ is concave on $I$, and $\underline w$ is locally
semiconcave.

By Proposition~\ref{prop:no-dirac-pv},
$ \underline w(t)
=\eta^{-\eta}r^\eta\underline{u}(r)\to0$ as $t\to\infty$.
The assertions concerning the distributional second derivative now follow
from Lemma~\ref{lem:semiconcavity-properties}.
\end{proof}

From now on we denote
\[
        \underline z(t)=\underline w_t^+(t),
        \qquad q=\frac{nk}{n-2k},
        \qquad
        A_k=\frac{\binom{n-1}{k-1}}{\eta^{k-1}}.
\]

\begin{lemma}\label{lem:critical-phase-properties}
	The phase-plane trajectory $(\underline w,\underline z)$ has the following
	properties.
	\begin{enumerate}[label=\textup{(\roman*)}]
		\item $\underline z$ is right-continuous, lower semicontinuous, and locally
		of bounded variation; it is continuous on
		$(\eta\log\eta,\infty)\setminus\mathcal J$.
		
		\item If $\underline w+\underline z>0$ on $[t_0,t_1]$, then
		\begin{align}
			\underline z(t_1)-\underline z(t_0)
			&\le \underline w_t^-(t_1)-\underline w_t^+(t_0) \notag\\
			&\le \int_{t_0}^{t_1}
			\left(
			-\underline z
			-\frac{\underline w^q}
			{A_k(\underline w+\underline z)^{k-1}}
			\right)\,dt.
			\label{eq:critical-phase-inequality}
		\end{align}
	\end{enumerate}
\end{lemma}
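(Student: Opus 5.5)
Part (i) is a direct transcription of Lemma~\ref{lem:semiconcavity-properties}. By Lemma~\ref{lem:critical-w-semiconcavity}, $\underline w$ is locally semiconcave. Hence $\underline z=\underline w_t^+$ is right-continuous, lower semicontinuous and locally of bounded variation. Off the countable set $\mathcal J$ we have $\underline w_t^-=\underline w_t^+$. At such a point the left- and right-continuous representatives of the monotone-plus-affine function coincide, so $\underline z$ is continuous there.

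For (ii), the first inequality in \eqref{eq:critical-phase-inequality} is immediate from $\underline w_t^-(t_1)\ge\underline w_t^+(t_1)=\underline z(t_1)$. For the second, I will use the Newton--Leibniz identity of Lemma~\ref{lem:semiconcavity-properties}:
\[
\underline w_t^-(t_1)-\underline w_t^+(t_0)=\underline w_{tt}((t_0,t_1))\le\nu_{ac}((t_0,t_1)),
\]
which holds because $\nu_{pp},\nu_{sc}\le0$. It then suffices to show that the density of $\nu_{ac}$ is bounded above a.e. on $[t_0,t_1]$ by the integrand. By Alexandrov's theorem in one dimension, for a.e.\ $t$ the function $\underline w$ is twice differentiable in the Peano sense. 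At such $t$ we have $\underline w_t(t)=\underline z(t)$, and the Peano second derivative equals the density of $\nu_{ac}$ a.e. I will fix such a $t$ and the corresponding $r=\eta e^{-t/\eta}$. Set $\psi=\underline u$, so that $\psi(r)=e^t\underline w(t)$ is twice differentiable in the Peano sense at $r$.

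The key step is a touching argument. Choose $\theta_0$ with $-u(r\theta_0)=\underline u(r)$, and set $\Phi(x)=-\psi(|x|)$. Then $u\le\Phi$ near $x_0=r\theta_0$, with equality at $x_0$. Since $u\in C^2$ and $\Phi$ has a second-order Taylor expansion at $x_0$, we get $D^2u(x_0)\le D^2\Phi(x_0)$ as symmetric matrices. Because $\lambda(D^2u(x_0))\in\Gamma_k$, the cone $\Gamma_k+\overline{\Gamma}_n\subset\Gamma_k$ and $\sigma_k$ is monotone on $\Gamma_k$, we obtain
\[
\sigma_k\bigl(\lambda(D^2\Phi(x_0))\bigr)\ge F_k[u](x_0)=\psi(r)^q .
\]
The eigenvalues of $D^2\Phi(x_0)$ are $-\psi''$ (once) and $-\psi'/r$ ($n-1$ times), so
\[
\binom{n-1}{k-1}(-\psi'')\Bigl(-\frac{\psi'}{r}\Bigr)^{k-1}+\binom{n-1}{k}\Bigl(-\frac{\psi'}{r}\Bigr)^{k}\ge\psi^q .
\]
Next I will substitute $\psi=e^t\underline w$ and $\partial_r=-\frac{\eta}{r}\partial_t$, which gives $-\psi'=\frac{\eta}{r}e^t(\underline w+\underline w_t)$. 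This is positive exactly under the hypothesis $\underline w+\underline z>0$, so I can divide by $(-\psi'/r)^{k-1}$. Because $q=\frac{nk}{n-2k}$ and $r^\eta e^{t}$ is constant, all powers of $r$ and $e^t$ cancel. Solving for $\underline w_{tt}$ should then yield
\[
\underline w_{tt}\le-\underline w_t-\frac{\underline w^q}{A_k(\underline w+\underline w_t)^{k-1}},
\]
with $A_k=\binom{n-1}{k-1}\eta^{1-k}$ arising from the factor $\eta^{k}$ in $(-\psi'/r)^{k-1}\cdot(\text{second-derivative term})$. I will use the identity $\binom{n-1}{k}\eta=\binom{n-1}{k-1}\frac{n-k}{k}\cdot\frac{n-2k}{n-k}$ to check that the first-order terms combine into exactly $-\underline w_t$.

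Integrating this a.e.\ bound over $(t_0,t_1)$ gives \eqref{eq:critical-phase-inequality}. I expect the main obstacle to be this algebraic cancellation: confirming that the $\binom{n-1}{k}$ term and the $\eta^2$ terms from $\psi''$ collapse precisely to the linear coefficient $-1$. A secondary point is justifying the matrix inequality $D^2u\le D^2\Phi$ when $\Phi$ is only Peano twice differentiable. I would handle this by comparing along each fixed direction at the touching point, which requires only the pointwise second-order expansion.
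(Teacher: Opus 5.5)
Your proposal is correct and follows the paper's proof essentially step for step. You touch $u$ from above at a minimizing point of the sphere by the radial function $-\underline u(|x|)$, use monotonicity of $\sigma_k$ on $\Gamma_k$, and convert via $t=\eta\log\frac{\eta}{r}$ to $-A_ke^{qt}(\underline w+\underline w_t)^{k-1}(\underline w_t+\underline w_{tt})\ge e^{qt}\underline w^q$; the nonpositivity of $\nu_{pp},\nu_{sc}$ and the Newton--Leibniz formula then finish the argument.

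The only slip is the binomial identity you quote for the cancellation, which is false as written. The identity actually needed is $\binom{n-1}{k}-\binom{n-1}{k-1}=\eta\binom{n-1}{k-1}$, equivalently $\frac{n-k}{k}-1=\eta$. With it, the first-order terms do collapse to $-\underline w_t$.
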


\begin{proof}
	Part \textup{(i)} follows from
	Lemmas~\ref{lem:semiconcavity-properties}
	and~\ref{lem:critical-w-semiconcavity}.  We prove \textup{(ii)}.
	
	Fix a point $t\in(t_0,t_1)$ at which $\underline w$ is twice
	differentiable, and set
	$
	r=\eta e^{-\frac{t}{\eta}}.
	$
	Then $\phi(r)=-\underline u(r)$ is twice differentiable at $r$.
	Choose $x_r\in\partial B_r$ such that
	$
	u(x_r)=\phi(r).
	$
	The radial function
	$
	\Phi(x):=\phi(|x|)
	$
	touches $u$ from above at $x_r$.  Hence
	$
	D^2\Phi(x_r)\ge D^2u(x_r).
	$
	Since $u$ is $k$-admissible, it follows that
	$D^2\Phi(x_r)\in\Gamma_k$; therefore, by the monotonicity of $F_k$
	on $\Gamma_k$,
	\[
	F_k[\Phi](x_r)
	\ge F_k[u](x_r)
	=(-u(x_r))^q
	=\underline u(r)^q.
	\]
	
	At almost every $t\in(t_0,t_1)$, one has
	$\underline w_t=\underline z$ and $	\phi'(r)
	=\frac{\eta e^t}{r}
	\bigl(\underline w+\underline w_t\bigr)$.  Since
	$\underline w+\underline z>0$, we have
    $\phi'(r)>0$.  Using the radial formula
	\[
	F_k[\Phi]
	=\frac{1}{k}\binom{n-1}{k-1}r^{1-n}
	\left(r^{n-k}(\phi')^k\right)'
	\]
	and the change of variables
	$t=\eta\log(\frac{\eta}{r})$, we obtain
	\[
	F_k[\Phi]
	=
	-A_ke^{qt}
	\bigl(\underline w+\underline w_t\bigr)^{k-1}
	\bigl(\underline w_t+\underline w_{tt}\bigr).
	\]
	Since $\underline u(r)=e^t\underline w(t)$,
	at almost every $t\in(t_0,t_1)$,
	we have \[
	f_{ac}(t)
	\le
	-\underline z(t)
	-\frac{\underline w(t)^q}
	{A_k\bigl(\underline w(t)+\underline z(t)\bigr)^{k-1}},
	\]
	where
	$
	\nu_{ac}=f_{ac}(t)\,dt
	$
	is the absolutely continuous part of $\underline w_{tt}$.
	
	By Lemma~\ref{lem:critical-w-semiconcavity}, the singular parts
	$\nu_{pp}$ and $\nu_{sc}$ are nonpositive.  Consequently,
	\[
	\underline w_{tt}
	\le
	\left(
	-\underline z
	-\frac{\underline w^q}
	{A_k(\underline w+\underline z)^{k-1}}
	\right)\,dt
	\]
	as an inequality of Radon measures on $(t_0,t_1)$.
	The generalized Newton--Leibniz formula therefore yields
	\begin{align*}
		\underline w_t^-(t_1)-\underline w_t^+(t_0)
		&=\underline w_{tt}\bigl((t_0,t_1)\bigr)\\
		&\le
		\int_{t_0}^{t_1}
		\left(
		-\underline z
		-\frac{\underline w^q}
		{A_k(\underline w+\underline z)^{k-1}}
		\right)\,dt.
	\end{align*}
	Finally,
	\[
	\underline z(t_1)
	=\underline w_t^+(t_1)
	\le \underline w_t^-(t_1),
	\qquad
	\underline z(t_0)=\underline w_t^+(t_0),
	\]
	which gives the first inequality in
	\eqref{eq:critical-phase-inequality}.
\end{proof}

The first consequence of the measure inequality is the eventual
negativity of $\underline z$.

\begin{lemma}\label{lem:critical-z-negative}
There exists $T>0$ such that
\[
        \underline z(t)<0
        \qquad\text{for all }t\ge T.
\]
\end{lemma}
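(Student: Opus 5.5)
We argue by combining the phase inequality of Lemma~\ref{lem:critical-phase-properties} with the decay $\underline w(t)\to0$ from Lemma~\ref{lem:critical-w-semiconcavity}. We may assume $u\not\equiv0$, so $\underline w>0$. The underlying heuristic is that, as long as $\underline w+\underline z>0$, the inequality gives $\underline z'\le-\underline z-(\text{positive})$. Hence $\underline z$ cannot stay nonnegative on a long interval. Once $\underline z$ becomes negative, the same inequality should prevent it from returning to the region $\{\underline z\ge0\}$.

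\textbf{Step 1: $\underline z$ takes negative values at arbitrarily large times.} Suppose instead that $\underline z\ge0$ on $[T_0,\infty)$. Then $\underline w+\underline z\ge\underline w>0$ there, since $\underline w>0$. Because $\underline w_t^+=\underline z\ge0$ and $\underline w$ is Lipschitz, $\underline w$ is nondecreasing on $[T_0,\infty)$. This forces $\underline w(t)\ge\underline w(T_0)>0$ for all $t\ge T_0$, which contradicts $\underline w(t)\to0$. So for every $T_0$ there is a point $t\ge T_0$ with $\underline z(t)<0$.

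\textbf{Step 2: once negative, $\underline z$ stays negative.} Fix $T$ with $\underline z(T)<0$, and suppose $\underline z(s)\ge0$ for some $s>T$. Set $s_*=\inf\{s>T:\underline z(s)\ge0\}$. By right-continuity, $\underline z<0$ on a right neighborhood of $T$, so $s_*>T$. Lower semicontinuity at a limit of points where $\underline z\ge0$ does not immediately give $\underline z(s_*)\ge0$, but right-continuity from the right side does: either $\underline z(s_*)\ge0$, or there are points $s_j\downarrow s_*$ with $\underline z(s_j)\ge0$, and right-continuity then gives $\underline z(s_*)\ge0$.

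In either case $\underline w+\underline z(s_*)>0$. By right-continuity of $\underline z$ and continuity of $\underline w$, we get $\underline w+\underline z>0$ on some interval $[s_*,s_*+h]$. Applying \eqref{eq:critical-phase-inequality} on $[s_*,s_1]$ for $s_1\in(s_*,s_*+h]$, and using that the integrand is at most $-\underline z-c$ with $c>0$ (because $\underline w$ is bounded below by a positive constant on this compact interval), we get
\[
\underline z(s_1)\le \underline z(s_*)(1-o(1))-c(s_1-s_*).
\]
This shows $\underline z$ strictly decreases immediately to the right of $s_*$, but it does not yet contradict the definition of $s_*$ from the left. The left side must be handled separately.

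To do so, approach $s_*$ from the left along points where $\underline z<0$, using the first inequality in Lemma~\ref{lem:critical-phase-properties}(ii), namely $\underline z(s_*)-\underline z(t_0)\le\underline w_t^-(s_*)-\underline w_t^+(t_0)$. Take $t_0\uparrow s_*$ with $\underline z(t_0)<0$. Since $\underline w_t^-(s_*)\ge\underline z(s_*)\ge0$ while $\underline w_t^+(t_0)<0$ on $(T,s_*)$, $\underline w$ would have to turn upward at $s_*$. This means a positive jump of the derivative, which is excluded because the singular part $\nu_{pp}$ of $\underline w_{tt}$ is nonpositive, and $\underline w_t^-$ is left-continuous, so $\underline w_t^-(s_*)=\lim_{t\uparrow s_*}\underline w_t^-(t)\le0$.

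Combining these facts, we should obtain $\underline z(s_*)\le\underline w_t^-(s_*)\le0$. Equality $\underline z(s_*)=0$ would then hold, and the inequality on $[s_*,s_1]$ gives $\underline z(s_1)\le -c(s_1-s_*)<0$ for $s_1$ slightly beyond $s_*$. So if $\underline z$ is to become nonnegative again, this must happen at a later point, and repeating the argument at that point contradicts its minimality. The cleanest way to organize this is to define $s_*$ as the first time with $\underline z\ge0$, show $\underline z(s_*)=0$, and then use the ODE bound to the right to push $\underline z$ negative for all later times, via a continuation argument on the set $\{\underline z<0\}$.

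\textbf{Main obstacle.} The delicate part is Step 2: handling the jump set $\mathcal J$ and the one-sided derivatives correctly. It relies on the nonpositivity of the singular part of $\underline w_{tt}$, which prevents upward jumps of the derivative, together with left-continuity of $\underline w_t^-$. Combining Steps 1 and 2 gives the required $T$.
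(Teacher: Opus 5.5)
Your Step 1 is correct and matches the paper. In Step 2 you correctly reach the key intermediate fact. At the first time $s_*>T$ with $\underline z(s_*)\ge0$, right-continuity gives $\underline z(s_*)\ge0$. On the other hand, $\underline z=\underline w_t^+<0$ on $(T,s_*)$ forces $\underline w_t^-(s_*)=\lim_{t\uparrow s_*}\underline w_t^+(t)\le0$, so $\underline z(s_*)\le\underline w_t^-(s_*)\le0$. Hence $\underline z(s_*)=0$, and also $\underline z(t)\to0$ as $t\uparrow s_*$.

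But the argument stops there without a contradiction. The equality $\underline z(s_*)=0$ is not contradictory in itself: it already violates the conclusion $\underline z<0$, yet it is perfectly consistent with $s_*$ being the first time with $\underline z\ge0$. Your right-side estimate, that $\underline z$ dips below $0$ just after $s_*$, does not help. It still allows $\underline z$ to touch $0$ and go back down, possibly at a sequence of times tending to infinity. Also, ``repeating the argument at that point'' cannot contradict the minimality of $s_*$, since the later point is not the first one.

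The missing step is to apply the integral inequality \eqref{eq:critical-phase-inequality} on a \emph{left} neighborhood of $s_*$, not a right one. Since $\underline z(t)\to0=\underline z(s_*)$ as $t\uparrow s_*$ and $\underline w(s_*)>0$, there is $\delta>0$ such that $\underline w+\underline z>0$ on $[s_*-\delta,s_*]$ and
\[
-\underline z-\frac{\underline w^q}{A_k(\underline w+\underline z)^{k-1}}\le -c<0
\qquad\text{on }[s_*-\delta,s_*].
\]
Then \eqref{eq:critical-phase-inequality} gives $\underline z(s_*)\le\underline z(s_*-\delta)-c\delta<0$, which contradicts $\underline z(s_*)=0$.

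In words: near the level $z=0$ with $\underline w>0$, the function $\underline z$ is strictly decreasing, so it can never climb from negative values back up to $0$. This is exactly how the paper concludes. With this step in place, your right-side estimate and the proposed continuation argument are unnecessary.
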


\begin{proof}
There is arbitrarily large $t$ at which $\underline z<0$; otherwise
$\underline z\ge0$ eventually, so $\underline w$ would be nondecreasing
near infinity, contradicting $\underline w>0$ and
$\underline w(t)\to0$.  Choose $T>\eta\log\eta$ with
$\underline z(T)<0$ and suppose that $\underline{z}<0$ fails at a later time.  Set
\[
        t_1=\sup\bigl\{s\ge T:
        \underline z(t)<0\text{ for every }t\in[T,s)\bigr\}.
\]
By Part\textup{(i)} of Lemma~\ref{lem:critical-phase-properties}, we have $\underline{z}(t_1)=0$ and $\lim\limits_{t\to t^-_1}\underline z(t)=0$.

Since $\underline w(t_1)>0$, the preceding limits allow us to choose
$\delta>0$ such that $\underline w+\underline z>0$ and
\[
        -\underline z
        -\frac{\underline w^q}
        {A_k(\underline w+\underline z)^{k-1}}<0
        \qquad\text{on }[t_1-\delta,t_1].
\]
Applying \eqref{eq:critical-phase-inequality} on this interval gives
$\underline z(t_1)<\underline z(t_1-\delta)$, contradicting
$\underline z(t_1)=0$ and $\underline z<0$ to the left of $t_1$.
\end{proof}

 We next compare $\underline z$ with the barrier
 $-A\underline w^{q-k+1}$ and introduce the corresponding invariant
 region. For fixed $\varepsilon_0,\delta_0\in(0,1)$, we denote $ A:=A(\varepsilon_0)=\frac1{(1+\varepsilon_0)A_k}$ and 
\begin{equation}\label{eq:critical-invariant-region-def}
        \mathcal R_A^{\delta_0}
        :=\left\{(w,z):
        0<w\le\delta_0,
        \ z<-Aw^{q-k+1}
        \right\}.
\end{equation}

\begin{lemma}\label{lem:critical-region-implies-bound}
Suppose that there exists $T>0$ such that
\[
        (\underline w(t),\underline z(t))\in\mathcal R_A^{\delta_0}
        \qquad\text{for every }t>T.
\]
Then
\begin{equation}\label{eq:critical-epsilon-log-bound}
        \limsup_{x\to0}
        \left(\log\frac1{|x|}\right)^{\frac{n-2k}{2k^2}}
        |x|^{\frac{n-2k}{k}}(-u(x))
        \le (1+\varepsilon_0)^{\frac{1}{q-k}}\mathcal L_{n,k}.
\end{equation}
\end{lemma}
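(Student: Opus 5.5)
The plan is to integrate the differential inequality that defines $\mathcal R_A^{\delta_0}$, which is a Bernoulli-type inequality for $\underline w$. We then convert the resulting decay of $\underline w$ back to the original variables. Finally, Theorem~\ref{thm:k-mazang-theorem51} replaces the spherical minimum $\underline u$ by $-u(x)$.

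\emph{Step 1: integrate the inequality.} By Lemma~\ref{lem:critical-w-semiconcavity}, $\underline w$ is locally semiconcave, hence locally Lipschitz, and it is positive on $(T,\infty)$. Therefore $\underline w_t$ exists and equals $\underline z$ almost everywhere. The hypothesis gives
\[
\underline w_t(t)<-A\,\underline w(t)^{q-k+1}\qquad\text{for a.e. }t>T.
\]
Since $\underline w$ is bounded away from $0$ on compact subintervals of $(T,\infty)$, the function $h:=\underline w^{-(q-k)}$ is locally absolutely continuous there. Moreover,
\[
h_t=-(q-k)\,\underline w^{-(q-k+1)}\underline w_t\ge (q-k)A\qquad\text{a.e.}
\]
Integrating from a fixed $T'>T$ gives $h(t)\ge (q-k)A(t-T')$, hence
\[
\underline w(t)\le\bigl[(q-k)A(t-T')\bigr]^{-\frac1{q-k}}
\]
for all $t>T'$.

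\emph{Step 2: return to $r$.} We have $q-k=\frac{2k^2}{n-2k}=\frac{2k}{\eta}$ and $\frac1{q-k}=\frac{n-2k}{2k^2}$. Using $\underline u(r)=e^t\underline w(t)$, $e^t=(\eta/r)^\eta$ and $t=\eta\log\frac{\eta}{r}=\eta\log\frac1r\,(1+o(1))$, we get
\[
\Bigl(\log\tfrac1r\Bigr)^{\frac1{q-k}}r^\eta\,\underline u(r)\le \eta^\eta\bigl[(q-k)A\eta\bigr]^{-\frac1{q-k}}(1+o(1)),\qquad r\to0.
\]
Next substitute $A=\bigl((1+\varepsilon_0)A_k\bigr)^{-1}$ and $A_k=\binom{n-1}{k-1}\eta^{1-k}$. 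The $(q-k)$-th power of the constant becomes
\[
(1+\varepsilon_0)\,\eta^{2k}\,\frac{\binom{n-1}{k-1}\eta^{1-k}}{2k}
=(1+\varepsilon_0)\binom{n-1}{k-1}\frac{\eta^{k+1}}{2k}
=(1+\varepsilon_0)\,\mathcal L_{n,k}^{\,q-k},
\]
where the last equality is read off from \eqref{eq:Lnk-def}. This yields the bound \eqref{eq:critical-epsilon-log-bound} with $\underline u(|x|)$ in place of $-u(x)$.

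\emph{Step 3: from $\underline u$ to $-u$.} Since $\underline w>0$, we have $u\not\equiv0$. If the singularity is removable, then $-u$ is bounded near $0$, so the left side of \eqref{eq:critical-epsilon-log-bound} is $0$ because $r^\eta(\log\frac1r)^{1/(q-k)}\to0$. Otherwise, Theorem~\ref{thm:k-mazang-theorem51} gives $-u(x)=\underline u(|x|)(1+o(1))$ as $x\to0$, and the bound transfers directly. This proves \eqref{eq:critical-epsilon-log-bound}.

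The only delicate point is Step 1. The argument needs $\underline w$ to be absolutely continuous, so that the a.e.\ inequality for $\underline z=\underline w_t^+$ integrates correctly; semiconcavity supplies this. The remaining steps are bookkeeping of the constants.
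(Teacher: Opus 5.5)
Your proposal is correct and follows the paper's proof essentially step for step: local absolute continuity of $\underline w$ from semiconcavity, integration of the Bernoulli-type inequality for $\underline w^{k-q}$, conversion back to $r$ via $r^\eta\underline u(r)=\eta^\eta\underline w(t)$, and transfer to $-u$ through Theorem~\ref{thm:k-mazang-theorem51}. Your explicit verification that the constant equals $(1+\varepsilon_0)^{1/(q-k)}\mathcal L_{n,k}$ supplies the bookkeeping the paper leaves implicit. Your separate treatment of the removable case is harmless but unnecessary, since Theorem~\ref{thm:k-mazang-theorem51} already covers that case whenever $u\not\equiv0$.
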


\begin{proof}
A locally semiconcave function is locally Lipschitz and hence locally
absolutely continuous.  Moreover,
$\underline w_t=\underline z$ almost everywhere.  Thus the defining
inequality of $\mathcal R_A^{\delta_0}$ gives
\[
        \frac{d}{dt}\underline w
        \le -A\underline w^{q-k+1}
        \qquad\text{for almost every }t>T.
\]
Thus,
\[
        \frac{d}{dt}\underline w^{k-q}\ge A(q-k).
\]
After integration from a fixed $t_1>T$ to $t>t_1$,
\[
        \underline w(t)^{k-q}
        \ge \underline w(t_1)^{k-q}+A(q-k)(t-t_1).
\]
Since $A=((1+\varepsilon_0)A_k)^{-1}$, it follows directly that
\begin{equation}\label{eq:critical-w-limsup}
        \limsup_{t\to\infty}t^{\frac{1}{q-k}}\underline w(t)
        \le
        \left(\frac{q-k}{(1+\varepsilon_0)A_k}\right)^{-\frac{1}{q-k}}.
\end{equation}
Since $r^{\frac{n-2k}{k}}\underline{u}(r)
=\eta^{\frac{n-2k}{k}}\underline w(t)$, transforming 
the above upper bound back to $\underline{u}$, we obtain
\[
        \limsup_{r\to0}
        \left(\log\frac1r\right)^{\frac{n-2k}{2k^2}}
        r^{\frac{n-2k}{k}}\underline{u}(r)
        \le (1+\varepsilon_0)^{\frac{1}{q-k}}\mathcal L_{n,k}.
\]
By \eqref{eq:k-mazang-angular-asymptotic}, the same upper bound holds for $u$ as well.
\end{proof}

The next lemma shows that the region $\mathcal R_A^{\delta_0}$
is forward invariant once it is reached for $t$ large enough.

In the first-contact arguments below and in Appendix~\ref{app:critical-lower-bound},
set $d=z-h(w)$.  Right-continuity and the one-sided jump direction give
$0\le d(t_1)\le d(t_1^-)\le0$ for an exit from $d<0$ with only downward
jumps, and the reversed inequalities for an exit from $d>0$ with only
upward jumps.  Hence $d(t_1^-)=d(t_1)=0$, so $z$ is continuous at the
first exit time $t_1$.

\begin{lemma}\label{lem:critical-invariant-region}
There exists $T>0$ such that, if
\[
        (\underline w(t_0),\underline z(t_0))
        \in\mathcal R_A^{\delta_0}
\]
for some $t_0>T$, then
\[
        (\underline w(t),\underline z(t))
        \in\mathcal R_A^{\delta_0}
        \qquad\text{for every }t>t_0.
\]
\end{lemma}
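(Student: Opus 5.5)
The plan is to reduce forward invariance to a first-contact argument for the single quantity
\[
d(t):=\underline z(t)-h(\underline w(t)),\qquad h(w):=-Aw^{q-k+1}=-Aw^{\mu},\quad \mu:=q-k+1>1 .
\]
Here $q-k>0$ because $q=\frac{nk}{n-2k}>k$.

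\textbf{Choice of $T$.} First choose $T$ large so that, for all $t\ge T$:
\begin{enumerate}
\item $\underline z(t)<0$, which is possible by Lemma~\ref{lem:critical-z-negative};
\item $\underline w(t)\le w_*$, which is possible since $\underline w(t)\to0$ by Lemma~\ref{lem:critical-w-semiconcavity}.
\end{enumerate}
The constant $w_*\in(0,\delta_0]$ depends only on $n,k,\varepsilon_0$ and will be fixed below.

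\textbf{The first two conditions of $\mathcal R_A^{\delta_0}$ are automatic.} The conditions $0<\underline w\le\delta_0$ persist for $t>t_0$. Indeed, $\underline w>0$ because $u<0$. Moreover $\underline w$ is locally absolutely continuous with $\underline w_t=\underline z<0$ a.e. (Lemma~\ref{lem:semiconcavity-properties}), so $\underline w$ is decreasing. Hence only the condition $d<0$ needs to be propagated.

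\textbf{Reduction to a first exit time.} Suppose $d$ fails to stay negative after $t_0$, and let $t_1>t_0$ be the first exit time from $\{d<0\}$. By Lemma~\ref{lem:critical-phase-properties}(i), $\underline z$ is right-continuous. Its only jumps are downward, since $\underline w_t^-\ge\underline w_t^+$. The function $h(\underline w(t))$ is continuous. So $d$ has only downward jumps, and the observation preceding the lemma gives $d(t_1^-)=d(t_1)=0$. Thus $\underline z$ is continuous at $t_1$ with $\underline z(t_1)=-A\underline w(t_1)^{\mu}$, and $d<0$ on $[t_0,t_1)$.

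\textbf{Local contradiction near $t_1$.} Write $w_1=\underline w(t_1)\le w_*$. Since $\mu>1$, we have $\underline w+\underline z=w_1(1-Aw_1^{q-k})>0$ at $t_1$ once $w_*$ is small. By continuity of $\underline w$ and of $\underline z$ at $t_1$ (using the left limit), there is $\delta>0$ on which:
\begin{itemize}
\item $\underline w+\underline z>0$;
\item the integrand of \eqref{eq:critical-phase-inequality} stays close to its value at $t_1$, namely
\[
Aw_1^{\mu}-\frac{w_1^{q}}{A_kw_1^{k-1}(1-Aw_1^{q-k})^{k-1}}
=\frac{w_1^{\mu}}{A_k}\Bigl(\frac1{1+\varepsilon_0}-\bigl(1-Aw_1^{q-k}\bigr)^{1-k}\Bigr)
\le -c_0w_1^{\mu}.
\]
\end{itemize}
Here $c_0=c_0(\varepsilon_0,n,k)>0$, and the inequality holds once $w_*$ is small. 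This is exactly where the factor $(1+\varepsilon_0)$ in $A$ is used.

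Applying \eqref{eq:critical-phase-inequality} on $[s,t_1]$, with $t_1-\delta<s<t_1$, gives
\[
\underline z(t_1)-\underline z(s)\le-\tfrac{c_0}{2}w_1^{\mu}(t_1-s).
\]
On the other hand, since $h'(w)=-A\mu w^{\mu-1}$ and $\underline w_t=\underline z$ is bounded near $t_1$ by about $Aw_1^{\mu}$, we get
\[
h(\underline w(t_1))-h(\underline w(s))\le C\,w_1^{\mu-1}\cdot w_1^{\mu}(t_1-s).
\]
Because $\mu-1>0$, this is at most $\tfrac{c_0}{4}w_1^{\mu}(t_1-s)$ for $w_*$ small. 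Subtracting the two estimates,
\[
d(t_1)-d(s)<0,\quad\text{that is,}\quad d(s)>d(t_1)=0 ,
\]
which contradicts $d<0$ on $[t_0,t_1)$.

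\textbf{Main obstacle.} The delicate step is the continuity bookkeeping at $t_1$. The phase inequality is only an integrated, measure-level statement, and $\underline z$ is merely BV. So one must use the absence of upward jumps to get continuity of $\underline z$ at $t_1$, and lower semicontinuity together with the left limit to make the integrand uniformly negative on a left neighbourhood. The quantitative smallness of $w_*$ is what absorbs both the $(1-Aw^{q-k})^{1-k}$ correction and the drift of the barrier $h$.
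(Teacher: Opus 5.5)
Your proposal is correct and is essentially the paper's argument. Both use the first exit time, continuity of $\underline z$ at $t_1$ from the one-sided jumps, the phase inequality on a left neighbourhood, and the key bound $G(\underline w(t_1),\underline z(t_1))\le\varepsilon_0\underline z(t_1)$ coming from $A=((1+\varepsilon_0)A_k)^{-1}$. The paper packages this as a slope comparison, $(\underline z(t)-\underline z(t_1))/(\underline w(t)-\underline w(t_1))$ being at least $\varepsilon_0>0$ yet at most $h'(\underline w(t_1))<0$, rather than comparing time increments.

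One small correction: $h$ and $\underline w$ are both decreasing, so $h(\underline w(t_1))-h(\underline w(s))\ge0$. This barrier term therefore enters $d(t_1)-d(s)$ with a favourable sign and needs no smallness of $w_*$. The upper bound you prove for it points the wrong way for the subtraction unless you read it as a bound on the absolute value.
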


\begin{proof}
	Set
	\[
	h(w):=-Aw^{q-k+1}.
	\]
	Since $\underline{w}(t)\to 0$, we may first choose $T$ so large that $0<\underline w(t)<\delta_0$,
	$\underline z(t)<0$, and
	$A\underline w(t)^{q-k}<\frac{1}{2}$ for $t\ge T$.  Suppose for contradiction that the trajectory
	enters $\mathcal R_A^{\delta_0}$ at $t_0\ge T$ and later exits.  By Part\textup{(i)} of Lemma~\ref{lem:critical-phase-properties}, there is  a first contact time $t_1>t_0$ such that
	\[
	\underline z(t_1)=h(\underline w(t_1)),
	\qquad
	\underline z(t)<h(\underline w(t))\quad(t_0\le t<t_1).
	\]
	Moreover,
	\[
	\lim_{t\to t_1}\underline z(t)
	=\underline z(t_1)=h(\underline w(t_1)).
	\]
	Because $\underline z<0$, the function $\underline w$ is strictly decreasing.
	For $t<t_1$ close to $t_1$ we consequently have
	\begin{equation}\label{eq:critical-first-contact-upper}
		\begin{aligned}
			\frac{\underline z(t)-\underline z(t_1)}
			{\underline w(t)-\underline w(t_1)}
			&\le
			\frac{h(\underline w(t))-h(\underline w(t_1))}
			{\underline w(t)-\underline w(t_1)},
		\end{aligned}
	\end{equation}
	and the right-hand side tends to
	$h'(\underline w(t_1))<0$ as $t\to t^-_1$. On the other hand, at $t=t_1$,
	\[
	\underline w(t_1)+\underline z(t_1)
	=\underline w(t_1)
	\bigl(1-A\underline w(t_1)^{q-k}\bigr)>0.
	\]
	The same is true on a left neighborhood of $t_1$.  Applying
	Lemma~\ref{lem:critical-phase-properties} on $[t,t_1]$, dividing by the
	negative quantity
	$\underline w(t_1)-\underline w(t)=\int_t^{t_1}\underline z(s)\,ds$,
	and then letting $t\to t^-_1$, we obtain
	\begin{align*}
		\liminf_{t\to t^-_1}
		\frac{\underline z(t)-\underline z(t_1)}
		{\underline w(t)-\underline w(t_1)}
		&\ge
		-1+
		\frac{1}
		{A_kA\bigl(1-A\underline w(t_1)^{q-k}\bigr)^{k-1}}\\
		&\ge -1+\frac1{A_kA}=\varepsilon_0>0,
	\end{align*}
	which contradicts \eqref{eq:critical-first-contact-upper}.
\end{proof}

It remains to show that the trajectory eventually enters the invariant
region.

\begin{lemma}\label{lem:critical-eventual-entry}
Let $T$ be as in Lemma~\ref{lem:critical-invariant-region}.  Then there exists
$t_0\ge T$ such that
\[
        (\underline w(t_0),\underline z(t_0))
        \in\mathcal R_A^{\delta_0}.
\]
\end{lemma}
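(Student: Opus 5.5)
We argue by contradiction. Suppose that for every $t\ge T$ the point $(\underline w(t),\underline z(t))$ lies outside $\mathcal R_A^{\delta_0}$. Since $0<\underline w<\delta_0$ for $t\ge T$, this means
\[
\underline z(t)\ge -A\underline w(t)^{q-k+1}\qquad(t\ge T).
\]
By Lemma~\ref{lem:critical-z-negative} we also have $\underline z<0$, so $\underline w$ is decreasing. Because $A\underline w^{q-k}<\frac12$, we get
\[
\underline w+\underline z\ge \underline w\bigl(1-A\underline w^{q-k}\bigr)>\tfrac12\underline w>0.
\]
Hence Lemma~\ref{lem:critical-phase-properties}(ii) applies on every interval $[t_0,t_1]\subset[T,\infty)$.

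The idea is that $\underline z$ then stays in the thin strip $-A\underline w^{q-k+1}\le \underline z<0$. On the other hand, the measure inequality forces $\underline z$ to be driven down at a rate comparable to $\underline w^q/\underline w^{k-1}=\underline w^{q-k+1}$, with a constant strictly larger than $A$ allows. In the strip we have $\underline w+\underline z\le\underline w$, so the source term obeys
\[
-\frac{\underline w^q}{A_k(\underline w+\underline z)^{k-1}}\le -\frac{\underline w^{q-k+1}}{A_k},
\]
and $-\underline z\le A\underline w^{q-k+1}$. Substituting both into \eqref{eq:critical-phase-inequality} gives, for $t_1>t_0\ge T$,
\[
\underline z(t_1)-\underline z(t_0)\le -\Bigl(\frac1{A_k}-A\Bigr)\int_{t_0}^{t_1}\underline w^{q-k+1}\,dt
=-\frac{\varepsilon_0}{(1+\varepsilon_0)A_k}\int_{t_0}^{t_1}\underline w^{q-k+1}\,dt .
\]
So $\underline z$ is nonincreasing on $[T,\infty)$ with a quantitative decrease. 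The coefficient comes from $\frac1{A_k}-A=\frac{\varepsilon_0}{(1+\varepsilon_0)A_k}$.

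To close, I would compare this with $\underline z\ge -A\underline w^{q-k+1}$. Since $\underline z$ is nonincreasing and negative, $\underline z(t)\le \underline z(T)=:-c<0$ for all $t\ge T$. Then $\underline w(t)\le \underline w(T)-c(t-T)$ eventually becomes negative, contradicting $\underline w>0$. Said differently, $\underline z\le -c$ contradicts $-A\underline w^{q-k+1}\to0$. Since $\underline z(T)<0$ by the choice of $T$, this already gives the contradiction, and monotonicity of $\underline z$ is the only input needed.

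The main obstacle is justifying the bound $\frac{1}{(\underline w+\underline z)^{k-1}}\ge \underline w^{1-k}$ together with the monotonicity step when $k=1$ versus $k\ge2$. It holds since $0<\underline w+\underline z\le\underline w$ and $k-1\ge0$, with equality when $k=1$. The other point to check is that Lemma~\ref{lem:critical-phase-properties}(ii) only compares $\underline z$ via right limits; jumps of $\underline z$ are downward, which is harmless for a monotonicity argument. If the strict inequality $\underline z(T)<0$ were unavailable, I would instead integrate the quantitative decrease, using $\underline w\ge\underline w(t_1)>0$ on $[T,t_1]$, to make $\underline z$ strictly negative at some later time and rerun the argument from there.
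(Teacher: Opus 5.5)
Your argument is correct and is essentially the paper's. Under the contradiction hypothesis you use the same two bounds, $-\underline z\le A\underline w^{q-k+1}$ and $\underline w+\underline z\le\underline w$, in the measure inequality \eqref{eq:critical-phase-inequality} to make $\underline z$ nonincreasing, and you then contradict $\underline z(T)<0$ using $\underline z\ge -A\underline w^{q-k+1}\to0$. The paper writes the same estimate as $G\le\varepsilon_0\underline z$ and concludes that $\underline z-\varepsilon_0\underline w$ is nonincreasing with limit zero, which differs from your version only cosmetically.
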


\begin{proof}
Suppose to the contrary that
\begin{equation}\label{eq:critical-outside-invariant-region}
        \underline z(t)
        \ge-A\underline w(t)^{q-k+1}
        \qquad\text{for every }t\ge T.
\end{equation}
After increasing $T$, Lemma~\ref{lem:critical-z-negative} gives
$\underline z<0$ on $[T,\infty)$, and we may also assume
$A\underline w(t)^{q-k}<\frac{1}{2}$.  Hence
\[
        \underline w+\underline z
        \ge \underline w\bigl(1-A\underline w^{q-k}\bigr)>0.
\]
Combining
\[
        -A\underline w(t)^{q-k+1}
        \le\underline z(t)<0,
\]
and the fact that $\underline w(t)\to0$, we obtain $\underline z(t)\to0$.
Set
\[
        G(t)=-\underline z(t)
        -\frac{\underline w(t)^q}
        {A_k(\underline w(t)+\underline z(t))^{k-1}}.
\]
Since $-\underline z\le A\underline w^{q-k+1}$ and
$\underline w+\underline z\le\underline w$, we have
\begin{align*}
        \frac{G(t)}{\underline z(t)}
        &=-1+
        \frac{\underline w(t)^q}
        {A_k(\underline w(t)+\underline z(t))^{k-1}
        (-\underline z(t))}\\
        &\ge -1+\frac1{A_kA}
        =\varepsilon_0.
\end{align*}
Because $\underline z<0$, this is equivalent to
$G\le\varepsilon_0\underline z$.  The measure form of
\eqref{eq:critical-phase-inequality} therefore yields
\[
        d\underline z
        \le G(t)\,dt
        \le \varepsilon_0\underline z(t)\,dt
        =\varepsilon_0\,d\underline w
\]
on $[T,\infty)$.  Thus
$\underline z(t)-\varepsilon_0\underline w(t)$ is nonincreasing and tends to zero as $t\to\infty$.
This contradicts $\underline z(t)<0<\underline w(t)$ and proves the lemma.
\end{proof}

\minisection{Completion of the upper bound}

\begin{proof}[Proof of Proposition~\ref{prop:hessian-log-upper-bound}]
By Proposition~\ref{prop:no-dirac-pv}, the zero-mass decay estimate
\eqref{eq:k-zero-mass-asymptotic} holds.  If $u$ is bounded near the origin,
then the left-hand side of \eqref{eq:hessian-log-upper-bound} is zero and
the conclusion is immediate.  Otherwise,
Lemma~\ref{lem:critical-bounded-or-blowup} gives
$u(x)\to-\infty$, and Theorem~\ref{thm:k-mazang-theorem51} supplies the
angular asymptotic needed to pass from spherical infima to pointwise
estimates.
Choose arbitrary $\varepsilon_0,\delta_0\in(0,1)$.  Lemma~\ref{lem:critical-z-negative}
gives $\underline z<0$ for large $t$, and Lemma~\ref{lem:critical-eventual-entry} provides
$t_0$ with
$(\underline w(t_0),\underline z(t_0))\in\mathcal R_A^{\delta_0}$.
Lemma~\ref{lem:critical-invariant-region} keeps the trajectory in
$\mathcal R_A^{\delta_0}$ for all later times.
Lemma~\ref{lem:critical-region-implies-bound}
therefore gives \eqref{eq:critical-epsilon-log-bound}.  Since
$\varepsilon_0>0$ was arbitrary, letting $\varepsilon_0\to0$ proves
\eqref{eq:hessian-log-upper-bound}.
\end{proof}

\minisection{The complementary lower bound}

The lower bound is obtained from the spherical maximum.  The corresponding
measure inequality and invariant-region argument require, respectively,
the verification of $k$-admissibility at lower contact points and the
exclusion of exponential decay.  These details are given in
Appendix~\ref{app:critical-lower-bound}.

\begin{proposition}
\label{prop:hessian-log-lower-bound}
Under the hypotheses of Theorem~\ref{thm:hessian-log-limit}, assume in
addition that $u(x)\to-\infty$ as $x\to0$.  Then
\begin{equation}\label{eq:hessian-log-lower-bound}
        \liminf_{x\to0}
        \left(\log\frac1{|x|}\right)^{\frac{n-2k}{2k^2}}|x|^{\frac{n-2k}{k}}(-u(x))
        \ge \mathcal L_{n,k}.
\end{equation}
\end{proposition}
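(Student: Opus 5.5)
We mirror the upper-bound argument, using the spherical maximum $\overline u(r)=\sup_{|x|=r}(-u)$ in place of the minimum. Set $e^t\overline w(t)=\overline u(r)$ with $t=\eta\log\frac{\eta}{r}$, and let $\psi(r)=\min_{|x|=r}u=-\overline u(r)$. As in Lemma~\ref{lem:critical-w-semiconcavity}, $\overline w$ is an infimum-type envelope with reversed sign: $-\overline w=\inf_\theta(-g_\theta)$ with $g_\theta''$ bounded below, so $\overline w$ is locally semiconvex. Proposition~\ref{prop:no-dirac-pv} gives $\overline w(t)\to0$. By Lemma~\ref{lem:semiconcavity-properties}, $\overline z:=\overline w_t^+$ is right-continuous and of bounded variation, with only upward jumps and nonnegative singular parts.

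\textbf{Step 1: the reverse measure inequality.} At a point $x_r$ where $u(x_r)=\psi(r)$, the radial function $\Psi(x)=\psi(|x|)$ touches $u$ from below. Hence $D^2\Psi(x_r)\le D^2u(x_r)$, and therefore $F_k[\Psi](x_r)\le \overline u(r)^q$, \emph{provided} $D^2\Psi(x_r)\in\overline\Gamma_k$. This admissibility is not automatic, so I would verify it directly. The eigenvalues of $D^2\Psi$ are $\psi''$ and $\psi'/r$ (with multiplicity $n-1$). Since $\overline w+\overline z>0$ eventually, we get $\psi'>0$. From $\psi'>0$ and $\sigma_k$ of this vector, admissibility reduces to the sign of $(r^{n-k}(\psi')^k)'$. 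At almost every $t$ where this is negative, the desired inequality $F_k[\Psi]\le\overline u^q$ holds trivially, since the left side is then negative. So in all cases
\[
\overline w_{tt}\ge\Bigl(-\overline z-\frac{\overline w^q}{A_k(\overline w+\overline z)^{k-1}}\Bigr)dt
\]
as measures, because the singular parts now have the favorable sign. We also need $\overline w+\overline z>0$ eventually. This follows because $\overline w/\underline w\to1$ by Theorem~\ref{thm:k-mazang-theorem51}, together with the semiconvexity, once we know $\overline z\to0$.

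\textbf{Step 2: phase plane.} Fix $\varepsilon_0$ and set $A'=\frac{1}{(1-\varepsilon_0)A_k}$. Consider the region $\{0<w\le\delta_0,\ z>-A'w^{q-k+1}\}$. Using the first-contact argument from Lemma~\ref{lem:critical-invariant-region}, with $z$ continuous at the exit time by the remark on jump directions, I would show this region is forward invariant: at contact, the difference-quotient limit gives $\le -1+\frac{1}{A_kA'(1-\cdots)^{k-1}}$, which is close to $-\varepsilon_0<0$ for small $w$, contradicting the barrier slope. Inside the region, $\overline w_t\ge -A'\overline w^{q-k+1}$, so $\overline w^{k-q}\le C+A'(q-k)t$. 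This yields $\liminf t^{\frac{1}{q-k}}\overline w\ge\bigl(\frac{(1-\varepsilon_0)A_k}{q-k}\bigr)^{\frac{1}{q-k}}$. Transferring back and using Theorem~\ref{thm:k-mazang-theorem51} to replace $\overline u$ by $-u$ gives the bound with the factor $(1-\varepsilon_0)^{1/(q-k)}$. Letting $\varepsilon_0\to0$ gives \eqref{eq:hessian-log-lower-bound}.

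\textbf{Step 3: eventual entry (main obstacle).} Suppose instead that $\overline z\le -A'\overline w^{q-k+1}$ for all large $t$. Then the inequality in Step 1 gives $d\overline z\ge(1-\frac{1}{A_kA'}+o(1))\overline z\,dt$ with $\frac{1}{A_kA'}=1-\varepsilon_0$. This leads only to exponential-type decay $\overline w\lesssim e^{-ct}$, which is not contradictory by itself. Exponential decay of $\overline w$ at a rate $c\ge1$ would mean $-u$ stays bounded, contradicting $u\to-\infty$. So I must rule out decay $\overline w\sim e^{-ct}$ with $0<c<1$. I would do this by linearizing: in the regime $\overline w^q\ll(\overline w+\overline z)^{k-1}|\overline z|$, the inequality forces $\overline z+\overline w$ to behave like the homogeneous $k$-harmonic dynamics. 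Those dynamics have only the two modes $e^{0}$ and $e^{-t}$, that is, the constant and fundamental-solution behaviors, so intermediate rates are impossible. Concretely, the inequality $d(\overline z+\overline w)\ge(o(1))\,dt$ integrated up to infinity gives $\overline z+\overline w\le o(\overline w)$. This forces $\overline w_t\le-(1-o(1))\overline w$, hence $-u$ is bounded, contradicting the blow-up hypothesis. I expect this exclusion of exponential decay, together with the admissibility check in Step 1, to require the most care.
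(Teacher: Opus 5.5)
Your architecture matches the paper's: semiconvexity of $\overline w$, and the reversed measure inequality with the admissibility dichotomy ($\sigma_k\le0$ is trivial, while $\sigma_k>0$ together with $b=\psi'/r>0$ forces $D^2\Psi\in\Gamma_k$). It also has the same forward-invariant region $\{0<w\le\delta,\ z>-A'w^{\beta+1}\}$ with $\beta=q-k$, and eventual entry as the crux.

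The gap is in how you secure $\overline w+\overline z>0$, and in Step~3. The justification ``$\overline z\to0$ and $\overline w/\underline w\to1$'' says nothing about the sign of $\overline w+\overline z$. Both terms tend to zero, so the sign is governed by the ratio $\overline z/\overline w$. Moreover, $e^t(\overline w+\overline z)$ is the $t$-derivative of $\overline u$, which has no reason to be monotone. For $k\ge2$ you cannot apply the measure inequality without this positivity.

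Likewise, under $\overline z\le-A'\overline w^{\beta+1}$ you only get
\[
\frac{G(\overline w,\overline z)}{\overline z}\le-1+(1-\varepsilon_0)\Bigl(\frac{\overline w}{\overline w+\overline z}\Bigr)^{k-1}.
\]
This is $\le-\varepsilon_0/2$, equivalently $d(-\overline z)\le-\frac{\varepsilon_0}{2}(-\overline z)\,dt$, only if you already know $\overline w+\overline z\ge(1-\theta)\overline w$ with $\theta$ small; your ``$o(1)$'' presupposes this. Your linearization for excluding rates $0<c<1$ needs the same lower bound to make $\overline w^q/(\overline w+\overline z)^{k-1}=O(\overline w^{\beta+1})$, so it is circular. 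Its last step is also a non sequitur: $\overline w_t\le-(1-o(1))\overline w$ only gives $e^t\overline w=e^{o(t)}$, not boundedness. You would need an integrable bound such as $1+\overline z/\overline w\le C\overline w^{\beta}$.

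The missing idea is to exclude exponential decay at the PDE level rather than by an ODE. If $\overline w=O(e^{-\kappa t})$, then $-u(x)\le\overline u(|x|)\le C|x|^{-\eta(1-\kappa_0)}$ with $\kappa_0=\min\{\kappa,\frac12\}$. Hence $-u\in L^a(B_1)$ for some $a>q$, and Corollary~\ref{cor:continuous-extension} makes the singularity removable, contradicting $u\to-\infty$. Consequently there are arbitrarily large times with $\overline z>-\theta\overline w$.

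A first-contact argument against the linear barrier $h(w)=-\theta w$ then shows that the wedge $\overline z>-\theta\overline w$ persists for all large $t$ (Lemma~\ref{lem:critical-bar-linear-wedge}). At contact, $\overline w+\overline z=(1-\theta)\overline w>0$ automatically, and $G/\overline z=-1+\overline w^{\beta}/(A_k\theta(1-\theta)^{k-1})\le-\frac12<-\theta$, which contradicts the barrier slope. This wedge supplies both the positivity of $\overline w+\overline z$ and the factor $(1-\theta)^{1-k}$ absorbed in \eqref{eq:critical-bar-theta-choice}. If eventual entry then failed, $-\overline z$ and hence $\overline w$ would decay exponentially, which the same removability argument rules out.
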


\begin{proof}
Retain $\eta,q,A_k$, and $t$ from the proof of
Proposition~\ref{prop:hessian-log-upper-bound}, and set
\[
        \beta=q-k,
        \qquad
        e^t\overline w(t)=\overline u(r),
        \qquad
        \overline z(t)=\overline w_t^+(t).
\]
Fix $\varepsilon\in(0,1)$ and put
\[
        B_\varepsilon=\frac1{(1-\varepsilon)A_k}.
\]
Lemmas~\ref{lem:critical-bar-w-semiconvexity}--\ref{lem:critical-bar-eventual-entry}
in Appendix~\ref{app:critical-lower-bound} show that there are
$\delta_\varepsilon>0$ and $T_\varepsilon>0$ such that
\begin{equation}\label{eq:critical-bar-eventual-trapping}
        0<\overline w(t)\le\delta_\varepsilon,
        \qquad
        \overline z(t)>-B_\varepsilon\overline w(t)^{\beta+1}
        \qquad(t\ge T_\varepsilon).
\end{equation}
Since $\overline w$ is locally semiconvex, it is locally absolutely
continuous and $\overline w_t=\overline z$ almost everywhere.  Hence
\eqref{eq:critical-bar-eventual-trapping} gives
\[
        \frac d{dt}\overline w^{-\beta}
        =-\beta\overline w^{-\beta-1}\overline w_t
        \le \beta B_\varepsilon
        \qquad\text{a.e. on }(T_\varepsilon,\infty).
\]
After integration,
\[
        \liminf_{t\to\infty}
        t^{\frac1\beta}\overline w(t)
        \ge(\beta B_\varepsilon)^{-\frac1\beta}.
\]
Using $r^\eta\overline u(r)=\eta^\eta\overline w(t)$,
$t=\eta\log(\eta/r)$, and the identity defining
$\mathcal L_{n,k}$, we obtain
\[
        \liminf_{r\to0}
        \left(\log\frac1r\right)^{\frac1\beta}
        r^\eta\overline u(r)
        \ge(1-\varepsilon)^{\frac1\beta}\mathcal L_{n,k}.
\]
By Theorem~\ref{thm:k-mazang-theorem51},
$\overline u(r)/\underline u(r)\to1$.  Since
$-u(x)\ge\underline u(|x|)$, the same lower bound holds pointwise.
Finally, let $\varepsilon\to0$ and use
$\frac1\beta=\frac{n-2k}{2k^2}$ to obtain
\eqref{eq:hessian-log-lower-bound}.
\end{proof}

The upper and lower estimates now determine the exact asymptotic
constant.
\begin{proof}[Proof of Theorem~\ref{thm:hessian-log-limit}]
Lemma~\ref{lem:critical-bounded-or-blowup} gives the
dichotomy between removability and divergence.  In the removable case there
is nothing further to prove.  In the nonremovable case,
Proposition~\ref{prop:hessian-log-upper-bound} and
Proposition~\ref{prop:hessian-log-lower-bound} give matching bounds with the
same constant $\mathcal L_{n,k}$, proving the asserted limit
\eqref{eq:hessian-log-limit}.
Since $\frac{n-2k}{2k^2}>0$, the same limit also gives
$
        |x|^{\frac{n-2k}{k}}u(x)\to0,
$
which is \eqref{eq:hessian-subfundamental-limit}.
\end{proof}

\section{The critical dimension and gradient--exponential sources}
\label{sec:critical-dimension}

Throughout this section, $k=\frac{n}{2}$
and $u$ and $f$ satisfy the hypotheses of
Theorem~\ref{thm:critical-dimension-classification}.  

Let $\mu$ be the measure on $B_1$ defined by
$
        d\mu=f(x,u(x),\nabla u(x))\,dx.
$
  By Lemma~\ref{lem:hessian-measure-decomposition-origin}, the canonical
extension of $u$ to $B_1$ belongs to $\Phi^k(B_1)$ and, for some
$M_0\ge0$,
\begin{equation}\label{eq:critdim-measure-decomposition}
        F_k[u]=M_0\delta_0+f(x,u(x),\nabla u(x)).
\end{equation}

The proof adapts the strategy of Ma and Qing for isolated singularities
of $n$-superharmonic functions \cite{MaQing2021} to the Hessian
setting.  The Adams--Moser--Trudinger estimate below yields the
logarithmic growth bound, while blow-down compactness and a local
Wolff-potential estimate determine the asymptotic coefficient.

\subsection{An Adams--Moser--Trudinger estimate}
The following integrability estimate for $\W_k$ is used for controlling the gradient--exponential density.

\begin{proposition}[Adams--Moser--Trudinger inequality]
\label{prop:critdim-hessian-amt}
Let $k=\frac{n}{2}$, let $\Omega\subset\mathbb R^n$ be bounded with diameter $D$, and
let $f\in L^1(\Omega)$, $f\ge0$.  Then, for every $0<\delta<1$,
\begin{equation}\label{eq:critdim-hessian-amt}
\int_\Omega
\exp\left(
        \frac{n(1-\delta)\W_k^f(x,D)}
        {\|f\|_{L^1(\Omega)}^{\frac{1}{k}}}
\right)\,dx
\le
\frac{C(n)|B(0,D)|}{\delta^{n+1}}+2^n|\Omega|.
\end{equation}
When $f=0$ a.e., $\W_k^f\equiv0$, and the quotient in the exponent
is defined to be zero.
\end{proposition}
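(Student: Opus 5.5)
We follow Hedberg's splitting argument for the Riesz potential in the limiting case, adapted to the Wolff potential with $n-2k=0$. Because $k=\frac n2$, the Wolff potential reduces to
\[
\W_k^f(x,D)=\int_0^D\Bigl(\int_{B(x,t)}f\Bigr)^{\frac1k}\frac{dt}{t}.
\]
Normalize $\|f\|_{L^1(\Omega)}=1$; the general case follows by homogeneity, since the exponent involves the quotient $\W_k^f/\|f\|_1^{1/k}$. Then $\mu(B(x,t))\le 1$ for every $t$.

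Fix $x$ and split the integral at a radius $\rho\in(0,D]$. For $t<\rho$ we use $\mu(B(x,t))\le\omega_nt^n\mathcal Mf(x)$. This gives
\[
\int_0^\rho\le C(\mathcal Mf(x))^{\frac1k}\rho^{2}=C(\mathcal Mf(x))^{\frac2n}\rho^2 .
\]
For $t>\rho$ we use $\mu\le1$, which gives $\int_\rho^D\le\log\frac D\rho$.

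The constant $n$ in the exponent is sharp, so the estimate must be arranged carefully. Take
\[
\rho=\min\{D,\,(\lambda\,\mathcal Mf(x))^{-\frac1n}\}
\]
with a parameter $\lambda$. Then
\[
\W_k^f(x,D)\le C\lambda^{-\frac2n}+\tfrac1n\log^+\bigl(\lambda D^n\mathcal Mf(x)\bigr).
\]
Consequently
\[
\exp\bigl(n(1-\delta)\W\bigr)\le e^{C'(1-\delta)\lambda^{-2/n}}\max\{1,\lambda D^n\mathcal Mf\}^{1-\delta}.
\]

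The key point is that the power $(\mathcal Mf)^{1-\delta}$ is integrable with a constant blowing up like $\delta^{-1}$. To see this, integrate the weak $(1,1)$ bound $|\{\mathcal Mf>s\}|\le C(n)/s$ over the region where $\lambda D^n\mathcal Mf>1$. Using the layer-cake formula,
\[
\int_{\{\lambda D^n\mathcal Mf>1\}}(\lambda D^n\mathcal Mf)^{1-\delta}\le \frac{C\lambda D^n}{\delta}.
\]
On the complementary region, where $\lambda D^n\mathcal Mf\le 1$, the integrand is bounded by $e^{C'\lambda^{-2/n}}$. With $\lambda$ of order $1$ relative to $D^{-n}$, this term produces the $2^n|\Omega|$-type contribution. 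Since $|B(0,D)|\sim D^n$, this already gives a bound of the form $C(n)|B(0,D)|\delta^{-1}$ plus a multiple of $|\Omega|$.

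The stated $\delta^{-(n+1)}$ is a weaker power of $\delta$, so we have room. If the constant on the $|\Omega|$ term is not exactly $2^n$, we will instead choose $\lambda$ depending on $\delta$, with $\lambda^{-2/n}\sim\delta$. This makes $e^{C'(1-\delta)\lambda^{-2/n}}$ bounded by an absolute constant, at the cost of a factor $\lambda\sim\delta^{-n/2}$ in the first term, still within $\delta^{-(n+1)}$.

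The main obstacle is bookkeeping of the constants. The $\log$ term must carry the exact coefficient $\frac1n$, so that $n(1-\delta)$ times it yields exactly the power $1-\delta<1$ of $\mathcal Mf$. To secure this we will take the crude bound $\mu(B(x,t))\le\|f\|_1$ on the outer range, with no constant lost. We will also take care that $\mathcal Mf$ is computed for $f$ extended by zero outside $\Omega$, which keeps the weak $(1,1)$ constant dependent only on $n$. The case $f=0$ a.e. is trivial by the stated convention.
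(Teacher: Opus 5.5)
Your argument is correct, but it does not follow the paper's route. The paper gives no argument of its own. It quotes \cite[Lemma~2.2]{MaWang2025}, an Adams--Moser--Trudinger inequality for Wolff potentials in the borderline case $\alpha p=N$, specialized to $N=n$, $p=k+1$, $\alpha=\frac{n}{k+1}$.

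You instead prove this special case directly by Hedberg's splitting. After normalizing $\|f\|_{L^1(\Omega)}=1$, you have the pointwise bound
\[
\W_k^f(x,D)\le C\lambda^{-2/n}+\tfrac1n\log^+\bigl(\lambda D^n\mathcal Mf(x)\bigr).
\]
Combined with the weak $(1,1)$ estimate for $\mathcal M(f\chi_\Omega)$ and the layer-cake formula, it gives
\[
\int_\Omega e^{n(1-\delta)\W_k^f(x,D)}\,dx\le e^{nC\lambda^{-2/n}}\Bigl(|\Omega|+\frac{C(n)\lambda D^n}{\delta}\Bigr).
\]
This is stronger than the stated bound, since $\delta^{-1}\le\delta^{-(n+1)}$. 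The $\delta^{-1}$ rate is also the natural one: a Dirac-like concentration shows it cannot be improved.

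Your closing bookkeeping can be simplified. The parameter $\lambda$ is dimensionless, so the remark that it is ``of order one relative to $D^{-n}$'' is misleading. The $\delta$-dependent choice $\lambda^{-2/n}\sim\delta$ is also unnecessary. A single $\lambda=\lambda(n)$ with $e^{nC\lambda^{-2/n}}\le 2^n$ already produces the constant $2^n$ in front of $|\Omega|$; in fact any constant larger than $1$ can be obtained this way.

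The citation buys brevity. Your approach buys a self-contained proof with sharper $\delta$-dependence. It also loses no generality: because the factor $t^{n-2k}$ is absent, the same computation works verbatim for $W_{\alpha,p}$ with $\alpha p=N$.
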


\begin{proof}
This follows from \cite[Lemma~2.2]{MaWang2025} with $N=n$, $p=k+1$, and
$\alpha=\frac{n}{k+1}$.
\end{proof}

\subsection{A uniform logarithmic upper bound}

\begin{lemma}
\label{lem:critdim-exp-quotient-bound}
There is a constant $C>0$ such that
\begin{equation}\label{eq:critdim-exp-quotient-bound}
        -u(x)\le C\log\frac1{|x|}+C,
        \qquad 0<|x|<\frac12.
\end{equation}
\end{lemma}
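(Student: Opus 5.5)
We prove the bound by combining Labutin's upper Wolff estimate with a split of the Hessian measure into the point mass at the origin and the diffuse source, arguing as in the proof of Proposition~\ref{prop:subcritical-fundamental-bound}. Let $\mu=F_k[u]=M_0\delta_0+f(x,u,\nabla u)\,dx$ as in \eqref{eq:critdim-measure-decomposition}. For $0<|x|<\frac12$, the upper inequality of Lemma~\ref{lem:labutin-local-pointwise} on a ball centered at $x$ with fixed radius ($R=\frac1{8}$, say, so that $B(x,3R)\subset B_1$) gives
\[
-u(x)\le C_2\W_k^{\mu}(x,\tfrac14)+C_3\inf_{B(x,1/8)}(-u).
\]
The last term is bounded by a constant, because $u\in L^1_{\loc}(B_1)$. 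Indeed, the average of $-u$ over $B(x,\frac18)\subset B_{3/4}$ controls the infimum.

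Next we split the Wolff potential, using $(a+b)^{1/k}\le C(a^{1/k}+b^{1/k})$:
\[
\W_k^\mu(x,\tfrac14)\le C\,\W_k^{M_0\delta_0}(x,\tfrac14)+C\,\W_k^{g}(x,\tfrac14),\qquad g=f(\cdot,u,\nabla u)\chi_{B_1}.
\]
Since $n-2k=0$, the first term is explicit:
\[
M_0^{1/k}\int_{|x|}^{1/4}\frac{dt}{t}\le M_0^{1/k}\log\frac1{|x|}.
\]
This already has the required form. The whole difficulty is to show that $\W_k^g(x,\frac14)$ is bounded, or at least $O(\log\frac1{|x|})$, uniformly in $x$.

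\textbf{Main obstacle.} We only know $g\in L^1(B_1)$, so $\W_k^g$ need not be bounded a priori. We would run a bootstrap through the growth condition \eqref{eq:critical-source-growth}, with Proposition~\ref{prop:critdim-hessian-amt} as the key tool.

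First, choose $\rho$ so small that $\|g\|_{L^1(B_{2\rho})}^{1/k}$ is tiny. Splitting $g=g\chi_{B_{2\rho}}+g\chi_{B_1\setminus B_{2\rho}}$, the part outside $B_{2\rho}$ has a Wolff potential bounded on $B_\rho$ (for $t<\rho$ the ball $B(x,t)$ misses it, and for larger $t$ we use the $L^1$ bound). Proposition~\ref{prop:critdim-hessian-amt} then shows that $e^{\beta\W_k^{g\chi_{B_{2\rho}}}}\in L^1(B_\rho)$ with $\beta$ as large as desired. Together with the estimate for the Dirac part and the pointwise bound above, this gives
\[
e^{-\alpha u}\le C|x|^{-\alpha C_2M_0^{1/k}}\,e^{\alpha C\W_k^{g\chi_{B_{2\rho}}}},
\]
so $e^{-\alpha u}\in L^{s}(B_\rho)$ for every $s<s_0$, where $s_0=n/(\alpha C_2 M_0^{1/k})$; if $M_0=0$, this holds for every $s<\infty$.

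Second, we control the gradient factor $|\nabla u|^\ell$. We know $\nabla u\in L^{q}_{\loc}$ for all $q<\frac{nk}{n-k}=n$. Hölder's inequality with $\ell<n$ then puts $g$ in $L^{m}$ near $0$ for some $m>1$, at least when $M_0$ is small. If $M_0$ is large, we expect to need an alternative. The plan there is to use a gradient bound $|\nabla u(x)|\le C|x|^{-1}(1+\log\frac1{|x|})$, derived by rescaling annuli using Trudinger's estimates, and then to verify directly that $\mu(B(x,t)\setminus B_{|x|/2})$ is small.

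With $g\in L^m$ for some $m>1$, Hölder's inequality gives
\[
\int_{B(x,t)}g\le Ct^{n(1-1/m)},
\]
so $\W_k^g(x,\frac14)\le C$ uniformly. Substituting into the pointwise bound yields $-u(x)\le C\log\frac1{|x|}+C$.
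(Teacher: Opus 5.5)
Your reduction is correct as far as it goes. The fixed-radius Labutin bound, the $L^1$ control of the infimum term, and subadditivity of $s\mapsto s^{1/k}$ give $-u(x)\le C_2M_0^{1/k}\log\frac1{|x|}+C_2\W_k^g(x,\frac14)+C$. When $\alpha C_2M_0^{1/k}<n-\ell$, your AMT--H\"older bookkeeping does place $g$ in some $L^m$ with $m>1$, and that settles this case.

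The gap is the complementary case, and it is not exceptional: $M_0$ may be arbitrarily large (already $f\equiv0$, $u=c\log|x|$ is admissible). When $\alpha C_2M_0^{1/k}\ge n-\ell$, your bound gives $e^{-\alpha u}\in L^s$ only for $s<\frac{n}{\alpha C_2M_0^{1/k}}\le\frac{n}{n-\ell}$. Pairing with $|\nabla u|^\ell\in L^{q/\ell}$, $q<n$, would need $s>\frac{n}{n-\ell}$.

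This is not an artifact of the bookkeeping. Since $-u\ge cM_0^{1/k}\log\frac1{|x|}$, nothing in the hypotheses prevents $g$ from being as large as a big negative power of $|x|$ on small sets accumulating at the origin, while only $g\in L^1$ is forced. So you should expect neither $g\in L^m$ for some $m>1$ nor a uniform bound on $\W_k^g$, only an $O(\log\frac1{|x|})$ bound.

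Your fallback does not repair this, because it is circular. Trudinger's gradient estimate requires $F_k[U]=\psi^k$ with $\psi$ Lipschitz, whereas $f$ is only continuous. Moreover, both it and the H\"older estimate on annuli need control of $\osc u$ and of $e^{-\alpha u}$ there, which is exactly the bound $-u\lesssim\log\frac1{|x|}$ you are trying to prove.

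The missing idea is to localize at scale $|x|$, where the Dirac mass drops out, and to bound the \emph{local} potential by a level-set splitting rather than by global integrability. The paper argues by contradiction along $x_j\to0$ with $-u(x_j)/\lambda_j\to\infty$, $\lambda_j=\log\frac1{|x_j|}$. It rescales $B(x_j,\frac{|x_j|}{2})$ to $B_2$ and obtains $-u_j\le C\lambda_j+C\W_k^{\mu_j}(\cdot,2)$ on $B_1$, controlling the infimum term by Lemma~\ref{lem:critdim-infimum-estimate}. Your fixed-ball estimate would serve equally well at this step: for $y\in B(x_j,\frac{|x_j|}{4})$, the part $t\ge\frac{|x_j|}{4}$ of $\W_k^\mu(y,\frac14)$ is at most $\mu(B_1)^{1/k}\lambda_j$, and the remaining part does not see $M_0\delta_0$.

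Since the local mass $\mu_j(B_2)=\mu(B(x_j,\frac{|x_j|}{2}))\to0$, Proposition~\ref{prop:critdim-hessian-amt} gives $\int_{B_2}e^{A\W_k^{\mu_j}(\cdot,2)}\,d\xi\le C(A)$ for \emph{every} fixed $A$, regardless of $M_0$. The paper then splits $B_1$ along the level set $\Omega_j=\{\W_k^{\mu_j}(\cdot,2)\ge\lambda_j\}$.
\begin{itemize}
\item On $\Omega_j$, the factor $e^{C\lambda_j}$ is absorbed into $e^{C\W_k^{\mu_j}}$, which yields a uniform $L^s$ bound, $s>1$, for the density there.
\item On the complement one only has the pointwise bound $g_j\le Ce^{2C\lambda_j}a_j^{n-\ell}|\nabla u_j|^\ell$, hence $\int_{B_t\setminus\Omega_j}g_j\le Ce^{K\lambda_j}t^\beta$. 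Cutting the $t$-integral at $t=e^{-N\lambda_j}$ with $N\beta>K$ shows that this part costs only $O(\lambda_j)$.
\end{itemize}
Thus $\W_k^{\mu_j}(0,2)\le C+C\lambda_j$, contradicting $\W_k^{\mu_j}(0,2)/\lambda_j\to\infty$.
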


The proof also requires a local infimum estimate on balls whose radii
are comparable to their distance from the origin.

\begin{lemma}
\label{lem:critdim-infimum-estimate}
For every fixed $\theta\in(0,\frac{1}{2}]$, there is a constant
\[
        C_\theta=C\bigl(n,\theta,
        \|u\|_{L^n(B_1)}\bigr)>0
\]
such that
\begin{equation}\label{eq:critdim-infimum-estimate}
        \inf_{B(x_0,\theta|x_0|)}(-u)
        \le C_\theta\log\frac1{|x_0|},
        \qquad x_0\in B_{\frac{1}{2}}\setminus\{0\}.
\end{equation}
\end{lemma}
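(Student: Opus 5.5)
The plan is to compare the infimum of $-u$ on the ball with its $L^n$ average over that ball, and then use the fact that the ball has volume comparable to $|x_0|^n$. Set $\rho=\theta|x_0|$ and $B=B(x_0,\rho)$. Since $\theta\le\frac12$ and $|x_0|<\frac12$, we have $B\subset B_1\setminus\{0\}$. Moreover $-u\ge0$ there, so Hölder's inequality gives
\[
        \inf_{B}(-u)
        \le \frac1{|B|}\int_B(-u)\,dx
        \le |B|^{-\frac1n}\|u\|_{L^n(B)}
        \le \omega_n^{-\frac1n}\theta^{-1}|x_0|^{-1}\|u\|_{L^n(B_1)} .
\]
This bound grows like $|x_0|^{-1}$ rather than like a logarithm, so it is too crude. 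The integrability alone is not enough, and the real input must be the $k$-convexity, which here means the Hessian measure decomposition \eqref{eq:critdim-measure-decomposition} together with the Wolff potential bounds. Finiteness of $\|u\|_{L^n(B_1)}$ follows from the local integrability of $k$-convex functions when $k=\frac n2$, applied to the canonical extension, after shrinking the ball slightly if needed.

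First I rule out a bad distribution of values: $-u$ cannot be very large on a whole fixed fraction of $B$ compared with $\log\frac1{|x_0|}$. For this I would compare $u$ with the fundamental solution. Let $M=F_k[u](B_{3/4})$, which is finite by Lemma~\ref{lem:hessian-measure-decomposition-origin}. I then use the upper estimate \eqref{eq:pointwise-labutin} in Lemma~\ref{lem:labutin-local-pointwise}, centered at $x_0$, in its averaged form \eqref{eq:pointwise-start}. Because $k=\frac n2$, the exponent $n-2k$ vanishes, so the Wolff potential is
\[
        \W_k^\mu(x,R)=\int_0^R\mu(B(x,t))^{\frac1k}\frac{dt}{t}.
\]
The part of the integral with $t\ge c|x_0|$ is therefore bounded by $M^{1/k}\log\frac{C}{|x_0|}$. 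The part with $t<c|x_0|$ is not controlled pointwise. However, its average over $B$ is controlled: integrating in $x$ and applying Fubini and Hölder to $\int_B\mu(B(x,t))^{1/k}dx$ gives, up to constants, $\rho^{n}\mu(B(x_0,2\rho))^{1/k}$. This is bounded by $\rho^n M^{1/k}$ times a constant, with no logarithm. The lower Wolff bound and $\mu(B(x,t))\le M$ are what make this work uniformly.

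So the key estimate is the averaged one,
\[
        \frac1{|B|}\int_B(-u)\,dx
        \le C\,M^{\frac1k}\log\frac1{|x_0|}
        + C\,\frac1{|B_{1/4}|}\int_{B_{1/4}}(-u)\,dx .
\]
I would obtain it by integrating \eqref{eq:pointwise-start} over $x\in B$, taking $R=\frac14$ and $q=n$, with the last term controlled by $\|u\|_{L^n(B_1)}$. Since the infimum is at most the average, this yields \eqref{eq:critdim-infimum-estimate}, with a constant depending on $M$ as well. The dependence on $M$ is the step I expect to be the main obstacle, because the statement asks for a constant depending only on $\|u\|_{L^n(B_1)}$. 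I would remove it in one of two ways. The first is to bound $M$ by the $L^1$ norm of $u$ on a slightly larger ball, using the standard local mass estimate for Hessian measures (via the divergence structure, or Trudinger--Wang), and then bound that by the $L^n$ norm. The second, if the paper's constants are allowed to depend on $u$, is simply to accept the dependence. The averaging of the small-scale part of the Wolff integral is the delicate computation: one needs $\int_B\mu(B(x,t))\,dx\le|B_t|\,\mu(B(x_0,2\rho))$ together with Jensen's inequality for the concave power $\frac1k\le1$, using $k\ge1$.
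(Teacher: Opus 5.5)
Your argument is correct, but it takes a different route from the paper. The paper's proof does not use the Hessian measure here. For $k=\frac n2$ the exponent $p_{\overline\Gamma_k}=\frac{n(k-1)}{n-k}+2$ equals $n$, so by the cone comparison criterion of Liu--Ma--Qing--Zhong the function $-u$ is $n$-superharmonic. It extends across the origin, a point of zero $n$-capacity, as an $n$-superharmonic function in $L^n(B_1)$. The estimate is then quoted from Ma--Qing's Lemma~3.3, and the extension to general $\theta$ is only asserted.

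You instead stay inside the $k$-Hessian framework and use nothing beyond Lemma~\ref{lem:labutin-local-pointwise}. Average the upper bound in \eqref{eq:pointwise-labutin} over $B=B(x_0,\rho)$ with $\rho=\theta|x_0|$, cutting the Wolff integral at $t=\rho$. Because $n-2k=0$, the scales $t\ge\rho$ cost at most $M^{1/k}\log\frac{C}{\theta|x_0|}$. For $t\le\rho$, Fubini and Jensen give $\frac1{|B|}\int_B\mu(B(x,t))^{1/k}\,dx\le (t/\rho)^{2}\mu(B(x_0,2\rho))^{1/k}$, so these scales contribute only $\frac12 M^{1/k}$. This computation is the heart of your argument and it is right. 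It also bounds the average of $-u$ over $B$, not just the infimum, for every fixed $\theta$.

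The paper's route is shorter on the page but relies on the $p_{\overline\Gamma_k}$-superharmonicity result and on nonlinear potential theory for the $n$-Laplacian. Yours is self-contained and makes the role of the Hessian mass explicit.

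On the dependence on $M$: it is harmless for the only application, Lemma~\ref{lem:critdim-exp-quotient-bound}, where the constant only needs to be independent of $x_0$. If you want the constant exactly as stated, use the lower half of \eqref{eq:pointwise-labutin} rather than a divergence-structure mass bound, which would naturally involve $\sup|u|$, infinite here. For $k=\frac n2$ that lower bound gives $\mu(B(z,\frac r{16}))\le (C|u(z)|)^k$. Pick $z\in B(y,\frac r{32})$ with $|u(z)|$ no larger than the average of $|u|$ on that ball, and cover the fixed compact set on which you need $M$. This yields $M\le C(n)\|u\|_{L^1(B_1)}^k$, hence a constant depending only on $n$, $\theta$ and $\|u\|_{L^n(B_1)}$.
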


\begin{proof}
Set $w=-u\ge0$.  Since $u$ is $k$-admissible,
\[
        -\lambda(D^2w)=\lambda(D^2u)\in\overline\Gamma_k
        \qquad\text{in }B_1\setminus\{0\}.
\]
The cone comparison criterion in
\cite{LiuMaQingZhong2026} (see Lemmas~3.2.1--3.2.2 and Corollary~3.2.2) shows that $w$ is
$p_{\overline{\Gamma}_k}$-superharmonic, with
\begin{equation}\label{eq:critical-p-gamma-k}
        p_{\overline{\Gamma}_k}
        =\frac{n(k-1)}{n-k}+2.
\end{equation}
When $k=\frac{n}{2}$, this gives $p_{\overline{\Gamma}_k}=n$.  The standard
extension of $w$ across the zero $n$-capacity point $\{0\}$ is therefore
$n$-superharmonic in $B_1$
\cite[Theorem~7.35]{HeinonenKilpelainenMartio1993}; moreover
$w\in L^n(B_1)$.  Thus
\eqref{eq:critdim-infimum-estimate} for $\theta=\frac{1}{2}$ is precisely
\cite[Lemma~3.3]{MaQing2021}.  The same proof gives the stated estimate for
every fixed $\theta\in(0,\frac{1}{2}]$, with a constant depending also on
$\theta$.
\end{proof}

\begin{proof}[Proof of Lemma~\ref{lem:critdim-exp-quotient-bound}]
We follow the proof of \cite[Lemma~4.1]{MaQing2021}.  Suppose to the
contrary that there are $x_j\to0$ such that, with $\lambda_j=\log\frac1{|x_j|}\to\infty$,
one has $u(x_j)/\lambda_j\to-\infty$.

Let $a_j=\frac{|x_j|}{4}$ and $u_j(\xi)=u\left(x_j+a_j\xi\right)$ for $\xi\in B_2$.
Set
\begin{equation}\label{eq:critdim-local-rescaled-equation}
        g_j(\xi)
        :=a_j^n f\left(x_j+a_j\xi,u_j(\xi),a_j^{-1}\nabla u_j(\xi)\right)
        =F_k[u_j](\xi)
        \qquad (\xi\in B_2),
\end{equation}
and let $\mu_j$ be the measure on $B_2$ defined by
$
        d\mu_j=g_j(\xi)\,d\xi.
$
By Lemma~\ref{lem:labutin-local-pointwise} and Lemma~\ref{lem:critdim-infimum-estimate}, we obtain
\begin{equation}\label{eq:critdim-rescaled-labutin-local}
        -u_j(\xi)
        \le C_1\W_k^{\mu_j}(\xi,\frac{1}{2})
        +C_2\inf_{B(\xi,\frac{1}{4})}(-u_j),
        \qquad \xi\in B_1,
\end{equation}
and
\begin{equation}\label{eq:critdim-rescaled-infimum-bound}
\begin{aligned}
        \inf_{B(\xi,\frac{1}{4})}(-u_j)
        &=\inf_{B(x_j+a_j\xi,\frac{|x_j|}{16})}(-u) \\
        &
        \le C\log\frac1{|x_j+a_j\xi|}
        \le C\lambda_j,
        \qquad \xi\in B_1.
\end{aligned}
\end{equation}
Then \eqref{eq:critdim-rescaled-labutin-local} and
\eqref{eq:critdim-rescaled-infimum-bound} yield
\begin{equation}\label{eq:critdim-rescaled-pointwise-upper}
        -u_j(\xi)\le C_1\lambda_j+C_2\W_k^{\mu_j}(\xi,2),
        \qquad \xi\in B_1.
\end{equation}
Hence,
\begin{equation}\label{eq:critdim-large-wolff-contradiction-start}
        \frac{\W_k^{\mu_j}(0,2)}{\lambda_j}\longrightarrow\infty.
\end{equation}

We next estimate $\W_k^{\mu_j}(0,2)$.  The growth assumption
\eqref{eq:critical-source-growth}, the identity
$|\nabla_xu|=a_j^{-1}|\nabla_\xi u_j|$, and
\eqref{eq:critdim-rescaled-pointwise-upper} imply that, for $\xi\in B_1$,
\[
g_j(\xi)
\le C a_j^{n-\ell}|\nabla u_j(\xi)|^\ell
\exp\{C\lambda_j+C\W_k^{\mu_j}(\xi,2)\}.
\]
By Lemma~\ref{lem:hessian-measure-decomposition-origin},
$f(\cdot,u,\nabla u)\in L^1(B_{\frac12})$.  Since
$B(x_j,2a_j)\subset B_{\frac32|x_j|}$, a change of variables and
absolute continuity of the integral give
\[
        m_j:=\mu_j(B_2)
        =\int_{B(x_j,2a_j)}f(x,u(x),\nabla u(x))\,dx
        \longrightarrow0.
\]
Fix $A>0$.  For all sufficiently large $j$ with $m_j>0$,
$A m_j^{1/k}\le n/2$.  Applying
Proposition~\ref{prop:critdim-hessian-amt} with
$\Omega=B_2$, $D=4$, and $\delta=\frac12$, and using
$\W_k^{\mu_j}(\xi,2)\le\W_k^{\mu_j}(\xi,4)$, we obtain
\[
 \int_{B_2}e^{A \W_k^{\mu_j}(\xi,2)}\,d\xi\le C(A)
\qquad\text{for all sufficiently large }j,
\]
where $C(A)$ is independent of $j$.  If $m_j=0$, the potential
vanishes identically and the same bound is immediate.
Choose $q$ with $\ell<q<n$ when $\ell>0$, and choose any
$q\in(1,n)$ when $\ell=0$.  For all sufficiently large $j$,
$B(x_j,a_j)\subset B_{\frac{1}{2}}$.  The local gradient integrability gives
\[
\int_{B_1}|\nabla u_j|^q\,d\xi
=a_j^{q-n}\int_{B(x_j,a_j)}|\nabla u|^q\,dx
\le C a_j^{q-n}.
\]
Fix $s>1$ so that $s\ell<q$ if $\ell>0$; if $\ell=0$, take
any fixed $s>1$, and define
\[
 \Omega_j=\{\xi\in B_1:\W_k^{\mu_j}(\xi,2)\ge\lambda_j\}.
\]
On $\Omega_j$, we have $\lambda_j\le\W_k^{\mu_j}(\xi,2)$, so
the exponential factor in the bound for $g_j^s$ satisfies
\[
        \exp\{sC\lambda_j+sC\W_k^{\mu_j}(\xi,2)\}
        \le \exp\{2sC\W_k^{\mu_j}(\xi,2)\}.
\]
For $\ell>0$, H\"older's inequality and the preceding exponential
estimate, with the fixed coefficient $A=\frac{2sqC}{q-s\ell}$, give
\begin{align*}
        \int_{\Omega_j}g_j^s\,d\xi
        &\le C a_j^{s(n-\ell)}
        \int_{B_1}|\nabla u_j|^{s\ell}
        e^{2sC\W_k^{\mu_j}(\xi,2)}\,d\xi \\
        &\le C a_j^{s(n-\ell)}
        \left(\int_{B_1}|\nabla u_j|^q\,d\xi\right)^{\frac{s\ell}{q}}
        \left(\int_{B_2}
        e^{\frac{2sqC}{q-s\ell}\W_k^{\mu_j}(\xi,2)}\,d\xi
        \right)^{1-\frac{s\ell}{q}} \\
        &\le C a_j^{sn(1-\frac{\ell}{q})}\le C.
\end{align*}
When $\ell=0$, the same bound follows directly from the exponential
estimate with $A=2sC$, omitting the gradient factor.
Consequently, with $\gamma=n(1-\frac{1}{s})>0$,
\[
\int_{B(0,t)\cap\Omega_j}g_j\,d\xi\le Ct^\gamma,
\qquad 0<t\le2,
\]
and hence
\begin{equation}\label{eq:critdim-potential-split}
       \W_k^{\mu_j}(0,2)
        \le C+C\int_0^2
        \left(\int_{B(0,t)\setminus\Omega_j}g_j\,d\xi\right)^{\frac{1}{k}}
        \frac{dt}{t}.
\end{equation}
For $0<t\le1$, the definition of $\Omega_j$ gives
$\W_k^{\mu_j}(\xi,2)<\lambda_j$ on
$B(0,t)\setminus\Omega_j$.  Hence the preceding pointwise bound and
H\"older's inequality give
\[
\begin{aligned}
\int_{B(0,t)\setminus\Omega_j}g_j\,d\xi
&\le C a_j^{n-\ell}e^{C\lambda_j}
t^{n(1-\frac{\ell}{q})}
\left(\int_{B_1}|\nabla u_j|^q\,d\xi\right)^{\frac{\ell}{q}} \\
&\le Ce^{K\lambda_j}t^\beta,
\qquad 0<t\le1,
\end{aligned}
\]
where $\beta=n\left(1-\frac\ell q\right)>0$
and $K>0$ is independent of $j$.  For $1\le t\le2$, we use the
total mass instead:
\[
\int_1^2
\left(\int_{B(0,t)\setminus\Omega_j}g_j\,d\xi\right)^{\frac1k}
\frac{dt}{t}
\le \mu_j(B_2)^{\frac1k}\log2\le C.
\]
Here, for all sufficiently large $j$,
$\mu_j(B_2)=\mu(B(x_j,2a_j))\le\mu(B_{\frac12})<\infty$.
Fix $N>\frac{K}{\beta}$.  Since $e^{-N\lambda_j}<1$ for all
sufficiently large $j$, we may use the small-radius bound below this
radius and the total mass above it to obtain
\begin{equation}\label{eq:critdim-low-potential-integral}
\begin{aligned}
        \int_0^2
        \left(\int_{B(0,t)\setminus\Omega_j}g_j\,d\xi\right)^{\frac{1}{k}}
        \frac{dt}{t}
        &\le Ce^{\frac{K\lambda_j}{k}}
        \int_0^{e^{-N\lambda_j}}t^{\frac{\beta}{k}}\frac{dt}{t} \\
        &\quad+\left(\int_{B_2}g_j\,d\xi\right)^{\frac{1}{k}}
        \int_{e^{-N\lambda_j}}^2\frac{dt}{t} \\
        &\le Ce^{\frac{(K-N\beta)\lambda_j}{k}}
        +C(N\lambda_j+\log2) \\
        &\le C+C\lambda_j.
\end{aligned}
\end{equation}
Combining \eqref{eq:critdim-potential-split} and
\eqref{eq:critdim-low-potential-integral} yields
$\W_k^{\mu_j}(0,2)\le C+C\lambda_j$, contradicting
\eqref{eq:critdim-large-wolff-contradiction-start}.
\end{proof}

\subsection{Logarithmic blow-down and annular infimum asymptotic}

With the logarithmic upper bound established, we now identify the
coefficient of the logarithmic singularity.  We begin with a comparison
argument against the logarithmic fundamental solution.
\begin{lemma}
\label{lem:critdim-spherical-minimum}
For $0<r<1$, define
\[
        \gamma(r)=\min_{|x|=r}\frac{-u(x)}{\log\frac{1}{r}}.
\]
Then $\gamma(s)\le\gamma(r)$ whenever $0<s<r<1$.  Hence
\[
m=\lim_{r\to0}\gamma(r)
=\liminf_{x\to0}\frac{-u(x)}{\log\frac{1}{|x|}}
\]
exists, $m\ge0$, and
\begin{equation}\label{eq:critdim-global-comparison-lower}
        -u(x)\ge m\log\frac1{|x|},
        \qquad 0<|x|<1.
\end{equation}
\end{lemma}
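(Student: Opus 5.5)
The plan is a comparison argument on the annulus $s<|x|<r$, using the logarithmic fundamental solution in the critical dimension. Fix $0<s<r<1$. Put $G(x)=\log|x|$, which is $k$-harmonic on $\mathbb R^n\setminus\{0\}$ because $k=\frac n2$. For $\varepsilon>0$ I would set
\[
\psi(x)=\gamma(r)\log|x|-\varepsilon\bigl(\log\tfrac1{|x|}\bigr),
\]
or, more simply, take $\psi=\gamma(r)\log|x|$ and handle the inner boundary by letting $s$ tend to a smaller radius $\rho$. The aim is to show $u\le\gamma(r)\log|x|$ is \emph{false} in general; what is actually wanted is
\[
-u\ge\gamma(r)\log\frac1{|x|}\quad\text{on } \{s\le|x|\le r\}.
\]
This gives $\gamma(s)\ge$ something, and that is the wrong direction. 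So the monotonicity needs more care.

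The claim is $\gamma(s)\le\gamma(r)$ for $s<r$, so $\gamma$ is nondecreasing in $r$. To get it, I would compare on the annulus $\rho<|x|<r$ with $\rho<s$ small. Since $F_k[u]\ge0$, $u$ is $k$-subharmonic, so $-u$ is "$k$-superharmonic". The useful direction is therefore an \emph{upper} bound for $-u$ by a $k$-harmonic function, namely the maximum principle for $u$ against a $k$-harmonic comparison function $v$. The key object is the radial $k$-harmonic function $v(x)=a\log|x|+b$, chosen with:
\begin{itemize}
\item $v\le u$ at the point of $\partial B_r$ where $-u$ attains its minimum;
\item $v\le u$ at the point of $\partial B_\rho$ where $-u$ attains its minimum.
\end{itemize}
Comparison only runs on full boundary spheres, though, and $v\le u$ on the whole of $\partial B_r$ is not guaranteed by a condition at the minimum point alone. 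The cleaner route is a Hadamard three-circle type statement. Let $\phi(t)=\max_{|x|=e^{-t}}u$. Because $u$ is $k$-subharmonic and $\log|x|$ is the radial $k$-harmonic function, $\phi$ is convex in $t=\log\frac1{|x|}$. This follows by comparing $u$ with the $k$-harmonic $a\log|x|+b$ that matches $\phi$ at two radii: the comparison principle for $k$-subharmonic versus $k$-harmonic functions on annuli (Trudinger--Wang) gives $u\le a\log|x|+b$ in between.

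In terms of $-u$, the function $-\phi(t)=\min_{|x|=e^{-t}}(-u)=t\,\gamma(e^{-t})$ is concave in $t$. It is also nonnegative, and $-\phi(t)\ge0$ near $t=0$; by continuity of $u$ on $\partial B_{r}$ for $r$ close to $1$, $-\phi(t)$ is a finite nonnegative value at small $t>0$. A nonnegative concave function $h$ on $(0,\infty)$ has $h(t)/t$ nonincreasing in $t$, since $h(t)\ge\frac{t}{t'}h(t')+(1-\frac t{t'})h(0^+)$ with $h(0^+)\ge0$ for $t<t'$. Therefore $\gamma(e^{-t})=h(t)/t$ is nonincreasing in $t$, which is exactly $\gamma(s)\le\gamma(r)$.

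The remaining conclusions follow quickly. Monotonicity gives the existence of $m=\lim_{r\to0}\gamma(r)\ge0$. The bound $-u(x)\ge\gamma(|x|)\log\frac1{|x|}\ge m\log\frac1{|x|}$ is immediate. Finally, the identification with the $\liminf$ holds because $\gamma(|x|)\le -u(x)/\log\frac1{|x|}$, with equality at the minimizing points.

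The main obstacle is the behaviour at $t\to0^+$, i.e.\ near $|x|=1$, where $u$ is only given on $B_1\setminus\{0\}$. I would handle it by working on $(t_1,\infty)$ with $t_1>0$ and using $h(t_1)\ge0$ in place of $h(0^+)\ge0$. This yields monotonicity of $(h(t)-c)/(t-t_1)$-type quotients. Alternatively, I would first prove the statement on $0<s<r\le r_1<1$ using $u\in C^2(B_2\setminus\{0\})$: there $u$ is continuous up to $|x|=1$, so $h$ extends continuously to $t=0$ with $h(0)\ge0$, and the argument is clean. The comparison step across the inner sphere near $0$ requires no extension, since it only uses annuli.
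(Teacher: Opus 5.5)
Your proposal is correct and is essentially the paper's argument: the paper compares $u$ on $B_1\setminus\overline{B_s}$ with $h_s(x)=\gamma(s)\log|x|$, using $u\le h_s$ on $|x|=s$, $u\le 0=h_s$ on $|x|=1$, and the Trudinger--Wang comparison principle. This is exactly your three-point inequality $h(t)\ge\frac{t}{t'}h(t')$ with $h(0)$ replaced by its lower bound $0$. Your detour through full convexity of $\phi$ is unnecessary and needs one extra remark. For $k\ge2$ the function $a\log|x|+b$ is $k$-admissible only when $a\ge0$; this does hold here, because the canonical extension is subharmonic and so $\max_{|x|=r}u$ is nondecreasing in $r$. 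The paper's comparison function instead has slope $\gamma(s)\ge0$ automatically.
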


\begin{proof}
For fixed $s\in(0,1)$, denote $h_s(x)=\gamma(s)\log|x|$.
By definition, we have $u\le h_s$ on $|x|=s$ while $u\le 0= h_s$ on $|x|=1$.  Moreover,
\[
       F_k[u]\ge0=F_k[h_s]
        \qquad\text{in }B_1\setminus\overline{B_s}.
\]
The comparison principle Theorem~3.1 in \cite{TrudingerWang1997} gives
$u\le h_s$ in the annulus, or equivalently
\begin{equation}\label{eq:critdim-comparison-from-inner-sphere}
        \frac{-u(x)}{\log\frac{1}{|x|}}\ge\gamma(s),
        \qquad s\le|x|<1.
\end{equation}
Taking the minimum on $|x|=r$ proves the asserted monotonicity.  The limit is
finite by Lemma~\ref{lem:critdim-exp-quotient-bound}; letting $s\to0$
in \eqref{eq:critdim-comparison-from-inner-sphere} gives
\eqref{eq:critdim-global-comparison-lower}.
\end{proof}

We next show that every  blow-down subsequence converges to the constant
$-m$.

\begin{lemma}
\label{lem:critdim-unique-blowdown}
Let $U_r(\xi)=\frac{u(r\xi)}{\log\frac{1}{r}}.$ Then $U_r\to-m$ in $L^1_{\mathrm{loc}}(\mathbb R^n\setminus\{0\})$ as $r\to0$.
\end{lemma}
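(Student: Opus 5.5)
We argue by compactness of the blow-downs and identify every subsequential limit through the global B\^ocher theorem. Fix $r_j\to0$ and set $\lambda_j=\log\frac1{r_j}$. Combining Lemma~\ref{lem:critdim-exp-quotient-bound} with \eqref{eq:critdim-global-comparison-lower}, for $\xi$ in a fixed annulus $\mathcal A_1^\varepsilon$ and $j$ large,
\[
m\,\frac{\log\frac1{r_j|\xi|}}{\lambda_j}\le -U_{r_j}(\xi)\le C\,\frac{\log\frac1{r_j|\xi|}+1}{\lambda_j},
\]
so $0\le -U_{r_j}\le C$ uniformly on compact subsets of $\mathbb R^n\setminus\{0\}$, and the lower bound tends to $m$. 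The functions $U_{r_j}$ are $k$-convex, since $\Phi^k$ is invariant under dilation and multiplication by positive constants. Uniformly bounded nonpositive $k$-convex (hence subharmonic) functions are precompact in $L^1_{\loc}$; this is the standard compactness used in \cite{TrudingerWang1999}. After passing to a subsequence we therefore obtain $U_{r_j}\to U$ in $L^1_{\loc}(\mathbb R^n\setminus\{0\})$, with $U\in\Phi^k(\mathbb R^n\setminus\{0\})$ and $U\le0$.

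Next we show $F_k[U]=0$ away from the origin. Since $k=\frac n2$, we have $F_k[U_{r_j}](\xi)=r_j^{n}\lambda_j^{-k}F_k[u](r_j\xi)$. Hence, for a compact set $K\subset\mathcal A_1^\varepsilon$,
\[
F_k[U_{r_j}](K)\le \lambda_j^{-k}\,\mu\bigl(\{\varepsilon r_j\le|x|\le\varepsilon^{-1}r_j\}\bigr)\longrightarrow0,
\]
because $\mu=f(x,u,\nabla u)\,dx$ is a finite measure on $B_{1/2}$ by Lemma~\ref{lem:hessian-measure-decomposition-origin}, and the Dirac part at $0$ does not meet the annulus. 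Weak continuity of Hessian measures under $L^1_{\loc}$ convergence then gives $F_k[U]=0$ in $\mathbb R^n\setminus\{0\}$. Theorem~\ref{thm:k-hessian-punctured-bocher} in the case $k=\frac n2$ yields $U\equiv B$ for some constant $B\le0$.

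It remains to identify $B=-m$. For the upper bound, pass the lower bound to the limit in $L^1_{\loc}$: since $\log\frac1{r_j|\xi|}/\lambda_j\to1$ uniformly on annuli, we get $-B\ge m$. For the reverse inequality we use the spherical minimum. By Lemma~\ref{lem:critdim-spherical-minimum}, $\gamma(r_j)\to m$, so there are points $\theta_j\in\mathbb S^{n-1}$ with $-U_{r_j}(\theta_j)\to m$. This is the main obstacle, because $L^1$ convergence alone does not control pointwise values. We would handle it with the sub-mean value property, as $U_{r_j}$ is subharmonic. For small $\rho$,
\[
U_{r_j}(\theta_j)\le \frac1{|B_\rho|}\int_{B_\rho(\theta_j)}U_{r_j}\longrightarrow B
\]
along a further subsequence in which $\theta_j\to\theta_\infty$. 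The convergence holds because $U_{r_j}\to B$ in $L^1$ near $\mathbb S^{n-1}$ and $|B_\rho(\theta_j)\triangle B_\rho(\theta_\infty)|\to0$ while the $U_{r_j}$ are uniformly bounded. This gives $-m\le B$, that is $-B\le m$. Hence $B=-m$. Since the limit does not depend on the subsequence, the whole family $U_r$ converges to $-m$ in $L^1_{\loc}(\mathbb R^n\setminus\{0\})$.
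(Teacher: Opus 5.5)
Your proposal is correct and follows essentially the same route as the paper: two-sided bounds from Lemma~\ref{lem:critdim-exp-quotient-bound} and \eqref{eq:critdim-global-comparison-lower}, $L^1_{\loc}$ compactness, vanishing of the rescaled Hessian mass, the B\^ocher theorem for $k=\frac n2$, and identification of the constant through the mean-value inequality at the spherical minimizers $\theta_j$. Your use of $\mu(B_{1/2})<\infty$ is slightly more careful than the paper's $\|f\|_{L^1(B_1)}$. The only other difference is cosmetic: you extract $\theta_j\to\theta_\infty$, whereas the paper bounds the averages over $B(\theta_j,\frac14)$ by the $L^1$ norm of $U_j+c$ on a fixed annulus, which works because the limit is constant.
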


\begin{proof}
Let $r_j\to0$ and $L_j=\log\frac{1}{r_j}$.  On every compact
annulus $K\Subset\mathbb R^n\setminus\{0\}$,
Lemma~\ref{lem:critdim-exp-quotient-bound} gives $0\le-U_{r_j}\le C_K$.
Compactness of $k$-convex functions and a diagonal argument therefore
give, after passage to a subsequence,
\[
        U_{r_j}\longrightarrow U
        \qquad\text{in }L^1_{\mathrm{loc}}(\mathbb R^n\setminus\{0\}),
\]
for some $U\in\Phi^k(\mathbb R^n\setminus\{0\})$, $U\le0$.  The local
lower bound for $U_{r_j}$ excludes the alternative that the compactness
limit is identically $-\infty$.  Since
$F_k[U_{r_j}]$ has density
$L_j^{-k}r_j^n f(r_j\xi,u(r_j\xi),\nabla u(r_j\xi))$, and its mass on $K$ is bounded by
\begin{equation}\label{eq:critdim-scaled-measure-zero}
        L_j^{-k}\int_{r_jK}f(x,u(x),\nabla u(x))\,dx
        \le L_j^{-k}\|f(\cdot,u,\nabla u)\|_{L^1(B_1)}\longrightarrow0,
\end{equation}
Weak continuity of Hessian measures gives $F_k[U]=0$ in
$\mathbb R^n\setminus\{0\}$.  Theorem~\ref{thm:k-hessian-punctured-bocher}
then implies that $U\equiv-c$ for some $c\ge0$.  It remains to
identify $c$.  

For brevity, write $U_j=U_{r_j}$, and choose
$\theta_j\in\mathbb S^{n-1}$ such that
$-U_j(\theta_j)=\gamma(r_j)$.
Let $\mathcal{A}=\{\frac{1}{2}<|\xi|<\frac{3}{2}\}$.
For all sufficiently large $j$ and every $\xi\in\mathcal A$, the lower bound
\eqref{eq:critdim-global-comparison-lower} gives
\[
\begin{aligned}
        -U_j(\xi)\ge m-\frac{m\log(3/2)}{L_j}
        =:b_j.
\end{aligned}
\]
 In particular,
$b_j\to m$. Set $  H_j=-U_j-b_j$.
Then $H_j\ge0$ and, since $U_j\in\Phi^k\subset\Phi^1$, the function
$H_j$ is superharmonic on $\mathcal A$.  
Since $B(\theta_j,\frac14)\Subset\mathcal A$, the mean-value inequality yields
\[
        0\le\frac1{|B(\theta_j,\frac14)|}\int_{B(\theta_j,\frac14)}H_j\,d\xi
        \le H_j(\theta_j)
        =\gamma(r_j)-b_j
        \longrightarrow0.
\]
Moreover, all the balls $B(\theta_j,\frac14)$ are contained in the fixed compact annulus
$ \mathcal A'=\{\frac23<|\xi|<\frac43\}
\Subset\mathcal A.$
Since $U_j\to-c$ in $L^1(\mathcal A')$,
\[
        \left|\frac1{|B(\theta_j,\frac14)|}\int_{B(\theta_j,\frac14)}(-U_j-c)\,d\xi\right|
        \le \frac1{|B(\theta_j,\frac14)|}
        \int_{\mathcal A'}|U_j+c|\,d\xi
        \longrightarrow0.
\]
Finally,
\[
        \frac1{|B(\theta_j,\frac14)|}\int_{B(\theta_j,\frac14)}H_j\,d\xi
        =\frac1{|B(\theta_j,\frac14)|}\int_{B(\theta_j,\frac14)}(-U_j-c)\,d\xi+c-b_j.
\]
Passing to the limit in this identity gives $0=0+c-m$, and hence
$c=m$.  Thus every subsequential limit of $U_r$ is the constant
$-m$.  Since the sequence $r_j$ was arbitrary, the proof is completed.
\end{proof}

\begin{corollary}
\label{cor:critdim-annular-infimum}
For every fixed $0<\theta<1$,
\begin{equation}\label{eq:critdim-annular-infimum}
        \lim_{x\to0}
        \frac{\displaystyle\inf_{B(x,\theta|x|)}(-u)}
        {\log\frac{1}{|x|}}=m.
\end{equation}
\end{corollary}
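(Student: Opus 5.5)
We prove \eqref{eq:critdim-annular-infimum} by splitting it into a lower bound and an upper bound, both read off from the preceding lemmas after rescaling by $r=|x|$.

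\emph{Lower bound.} It follows directly from \eqref{eq:critdim-global-comparison-lower}. For $y\in B(x,\theta|x|)$ we have $|y|\le(1+\theta)|x|<1$ once $|x|$ is small, so
\[
-u(y)\ge m\log\frac1{|y|}\ge m\Bigl(\log\frac1{|x|}-\log(1+\theta)\Bigr).
\]
Dividing by $\log\frac1{|x|}$ and letting $x\to0$ gives $\liminf\ge m$.

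\emph{Upper bound.} Argue by contradiction. Suppose there are $x_j\to0$ and $\varepsilon>0$ with
\[
\inf_{B(x_j,\theta|x_j|)}(-u)\ge(m+\varepsilon)\log\frac1{|x_j|}.
\]
Set $r_j=|x_j|$, $e_j=x_j/r_j$, and pass to a subsequence so that $e_j\to e_\infty$. With $U_j=U_{r_j}$ as in Lemma~\ref{lem:critdim-unique-blowdown}, the assumption says $-U_j\ge m+\varepsilon$ on $B(e_j,\theta)$. For large $j$ this ball contains the fixed ball $B(e_\infty,\theta/2)$, which is compactly contained in $\mathbb R^n\setminus\{0\}$ because $\theta<1$. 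Lemma~\ref{lem:critdim-unique-blowdown} gives $U_j\to-m$ in $L^1(B(e_\infty,\theta/2))$. Hence the averages of $-U_j$ over $B(e_\infty,\theta/2)$ tend to $m$, while the pointwise lower bound forces them to be at least $m+\varepsilon$, a contradiction. Therefore $\limsup\le m$.

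The only point needing care is the change of normalization. The blow-down divides by $\log\frac1{r_j}$ with $r_j=|x_j|$, which is exactly the denominator in \eqref{eq:critdim-annular-infimum}, so no error term arises. In the lower bound, the replacement of $\log\frac1{|y|}$ by $\log\frac1{|x|}$ only introduces an $O(1)$ error, which vanishes after division by $\log\frac1{|x|}$.

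The main obstacle is the upper bound, since $L^1_{\loc}$ convergence alone does not control pointwise infima. The argument works because the infimum is taken over a ball of fixed relative size. A lower bound for $-U_j$ on such a ball forces its average to be large, and that average is controlled by the $L^1$ limit. No Harnack inequality or equicontinuity is needed.
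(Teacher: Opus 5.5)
Your proof is correct and follows the paper's argument: the lower bound comes from the global comparison bound \eqref{eq:critdim-global-comparison-lower}, and the upper bound comes from bounding the infimum of $-U_r$ by its average over a ball, which tends to $m$ by Lemma~\ref{lem:critdim-unique-blowdown}. The only difference is how uniformity in the direction is handled. You pass to a subsequence $e_j\to e_\infty$ and argue by contradiction on the fixed ball $B(e_\infty,\theta/2)$, whereas the paper asserts directly that the averages over $B(\omega,\theta)$ converge to $m$ uniformly in $\omega$.
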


\begin{proof}
Write $x=r\omega$, $|\omega|=1$, and $L_r=\log(\frac{1}{r})$.  For
$y\in B(x,\theta r)$, the comparison lower bound gives
\[
        \frac{-u(y)}{L_r}
        \ge m\frac{\log(\frac{1}{|y|})}{L_r}
        \ge m\left(1-\frac{\log(1+\theta)}{L_r}\right).
\]
For the upper limit,
\[
        \frac{\inf_{B(x,\theta r)}(-u)}{L_r}
        =\inf_{B(\omega,\theta)}(-U_r)
        \le\frac1{|B(0,\theta)|}
        \int_{B(\omega,\theta)}(-U_r)\,d\xi.
\]
By
Lemma~\ref{lem:critdim-unique-blowdown}, the average on the right-hand side
converges to $m$ uniformly in $\omega$.
\end{proof}

\subsection{Local Wolff-potential decay and completion of the classification}

The preceding infimum asymptotics control the comparison term in the
local pointwise estimate.  It remains to prove that the local Wolff
potential of $\mu$ is negligible on the logarithmic scale.
\begin{lemma}
\label{lem:critdim-local-wolff-small}
\begin{equation}\label{eq:critdim-local-wolff-small}
        \lim_{x\to0}
        \frac{\W_k^\mu(x,\frac{|x|}{2})}{\log(\frac{1}{|x|})}=0.
\end{equation}
\end{lemma}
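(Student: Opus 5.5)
I would argue by contradiction, mirroring the rescaling in the proof of Lemma~\ref{lem:critdim-exp-quotient-bound}, but now exploiting the sharper information available: the logarithmic upper bound of Lemma~\ref{lem:critdim-exp-quotient-bound}, the annular infimum asymptotics of Corollary~\ref{cor:critdim-annular-infimum}, and the vanishing of the local mass $\mu(B(x,\frac{3}{4}|x|))\to0$ as $x\to0$, which follows from $f(\cdot,u,\nabla u)\in L^1(B_{1/2})$ (Lemma~\ref{lem:hessian-measure-decomposition-origin}). Suppose there are $\varepsilon>0$ and $x_j\to0$ with $\W_k^\mu(x_j,\frac{|x_j|}{2})\ge\varepsilon\lambda_j$, where $\lambda_j=\log\frac1{|x_j|}$. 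Set $a_j=\frac{|x_j|}{4}$, $u_j(\xi)=u(x_j+a_j\xi)$, and let $g_j,\mu_j$ be as in \eqref{eq:critdim-local-rescaled-equation}. By scale invariance of $\W_k$ when $k=\frac n2$, $\W_k^{\mu}(x_j,\frac{|x_j|}{2})=\W_k^{\mu_j}(0,2)$, so it suffices to show $\W_k^{\mu_j}(0,2)=o(\lambda_j)$.

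The pointwise input is already available: on $B_1$, Lemma~\ref{lem:critdim-exp-quotient-bound} gives $-u_j\le C\lambda_j$, and hence $g_j\le Ca_j^{n-\ell}|\nabla u_j|^\ell e^{C\alpha\lambda_j}$. This is too crude by itself, since it would only reproduce the bound $\W_k^{\mu_j}(0,2)\le C\lambda_j$. The gain has to come from the small total mass $m_j=\mu_j(B_2)\to0$. I would split the Wolff integral at a radius $\rho_j=e^{-N\lambda_j}$, exactly as in \eqref{eq:critdim-low-potential-integral}. The part over $(\rho_j,2)$ is bounded by
\[
m_j^{1/k}\bigl(N\lambda_j+\log 2\bigr)=o(\lambda_j).
\]
The part over $(0,\rho_j)$ is bounded using the pointwise density bound together with H\"older's inequality and the gradient integrability $\int_{B_1}|\nabla u_j|^q\le Ca_j^{q-n}$. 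This yields $\mu_j(B_t)\le Ce^{K\lambda_j}t^{\beta}$ with $\beta=n(1-\frac\ell q)>0$ and $K$ independent of $j$. Choosing $N>K/\beta$ makes this contribution $O(1)$.

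The key point, and the one I expect to need care, is that here the exponential factor is controlled directly by the pointwise bound $-u\le C\log\frac1{|x|}+C$. No sets $\Omega_j$ and no Adams--Moser--Trudinger step are needed, because $e^{-\alpha u}\le Ce^{C\alpha\lambda_j}$ uniformly on $B(x_j,2a_j)$. The constant $K$ may be large, but it is fixed, so $N$ is fixed and the logarithmic factor $N\lambda_j$ is multiplied by $m_j^{1/k}\to0$. One must check that $|x_j+a_j\xi|\ge\frac{|x_j|}{2}$ on $B_2$, so that $\log\frac1{|x_j+a_j\xi|}\le\lambda_j+\log2$. One must also check that the H\"older step is uniform: the factor $a_j^{n-\ell}\cdot a_j^{(q-n)\ell/q}=a_j^{n(1-\ell/q)}\le1$ is harmless. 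Combining the two parts gives $\W_k^{\mu_j}(0,2)\le C+m_j^{1/k}(N\lambda_j+\log2)$, hence $\W_k^\mu(x_j,\frac{|x_j|}2)/\lambda_j\to0$, which is the desired contradiction and proves \eqref{eq:critdim-local-wolff-small}.
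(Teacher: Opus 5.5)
Your proof is correct and is essentially the paper's argument. The pointwise bound $-u\le C\log\frac1{|x|}+C$ makes $e^{-\alpha u}$ a fixed negative power $|x|^{-\kappa}$, H\"older with $\nabla u\in L^{q}_{\loc}$ gives $\mu(B(x,s))\lesssim |x|^{-\kappa}s^{\beta}$, and splitting the Wolff integral at $|x|^N$ with $N\beta>\kappa$ leaves only $\mu(B(x,\frac{|x|}{2}))^{1/k}\cdot O(\log\frac1{|x|})=o(\log\frac1{|x|})$. The paper does this computation directly at $x$ in the original variables rather than by contradiction after rescaling (your identity $\W_k^\mu(x_j,\frac{|x_j|}2)=\W_k^{\mu_j}(0,2)$ shows these are the same), and Corollary~\ref{cor:critdim-annular-infimum} is not actually needed.
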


\begin{proof}
Write $h(y)=f(y,u(y),\nabla u(y))$, so that $d\mu=h\,dy$, and set
$r=|x|$.  We take $0<r<\frac14$; all constants below are independent
of $x$.  For $y\in B(x,\frac r2)$, we have
$\frac r2<|y|<\frac{3r}{2}<\frac12$.  Thus
Lemma~\ref{lem:critdim-exp-quotient-bound} directly gives
\begin{equation}\label{eq:critdim-local-pointwise-potential}
        -u(y)\le C_1\log\frac1r+C_2,
        \qquad y\in B(x,\frac r2),
\end{equation}
with $C_1>0$.  In particular,
\[
        e^{-\alpha u(y)}\le Cr^{-\kappa},
        \qquad \kappa=\alpha C_1>0.
\]

Choose $q_0$ with $\max\{1,\ell\}<q_0<n$.  The canonical extension
of $u$ belongs to $\Phi^k(B_1)$ by
Lemma~\ref{lem:hessian-measure-decomposition-origin}.  Since
$\frac{nk}{n-k}=n$, the local gradient estimate in Section~\ref{sec:prelim-removable} yields
\[
        \int_{B_{\frac12}}|\nabla u|^{q_0}\,dy<\infty.
\]
For $\ell>0$, the growth assumption \eqref{eq:critical-source-growth}
and H\"older's inequality therefore imply, for $0<s\le\frac r2$,
\[
\begin{aligned}
        \mu(B(x,s))
        &\le Cr^{-\kappa}\int_{B(x,s)}|\nabla u|^\ell\,dy \\
        &\le Cr^{-\kappa}
        \left(\int_{B_{\frac12}}|\nabla u|^{q_0}\,dy\right)^{\frac\ell{q_0}}
        s^{n(1-\frac\ell{q_0})}
        \le Cr^{-\kappa}s^\beta,
\end{aligned}
\]
where $\beta=n(1-\frac\ell{q_0})>0$.  When $\ell=0$, the same
bound follows directly by integrating $h\le Cr^{-\kappa}$, with
$\beta=n$.

Fix $N>1$ with $N\beta>\kappa$.  For all sufficiently small $r$,
$\rho=r^N<\frac r2$.  Since $k=\frac{n}{2}$, splitting the Wolff integral
at $\rho$ gives
\[
\begin{aligned}
        \W_k^\mu(x,\frac r2)
        &\le Cr^{-\frac{\kappa}{k}}
        \int_0^\rho s^{\frac\beta k}\frac{ds}{s}
        +\mu(B(x,\frac r2))^{\frac1k}
        \int_\rho^{\frac r2}\frac{ds}{s} \\
        &\le Cr^{\frac{N\beta-\kappa}{k}}
        +(N-1)\mu(B(x,\frac r2))^{\frac1k}\log\frac1r.
\end{aligned}
\]
Finally, $h\in L^1(B_{\frac12})$ by
Lemma~\ref{lem:hessian-measure-decomposition-origin}, and
$B(x,\frac r2)\subset B_{\frac{3r}{2}}$.  Consequently,
\[
        \mu(B(x,\frac r2))
        \le\int_{B_{\frac{3r}{2}}}h(y)\,dy\longrightarrow0
        \qquad\text{as }r\to0,
\]
uniformly in the direction of $x$.  Dividing the preceding potential
estimate by $\log\frac1r$ proves the claim.
\end{proof}

We now combine this decay with
Corollary~\ref{cor:critdim-annular-infimum} to obtain the exact logarithmic
limit and characterize the removable case.

\begin{proof}[Proof of Theorem~\ref{thm:critical-dimension-classification}]
Lemma~\ref{lem:critdim-spherical-minimum} provides $m\ge0$ and the global
lower bound.  Let $x\to0$ and $r=|x|$. Define
\[
        a_x=\inf_{B(x,\frac{3r}{4})}(-u),
        \qquad b_x=\inf_{B(x,\frac{r}{16})}(-u).
\]
 Apply
Lemma~\ref{lem:labutin-local-pointwise} to $\widetilde u:=u+a_x$.  Since
$\inf_{B(x,\frac{r}{16})}(-\widetilde u)=b_x-a_x$,
we obtain
\begin{equation}\label{eq:critdim-shifted-labutin}
        -u(x)-a_x
        \le C_1\W_k^\mu(x,\frac{r}{8})+C_2(b_x-a_x).
\end{equation}
By Corollary~\ref{cor:critdim-annular-infimum},
$\frac{a_x}{\log\frac{1}{r}}\to m$ and $\frac{b_x}{\log\frac{1}{r}}\to m$, while
Lemma~\ref{lem:critdim-local-wolff-small} gives
$\frac{\W_k^\mu(x,\frac{r}{8})}{\log\frac{1}{r}}\to0$.  Dividing
\eqref{eq:critdim-shifted-labutin} by $\log\frac{1}{r}$ yields
\[
        \limsup_{x\to0}\frac{-u(x)}{\log\frac{1}{r}}\le m.
\]
The reverse inequality follows from
\eqref{eq:critdim-global-comparison-lower}, proving
\eqref{eq:critical-dimension-limit}.

A removable singularity implies $m=0$.  Conversely, suppose $m=0$ and
write the canonical measure as in
\eqref{eq:critdim-measure-decomposition}.  If $M_0>0$,   for fixed $R>0$, by Lemma~\ref{lem:labutin-local-pointwise} we obtain
\[
\begin{aligned}
        -u(x)
        &\ge C(n,k)^{-1}
              \W_k^{F_k[u]}(x,\frac{R}{64})\\
        &\ge C(n,k)^{-1}M_0^{\frac{1}{k}}
              \int_{2|x|}^{\frac{R}{64}}\frac{dt}{t}
        =C(n,k)^{-1}M_0^{\frac{1}{k}}\log\frac{R}{128|x|}.
\end{aligned}
\]
This forces
$\liminf_{x\to0}-\frac{u(x)}{\log(\frac{1}{|x|})}>0$, contradicting $m=0$.
Hence $M_0=0$.

Choose $q_0$ with $\ell<q_0<n$ when $\ell>0$ and any
$q_0\in(1,n)$ when $\ell=0$.  Next choose $s>1$ sufficiently close to $1$
so that $\ell s<q_0$.  Since $-u(x)\le\varepsilon\log\frac1{|x|}$ for every
$\varepsilon>0$ and all sufficiently small $|x|$, we have
\[
        f(x,u(x),\nabla u(x))
        \le C|\nabla u(x)|^{\ell }|x|^{-\alpha\varepsilon }.
\]
For $\ell>0$, choose $\varepsilon>0$ so that
$\alpha\varepsilon s q_0/(q_0-\ell s)<n$.  H\"older's inequality and
$\nabla u\in L^{q_0}_{\loc}(B_1)$ then give
$f(\cdot,u,\nabla u)\in L^s$ near the origin.
For $\ell=0$, the same conclusion follows by direct integration provided
$\alpha\varepsilon s<n$.
We finish the proof by Proposition~\ref{thm:general-removable-lm}.
\end{proof}

\appendix
\section{Technical details for the complementary lower estimate}
\label{app:critical-lower-bound}

The proof of Proposition~\ref{prop:hessian-log-lower-bound} reduces the
lower estimate to
\eqref{eq:critical-bar-eventual-trapping}.  We establish it here.
The argument follows the upper-bound phase-plane analysis, so we concentrate
on the points affected by replacing the spherical minimum with the spherical
maximum.

Retain
\[
        \eta=\frac{n-2k}{k},
        \qquad
        q=\frac{nk}{n-2k},
        \qquad
        A_k=\frac{\binom{n-1}{k-1}}{\eta^{k-1}},
        \qquad
        t=\eta\log\frac{\eta}{r},
\]
let $\beta=q-k$, and define $\overline w$ by
\begin{equation}\label{eq:critical-bar-emden-fowler}
        e^t\overline w(t)=\overline u(r).
\end{equation}
The spherical maximum changes semiconcavity into semiconvexity, reverses the
jumps of the one-sided derivative, and reverses the corresponding inequalities.

\begin{lemma}\label{lem:critical-bar-w-semiconvexity}
The function $\overline w$ is locally semiconvex on
$(\eta\log\eta,\infty)$ and satisfies
\[
        \lim_{t\to\infty}\overline w(t)=0.
\]
If
\[
        \overline w_{tt}
        =\overline\nu_{ac}+\overline\nu_{pp}+\overline\nu_{sc},
\]
then $\overline\nu_{pp}$ and $\overline\nu_{sc}$ are nonnegative, and
$\overline\nu_{pp}$ is supported on the at most countable set
\[
        \overline{\mathcal J}
        :=\{t:\overline w_t^-(t)<\overline w_t^+(t)\}.
\]
\end{lemma}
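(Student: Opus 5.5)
The plan is to mirror the proof of Lemma~\ref{lem:critical-w-semiconcavity}, replacing the infimum over the sphere by the supremum. Fix a compact interval $I\Subset(\eta\log\eta,\infty)$, write $r(t)=\eta e^{-t/\eta}$, and for $\theta\in\mathbb S^{n-1}$ set
\[
g_\theta(t)=-e^{-t}u(r(t)\theta),
\]
so that $\overline w(t)=\sup_{\theta}g_\theta(t)$. The radii $r(t)$, $t\in I$, stay in a compact subinterval of $(0,1)$. Since $u\in C^2$ away from the origin, $g_\theta''$ is bounded in absolute value uniformly for $(t,\theta)\in I\times\mathbb S^{n-1}$; in particular it is bounded below by $-2C_I$ for some $C_I>0$.

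Each function $g_\theta(t)+C_It^2$ is therefore convex on $I$. A pointwise supremum of convex functions is convex, and this supremum equals $\overline w(t)+C_It^2$, which is finite because the sphere is compact and $u$ is continuous. Hence $\overline w$ is locally semiconvex. The only difference from the semiconcave case is that we now use the lower bound on $g_\theta''$ and take a supremum instead of an infimum.

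For the limit, use $\overline w(t)=\eta^{-\eta}r^\eta\overline u(r)$ with $r\to0$ as $t\to\infty$. Proposition~\ref{prop:no-dirac-pv} gives $|x|^{\eta}u(x)\to0$ uniformly as $x\to0$, so
\[
r^\eta\overline u(r)=\sup_{|x|=r}|x|^\eta(-u(x))\to0,
\]
and hence $\overline w(t)\to0$.

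The measure decomposition follows from the semiconvex version of Lemma~\ref{lem:semiconcavity-properties}. On a compact interval, $\overline w+C_It^2$ is convex, so its distributional second derivative is a nonnegative measure. Subtracting the absolutely continuous measure $2C_I\,dt$ changes only the absolutely continuous part. Therefore the pure-point part $\overline\nu_{pp}$ and the singular-continuous part $\overline\nu_{sc}$ of $\overline w_{tt}$ coincide with those of a convex function and are nonnegative.

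The atoms of the second derivative of a convex function are exactly its jumps in derivative, where $f_r^+-f_r^->0$. So $\overline\nu_{pp}$ is supported on $\overline{\mathcal J}=\{t:\overline w_t^-(t)<\overline w_t^+(t)\}$. This set is at most countable because $\overline w_t^+$ is locally of bounded variation.

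None of these steps is genuinely hard. The one point that needs care is the uniform lower bound on $g_\theta''$ over the whole sphere. It follows from the chain rule: $g_\theta''$ is a combination of $u$, $\nabla u$ and $D^2u$ evaluated on a compact annulus, with bounded coefficients coming from $r(t)$ and $e^{-t}$.
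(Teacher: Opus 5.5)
Your proposal is correct and follows essentially the same route as the paper: you write $\overline w=\sup_{\theta}g_\theta$ with a uniform lower bound on $g_\theta''$ over compact $t$-intervals to get local semiconvexity, obtain $\overline w(t)=\eta^{-\eta}r^\eta\overline u(r)\to0$ from Proposition~\ref{prop:no-dirac-pv}, and read off the sign and support of the singular parts from the semiconvex case of Lemma~\ref{lem:semiconcavity-properties}. Your version only spells out the chain-rule bound and the convex-function facts that the paper leaves implicit.
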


\begin{proof}
Let $g_\theta$ be the functions introduced in the proof of
Lemma~\ref{lem:critical-w-semiconcavity}.  On each compact $t$-interval,
their second derivatives have a common lower bound, and
\[
        \overline w(t)=\sup_{\theta\in\mathbb S^{n-1}}g_\theta(t).
\]
Thus $\overline w$ is locally semiconvex.  Moreover,
Proposition~\ref{prop:no-dirac-pv} and
$e^{-t}=(r/\eta)^\eta$ give
\[
        \overline w(t)=\eta^{-\eta}r^\eta\overline u(r)\longrightarrow0.
\]
The remaining assertions follow from the semiconvex part of
Lemma~\ref{lem:semiconcavity-properties}.
\end{proof}

Set
\[
        \overline z(t)=\overline w_t^+(t),
        \qquad
        G(w,z)=-z-\frac{w^q}{A_k(w+z)^{k-1}}.
\]

\begin{lemma}\label{lem:critical-bar-phase-properties}
The trajectory $(\overline w,\overline z)$ has the following properties.
\begin{enumerate}[label=\textup{(\roman*)}]
\item
$\overline z$ is right-continuous, upper semicontinuous, and locally of
bounded variation.  Moreover, it is continuous on
$(\eta\log\eta,\infty)\setminus\overline{\mathcal J}$.

\item
If $\inf_{t\in[t_0,t_1]}(\overline w(t)+\overline z(t))>0$, then
\begin{align}
        \overline z(t_1)-\overline z(t_0)
        &\ge \overline w_t^-(t_1)-\overline w_t^+(t_0) \notag\\
        &\ge\int_{t_0}^{t_1}
        G(\overline w,\overline z)\,dt.
        \label{eq:critical-bar-phase-inequality}
\end{align}
\end{enumerate}
\end{lemma}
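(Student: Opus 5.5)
Part (i) follows from the semiconvex half of Lemma~\ref{lem:semiconcavity-properties} applied to $\overline w$, which is locally semiconvex by Lemma~\ref{lem:critical-bar-w-semiconvexity}. Then $\overline z=\overline w_t^+$ is right-continuous, upper semicontinuous and locally BV, and $\overline w_t^-\le\overline w_t^+$ everywhere. Away from the countable set $\overline{\mathcal J}$ the one-sided derivatives coincide. Since $\overline w_t^-$ is left-continuous and $\overline w_t^+$ is right-continuous with the same BV representative, $\overline z$ is continuous there.

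For part (ii), I mirror the proof of Lemma~\ref{lem:critical-phase-properties}, but with lower contact points. Set $\psi(r)=\min_{|x|=r}u(x)=-\overline u(r)$ and fix $t\in(t_0,t_1)$ at which $\overline w$ is twice differentiable. Pick $x_r$ with $u(x_r)=\psi(r)$. The radial function $\Psi(x)=\psi(|x|)$ touches $u$ from below at $x_r$, so $D^2\Psi(x_r)\le D^2u(x_r)$.

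\textbf{Main obstacle.} Monotonicity of $F_k$ now requires $D^2\Psi(x_r)\in\overline\Gamma_k$, which does not follow from touching from below. I would verify it directly. The eigenvalues of $D^2\Psi$ are $\psi''$ (once) and $\psi'/r$ (with multiplicity $n-1$). From $\psi'(r)=\frac{\eta e^t}{r}(\overline w+\overline w_t)$ and the hypothesis $\inf(\overline w+\overline z)>0$, we get $\psi'>0$. The radial formula then gives
\[
F_k[\Psi]=-A_ke^{qt}(\overline w+\overline w_t)^{k-1}(\overline w_t+\overline w_{tt}),
\]
with $\sigma_{k-1}$ of the tangential eigenvalues positive. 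So the radial eigenvalue vector lies in $\Gamma_k$ as soon as $F_k[\Psi]>0$, and otherwise the desired inequality $F_k[\Psi]\le\overline u^q$ holds trivially. In the first case, $D^2u(x_r)-D^2\Psi(x_r)\ge0$ with $D^2\Psi\in\Gamma_k$ gives $F_k[u](x_r)\ge F_k[\Psi](x_r)$ by monotonicity on $\Gamma_k$, that is, $F_k[\Psi]\le\overline u(r)^q$. Hence in all cases
\[
-A_ke^{qt}(\overline w+\overline z)^{k-1}(\overline z+\overline w_{tt})\le e^{qt}\overline w^q \quad\text{a.e.}
\]
Dividing by the positive factor gives $\overline\nu_{ac}\ge G(\overline w,\overline z)\,dt$. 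The same check that $\Psi$ is $k$-admissible is what the appendix remark calls verification at lower contact points, and it is why the uniform positivity of $\overline w+\overline z$ is assumed.

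Finally, $\overline\nu_{pp},\overline\nu_{sc}\ge0$ by Lemma~\ref{lem:critical-bar-w-semiconvexity}, so $\overline w_{tt}\ge G\,dt$ as measures on $(t_0,t_1)$. The Newton--Leibniz identity of Lemma~\ref{lem:semiconcavity-properties} then yields
\[
\overline w_t^-(t_1)-\overline w_t^+(t_0)=\overline w_{tt}((t_0,t_1))\ge\int_{t_0}^{t_1}G\,dt.
\]
Combined with $\overline z(t_1)=\overline w_t^+(t_1)\ge\overline w_t^-(t_1)$, this gives the first inequality in \eqref{eq:critical-bar-phase-inequality}. The integral is finite because $G$ is bounded: $\overline w+\overline z$ is bounded below by a positive constant, and $\overline z$ is locally bounded.
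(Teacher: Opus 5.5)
Your proposal is correct and follows the paper's proof essentially step by step, including the lower contact point, the case split on the sign of $\sigma_k(D^2\Psi(x_r))$ to secure $k$-admissibility, the nonnegative singular parts, and the Newton--Leibniz identity. One justification needs tightening: positivity of $\sigma_{k-1}$ of the tangential eigenvalues alone would not force membership in $\Gamma_k$, but since those eigenvalues all equal $b>0$, the identity $\sigma_j(a,b,\ldots,b)=\binom{n-1}{j-1}b^{j-1}\bigl(a+\frac{n-j}{j}b\bigr)$ together with $\frac{n-j}{j}\ge\frac{n-k}{k}$ for $j\le k$ gives it, exactly as in the paper.
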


\begin{proof}
Part \textup{(i)} follows from
Lemmas~\ref{lem:semiconcavity-properties}
and~\ref{lem:critical-bar-w-semiconvexity}.  We prove \textup{(ii)}.

Set
\[
        \psi(\rho):=\min_{|x|=\rho}u(x)=-\overline u(\rho).
\]
Fix a point $t\in(t_0,t_1)$ at which $\overline w$ is twice
differentiable, and set
$
        r=\eta e^{-\frac{t}{\eta}}.
$
Then $\psi$ is twice differentiable at $r$.
Choose $x_r\in\partial B_r$ such that
$
        u(x_r)=\psi(r).
$
The radial function
$
        \Psi(x):=\psi(|x|)
$
touches $u$ from below at $x_r$.  Hence
$
        D^2\Psi(x_r)\le D^2u(x_r).
$

At almost every $t\in(t_0,t_1)$, one has
$\overline w_t=\overline z$ and
$\psi'(r)=\frac{\eta e^t}{r}
\bigl(\overline w+\overline w_t\bigr)$.
Since $\overline w+\overline z>0$, the tangential eigenvalue
$b=\psi'(r)/r$ of $D^2\Psi(x_r)$ is positive.  Writing the radial
eigenvalues as $(a,b,\ldots,b)$, we have
\[
        \sigma_j(a,b,\ldots,b)
        =\binom{n-1}{j-1}b^{j-1}
        \left(a+\frac{n-j}{j}b\right),
        \qquad 1\le j\le k.
\]
If $\sigma_k(a,b,\ldots,b)\le0$, then
\[
        F_k[\Psi](x_r)\le0<\overline u(r)^q.
\]
If $\sigma_k(a,b,\ldots,b)>0$, then
$\frac{n-j}{j}\ge\frac{n-k}{k}$ for $1\le j\le k$ shows that
$D^2\Psi(x_r)\in\Gamma_k$.  Since $u$ is $k$-admissible,
the monotonicity of $F_k$ on $\Gamma_k$ therefore gives
\[
        F_k[\Psi](x_r)
        \le F_k[u](x_r)
        =(-u(x_r))^q
        =\overline u(r)^q.
\]
Thus the same inequality holds in either case.

Since $\overline u(r)=e^t\overline w(t)$ and
\[
        F_k[\Psi]
        =-A_ke^{qt}
        \bigl(\overline w+\overline w_t\bigr)^{k-1}
        \bigl(\overline w_t+\overline w_{tt}\bigr),
\]
 it follows that, at almost
every $t\in(t_0,t_1)$,
\[
        \overline f_{ac}(t)
        \ge
        -\overline z(t)
        -\frac{\overline w(t)^q}
        {A_k\bigl(\overline w(t)+\overline z(t)\bigr)^{k-1}}
        =G\bigl(\overline w(t),\overline z(t)\bigr),
\]
where
\[
        \overline\nu_{ac}=\overline f_{ac}(t)\,dt
\]
is the absolutely continuous part of $\overline w_{tt}$.

By Lemma~\ref{lem:critical-bar-w-semiconvexity}, the singular parts
$\overline\nu_{pp}$ and $\overline\nu_{sc}$ are nonnegative.
The assumed positive lower bound gives
$G(\overline w,\overline z)\in L^1(t_0,t_1)$; consequently,
\[
        \overline w_{tt}
        \ge G(\overline w,\overline z)\,dt
\]
as an inequality of Radon measures on $(t_0,t_1)$.  The generalized
Newton--Leibniz formula therefore yields
\[
        \overline w_t^-(t_1)-\overline w_t^+(t_0)
        =\overline w_{tt}\bigl((t_0,t_1)\bigr)
        \ge\int_{t_0}^{t_1}G(\overline w,\overline z)\,dt.
\]
Finally,
\[
        \overline z(t_1)=\overline w_t^+(t_1)
        \ge\overline w_t^-(t_1),
        \qquad
        \overline z(t_0)=\overline w_t^+(t_0),
\]
which gives the first inequality in
\eqref{eq:critical-bar-phase-inequality}.
\end{proof}

\begin{lemma}\label{lem:critical-bar-linear-wedge}
The function $\overline w$ does not satisfy
$\overline w(t)=O(e^{-\kappa t})$ for any $\kappa>0$.  Moreover, for
every $0<\theta<\frac12$, there exists $T_\theta>0$ such that
\begin{equation}\label{eq:critical-bar-wedge}
        \overline z(t)>-\theta\overline w(t)
        \qquad(t\ge T_\theta).
\end{equation}
\end{lemma}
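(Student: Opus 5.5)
The plan has two parts. First I rule out exponential decay by means of the removability criterion. Then I prove the wedge estimate \eqref{eq:critical-bar-wedge} with a forward-invariance and eventual-entry argument in the $(\overline w,\overline z)$ phase plane, built on Lemma~\ref{lem:critical-bar-phase-properties}. Throughout I am in the setting of Proposition~\ref{prop:hessian-log-lower-bound}, so $u(x)\to-\infty$ and, by Lemma~\ref{lem:critical-bounded-or-blowup}, the singularity is not removable.

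\emph{No exponential decay.} Suppose $\overline w(t)\le Ce^{-\kappa t}$ for large $t$, and we may take $\kappa<1$. Since $e^{t}=(\eta/r)^{\eta}$, this gives
\[
\overline u(r)=e^t\overline w(t)\le C e^{(1-\kappa)t}=C r^{-\eta(1-\kappa)}.
\]
Hence $-u(x)\le C|x|^{-\eta(1-\kappa)}$ near the origin. Because $\eta q=n$, we have $\eta(1-\kappa)(q+\delta)<n$ for some small $\delta>0$, so $u\in L^{q+\delta}$ near $0$. Corollary~\ref{cor:continuous-extension} then makes the singularity removable, which contradicts $u\to-\infty$.

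\emph{Forward invariance of $\{\overline z>-\theta\overline w\}$.} Set $d(t)=\overline z(t)+\theta\overline w(t)$. On the line $z=-\theta w$ we have $w+z=(1-\theta)w>0$, and
\[
G(w,-\theta w)+\theta(-\theta w)=\theta(1-\theta)w-\frac{w^{q-k+1}}{A_k(1-\theta)^{k-1}}.
\]
Since $q-k+1>1$ and $\overline w\to0$, this is $\ge\frac12\theta(1-\theta)\overline w>0$ once $t\ge T$. Suppose $d(t_0)>0$ for some $t_0\ge T$ and $t_1$ is the first exit time from $\{d>0\}$. The function $\overline z$ has only upward jumps (semiconvexity), so the remark preceding Lemma~\ref{lem:critical-invariant-region} shows that $\overline z$ is continuous at $t_1$ and $d(t_1)=0$. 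For $t>t_1$ close to $t_1$, right-continuity gives $\overline w+\overline z>0$ and $G+\theta\overline z>0$ on $[t_1,t]$. Applying \eqref{eq:critical-bar-phase-inequality} on $[t_1,t]$ and using $\theta(\overline w(t)-\overline w(t_1))=\theta\int_{t_1}^t\overline z$, I get
\[
d(t)-d(t_1)\ge\int_{t_1}^t\bigl(G(\overline w,\overline z)+\theta\overline z\bigr)\,ds>0 .
\]
Thus $d>0$ on a right neighborhood of $t_1$, which contradicts the choice of $t_1$ as the exit time. The region is therefore invariant for $t\ge T$.

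\emph{Eventual entry.} If instead $\overline z\le-\theta\overline w$ for all $t\ge T$, then $\overline w_t=\overline z$ a.e. gives $(\log\overline w)_t\le-\theta$. Hence $\overline w=O(e^{-\theta t})$, contradicting the first part. So some $t_0\ge T$ has $d(t_0)>0$, and invariance yields \eqref{eq:critical-bar-wedge} with $T_\theta=t_0$.

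The delicate step is the first-exit argument, where the one-sided regularity of $\overline z$ must be handled correctly. The key is that a semiconvex $\overline w$ produces only upward jumps of $\overline z$, so the trajectory cannot jump out of $\{d>0\}$ and exit can happen only through $d=0$. Right-continuity then supplies the local positivity needed to apply \eqref{eq:critical-bar-phase-inequality}.
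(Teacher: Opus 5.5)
Your overall route matches the paper's. You exclude exponential decay through Corollary~\ref{cor:continuous-extension}, get eventual entry into $\{\overline z>-\theta\overline w\}$ from that, and prove invariance by a first-exit argument. The first part and the entry step are correct.

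The gap is in the conclusion of the invariance step. With $t_1=\inf\{t>t_0:\ d(t)\le0\}$, you correctly use the upward jumps of $\overline z$ to show that $\overline z$ is continuous at $t_1$ and $d(t_1)=0$. But showing afterwards that $d>0$ on $(t_1,t_1+\delta)$ contradicts nothing. A trajectory that touches the line $z=-\theta w$ at the single time $t_1$ and immediately re-enters $\{d>0\}$ is compatible with the definition of $t_1$. Such a touch already violates the strict inequality \eqref{eq:critical-bar-wedge} at $t=t_1$. Information to the right of $t_1$ cannot exclude the touching; you must use the side from which the trajectory arrives.

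The repair uses only what you have already established. Since $\overline z$ is continuous at $t_1$ from both sides, and
$(\overline w+\overline z)(t_1)=(1-\theta)\overline w(t_1)>0$ and $\bigl(G(\overline w,\overline z)+\theta\overline z\bigr)(t_1)\ge\frac12\theta(1-\theta)\overline w(t_1)>0$,
both quantities stay positive on some interval $[t_1-\delta,t_1]$. Applying \eqref{eq:critical-bar-phase-inequality} on $[t,t_1]$ with $t_1-\delta<t<t_1$ gives
\[
d(t_1)-d(t)\ge\int_t^{t_1}\bigl(G(\overline w,\overline z)+\theta\overline z\bigr)\,ds>0 .
\]
Hence $d(t_1)>d(t)>0$, contradicting $d(t_1)=0$. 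This is the paper's argument, written there in slope form. As $t\to t_1^-$, the quotient $(\overline z(t)-\overline z(t_1))/(\overline w(t)-\overline w(t_1))$ is $\ge-\theta$ by the first-contact condition, and $\le G/\overline z\le-\frac12$ by the phase inequality.

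Alternatively, you can keep your right-sided computation but apply it at $t_2=\inf\{t\ge t_0:\ d(t)<0\}$, where again $d(t_2)=0$ by the upward-jump property. That proves only $d\ge0$ for $t\ge t_0$. Strictness then follows by running the whole argument with some $\theta'\in(0,\theta)$ and using $\overline w>0$.
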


\begin{proof}
Suppose that $\overline w(t)\le Ce^{-\kappa t}$ for all large $t$, and let
$\kappa_0=\min\{\kappa,\frac12\}$.  Since
$e^t\overline w(t)=\overline u(r)$,
\[
        -u(x)\le\overline u(|x|)
        \le C|x|^{-\eta(1-\kappa_0)}.
\]
Choose $a$ with $q<a<q/(1-\kappa_0)$.  Then
$-u\in L^a(B_1)$, and Corollary~\ref{cor:continuous-extension} makes the
singularity removable, a contradiction.

Fix $0<\theta<\frac12$.  There are arbitrarily large times at which
$\overline z> -\theta\overline w$; otherwise
$\overline w_t=\overline z\le-\theta\overline w$ almost everywhere for
all large $t$, which would imply exponential decay of $\overline w$.
Since $\overline w(t)\to0$, we may therefore choose $t_0$ so large that $ \overline z(t_0)>-\theta\overline w(t_0)$
and
\[
        \frac{\overline w(t)^\beta}
        {A_k\theta(1-\theta)^{k-1}}\le\frac12
        \qquad\text{for every }t\ge t_0.
\]
Suppose that the trajectory later exits the region
$z>h(w)$, where $h(w)=-\theta w$, and let $t_1>t_0$ be the first
contact time.  Then
\[
        \overline z(t_1)=h(\overline w(t_1)),
        \qquad
        \overline z(t)>h(\overline w(t))
        \quad(t_0\le t<t_1).
\]
Moreover, by Lemma~\ref{lem:critical-bar-w-semiconvexity},
at $t=t_1$ we have
\[
        \overline z(t_1)=-\theta\overline w(t_1)<0,
        \qquad
        \overline w(t_1)+\overline z(t_1)
        =(1-\theta)\overline w(t_1)>0.
\]
By continuity at $t_1$, for some $\delta>0$ we have
$\overline z<0$ and
$\inf_{[t_1-\delta,t_1]}(\overline w+\overline z)>0$.
Thus $\overline w$ is strictly decreasing there.
For $t<t_1$ sufficiently close to $t_1$,
\[
        \frac{\overline z(t)-\overline z(t_1)}
        {\overline w(t)-\overline w(t_1)}
        \ge
        \frac{h(\overline w(t))-h(\overline w(t_1))}
        {\overline w(t)-\overline w(t_1)}.
\]
Consequently,
\[
        \liminf_{t\to t_1^-}
        \frac{\overline z(t)-\overline z(t_1)}
        {\overline w(t)-\overline w(t_1)}
        \ge h'(\overline w(t_1))=-\theta.
\]
On the other hand, applying
Lemma~\ref{lem:critical-bar-phase-properties}\textup{(ii)} on $[t,t_1]$,
dividing by
\[
        \overline w(t)-\overline w(t_1)
        =\int_t^{t_1}(-\overline z(s))\,ds>0,
\]
and letting $t\to t_1^-$, we obtain
\begin{align*}
        \limsup_{t\to t_1^-}
        \frac{\overline z(t)-\overline z(t_1)}
        {\overline w(t)-\overline w(t_1)}
        &\le
        \frac{G(\overline w(t_1),\overline z(t_1))}
        {\overline z(t_1)}\\
        &=-1+
        \frac{\overline w(t_1)^\beta}
        {A_k\theta(1-\theta)^{k-1}}
        \le-\frac12.
\end{align*}
This contradicts $-\theta>-\frac12$.  Taking $T_\theta=t_0$
proves \eqref{eq:critical-bar-wedge}.
\end{proof}

Fix $0<\varepsilon<1$, 
\[
        B_\varepsilon=\frac1{(1-\varepsilon)A_k},
\]
and choose $\delta_\varepsilon\in(0,1)$ so small that
\begin{equation}\label{eq:critical-bar-delta-choice}
\begin{gathered}
 B_\varepsilon\delta_\varepsilon^\beta<\frac12,
 \qquad
 B_\varepsilon(\beta+1)\delta_\varepsilon^\beta<\frac\varepsilon4,\\
 -1+\frac{1-\varepsilon}
 {(1-B_\varepsilon\delta_\varepsilon^\beta)^{k-1}}
 <-\frac\varepsilon2.
\end{gathered}
\end{equation}
Define
\begin{equation}\label{eq:critical-bar-region}
        \mathcal S_\varepsilon
        =\left\{(w,z):
        0<w\le\delta_\varepsilon,
        \quad z>-B_\varepsilon w^{\beta+1}
        \right\}.
\end{equation}

\begin{lemma}\label{lem:critical-bar-invariant-region}
There exists $T_\varepsilon>0$ such that, if
$
        (\overline w(t_0),\overline z(t_0))\in\mathcal S_\varepsilon
$
for some $t_0\ge T_\varepsilon$, then
\[
        (\overline w(t),\overline z(t))\in\mathcal S_\varepsilon
        \qquad(t\ge t_0).
\]
\end{lemma}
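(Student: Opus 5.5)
This is a first-contact argument, the mirror image of Lemma~\ref{lem:critical-invariant-region}, now carried out for the spherical maximum. Set $h(w)=-B_\varepsilon w^{\beta+1}$, and fix $\theta\in(0,\frac12)$, say $\theta=\frac14$. We choose $T_\varepsilon\ge T_\theta$, with $T_\theta$ from Lemma~\ref{lem:critical-bar-linear-wedge}, so large that $0<\overline w(t)<\delta_\varepsilon$ for all $t\ge T_\varepsilon$; this is possible by Lemma~\ref{lem:critical-bar-w-semiconvexity}. Then the constraint $w\le\delta_\varepsilon$ can never be violated after $T_\varepsilon$. Moreover, \eqref{eq:critical-bar-wedge} gives $\overline w+\overline z>(1-\theta)\overline w>0$ on $[T_\varepsilon,\infty)$, uniformly on compact intervals. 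Hence Lemma~\ref{lem:critical-bar-phase-properties}(ii) applies on every compact subinterval.

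Suppose the trajectory is in $\mathcal S_\varepsilon$ at $t_0\ge T_\varepsilon$ and later leaves it. Let $t_1>t_0$ be the first exit time, so that $\overline z(t_1)\le h(\overline w(t_1))$ and $\overline z>h(\overline w)$ on $[t_0,t_1)$. Since $\overline z$ only jumps upward ($\overline\nu_{pp}\ge0$) and is right-continuous, the remark preceding Lemma~\ref{lem:critical-invariant-region} gives continuity at $t_1$ and $\overline z(t_1)=h(\overline w(t_1))<0$. Consequently $\overline z<0$ on a left neighbourhood $[t_1-\delta,t_1]$, so $\overline w$ is strictly decreasing there.

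For $t<t_1$ close to $t_1$, dividing $\overline z(t)-\overline z(t_1)>h(\overline w(t))-h(\overline w(t_1))$ by $\overline w(t)-\overline w(t_1)>0$ gives
\[
\liminf_{t\to t_1^-}\frac{\overline z(t)-\overline z(t_1)}{\overline w(t)-\overline w(t_1)}\ge h'(\overline w(t_1))=-B_\varepsilon(\beta+1)\overline w(t_1)^\beta>-\frac\varepsilon4,
\]
where the last step uses \eqref{eq:critical-bar-delta-choice}. On the other hand, \eqref{eq:critical-bar-phase-inequality} on $[t,t_1]$ gives $\overline z(t)-\overline z(t_1)\le-\int_t^{t_1}G\,ds$. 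Divide this by $\overline w(t)-\overline w(t_1)=\int_t^{t_1}(-\overline z)\,ds>0$ and let $t\to t_1^-$, using continuity of $\overline w$ and of $\overline z$ at $t_1$. This yields
\[
\limsup_{t\to t_1^-}\frac{\overline z(t)-\overline z(t_1)}{\overline w(t)-\overline w(t_1)}\le\frac{G(\overline w(t_1),\overline z(t_1))}{\overline z(t_1)}.
\]
With $w=\overline w(t_1)$ and $z=-B_\varepsilon w^{\beta+1}$ we have $w+z=w(1-B_\varepsilon w^\beta)$, and therefore
\[
\frac{G(w,z)}{z}=-1+\frac{w^{q}}{A_kB_\varepsilon w^{\beta+1}w^{k-1}(1-B_\varepsilon w^\beta)^{k-1}}=-1+\frac{1-\varepsilon}{(1-B_\varepsilon w^\beta)^{k-1}},
\]
because $q=\beta+k$. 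Since $w\le\delta_\varepsilon$, the third condition in \eqref{eq:critical-bar-delta-choice} bounds this by $-\frac\varepsilon2$. The two one-sided limits are then incompatible, since $-\frac\varepsilon2<-\frac\varepsilon4$. This contradiction shows the trajectory stays in $\mathcal S_\varepsilon$.

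The main obstacle is justifying the limit of the difference quotient of $\int G$ against $\int(-\overline z)$. This requires $G(\overline w,\overline z)$ to be continuous at $t_1$ along the trajectory and $\overline z(t_1)\ne0$. The first follows from continuity of $\overline z$ at the contact time together with the positive lower bound on $\overline w+\overline z$ supplied by the wedge estimate. The second holds because $\overline w(t_1)>0$. Everything else is the sign bookkeeping reversed from the upper-bound case.
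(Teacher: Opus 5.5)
Your proposal is correct and is essentially the paper's proof. It uses the same first-contact argument at the barrier $h(w)=-B_\varepsilon w^{\beta+1}$, with continuity of $\overline z$ at $t_1$ from the upward-jump remark, and takes both the bound $h'(\overline w(t_1))>-\frac\varepsilon4$ and the bound $G/\overline z<-\frac\varepsilon2$ from \eqref{eq:critical-bar-delta-choice}. The only deviation is that you import the wedge estimate \eqref{eq:critical-bar-wedge} to make $\overline w+\overline z$ positive on all of $[T_\varepsilon,\infty)$, whereas the paper needs positivity only near $t_1$ and gets it from $\overline w(t_1)\bigl(1-B_\varepsilon\overline w(t_1)^\beta\bigr)>0$ together with continuity at $t_1$; the extra input is harmless but unnecessary.
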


\begin{proof}
Set $h(w)=-B_\varepsilon w^{\beta+1}$.
Choose $T_\varepsilon$ so that
$0<\overline w(t)<\delta_\varepsilon$ for $t\ge T_\varepsilon$.
Suppose for contradiction that the trajectory enters
$\mathcal S_\varepsilon$ at $t_0\ge T_\varepsilon$ and later exits.
There is then a first contact time $t_1>t_0$ such that
\[
        \overline z(t_1)=h(\overline w(t_1)),
        \qquad
        \overline z(t)>h(\overline w(t))
        \quad(t_0\le t<t_1).
\]
Moreover, at $t=t_1$ we have by Lemma~\ref{lem:critical-bar-w-semiconvexity}, 
\[
        \overline z(t_1)=-B_\varepsilon\overline w(t_1)^{\beta+1}<0
\]
and by \eqref{eq:critical-bar-delta-choice},
\[
        \overline w(t_1)+\overline z(t_1)
        =\overline w(t_1)
        \bigl(1-B_\varepsilon\overline w(t_1)^\beta\bigr)>0.
\]
By continuity at $t_1$, for some $\delta>0$ we have
$\overline z<0$ and
$\inf_{[t_1-\delta,t_1]}(\overline w+\overline z)>0$.
Thus $\overline w$ is strictly decreasing there.
For $t<t_1$ sufficiently close to $t_1$,
\[
        \frac{\overline z(t)-\overline z(t_1)}
        {\overline w(t)-\overline w(t_1)}
        \ge
        \frac{h(\overline w(t))-h(\overline w(t_1))}
        {\overline w(t)-\overline w(t_1)}.
\]
Hence
\begin{equation}\label{eq:critical-bar-first-contact-lower}
        \liminf_{t\to t_1^-}
        \frac{\overline z(t)-\overline z(t_1)}
        {\overline w(t)-\overline w(t_1)}
        \ge h'(\overline w(t_1))
        >-\frac\varepsilon4.
\end{equation}
On the other hand, applying
Lemma~\ref{lem:critical-bar-phase-properties}\textup{(ii)} on $[t,t_1]$,
dividing by
\[
        \overline w(t)-\overline w(t_1)
        =\int_t^{t_1}(-\overline z(s))\,ds>0,
\]
and letting $t\to t_1^-$, we obtain
\begin{align*}
        \limsup_{t\to t_1^-}
        \frac{\overline z(t)-\overline z(t_1)}
        {\overline w(t)-\overline w(t_1)}
        &\le
        \frac{G(\overline w(t_1),\overline z(t_1))}
        {\overline z(t_1)}\\
        &=-1+
        \frac{1-\varepsilon}
        {\bigl(1-B_\varepsilon\overline w(t_1)^\beta\bigr)^{k-1}}
        <-\frac\varepsilon2,
\end{align*}
by \eqref{eq:critical-bar-delta-choice}.  This contradicts
\eqref{eq:critical-bar-first-contact-lower}.
\end{proof}

Choose $0<\theta<\frac12$ so small that
\begin{equation}\label{eq:critical-bar-theta-choice}
        \frac{1-\varepsilon}{(1-\theta)^{k-1}}
        \le1-\frac\varepsilon2.
\end{equation}

\begin{lemma}\label{lem:critical-bar-eventual-entry}
There exists $t_0\ge T_\varepsilon$ such that
\[
        (\overline w(t_0),\overline z(t_0))\in\mathcal S_\varepsilon.
\]
\end{lemma}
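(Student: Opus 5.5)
We argue by contradiction, in parallel with Lemma~\ref{lem:critical-eventual-entry}. Suppose that
\[
        \overline z(t)\le -B_\varepsilon\overline w(t)^{\beta+1}
        \qquad\text{for every }t\ge T_\varepsilon
\]
(the condition $0<\overline w\le\delta_\varepsilon$ holds for large $t$ because $\overline w\to0$). After enlarging $T_\varepsilon$ we also assume $T_\varepsilon\ge T_\theta$, with $\theta$ chosen in \eqref{eq:critical-bar-theta-choice}. Lemma~\ref{lem:critical-bar-linear-wedge} then gives
\[
        -\theta\overline w<\overline z\le -B_\varepsilon\overline w^{\beta+1}<0
        \qquad (t\ge T_\varepsilon).
\]
Consequently $\overline w+\overline z\ge(1-\theta)\overline w$, which has a positive infimum on every compact interval. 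Lemma~\ref{lem:critical-bar-phase-properties}(ii) therefore holds on every $[t_0,t_1]\subset[T_\varepsilon,\infty)$, which means $d\overline z\ge G(\overline w,\overline z)\,dt$ as measures. We also get $\overline z\to0$, since $|\overline z|\le\theta\overline w\to0$.

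The key step is to bound $G/\overline z$ from above on this region. We have
\[
\frac{G}{\overline z}=-1+\frac{\overline w^{q}}{A_k(\overline w+\overline z)^{k-1}(-\overline z)} .
\]
The factors in the second term are controlled as follows:
\begin{itemize}
\item $-\overline z\ge B_\varepsilon\overline w^{\beta+1}$, which gives $\overline w^{q}/(-\overline z)\le\overline w^{k-1}/B_\varepsilon$ because $q-\beta-1=k-1$;
\item $(\overline w+\overline z)^{k-1}\ge(1-\theta)^{k-1}\overline w^{k-1}$.
\end{itemize}
Combining these,
\[
\frac{G}{\overline z}\le -1+\frac{1}{A_kB_\varepsilon(1-\theta)^{k-1}}=-1+\frac{1-\varepsilon}{(1-\theta)^{k-1}}\le-\frac\varepsilon2
\]
by \eqref{eq:critical-bar-theta-choice}. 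Since $\overline z<0$, this is equivalent to $G\ge-\frac\varepsilon2\overline z$.

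Using $\overline w_t=\overline z$ almost everywhere, the measure inequality becomes
\[
d\overline z\ge-\tfrac\varepsilon2\,\overline z\,dt=-\tfrac\varepsilon2\,d\overline w .
\]
Thus $\overline z+\frac\varepsilon2\overline w$ is nondecreasing on $[T_\varepsilon,\infty)$. Both $\overline z$ and $\overline w$ tend to $0$, so the limit of this function is $0$, and monotonicity forces $\overline z+\frac\varepsilon2\overline w\le0$ for all large $t$. In other words, $\overline w_t\le-\frac\varepsilon2\overline w$ almost everywhere.

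Integrating gives $\overline w(t)=O(e^{-\varepsilon t/2})$, which contradicts the first assertion of Lemma~\ref{lem:critical-bar-linear-wedge} that $\overline w$ has no exponential decay. This completes the argument.

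The main obstacle is the reversed monotonicity. In the upper-bound case the monotone quantity tended to zero and directly contradicted a sign. Here $\overline z+\frac\varepsilon2\overline w\le0$ is consistent with the assumed signs. The contradiction therefore has to come from converting this inequality into exponential decay and then invoking the non-exponential-decay property. That property in turn rests on the removability criterion, Corollary~\ref{cor:continuous-extension}.

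The remaining details are routine. One must check that $G$ is locally integrable so that the measure inequality applies; this follows from the uniform lower bound on $\overline w+\overline z$. One must also check that Lemma~\ref{lem:critical-bar-phase-properties}(ii), stated for intervals, yields $d\overline z\ge G\,dt$ as measures on $(T_\varepsilon,\infty)$.
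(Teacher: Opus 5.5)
Your proof is correct and follows the paper's argument: the same contradiction hypothesis combined with the wedge from Lemma~\ref{lem:critical-bar-linear-wedge}, the same bound $G/\overline z\le-\frac\varepsilon2$ obtained from \eqref{eq:critical-bar-theta-choice}, and the same final contradiction with the non-exponential-decay part of that lemma. The only cosmetic difference is in how you reach exponential decay: the paper integrates $ds+\frac\varepsilon2 s\,dt\le0$ for $s=-\overline z$ and then integrates $s$, whereas you use the monotone quantity $\overline z+\frac\varepsilon2\overline w$, which has zero limit, to get $\overline w_t\le-\frac\varepsilon2\overline w$ directly.
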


\begin{proof}
Increase $T_\varepsilon$, if necessary, so that
\eqref{eq:critical-bar-wedge} holds and
$0<\overline w<\delta_\varepsilon$ on
$[T_\varepsilon,\infty)$.  If the conclusion failed, then
\[
        -\theta\overline w<\overline z
        \le-B_\varepsilon\overline w^{\beta+1}<0
        \qquad(t\ge T_\varepsilon).
\]
Set
\[
        Y=-\frac{\overline z}{\overline w^{\beta+1}}
        \ge B_\varepsilon,
        \qquad
        \Theta=-\frac{\overline z}{\overline w}<\theta.
\]
Then $\overline w+\overline z>(1-\theta)\overline w>0$, with a positive
lower bound on every compact time interval, and
\eqref{eq:critical-bar-theta-choice} gives
\[
        \frac{G(\overline w,\overline z)}{\overline z}
        =-1+\frac1{A_kY(1-\Theta)^{k-1}}
        \le-\frac\varepsilon2.
\]
Since $\overline z<0$, the measure inequality in
Lemma~\ref{lem:critical-bar-phase-properties}\textup{(ii)} implies, with
$s=-\overline z>0$,
\[
        ds+\frac\varepsilon2s\,dt\le0.
\]
Thus $s(t)\le Ce^{-\varepsilon t/2}$.  Because
$\overline w_t=-s$ almost everywhere and $\overline w(t)\to0$,
\[
        \overline w(t)=\int_t^\infty s(\tau)\,d\tau
        \le Ce^{-\varepsilon t/2},
\]
contradicting Lemma~\ref{lem:critical-bar-linear-wedge}.
\end{proof}

Lemmas~\ref{lem:critical-bar-invariant-region} and
\ref{lem:critical-bar-eventual-entry} prove
\eqref{eq:critical-bar-eventual-trapping}, completing the remaining part of the proof of Proposition~\ref{prop:hessian-log-lower-bound}.

\bigskip
\noindent \textbf{Acknowledgements:} The first named author is supported by Natural Science Foundation of Tianjin (Grant No. 22JCJQJC00130), the Fundamental Research Funds for the Central Universities.

\bigskip

\noindent Shiguang Ma: School of Mathematical Sciences and LPMC, Nankai University\\ 
Tianjin 300071, China \\
\textit{e-mail}: msgdyx8741@nankai.edu.cn \\

\medskip

\noindent Shengyang Zang: School of Mathematical Sciences, Nankai University\\ 
Tianjin 300071, China\\
\textit{e-mail}: 1120230032@mail.nankai.edu.cn

\end{document}